\documentclass[ks]{imsart}

\RequirePackage{amsthm,amsmath,amsfonts,amssymb}
\usepackage[margin=1.5in]{geometry}
\usepackage{mathrsfs}
\usepackage{algorithm}
\RequirePackage[authoryear]{natbib}
\RequirePackage[colorlinks,citecolor=blue,urlcolor=blue]{hyperref}
\RequirePackage{graphicx}
\usepackage{placeins}
\usepackage{xcolor}

\startlocaldefs
\theoremstyle{plain}

\newtheorem{theorem}{Theorem}[section]
\newtheorem{lemma}[theorem]{Lemma}
\newtheorem{proposition}[theorem]{Proposition}
\newtheorem{corollary}[theorem]{Corollary}

\makeatletter
\let\c@algorithm\c@theorem
\makeatother

\theoremstyle{definition}
\newtheorem{definition}[theorem]{Definition}

\theoremstyle{remark}

\usepackage{bbm}
\usepackage[inline]{enumitem}
\usepackage{subcaption}
\usepackage{multirow}
\usepackage{rotating}
\usepackage{graphicx}
\usepackage{tikz}
\usepackage{pgfplots}

\pgfplotsset{compat=1.18}
\usepackage{xfp} 
\usetikzlibrary{arrows.meta,positioning,shapes.geometric,calc,intersections}

\usepackage[acronym]{glossaries}

\newacronym{ids}{\textit{IDS}}{\textit{intrusion detection system}}
\newacronym{it}{\textit{IT}}{\textit{information technology}}
\newacronym{roc}{\textit{ROC}}{\textit{receiver operating characteristic}}
\newacronym{cbi}{CBI}{\textit{conservative Bayesian inference}}
\newacronym{mdl}{\textit{MDL}}{\textit{Machine/Deep Learning}}
\newacronym{svm}{\textit{SVMs}}{\textit{support vector machines}}
\newacronym{ann}{\textit{ANNs}}{\textit{artificial neural networks}}
\newacronym{dnn}{\textit{DNNs}}{\textit{deep neural networks}}
\newacronym{pfd}{\textit{pfd}}{\textit{probability of failure on demand}}
\newacronym{iid}{i.i.d.}{\textit{independent identically distributed}}
\newacronym{pk}{\textit{PK}}{\textit{prior knowledge}}
\newacronym{mct}{MCT}{\textit{monotone convergence theorem}}
\newacronym{dct}{DCT}{\textit{dominated convergence theorem}}
\newacronym{cdf}{CDF}{\textit{cumulative distribution function}}
\newacronym{fdds}{\textit{fdds}}{\textit{finite-dimensional distributions}}
\newacronym{wrt}{\textit{w.r.t.}}{\textit{with respect to}}
\newacronym{wlog}{\textit{w.l.o.g.}}{\textit{without loss of generality}}
\newacronym{ivt}{IVT}{\textit{Intermediate Value Theorem}}
\newacronym{rhs}{\textit{r.h.s.}}{\textit{right hand side}}
\newacronym{lhs}{\textit{l.h.s.}}{\textit{left hand side}}
\newacronym{fp}{\textit{FP}}{false positive}
\newacronym{fn}{\textit{FN}}{false negative}
\newacronym{av}{\textit{AVs}}{autonomous vehicles}
\newacronym{pcm}{\textit{pcm}}{\textit{probability of a crash event per mile}}
\newacronym{pfm}{\textit{pfm}}{\textit{probability of a fatality event per mile}}
\newacronym{dpm}{dpm}{\textit{disengagements per mile}}
\newacronym{srgm}{\textit{SRGM}}{\textit{Software Reliability Growth Models}}
\newacronym{cots}{\textit{COTS}}{\textit{Commercial off-the-shelf}}

\endlocaldefs

\begin{document}

\begin{frontmatter}
\title{Fixed-Point Characterisations of Extremal Distributions under Partial Distributional Constraints}
\runtitle{Fixed-Point Characterisations of Extremal Priors}

\begin{aug}
\author[A]{\fnms{Kizito}~\snm{Salako}\ead[label=e1]{k.o.salako@citystgeorges.ac.uk}\orcid{0000-0003-0394-7833}},
\author[A]{\fnms{Rabiu}~\snm{Tsoho Muhammad}\ead[label=e2]{Rabiu-Tsoho.Muhammad@citystgeorges.ac.uk}\orcid{0009-0002-0700-6344}}
\address[A]{The Centre for Software Reliability, Department of Computer Science, City St. George's, University of London, Northampton Square, EC1V 0HB, The United Kingdom\\\printead{e1,e2}}

\runauthor{K. Salako and R. Tsoho Muhammad}
\end{aug}

\begin{abstract}
We present a methodological framework for solving robust inference problems with partially specified distributions over measurable subsets of a parameter space. Partial specifications define a set of admissible distributions. The goal is to determine extremal values (over these admissible distributions) for statistical quantities, where these quantities---these objective functions---are ratios of expectations of analytic functions, continuous functions, piecewise continuous functions, and uniform limits of piecewise continuous functions. We show that extremal values are approached by sequences of admissible distributions, whose limiting extremal distributions are characterised by fixed-point conditions on their support locations. This characterises \emph{where} extremal distributions place probability mass and yields a practical computational framework for solving the corresponding optimisation problems. We establish convergence and asymptotic properties of the resulting extremal distributions and extremal objective function values. This work extends robust inference methods (e.g. robust Bayesian inference) by combining extremal-distribution reduction, fixed-point characterisation, and approximation-based analysis within a unified framework.

\end{abstract}

\begin{keyword}[class=MSC]
\kwd[Primary ]{62F15}\kwd{62F35}
\kwd[; secondary ]{90C32}\kwd{62F30}
\end{keyword}

\begin{keyword}
\kwd{robust Bayesian analysis}
\kwd{conservative Bayesian inference}
\end{keyword}

\end{frontmatter}
\newpage
\tableofcontents
\newpage
\listoffigures
\newpage

\section{Introduction}
The following constrained optimisation problem arises often in robust inference settings. For $d\in\mathbb N$, let $K:=[0,1]^d$ and let $K_1,\ldots,K_n$ be a partition of $K$ into Borel-measurable sets.
Let $\mathcal D:=\Bigl\{\mathbb P\in \mathcal{P}(K): \mathbb P(K_i)=p_i,\ i=1,\ldots,n\Bigr\}$, 
where $\mathcal P(K)$ denotes the set of all Borel probability measures on $K$, $p_i> 0$ $(i=1,\ldots,n)$, and $\sum_{i=1}^n p_i=1$.
Let $f,g:K\to [0,\infty)$ be $\mathcal D$-integrable; i.e. $f,g\in\mathcal L(\mathbb P)$ for all $\mathbb P\in\mathcal D$. Further assume $\inf_{{\mathbf x}\in K_j} g({\mathbf x})>0$ for some $j\in\{1,\ldots,n\}$. Then, the optimisation problem of interest is
\begin{equation}
\inf_{\mathbb P \in\mathcal D}\frac{\mathbb E_{\mathbb P }[f({\mathbf X})]}{\mathbb E_{\mathbb P }[g({\mathbf X})]}.
\label{eqn_genCBIprob}
\end{equation}
Problem~\eqref{eqn_genCBIprob} is solved when the infimum and a related extremal distribution of ${\mathbf X}$ are deduced. It is natural to consider \eqref{eqn_genCBIprob} within a Bayesian context, where the prior distribution of $\textbf{X}$ is partially specified. 
%
Indeed, generalisations of \eqref{eqn_genCBIprob} have been extensively studied in the robust Bayesian inference literature (see \cite{berger1990_SensitivityToPrior}, \cite{moreno1991robust}, \cite{1991_Lavine}, \cite{Berger_1994_RobustnessInBidinesionalModels}), and certain nonlinear fractional program forms of~\eqref{eqn_genCBIprob} can be solved using Dinkelbach-type fixed--point iteration (see \cite{Dinkelbach1967},  \cite{Schaible1976Dinkelbach}). 
Beyond robust Bayes, \eqref{eqn_genCBIprob} and its extensions also give generalised Boole--Fréchet bounds, including bounds under marginal-density constraints (where these bounds cannot be obtained directly via Hailperin's classical linear-programming formulation; see \cite{Hailperin_1965}). Several special cases of \eqref{eqn_genCBIprob} (and its generalisations in Section~\ref{sec_methods}) have been solved and applied in various contexts; for example, see \cite{LaurenceWang2005Basket}, \cite{bishop_toward_2011}, \cite{GiacominiKitagawa2021RobustBayesian}, \cite{strigini_software_2013}, 
\cite{Ruger1978}, \cite{zhao_assessing_2019}, \cite{EmbrechtsPuccettiRueschendorf2013}, \cite{littlewood_reliability_2020}, \cite{salako_conservative_2021}, \cite{Whitt1976BivariateDistributions}, \cite{SalakoZhao_TSE_2023,kizito_QRE_2024}). 

This paper makes three main methodological contributions to solving constrained inference problems with partially specified distributions: \textbf{i)}~it derives fixed-point characterisations of extremal priors for \eqref{eqn_genCBIprob}, showing that the optimisation can be reduced to discrete extremal priors, or limits of feasible priors, whose support points satisfy explicit Dinkelbach-type fixed-point conditions. This strengthens earlier extremal-prior reduction results by identifying \emph{where} extremal priors place mass; \textbf{ii)}~it develops these structural results into a practical \emph{computational framework}, based on discrete reformulations, fixed-point iteration, and approximation arguments, for solving \eqref{eqn_genCBIprob} and several nontrivial generalisations (e.g. overlapping measurable-set constraints and multiple marginal-density constraints); and \textbf{iii)}~it proves convergence/asymptotic properties of the resulting extremal values and priors, extending the framework beyond analytic objectives to continuous, piecewise continuous, and uniform-limit classes of functions.

The paper is organised as follows. Section~\ref{sec_methods} develops the theoretical and computational framework for \eqref{eqn_genCBIprob} and its generalisations. In particular, Subsection~\ref{subsec_discreteformulationsandGeneralisations} establishes equivalent finite and discrete extremal-prior formulations, including extensions to overlapping measurable-set constraints and marginal-density constraints. Subsection~\ref{subsec_solutions} gives the main fixed-point characterisations and algorithms: analytic objectives are treated directly through Dinkelbach-type conditions, while continuous, piecewise continuous, and uniform-limit objective classes are handled through approximation and convergence arguments. Section~\ref{subsec_examples} then illustrates the framework with applications to one- and two-dimensional extremal-prior problems, Markov chain likelihoods with mixed prior constraints, Bernoulli reliability models, generalised Boole--Fr\'echet bounds, interval-identified econometric inference, robust value-at-risk aggregation, model-free basket-option bounds, and inference using Markov chains with random transition probabilities. Section~\ref{sec_discussion} discusses the interpretation of the resulting extremal priors, their discrete or degenerate nature, their role in conservative Bayesian inference, their convergence to ordinary Bayesian inference under refined partial specifications, and the attractor--repeller structure that governs extremal support locations.

\section{Methodology}
\label{sec_methods}
This section establishes a general framework for solving \eqref{eqn_genCBIprob} and its extensions. The key steps are: reduce the original optimisation to an extremal discrete problem, characterise the support of the resulting extremal priors via fixed-point conditions, then extend this to broader prior constraints and objective-function classes. 

The following are the structural foundation and computational tools for the framework. Four theorems give solutions of  \eqref{eqn_genCBIprob} under four nested classes of $f, g$ functions: \textbf{i)}~Theorem~\ref{thm:global_fp_tuple} for analytic functions; \textbf{ii)}~Theorem~\ref{thm_gensol} for continuous functions; \textbf{iii)}~Theorem~\ref{thm_refinedgensol} for bounded piecewise continuous functions with a measurable set of discontinuities; \textbf{iv)}~Theorem~\ref{thm:regulated_extension} for uniform limits of bounded piecewise continuous functions. Theorems~\ref{thm_CBIwithfails_sol}--
\ref{thm_VaRExample}, \ref{thm:canonical-strike-recursive}, \ref{thm_Gen2stateMarkovChain} exemplify the general theorems' solutions. 

The following definition is useful in what follows.
\begin{definition}[\emph{Dinkelbach difference}] $h_{\varphi,f,g}({\mathbf x}):= f({\mathbf x})-\varphi g({\mathbf x})$ for $\varphi\geqslant 0$, ${\mathbf x}\in K$
\label{def:Dinkelbachdifference}
\end{definition}

\subsection{Equivalent Discrete Formulations of \eqref{eqn_genCBIprob} and Its Generalisations}
\label{subsec_discreteformulationsandGeneralisations}
Propositions~\ref{prop_cbi_transform}, \ref{prop_genprob1_v1}, \ref{prop_cont_marginal_approx}, \ref{cor_refined_strip_oscillations}, \ref{prop_gen_multiple_marginals_piecewise_refined} simplify and extend \eqref{eqn_genCBIprob}. These extend Theorem~1 of \cite{moreno1991robust} (\emph{cf.} Theorem~3 of \cite{BetroRuggeriMeczarski1994-JSPI}).

%
%
%
%
%
%
%
%
%
\begin{figure}[ht!]
\centering

\subfloat[$\tilde K_1\subseteq K$]{%
\centering
\setlength{\fboxsep}{6pt}%
\colorbox{gray!35}{%
\begin{tikzpicture}[x=4.5cm,y=4.5cm]
  \clip (0,0) rectangle (1,1);
  \fill[white] (0,0) rectangle (1,1);

  \fill[gray!60]
    (0,0) -- (0.72,0)
    .. controls (0.78,0.12) and (0.70,0.24) ..
    (0.74,0.36)
    .. controls (0.79,0.50) and (0.66,0.60) ..
    (0.69,0.73)
    .. controls (0.72,0.86) and (0.62,0.94) ..
    (0.60,1.00)
    -- (0,1) -- cycle;

  \draw[white,line width=1.1pt] (0,0) rectangle (1,1);
  \draw[white,line width=1.1pt]
    (0.72,0)
    .. controls (0.78,0.12) and (0.70,0.24) ..
    (0.74,0.36)
    .. controls (0.79,0.50) and (0.66,0.60) ..
    (0.69,0.73)
    .. controls (0.72,0.86) and (0.62,0.94) ..
    (0.60,1.00);

  \node[font=\Large\bfseries, anchor=south east] at (0.98,0.98) {$K$};
  \node[font=\large] at (0.30,0.52) {$\boldsymbol{\tilde K_1}$};
\end{tikzpicture}}%
\label{fig:overlap_atoms_partition_a}}
\hspace{1cm}
\subfloat[$\tilde K_2\subseteq K$]{%
\centering
\setlength{\fboxsep}{6pt}%
\colorbox{gray!35}{%
\begin{tikzpicture}[x=4.5cm,y=4.5cm]
  \clip (0,0) rectangle (1,1);
  \fill[white] (0,0) rectangle (1,1);

  \fill[gray!60]
    (0,0) -- (1,0) -- (1,0.50)
    .. controls (0.88,0.42) and (0.72,0.58) ..
    (0.56,0.52)
    .. controls (0.38,0.45) and (0.20,0.67) ..
    (0,0.60)
    -- cycle;

  \draw[white,line width=1.1pt] (0,0) rectangle (1,1);
  \draw[white,line width=1.1pt]
    (0,0.60)
    .. controls (0.20,0.67) and (0.38,0.45) ..
    (0.56,0.52)
    .. controls (0.72,0.58) and (0.88,0.42) ..
    (1,0.50);

  \node[font=\Large\bfseries, anchor=south east] at (0.98,0.98) {$K$};
  \node[font=\large] at (0.50,0.22) {$\boldsymbol{\tilde K_2}$};
\end{tikzpicture}}%
\label{fig:overlap_atoms_partition_b}}

\vspace{1ex}

\subfloat[$\tilde K_3\subseteq K$]{%
\centering
\setlength{\fboxsep}{6pt}%
\colorbox{gray!35}{%
\begin{tikzpicture}[x=4.5cm,y=4.5cm]
  \clip (0,0) rectangle (1,1);
  \fill[white] (0,0) rectangle (1,1);

  \fill[gray!60]
    (0.30,0.62)
    .. controls (0.34,0.82) and (0.48,0.98) ..
    (0.56,1.00)
    -- (1.00,1.00)
    -- (1.00,0.34)
    .. controls (0.95,0.28) and (0.84,0.26) ..
    (0.74,0.30)
    .. controls (0.58,0.36) and (0.40,0.48) ..
    (0.30,0.62) -- cycle;

  \draw[white,line width=1.1pt] (0,0) rectangle (1,1);
  \draw[white,line width=1.1pt]
    (0.30,0.62)
    .. controls (0.34,0.82) and (0.48,0.98) ..
    (0.56,1.00)
    -- (1.00,1.00)
    -- (1.00,0.34)
    .. controls (0.95,0.28) and (0.84,0.26) ..
    (0.74,0.30)
    .. controls (0.58,0.36) and (0.40,0.48) ..
    (0.30,0.62) -- cycle;

  \node[font=\Large\bfseries, anchor=south east] at (0.98,0.98) {$K$};
  \node[font=\large] at (0.71,0.67) {$\boldsymbol{\tilde K_3}$};
\end{tikzpicture}}%
\label{fig:overlap_atoms_partition_c}}
\hspace{1cm}
\subfloat[Induced partition $\{A_I:I\in\mathcal I\}$]{%
\centering
\setlength{\fboxsep}{6pt}%
\colorbox{gray!35}{%
\begin{tikzpicture}[x=4.5cm,y=4.5cm]
  \clip (0,0) rectangle (1,1);
  \fill[gray!60] (0,0) rectangle (1,1);

  \draw[white,line width=1.1pt]
    (0.72,0)
    .. controls (0.78,0.12) and (0.70,0.24) ..
    (0.74,0.36)
    .. controls (0.79,0.50) and (0.66,0.60) ..
    (0.69,0.73)
    .. controls (0.72,0.86) and (0.62,0.94) ..
    (0.60,1.00);

  \draw[white,line width=1.1pt]
    (0,0.60)
    .. controls (0.20,0.67) and (0.38,0.45) ..
    (0.56,0.52)
    .. controls (0.72,0.58) and (0.88,0.42) ..
    (1,0.50);

  \draw[white,line width=1.1pt]
    (0.30,0.62)
    .. controls (0.34,0.82) and (0.48,0.98) ..
    (0.56,1.00)
    -- (1.00,1.00)
    -- (1.00,0.34)
    .. controls (0.95,0.28) and (0.84,0.26) ..
    (0.74,0.30)
    .. controls (0.58,0.36) and (0.40,0.48) ..
    (0.30,0.62) -- cycle;

  \draw[white,line width=1.1pt] (0,0) rectangle (1,1);

  \node[font=\Large\bfseries, anchor=south east] at (0.98,0.98) {$K$};

  \node[font=\scriptsize] at (0.16,0.82) {$A_{\{1\}}$};
  \node[font=\scriptsize] at (0.89,0.14) {$A_{\{2\}}$};
  \node[font=\scriptsize] at (0.85,0.76) {$A_{\{3\}}$};
  \node[font=\scriptsize] at (0.31,0.28) {$A_{\{1,2\}}$};
  \node[font=\scriptsize] at (0.53,0.72) {$A_{\{1,3\}}$};
  \node[font=\scriptsize] at (0.88,0.4) {$A_{\{2,3\}}$};
  \node[font=\scriptsize] at (0.64,0.44) {$A_{\{1,2,3\}}$};
\end{tikzpicture}}%
\label{fig:overlap_atoms_partition_d}}

\caption[Atomisation induced by overlapping measurable-set constraints]{Illustration of Proposition~\ref{prop_genprob1_v1} on $K=[0,1]^2$. Panels (a)–(c) show three overlapping measurable sets $\tilde K_1,\tilde K_2,\tilde K_3\subseteq K$, chosen so that $\tilde K_1\cup\tilde K_2\cup\tilde K_3=K$. Panel (d) shows the nonempty atoms $A_I=\Bigl(\bigcap_{i\in I}\tilde K_i\Bigr)\cap\Bigl(\bigcap_{i\notin I}\tilde K_i^c\Bigr)$ which partition $K$. Boundary segments are assigned according to a fixed Borel, or half-open, convention.}
\label{fig:overlap_atoms_partition}
\end{figure}
Proposition~\ref{prop_cbi_transform} considers the following extension of Problem~\eqref{eqn_genCBIprob}. 
Let
$\boldsymbol\psi=(\psi_1,\ldots,\psi_q):K\to\mathbb R^q$ be Borel-measurable, and let
$\boldsymbol m=(m_1,\ldots,m_q)\in\mathbb R^q$. Define
\[
\mathcal D_{p,\boldsymbol m}
:=
\Bigl\{
\mathbb P\in\mathcal P(K):
\mathbb P(K_i)=p_i,\ i=1,\ldots,n,\quad
\mathbb E_{\mathbb P}[\psi_j({\mathbf X})]=m_j,\ j=1,\ldots,q
\Bigr\}.
\]
Let $f,g:K\to[0,\infty)$ be $\mathcal D_{p,\boldsymbol m}$-integrable; that is,
$f,g\in\mathcal L(\mathbb P)$ for every $\mathbb P\in\mathcal D_{p,\boldsymbol m}$.
Further assume $\inf_{{\mathbf x}\in K_j} g({\mathbf x})>0$ for some $j\in\{1,\ldots,n\}$;
so $0<
\mathbb E_{\mathbb P}[g({\mathbf X})]$ for all $\mathbb P\in\mathcal D_{p,\boldsymbol m}$. For $r\in\mathbb N$, define
\[
\mathcal D_{p,\boldsymbol m}^{(r)}
:=
\Bigl\{
\mathbb P\in\mathcal D_{p,\boldsymbol m}:
\mathbb P=\sum_{\ell=1}^r w_\ell\delta_{{\mathbf x}_\ell},\
{\mathbf x}_\ell\in K,\ w_\ell\geqslant 0,\
\sum_{\ell=1}^r w_\ell=1
\Bigr\},
\]
where zero-weight terms in the representation are ignored.

\begin{proposition}[finite-support reduction of \eqref{eqn_genCBIprob}]\label{prop_cbi_transform}
Assume $\mathcal D_{p,\boldsymbol m}\neq\varnothing$. Then
\begin{equation}
\inf_{\mathbb P\in\mathcal D_{p,\boldsymbol m}}
\frac{\mathbb E_{\mathbb P}[f({\mathbf X})]}
{\mathbb E_{\mathbb P}[g({\mathbf X})]}
=
\inf_{\mathbb P\in\mathcal D_{p,\boldsymbol m}^{(n+q)}}
\frac{\mathbb E_{\mathbb P}[f({\mathbf X})]}
{\mathbb E_{\mathbb P}[g({\mathbf X})]}.
\label{eqn_weak_opt_equiv_momentgen_disc}
\end{equation}
Indeed, for every $\mathbb P\in\mathcal D_{p,\boldsymbol m}$ there exist $\mathbb Q_0\in\mathcal D_{p,\boldsymbol m}^{(n+q+1)}$ and $\mathbb Q\in\mathcal D_{p,\boldsymbol m}^{(n+q)}$ such that
\begin{equation}
\frac{\mathbb E_{\mathbb P}[f({\mathbf X})]}{\mathbb E_{\mathbb P}[g({\mathbf X})]}=\frac{\mathbb E_{\mathbb Q_0}[f({\mathbf X})]}
{\mathbb E_{\mathbb Q_0}[g({\mathbf X})]}\geqslant\frac{\mathbb E_{\mathbb Q}[f({\mathbf X})]}
{\mathbb E_{\mathbb Q}[g({\mathbf X})]}.
\label{eqn_feasiblepriorequivanddomination}
\end{equation}
In particular, by considering no moment constraints (so $q=0$), \eqref{eqn_weak_opt_equiv_momentgen_disc} implies Problem~\eqref{eqn_genCBIprob} has an equivalent discrete formulation. That is, 
\begin{equation}
\inf_{\mathbb P \in \mathcal D}
\frac{\mathbb E_{\mathbb P }[f({\mathbf X})]}{\mathbb E_{\mathbb P }[g({\mathbf X})]}=\inf_{K_1\times,\ldots,\times K_n}
\frac{\sum_{i=1}^n f({\mathbf x}_i)p_i}{\sum_{i=1}^n g({\mathbf x}_i)p_i}
\label{eqn_weak_opt_equiv_disc}
\end{equation}
Furthermore, for continuous $f,g$,
\begin{equation*}
\inf_{K_1\times,\ldots,\times K_n}
\frac{\sum_{i=1}^n f({\mathbf x}_i)p_i}{\sum_{i=1}^n g({\mathbf x}_i)p_i}
=
\min_{\overline K_1\times,\ldots,\times \overline K_n}
\frac{\sum_{i=1}^n f({\mathbf x}_i)p_i}{\sum_{i=1}^n g({\mathbf x}_i)p_i}.
\end{equation*}

\end{proposition}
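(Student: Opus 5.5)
Fix $\mathbb P\in\mathcal D_{p,\boldsymbol m}$ and write $F:=\mathbb E_{\mathbb P}[f]$, $G:=\mathbb E_{\mathbb P}[g]>0$, $\varphi:=F/G$. The plan is to get $\mathbb Q_0$ from a Carath\'eodory (Richter--Rogosinski) moment argument, and $\mathbb Q$ by a linear-programming vertex argument on the Dinkelbach difference $h_{\varphi,f,g}$ of Definition~\ref{def:Dinkelbachdifference}.

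\textbf{Construction of $\mathbb Q_0$.} Consider the Borel map
\[
\Phi:K\to\mathbb R^{n+q+2},\qquad \Phi({\mathbf x})=\bigl(\mathbbm 1_{K_1}({\mathbf x}),\ldots,\mathbbm 1_{K_n}({\mathbf x}),\psi_1({\mathbf x}),\ldots,\psi_q({\mathbf x}),f({\mathbf x}),g({\mathbf x})\bigr).
\]
Every coordinate is $\mathbb P$-integrable, because $f,g$ are $\mathcal D_{p,\boldsymbol m}$-integrable and $\mathbb E_{\mathbb P}[\psi_j]=m_j$ is assumed finite. By the Richter--Rogosinski theorem (Carath\'eodory for integrals), $\mathbb E_{\mathbb P}[\Phi]$ lies in the convex hull of $\Phi(K)$, and hence is a convex combination of at most $n+q+3$ points of $\Phi(K)$. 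This count can be lowered in two ways.
\begin{enumerate*}[label=(\roman*)]
\item Since $\sum_i\mathbbm 1_{K_i}\equiv 1$, the image $\Phi(K)$ lies in an affine hyperplane, so the ambient dimension is $n+q+1$ and at most $n+q+2$ atoms are needed.
\item Only the ratio $F/G$ has to be preserved, not $F$ and $G$ separately. So I would replace the pair $(f,g)$ by the single coordinate $h_{\varphi,f,g}$. Its mean under $\mathbb P$ is $0$, and any discrete $\mathbb Q_0$ matching this mean together with the $n+q$ constraint coordinates satisfies $\mathbb E_{\mathbb Q_0}[f]=\varphi\,\mathbb E_{\mathbb Q_0}[g]$. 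This map sends $K$ into an $(n+q)$-dimensional affine set, so at most $n+q+1$ atoms suffice.
\end{enumerate*}
We also need $\mathbb E_{\mathbb Q_0}[g]>0$. This holds because $\mathbb Q_0(K_j)=p_j>0$ and $\inf_{K_j}g>0$. Hence $\mathbb Q_0\in\mathcal D^{(n+q+1)}_{p,\boldsymbol m}$ and it has the same ratio as $\mathbb P$.

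\textbf{Reduction to $\mathbb Q$.} Write $\mathbb Q_0=\sum_{\ell=1}^{r}w_\ell\delta_{{\mathbf x}_\ell}$ with $r\le n+q+1$, and fix the support points. Over the polytope
\[
W=\Bigl\{w\geqslant 0:\ \textstyle\sum_\ell w_\ell\mathbbm 1_{K_i}({\mathbf x}_\ell)=p_i,\ \sum_\ell w_\ell\psi_j({\mathbf x}_\ell)=m_j\Bigr\},
\]
the ratio $\sum_\ell w_\ell f({\mathbf x}_\ell)\big/\sum_\ell w_\ell g({\mathbf x}_\ell)$ is linear-fractional with a positive denominator. It is therefore quasi-linear, and its minimum over the compact polytope $W$ is attained at a vertex. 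A vertex has at most as many nonzero coordinates as the number of equality constraints, which is $n+q$; the constraint $\sum_\ell w_\ell=1$ is implied by the $K_i$ constraints. This vertex gives $\mathbb Q\in\mathcal D^{(n+q)}_{p,\boldsymbol m}$ with ratio no larger than that of $\mathbb Q_0$, which proves \eqref{eqn_feasiblepriorequivanddomination}. Since $\mathcal D^{(n+q)}_{p,\boldsymbol m}\subseteq\mathcal D_{p,\boldsymbol m}$, \eqref{eqn_weak_opt_equiv_momentgen_disc} follows.

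\textbf{The cases $q=0$ and continuous $f,g$.} When $q=0$ there are $n$ atoms. Each $K_i$ must receive mass $p_i>0$, and the $K_i$ are disjoint, so each $K_i$ contains exactly one atom ${\mathbf x}_i$, with weight $p_i$. This gives \eqref{eqn_weak_opt_equiv_disc}. For continuous $f,g$, the ratio extends continuously to the compact set $\overline K_1\times\cdots\times\overline K_n$. Its denominator stays positive there, since $g\geqslant\inf_{K_j}g>0$ on $\overline K_j$ by continuity. The infimum over the dense subset $K_1\times\cdots\times K_n$ therefore equals the minimum over the closure.

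\textbf{Main obstacle.} The delicate step is the atom count in the $\mathbb Q_0$ construction: obtaining exactly $n+q+1$ atoms via the Dinkelbach coordinate, and checking that the Richter--Rogosinski theorem applies to measurable $\psi_j$ that may be unbounded. For the latter I would cite the general version of the theorem, which needs only integrability of each coordinate.
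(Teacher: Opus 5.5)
Your proof is correct. Your construction of $\mathbb Q_0$ is the paper's: adjoin $h_{\varphi,f,g}$ to the $n+q$ constraint coordinates, use $\sum_i\mathbf 1_{K_i}\equiv 1$ to get affine dimension at most $n+q$, and apply Richter's theorem. The paper proves the needed version, Lemma~\ref{lem_finite_dimensional_barycentric_reduction}, assuming only integrability, so unbounded $\psi_j$ cause no difficulty. Your $q=0$ and closure arguments also match the paper's.

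You depart from the paper at the domination step. The paper makes one explicit rank-pruning move. If $\mathbb Q_0$ has exactly $n+q+1$ atoms, the constraint map from $\mathbb R^{n+q+1}$ to $\mathbb R^{n+q}$ has a nonzero kernel vector $\mathbf c$. Because $\sum_\ell c_\ell=0$, this vector has entries of both signs. The paper chooses its sign so that $\sum_\ell c_\ell h_{\varphi,f,g}({\mathbf x}_\ell)\leqslant 0$ and moves the weights along $\mathbf c$ until one vanishes. This preserves every constraint and gives $\mathbb E[h_{\varphi,f,g}]\leqslant 0$, which shows the ratio has not increased. You instead minimise the linear-fractional ratio over the whole weight polytope $W$ on the fixed support, then use vertex optimality and the basic-feasible-solution bound.

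Your route yields more: the best reweighting on $\operatorname{supp}\mathbb Q_0$, no case split on the atom count, and a support bound equal to the rank of the constraint matrix, which can be below $n+q$. The paper's route is more elementary, since it needs no polytope theory, and it stays inside the Dinkelbach framework used throughout the paper.

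One detail you should state explicitly: the denominator is positive on all of $W$, not only at the weights of $\mathbb Q_0$. Every $w\in W$ puts total mass $p_j$ on atoms in $K_j$, where $g\geqslant\inf_{K_j}g>0$. This is what makes the ratio continuous and quasi-linear on $W$.
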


\begin{proof} See Appendix~A, Supplementary Material (\cite{Salako_Muhammad_2025_suppmat}).
\end{proof}

\noindent\textbf{Remarks: }If $q=0$, the support bound reduces to $n$ from $n+q$. Since $p_i>0$ for every
$i=1,\ldots,n$, any $\mathbb P\in \mathcal D_{p,\boldsymbol m}^{(n)}$ must place exactly one support point in each partition element $K_i$. The $q$ moment constraints add at most $q$
additional support points in total.

Proposition~\ref{prop_cbi_transform} is a finite-support reduction for fractional objectives under fixed partition probabilities and finitely many moment equalities. Its proof uses a finite-dimensional barycentric representation followed by a rank-pruning argument. When no moment constraints are present, it recovers the finite-partition reduction of Moreno et al.~\cite[Theorem~1]{moreno1991robust}. More generally, it is a structured equality-constraint instance of the generalized moment framework of Betr\`o et al.~\cite[Theorem~3]{BetroRuggeriMeczarski1994-JSPI}, itself based on Winkler's extreme-point theorem \cite[Theorem 2.1]{Winkler1988ExtremePointsMomentSets}. 
The present formulation differs by: \begin{enumerate*}[label=\textbf{\roman*)}] \item exploiting the affine dependence among the $n$ partition indicators, while incorporating the $q$ moment constraints, to obtain the improved $n+q$ support bound; \item proving the stronger pointwise objective-value equivalence/domination statement \eqref{eqn_feasiblepriorequivanddomination}. \end{enumerate*}

The two parts of \eqref{eqn_feasiblepriorequivanddomination} play different roles. The equality involving $\mathbb Q_0$ is a finite-moment representation statement: after fixing $\mathbb P$, one may add one further moment constraint encoding the objective value attained by $\mathbb P$. This gives the $(n+q+1)$-atomic representative $\mathbb Q_0$ with the same objective value, by Richter's theorem \cite[Satz~4]{Richter1957Parameterfreie}. However, this equality-preserving argument alone does not yield the sharper $(n+q)$-atomic domination statement involving $\mathbb Q$: preserving the additional objective-value constraint costs one atom, whereas omitting it leaves the direction of the objective change uncontrolled. The proposition's proof resolves this obstruction by a rank-pruning step: it preserves only the original partition and moment constraints while pruning one atom in a direction that does not increase the objective value, thereby producing $\mathbb Q$.

Proposition~\ref{prop_genprob1_v1} shows that the partition in \eqref{eqn_genCBIprob} is not as restrictive as it first appears. It is valid when finite constraints ``overlap''.

\begin{proposition}[Overlapping measurable-set constraints]
\label{prop_genprob1_v1}
Let $K:=[0,1]^d$ and let $\tilde{K}_1,\allowbreak \ldots,\allowbreak \tilde{K}_n$ be Borel-measurable subsets of $K$
such that $\bigcup_{i=1}^n \tilde K_i = K$.
Let $\widetilde{\mathcal D}
:=
\Bigl\{
\mathbb P\in \mathcal P(K): \mathbb P(\tilde K_i)=\tilde p_i,\ i=1,\ldots,n
\Bigr\}$ where $\tilde p_i>0$ $(i=1,\ldots,n)$, and assume $\widetilde{\mathcal D}\neq\varnothing$.
Let $f,g:K\to [0,\infty)$ be $\widetilde{\mathcal D}$-integrable and $\inf_{{\mathbf x}\in \tilde K_j} g({\mathbf x})>0$ for some
$j$.

For each nonempty $I\subseteq\{1,\ldots,n\}$, define the atom $A_I
:=
\Bigl(\bigcap_{i\in I}\tilde K_i\Bigr)
\cap
\Bigl(\bigcap_{i\notin I}\tilde K_i^c\Bigr)$, and let $\mathcal I:=\{I\subseteq\{1,\ldots,n\}: I\neq\varnothing,\ A_I\neq\varnothing\}$.
Then $\{A_I: I\in\mathcal I\}$ is a partition of $K$, and
\begin{equation}
\inf_{\mathbb P\in \widetilde{\mathcal D}}
\frac{\mathbb E_{\mathbb P }[f({\mathbf X})]}{\mathbb E_{\mathbb P}[g({\mathbf X})]}=\inf_{\substack{p_I\geqslant 0\text{ for all }I\in\mathcal I,\\
\sum_{I\in\mathcal I} p_I = 1,\\
\sum_{I\in\mathcal I:\, i\in I} p_I = \tilde p_i,\ i=1,\ldots,n}}
\!\!\!\!
\inf_{\substack{{\mathbf x}_I\in A_I\text{ for }I\in\mathcal I\\\text{such that }p_I>0}}
\frac{\sum_{I\in\mathcal I} f({\mathbf x}_I)\, p_I}
     {\sum_{I\in\mathcal I} g({\mathbf x}_I)\, p_I}.
\label{eq:overlap_refinement}
\end{equation}


In particular, any extremal prior may be taken to be discrete, of the form
\[
\mathbb P^*=\sum_{I\in\mathcal I} p_I^*\,\delta_{{\mathbf x}_I^*},
\qquad {\mathbf x}_I^*\in \overline{A}_I,
\]
for some feasible $\{p_I^*\}_{I\in\mathcal I}$ with at most $n+1$ non-zero $p_I^*$.
If, moreover, $\tilde K_1,\ldots,\tilde K_n$ are pairwise disjoint, then
$\mathcal I=\bigl\{\{1\},\ldots,\{n\}\bigr\}$, $A_{\{i\}}=\tilde K_i$, $p_{\{i\}}=\tilde p_i$,
and \eqref{eq:overlap_refinement} becomes \eqref{eqn_weak_opt_equiv_disc}.
\end{proposition}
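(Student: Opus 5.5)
The plan is to reduce the overlapping-constraint problem to a family of partition problems of the type treated in Proposition~\ref{prop_cbi_transform}, and then take an outer infimum over the admissible atom masses.

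\textbf{Step 1: the atoms partition $K$.} For each $\mathbf x\in K$, set $I(\mathbf x):=\{i:\mathbf x\in\tilde K_i\}$. This set is nonempty because $\bigcup_i\tilde K_i=K$. By construction $\mathbf x\in A_I$ if and only if $I=I(\mathbf x)$. Hence the atoms with $I\in\mathcal I$ are pairwise disjoint, cover $K$, and are Borel, being finite intersections of Borel sets and their complements.

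\textbf{Step 2: decompose $\widetilde{\mathcal D}$.} Since $\tilde K_i=\bigsqcup_{I\ni i}A_I$, every $\mathbb P\in\widetilde{\mathcal D}$ has masses $p_I:=\mathbb P(A_I)$ that satisfy the linear constraints in \eqref{eq:overlap_refinement}. Conversely, any probability measure whose atom masses satisfy those constraints lies in $\widetilde{\mathcal D}$. Therefore
\[
\widetilde{\mathcal D}=\bigcup_{\mathbf p\in\Pi}\mathcal D(\mathbf p),
\]
where $\Pi$ is the polytope of feasible $\{p_I\}$ and $\mathcal D(\mathbf p)$ is the set of measures with $\mathbb P(A_I)=p_I$. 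The infimum over $\widetilde{\mathcal D}$ thus equals $\inf_{\mathbf p\in\Pi}\inf_{\mathbb P\in\mathcal D(\mathbf p)}$.

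For fixed $\mathbf p$, discard the atoms with $p_I=0$; they carry no mass, so they do not affect expectations. Apply \eqref{eqn_weak_opt_equiv_disc} of Proposition~\ref{prop_cbi_transform} to the partition $\{A_I:p_I>0\}$. The unused null atoms can be absorbed into a positive-mass atom, or equivalently one notes that Proposition~\ref{prop_cbi_transform}'s argument only uses the sets of positive mass. This gives the inner infimum over $\mathbf x_I\in A_I$ with $p_I>0$, and hence \eqref{eq:overlap_refinement}.

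There is a positivity hypothesis to check. Proposition~\ref{prop_cbi_transform} needs $g$ bounded away from $0$ on some cell. We only know $\inf_{\tilde K_j}g>0$. However, $\tilde K_j$ has mass $\tilde p_j>0$, so some $A_I\subseteq\tilde K_j$ has $p_I>0$, and $g$ is bounded below on it. So the denominator stays positive and the reduction applies.

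\textbf{Step 3: discreteness and the $n+1$ bound.} I expect this to be the main obstacle, since the outer problem is not linear in $\mathbf p$.

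I would treat the problem as a single finite-support reduction. Encode the constraints on $\mathcal P(K)$ as $n$ equality constraints $\mathbb E[\mathbf 1_{\tilde K_i}]=\tilde p_i$, together with total mass $1$. For $\mathbb P\in\widetilde{\mathcal D}$, first reduce within the atoms: by Step 2 there is a discrete $\mathbb Q$ with one point per positive atom and no larger ratio. Then apply the barycentric and rank-pruning argument of Proposition~\ref{prop_cbi_transform}, with $q=n$ moment functions $\psi_i=\mathbf 1_{\tilde K_i}$ and the trivial one-cell partition (so the cell count is $1$). This yields a representative with at most $1+n$ atoms and no larger objective value.

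Since the number of atoms is at most $|\mathcal I|\le 2^n-1$, this reduction is genuine. The limit points $\mathbf x_I^*\in\overline A_I$ arise when the infimum is not attained. In that case, take a minimising sequence of such $(n+1)$-atomic priors. Extract convergent subsequences of the weights, using compactness of $\Pi$, and of the locations, using compactness of $\overline A_I\subseteq[0,1]^d$. The limit measure $\sum p_I^*\delta_{\mathbf x_I^*}$ is the extremal prior in the stated sense, with at most $n+1$ nonzero weights.

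\textbf{Step 4: the disjoint case.} If the $\tilde K_i$ are pairwise disjoint, only the singletons give nonempty atoms. Then $A_{\{i\}}=\tilde K_i$, the constraints force $p_{\{i\}}=\tilde p_i$, and $\Pi$ is a single point, so \eqref{eq:overlap_refinement} collapses to \eqref{eqn_weak_opt_equiv_disc}.
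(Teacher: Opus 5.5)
Your argument is correct. Steps 1, 2 and 4 follow the paper's proof. You decompose $\widetilde{\mathcal D}$ by the atom masses $p_I=\mathbb P(A_I)$, apply Proposition~\ref{prop_cbi_transform} with $q=0$ on the positive-mass atoms, and use $\tilde p_j>0$ to find a positive-mass atom inside $\tilde K_j$ on which $g$ is bounded below.

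You genuinely differ on the $n+1$ bound. The paper swaps the two infima in \eqref{eq:overlap_refinement}. For fixed support points, the mass problem is then a linear-fractional program over the atom-mass polytope cut out by $n+1$ equality constraints, so an optimum sits at a vertex with at most $n+1$ nonzero $p_I$. You instead recast the overlapping constraints as $n$ indicator moment constraints $\psi_i=\mathbf 1_{\tilde K_i}$ on the one-cell partition $\{K\}$, and invoke the $q=n$ case of Proposition~\ref{prop_cbi_transform}.

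What each route buys:
\begin{itemize}
\item \emph{Your route} gives pointwise domination: every feasible $\mathbb P$ has objective value no smaller than some feasible prior with at most $n+1$ atoms. It needs neither the interchange of infima nor the vertex property of linear-fractional programs, which the paper invokes only briefly.
\item \emph{The paper's route} identifies the extremal mass vector as a vertex of the atom-mass polytope for any fixed choice of points. This is the structure exploited later, e.g.\ $p^\ast\in\arg\min_{p\in\mathcal P}p^\top m(\phi)$ in Theorem~\ref{thm_Gen2stateMarkovChain}.
\end{itemize}
Both rest on the same Carath\'eodory-type fact. A point of a polytope defined by $n+1$ equalities can be pushed along kernel directions to a point with at most $n+1$ nonzero coordinates, without increasing a linear-fractional objective.

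Three small repairs are needed.
\begin{enumerate}
\item Proposition~\ref{prop_cbi_transform} with the single cell $K$ formally requires $\inf_K g>0$, which you do not have. Say explicitly that its proof uses this only to guarantee $\mathbb E_{\mathbb P}[g]>0$ on the feasible class. Here $\mathbb E_{\mathbb P}[g]\geqslant \tilde p_j\inf_{\tilde K_j}g>0$ for every $\mathbb P\in\widetilde{\mathcal D}$.
\item Drop the claim that null atoms can be ``absorbed into a positive-mass atom''. Merging $A_{I_0}$ with $p_{I_0}=0$ into $A_{I_1}$ no longer forces $\mathbb P(A_{I_0})=0$. This enlarges the class, lets the inner infimum range over $A_{I_0}$, and can break the $\tilde K_i$ constraints. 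Your other justification is the correct one, and it is what the paper uses: every measure with these atom masses lives on the union of the positive atoms, which they partition.
\item The barycentric step may place two of the $n+1$ points in the same atom. Re-apply the $q=0$ reduction, or prune directly on the finite support of your $\mathbb Q$, to restore one point per atom. Also fix the active set along a subsequence before extracting limits.
\end{enumerate}
Finally, the resulting weak limit is extremal only in the paper's weak-limit sense. For merely integrable $f,g$ it need not attain the infimum; the paper's proof shares this caveat.
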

\begin{proof}
See Appendix~B, Supplementary Material (\cite{Salako_Muhammad_2025_suppmat}).
\end{proof}

Propositions~\ref{prop_cont_marginal_approx}, \ref{cor_refined_strip_oscillations}, \ref{prop_gen_multiple_marginals_piecewise_refined} allow for marginal-density constraints on the prior.
\begin{proposition}[Marginal-density extension of \eqref{eqn_genCBIprob}]
\label{prop_cont_marginal_approx}
Let $K:=U\times V=[0,1]^r\times [0,1]^s$ for $r,s\in\mathbb N$, where $r+s=d$.
Let $\rho:U\to[0,\infty)$ be Borel-measurable with $\int_U \rho({\mathbf u})\,d{\mathbf u}=1$, and define
\[
\mathcal D:=
\Bigl\{
\mathbb P\in\mathcal P(K):
\mathbb P(A\times V)=\int_A \rho({\mathbf u})\,d{\mathbf u}
\text{ for every Borel-measurable }A\subseteq U
\Bigr\}.
\]
Let $f,g:K\to[0,\infty)$ be bounded Borel functions, and assume $c:=\inf_{{\mathbf x}\in K} g({\mathbf x})>0$. For each $m\in\mathbb N$, let $B_1^{(m)},\ldots,B_{n_m}^{(m)}$ be a partition of $U$
into Borel-measurable sets, and write $K_i^{(m)}:=B_i^{(m)}\times V$, $p_i^{(m)}:=\int_{B_i^{(m)}} \rho({\mathbf u})\,d{\mathbf u}$, for $i=1,\ldots,n_m$.
Assume that the partitions are dominant for $(f,g)$ in the sense that
\[
\omega_f(m):=
\max_{1\leqslant i\leqslant n_m}
\sup\Bigl\{
|f({\mathbf u},{\mathbf v})-f({\mathbf u}',{\mathbf v})|:
{\mathbf u},{\mathbf u}'\in B_i^{(m)},\ ({\mathbf u},{\mathbf v}),({\mathbf u}',{\mathbf v})\in K
\Bigr\}\to 0,
\]
\[
\omega_g(m):=
\max_{1\leqslant i\leqslant n_m}
\sup\Bigl\{
|g({\mathbf u},{\mathbf v})-g({\mathbf u}',{\mathbf v})|:
{\mathbf u},{\mathbf u}'\in B_i^{(m)},\ ({\mathbf u},{\mathbf v}),({\mathbf u}',{\mathbf v})\in K
\Bigr\}\to 0
\]
as $m\to\infty$.
Lastly, define
\[
 {{\mathcal D}^{(m)}}:=
\Bigl\{
\mathbb P\in\mathcal P(K):
\mathbb P\bigl(K_i^{(m)}\bigr)=p_i^{(m)}\text{ for }i=1,\ldots,n_m
\Bigr\}.
\]
Then, each problem $\phi_m^*:=\inf\limits_{ {{\mathcal D}^{(m)}}}\frac{\mathbb E[f({\mathbf X})]}{\mathbb E[g({\mathbf X})]}$ has an equivalent form that is solved by Proposition~\ref{prop_cbi_transform}. Moreover, if $\phi^*:=\inf\limits_{\mathcal D}\frac{\mathbb E[f({\mathbf X})]}{\mathbb E[g({\mathbf X})]}$, then $\phi_m^*\to \phi^*$ as $m\to\infty$. More precisely, if $M_f\allowbreak:=\allowbreak\sup\limits_{{\mathbf x}\in K} f({\mathbf x})$ and $\eta_m:=
\frac{2\,\omega_f(m)}{c}
+
\frac{2M_f\,\omega_g(m)}{c^2}$, then $\phi^*-\eta_m\leqslant \phi_m^*\leqslant \phi^*+\eta_m$ for every $m\in\mathbb N$.
\end{proposition}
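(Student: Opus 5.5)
The first claim is immediate. $\{K_i^{(m)}\}_{i=1}^{n_m}$ is a Borel partition of $K$ with $\sum_i p_i^{(m)}=1$. Since $f,g$ are bounded, they are $\mathcal D^{(m)}$-integrable, and $\inf g=c>0$ on every cell. So, discarding cells with $p_i^{(m)}=0$ (which carry no mass under any $\mathbb P\in\mathcal D^{(m)}$), $\phi_m^*$ is an instance of \eqref{eqn_genCBIprob}. Proposition~\ref{prop_cbi_transform} with $q=0$ then gives the discrete form $\phi_m^*=\inf\sum_i f(\mathbf x_i)p_i^{(m)}/\sum_i g(\mathbf x_i)p_i^{(m)}$ over $\mathbf x_i\in K_i^{(m)}$. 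For the two-sided bound I will use two elementary facts about ratios with denominators at least $c$. If $|a-a'|\leqslant \epsilon_f$, $|b-b'|\leqslant\epsilon_g$, $b,b'\geqslant c$ and $0\leqslant a'\leqslant M_f$, then
\[
\Bigl|\frac ab-\frac{a'}{b'}\Bigr|\leqslant \frac{|a-a'|}{b}+a'\frac{|b-b'|}{bb'}\leqslant \frac{\epsilon_f}{c}+\frac{M_f\epsilon_g}{c^2}.
\]
This already sits within $\eta_m$, so the factor $2$ gives slack.

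\textbf{Lower bound $\phi_m^*\geqslant\phi^*-\eta_m$.} Since $\mathcal D\subseteq\mathcal D^{(m)}$, the inequality $\phi_m^*\leqslant\phi^*$ is trivial, and the real content is that coarsening does not lower the infimum by much. Fix $\mathbb Q\in\mathcal D^{(m)}$. By the discrete reduction, I may take $\mathbb Q=\sum_i p_i^{(m)}\delta_{(\mathbf u_i,\mathbf v_i)}$ with $\mathbf u_i\in B_i^{(m)}$. Define $\mathbb P:=\sum_i (\rho\,d\mathbf u|_{B_i^{(m)}})\otimes\delta_{\mathbf v_i}$, i.e.\ within cell $i$ the $U$-marginal is $\rho$ restricted to $B_i^{(m)}$ and $V$ is fixed at $\mathbf v_i$. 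This $\mathbb P$ has $U$-marginal $\rho$, so $\mathbb P\in\mathcal D$. Moreover
\[
\bigl|\mathbb E_{\mathbb P}f-\mathbb E_{\mathbb Q}f\bigr|\leqslant\sum_i\int_{B_i^{(m)}}|f(\mathbf u,\mathbf v_i)-f(\mathbf u_i,\mathbf v_i)|\rho\,d\mathbf u\leqslant\omega_f(m),
\]
and likewise for $g$ with $\omega_g(m)$. The ratio bound then gives $\phi^*\leqslant \mathbb E_{\mathbb P}f/\mathbb E_{\mathbb P}g\leqslant \mathbb E_{\mathbb Q}f/\mathbb E_{\mathbb Q}g+\eta_m$. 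Taking the infimum over $\mathbb Q$ yields the lower bound.

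\textbf{Upper bound $\phi_m^*\leqslant\phi^*+\eta_m$.} This follows from the inclusion $\mathcal D\subseteq\mathcal D^{(m)}$, since $\mathbb P(B_i^{(m)}\times V)=p_i^{(m)}$ for every $\mathbb P\in\mathcal D$; hence $\phi_m^*\leqslant\phi^*$. Finally, the convergence $\phi_m^*\to\phi^*$ follows because $\eta_m\to0$.

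\textbf{Main obstacle.} The delicate step is the lower bound. One must make sure the reduction to $\mathbb Q$ with finitely many atoms is legitimate, which it is, since Proposition~\ref{prop_cbi_transform} guarantees domination, so the infimum over discrete $\mathbb Q$ equals $\phi_m^*$. One must also check that the constructed $\mathbb P$ is Borel and satisfies the marginal constraint for every Borel $A$. The latter holds because $\mathbb P(A\times V)=\sum_i\int_{A\cap B_i^{(m)}}\rho=\int_A\rho$. A further point to handle is that $\omega_f$ controls variation only in $\mathbf u$ with $\mathbf v$ held fixed, which is exactly why the construction keeps $\mathbf v_i$ constant within each cell.
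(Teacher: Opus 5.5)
Your argument is correct. It differs from the paper's in how the lower bound $\phi_m^*\geqslant\phi^*-\eta_m$ is obtained.

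\textbf{The paper's route.} The paper does not discretise first. It takes an arbitrary $\mathbb Q\in\mathcal D^{(m)}$ and records its strip-wise $V$-marginals $\nu_i^{(m)}(C)=\mathbb Q(B_i^{(m)}\times C)/p_i^{(m)}$. It then maps $\mathbb Q$ to $S_m(\mathbb Q)=\sum_i(\rho\,d\mathbf u|_{B_i^{(m)}})\otimes\nu_i^{(m)}\in\mathcal D$. The two measures are compared on each strip through an arbitrary anchor $\mathbf u_i^{(m)}\in B_i^{(m)}$, using that both integrate $f(\mathbf u_i^{(m)},\cdot)$ identically. This incurs $\omega_f(m)$ twice per strip, which is where the factor $2$ in $\eta_m$ comes from.

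\textbf{Your route.} You first invoke Proposition~\ref{prop_cbi_transform} to restrict to one-atom-per-cell priors. Your lift is then the special case $\nu_i^{(m)}=\delta_{\mathbf v_i}$ of the paper's map. The comparison becomes a direct pointwise one with no anchor, giving the sharper error $\omega_f(m)/c+M_f\,\omega_g(m)/c^2=\eta_m/2$.

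\textbf{What each buys.} The paper gets an explicit marginal-repair map defined on all of $\mathcal D^{(m)}$, independent of the finite-support reduction. You get a simpler construction and a better constant, at the cost of relying on Proposition~\ref{prop_cbi_transform} applied to the positive-mass cells. The paper itself makes that same application for the first claim, so this adds no new gap.

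\textbf{On the factor $2$.} It is not intrinsic to the paper's approach. On strip $i$, couple $(\mathbf U,\mathbf V)\sim\mathbb Q(\cdot\cap K_i^{(m)})/p_i^{(m)}$ with an independent $\mathbf U'\sim\rho\mathbf 1_{B_i^{(m)}}/p_i^{(m)}$. Then $(\mathbf U',\mathbf V)$ has the law of the normalised $S_m(\mathbb Q)$ on that strip. This recovers your $\eta_m/2$ for arbitrary $\mathbb Q$ without discretising.
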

\begin{proof} See Appendix~C, Supplementary Material (\cite{Salako_Muhammad_2025_suppmat}).
\end{proof}
Proposition~\ref{cor_refined_strip_oscillations} is valid when finite and marginal-density constraints ``overlap''.
\begin{proposition}[Overlapping finite and marginal-density constraints]
\label{cor_refined_strip_oscillations}
Let $K := U \times V = [0,1]^r \times [0,1]^s$, let
$\rho : U \to [0,\infty)$ be Borel-measurable with $\int_U \rho(\textnormal{\textbf{u}})\,d\textnormal{\textbf{u}} = 1$, and let
$F_1,\ldots,F_L$ be a fixed finite Borel partition of $K$. Let
$q_1,\ldots,q_L \geqslant 0$ satisfy $\sum_{\ell=1}^L q_\ell = 1$, and define
\[
{\mathcal D_{F,q}}
:=
\left\{
\mathbb P \in \mathcal P(K) :
\begin{array}{l}
\mathbb P(A \times V) = \int_A \rho(\textnormal{\textbf{u}})\,d\textnormal{\textbf{u}} \ \text{for every Borel } A \subseteq U,\\[3pt] 
\mathbb P(F_\ell)=q_\ell,\ \ell=1,\ldots,L
\end{array}
\right\}.
\]
Assume ${\mathcal D_{F,q}}\neq\varnothing$. Let $f,g : K \to [0,\infty)$ be bounded Borel functions, and assume $c := \inf_{\textnormal{\textbf{x}} \in K} g(\textnormal{\textbf{x}}) > 0$. For each $m \in \mathbb N$, let
$B_1^{(m)},\ldots,B_{n_m}^{(m)}$ be a finite Borel partition of $U$. Define $K_i^{(m)} := B_i^{(m)} \times V\,$ and 
$\,p_i^{(m)} := \int_{B_i^{(m)}} \rho(\textnormal{\textbf{u}})\,d\textnormal{\textbf{u}},\,$ for $\,i=1,\ldots,n_m$. Let
\[
{\mathcal D_{F,q}}^{(m)}
:=
\Bigl\{
\mathbb P \in \mathcal P(K) :
\mathbb P\bigl(K_i^{(m)}\bigr)=p_i^{(m)},\ i=1,\ldots,n_m,\ 
\mathbb P(F_\ell)=q_\ell,\ \ell=1,\ldots,L
\Bigr\}.
\]
For each $m$, let $R_1^{(m)},\ldots,R_{L_m}^{(m)}$ be a finite Borel partition of $K$ refining both
$\{K_i^{(m)}\}_{i=1}^{n_m}$ and $\{F_\ell\}_{\ell=1}^L$; that is, each
$R_\tau^{(m)}$ is contained in some $K_i^{(m)}$ and in some $F_\ell$.
Define
\[
\widehat{\omega}_f(m) := \max_{1 \leqslant \tau \leqslant L_m}
\sup_{\textnormal{\textbf{x}},\textnormal{\textbf{y}} \in R_\tau^{(m)}} |f(\textnormal{\textbf{x}})-f(\textnormal{\textbf{y}})|,
\qquad
\widehat{\omega}_g(m) := \max_{1 \leqslant \tau \leqslant L_m}
\sup_{\textnormal{\textbf{x}},\textnormal{\textbf{y}} \in R_\tau^{(m)}} |g(\textnormal{\textbf{x}})-g(\textnormal{\textbf{y}})|.
\]
Assume $\widehat{\omega}_f(m) \to 0$ and $\widehat{\omega}_g(m) \to 0$
as $m \to \infty$. For each $m$, define the refined consistency polytope
\begin{align*}
{\mathcal W_{F,q,m}}
:=\left\{
\boldsymbol{\alpha}=(\alpha_\tau)_{\tau=1}^{L_m} \in [0,\infty)^{L_m} \;:\;\begin{array}{l}
\sum_{\tau:\,R_\tau^{(m)} \subseteq K_i^{(m)}} \alpha_\tau = p_i^{(m)},\ i=1,\ldots,n_m,\\[2pt]
\sum_{\tau:\,R_\tau^{(m)} \subseteq F_\ell} \alpha_\tau = q_\ell,\ \ell=1,\ldots,L
\end{array}
\right\}.
\end{align*}
So, ${\mathcal W_{F,q,m}}\neq\varnothing$ for every $m \in \mathbb N$. For each $m$ and each $\tau \in \{1,\ldots,L_m\}$, let $i(\tau)$ denote the unique
index such that $R_\tau^{(m)} \subseteq K_{i(\tau)}^{(m)}$. The vertical section $R_{\tau,\textnormal{\textbf{u}}}^{(m)} := \{\, \textnormal{\textbf{v}} \in V : (\textnormal{\textbf{u}},\textnormal{\textbf{v}}) \in R_\tau^{(m)} \,\}$ is defined for $\textnormal{\textbf{u}} \in B_{i(\tau)}^{(m)}$. Let $\mathcal T_m^+
:=
\Bigl\{
\tau \in \{1,\ldots,L_m\} :
\exists \boldsymbol{\alpha} \in {\mathcal W_{F,q,m}}\ \text{with}\ \alpha_\tau>0
\Bigr\}$ be the set of active refined atoms and assume that, for each $m \in \mathbb N$ and $\tau \in \mathcal T_m^+$,
there exists measurable probability kernel $\textnormal{\textbf{u}} \mapsto \nu_{\tau,\textnormal{\textbf{u}}}^{(m)} \in \mathcal P(V)$ (where $\textnormal{\textbf{u}} \in B_{i(\tau)}^{(m)}$),
such that $\nu_{\tau,\textnormal{\textbf{u}}}^{(m)}\!\bigl(R_{\tau,\textnormal{\textbf{u}}}^{(m)}\bigr)=1$ for $\rho$-a.e. $\textnormal{\textbf{u}} \in B_{i(\tau)}^{(m)}$. Finally, let $V_{F,q,m}(\boldsymbol{\alpha})
:=
\inf\limits_{\textnormal{\textbf{x}}_\tau \in R_\tau^{(m)} \text{ for } \alpha_\tau>0}
\frac{\sum_{\tau=1}^{L_m} \alpha_\tau f(\textnormal{\textbf{x}}_\tau)}
{\sum_{\tau=1}^{L_m} \alpha_\tau g(\textnormal{\textbf{x}}_\tau)}$ for $\boldsymbol{\alpha} \in {\mathcal W_{F,q,m}}$, let $\phi_{F,q}^*
:=
\inf\limits_{\mathbb P \in {\mathcal D_{F,q}}} \frac{\mathbb E_{\mathbb P}[f(\textnormal{\textbf{X}})]}{\mathbb E_{\mathbb P}[g(\textnormal{\textbf{X}})]}$, and let 
$\phi_{F,q,m}
:=
\inf\limits_{\boldsymbol{\alpha} \in {\mathcal W_{F,q,m}}} V_{F,q,m}(\boldsymbol{\alpha})$. 

\noindent Then:
\begin{enumerate}
\item
For each $m$, $I_m^+ := \{\, i \in \{1,\ldots,n_m\} : p_i^{(m)} > 0 \,\}$ and $J^+ := \{\, \ell \in \{1,\ldots,L\} : q_\ell > 0 \,\}$, every $\mathbb P \in {\mathcal D_{F,q}}^{(m)}$ satisfies
\[
\mathbb P\bigl(K_i^{(m)}\bigr)=0 \ \text{for all } i\notin I_m^+,
\qquad
\mathbb P(F_\ell)=0 \ \text{for all } \ell\notin J^+,
\]
and ${\mathcal D_{F,q}}^{(m)}
=
\Bigl\{
\mathbb P \in \mathcal P(K) :
\mathbb P\bigl(K_i^{(m)}\bigr)=p_i^{(m)},\ i\in I_m^+,\ 
\mathbb P(F_\ell)=q_\ell,\ \ell\in J^+
\Bigr\}$. Moreover,
\[
\phi_{F,q,m}
=
\inf_{\mathbb P \in {\mathcal D_{F,q}}^{(m)}} \frac{\mathbb E_{\mathbb P}[f(\textnormal{\textbf{X}})]}{\mathbb E_{\mathbb P}[g(\textnormal{\textbf{X}})]}.
\]

\item
For $M_f := \sup\limits_{\mathbf{x} \in K} f(\textnormal{\textbf{x}})$ and $\eta_{F,q,m}
:=
\frac{\widehat{\omega}_f(m)}{c}
+
\frac{M_f\,\widehat{\omega}_g(m)}{c^2}$,
$\phi_{F,q}^* - \eta_{F,q,m}
\leqslant
\phi_{F,q,m}
\leqslant
\phi_{F,q}^*$ holds for every $m \in \mathbb N$. Hence, $\phi_{F,q,m} \to \phi_{F,q}^*
\text{ as } m \to \infty$.
\end{enumerate}
\end{proposition}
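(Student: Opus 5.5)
The plan is to prove part 1 by a direct translation between priors in ${\mathcal D_{F,q}}^{(m)}$ and weight vectors in ${\mathcal W_{F,q,m}}$, using Proposition~\ref{prop_cbi_transform} on the refined partition $\{R_\tau^{(m)}\}$. Part 2 is then proved by an inclusion of feasible sets for the upper bound and a kernel-based reconstruction for the lower bound.

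\textbf{Part 1.} The vanishing statements are immediate: $\mathbb P(K_i^{(m)})=p_i^{(m)}=0$ for $i\notin I_m^+$, and $\mathbb P(F_\ell)=q_\ell=0$ for $\ell\notin J^+$. Hence the constraints indexed outside $I_m^+$ and $J^+$ are redundant, because $\{K_i^{(m)}\}$ and $\{F_\ell\}$ are partitions and the remaining masses already sum to $1$. This gives the stated description of ${\mathcal D_{F,q}}^{(m)}$.

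For the identity $\phi_{F,q,m}=\inf_{{\mathcal D_{F,q}}^{(m)}}$, I would argue as follows.
\begin{enumerate*}[label=\textbf{\roman*)}]
\item Every $\mathbb P\in{\mathcal D_{F,q}}^{(m)}$ induces $\alpha_\tau:=\mathbb P(R_\tau^{(m)})$. Since $\{R_\tau^{(m)}\}$ refines both partitions, $\boldsymbol\alpha\in{\mathcal W_{F,q,m}}$.
\item Conversely, every $\boldsymbol\alpha\in{\mathcal W_{F,q,m}}$ and every choice of points $\mathbf x_\tau\in R_\tau^{(m)}$ give a feasible discrete prior $\sum_\tau\alpha_\tau\delta_{\mathbf x_\tau}$.
\item For fixed $\boldsymbol\alpha$, the set $\{\mathbb P:\mathbb P(R_\tau^{(m)})=\alpha_\tau\}$ is a partition-constrained family, where atoms with $\alpha_\tau=0$ are discarded. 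By Proposition~\ref{prop_cbi_transform} with $q=0$, i.e.\ \eqref{eqn_weak_opt_equiv_disc}, its infimum equals $V_{F,q,m}(\boldsymbol\alpha)$. Here $c>0$ guarantees that the denominator is always positive.
\end{enumerate*}
Taking the infimum over $\boldsymbol\alpha\in{\mathcal W_{F,q,m}}$ yields the identity. Nonemptiness of ${\mathcal W_{F,q,m}}$ follows from ${\mathcal D_{F,q}}\subseteq{\mathcal D_{F,q}}^{(m)}$ together with step i).

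\textbf{Upper bound in part 2.} If $\mathbb P\in{\mathcal D_{F,q}}$, then $\mathbb P(K_i^{(m)})=\mathbb P(B_i^{(m)}\times V)=p_i^{(m)}$. Hence ${\mathcal D_{F,q}}\subseteq{\mathcal D_{F,q}}^{(m)}$, and part 1 gives $\phi_{F,q,m}\leqslant\phi_{F,q}^*$.

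\textbf{Lower bound in part 2.} Fix $\boldsymbol\alpha\in{\mathcal W_{F,q,m}}$ and points $\mathbf x_\tau\in R_\tau^{(m)}$ for each $\tau$ with $\alpha_\tau>0$; every such $\tau$ lies in $\mathcal T_m^+$. I would build $\mathbb P\in{\mathcal D_{F,q}}$ by setting, for $i\in I_m^+$ and Borel $A\subseteq B_i^{(m)}$, $C\subseteq V$,
\[
\mathbb P(A\times C):=\sum_{\tau:\,i(\tau)=i,\ \alpha_\tau>0}\frac{\alpha_\tau}{p_i^{(m)}}\int_A \nu^{(m)}_{\tau,\mathbf u}(C)\,\rho(\mathbf u)\,d\mathbf u .
\]
This is well defined because the kernels are measurable, and it extends to a measure on $B_i^{(m)}\times V$ by the usual product-kernel construction. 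The blocks with $p_i^{(m)}=0$ are handled separately: there $\alpha_\tau=0$, and $\rho$ vanishes a.e.\ on $B_i^{(m)}$, so those blocks contribute nothing.

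The verifications are short.
\begin{enumerate*}[label=\textbf{\alph*)}]
\item The $U$-marginal is $\rho$, because $\sum_{\tau:\,i(\tau)=i}\alpha_\tau=p_i^{(m)}$.
\item Each component $\tau$ is carried by $R_\tau^{(m)}$, because $\nu_{\tau,\mathbf u}^{(m)}(R_{\tau,\mathbf u}^{(m)})=1$ $\rho$-a.e. Hence $\mathbb P(R_\tau^{(m)})=\alpha_\tau$.
\item Consequently $\mathbb P(F_\ell)=\sum_{\tau:\,R_\tau^{(m)}\subseteq F_\ell}\alpha_\tau=q_\ell$.
\end{enumerate*}

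Now write $a:=\mathbb E_{\mathbb P}f$, $b:=\mathbb E_{\mathbb P}g$, $a':=\sum_\tau\alpha_\tau f(\mathbf x_\tau)$ and $b':=\sum_\tau\alpha_\tau g(\mathbf x_\tau)$. Because $\mathbb P$ puts mass $\alpha_\tau$ on $R_\tau^{(m)}$ and $\mathbf x_\tau\in R_\tau^{(m)}$, we get $|a-a'|\leqslant\widehat\omega_f(m)$ and $|b-b'|\leqslant\widehat\omega_g(m)$. Use the decomposition
\[
\frac ab-\frac{a'}{b'}=\frac{a-a'}{b}+\frac{a'(b'-b)}{bb'},
\]
together with $b,b'\geqslant c$ and $a'\leqslant M_f$. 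This gives
\[
\phi_{F,q}^*\leqslant \frac ab\leqslant \frac{a'}{b'}+\eta_{F,q,m}.
\]
Taking the infimum over the points $\mathbf x_\tau$ and then over $\boldsymbol\alpha$ gives $\phi_{F,q}^*-\eta_{F,q,m}\leqslant\phi_{F,q,m}$. Convergence follows since $\eta_{F,q,m}\to0$.

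\textbf{Main obstacle.} I expect the delicate step to be the reconstruction of a genuinely feasible $\mathbb P\in{\mathcal D_{F,q}}$ that matches the prescribed refined masses $\alpha_\tau$ while keeping the exact marginal $\rho$. This is precisely what the kernel hypothesis on $\mathcal T_m^+$ is designed for. The points to check are measurability of $\mathbf u\mapsto\nu^{(m)}_{\tau,\mathbf u}(C)$, the $\rho$-null exceptional sets, and the blocks with $p_i^{(m)}=0$.
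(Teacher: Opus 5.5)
Your proposal is correct and follows essentially the same route as the paper. Part 1 goes through the fibre decomposition over $\boldsymbol\alpha\in\mathcal W_{F,q,m}$ and the finite-support reduction of Proposition~\ref{prop_cbi_transform} on the positive-mass refined atoms. The upper bound comes from $\mathcal D_{F,q}\subseteq\mathcal D_{F,q}^{(m)}$, and the lower bound from the kernel-mixture reconstruction of a prior in $\mathcal D_{F,q}$ with the prescribed refined masses, followed by the same ratio-perturbation estimate. The only cosmetic difference is in the lower bound: you compare the reconstructed prior directly with the discrete objective $\sum_\tau\alpha_\tau f(\mathbf x_\tau)/\sum_\tau\alpha_\tau g(\mathbf x_\tau)$, whereas the paper compares $\widetilde{\mathbb Q}$ with an arbitrary $\mathbb Q\in\mathcal D_{F,q}^{(m)}$ and then invokes part 1; both give the same constant $\eta_{F,q,m}$.
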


\begin{proof}
See Appendix~D, Supplementary Material (\cite{Salako_Muhammad_2025_suppmat}).
\end{proof}
\begin{figure}[!ht]
\centering

\subfloat[Coarse partition at level $m$\label{fig:multiple_marginal_partition_a}]{%
\setlength{\fboxsep}{6pt}%
\colorbox{gray!35}{%
\begin{tikzpicture}[x=4.1cm,y=4.1cm]

  \def\gap{0.08}

  \fill[gray!35] (-0.26,-0.26) rectangle (1,1);

  \fill[white] (0,0) rectangle (1,1);

  \fill[gray!60]
    (0,-\gap)
    -- plot[smooth] coordinates {
      (0.00,-\gap-0.02) (0.08,-\gap-0.06) (0.18,-\gap-0.15)
      (0.30,-\gap-0.05) (0.42,-\gap-0.03) (0.54,-\gap-0.11)
      (0.68,-\gap-0.16) (0.82,-\gap-0.05) (1.00,-\gap-0.08)
    }
    -- (1,-\gap) -- cycle;

  \fill[gray!60]
    (-\gap,0)
    -- plot[smooth] coordinates {
      (-\gap-0.01,0.00) (-\gap-0.03,0.10) (-\gap-0.06,0.22)
      (-\gap-0.10,0.36) (-\gap-0.15,0.52) (-\gap-0.13,0.68)
      (-\gap-0.08,0.84) (-\gap-0.02,1.00)
    }
    -- (-\gap,1) -- cycle;

  \foreach \x in {0.26,0.60,0.82}{
    \draw[black,line width=0.7pt] (\x,0) -- (\x,1);
  }
  \foreach \y in {0.22,0.54,0.80}{
    \draw[black,line width=0.7pt] (0,\y) -- (1,\y);
  }

  \begin{scope}
    \clip
      (0,-\gap)
      -- plot[smooth] coordinates {
        (0.00,-\gap-0.02) (0.08,-\gap-0.06) (0.18,-\gap-0.15)
        (0.30,-\gap-0.05) (0.42,-\gap-0.03) (0.54,-\gap-0.11)
        (0.68,-\gap-0.16) (0.82,-\gap-0.05) (1.00,-\gap-0.08)
      }
      -- (1,-\gap) -- cycle;
    \foreach \x in {0.26,0.60,0.82}{
      \draw[black,line width=0.7pt] (\x,-0.30) -- (\x,0);
    }
  \end{scope}

  \begin{scope}
    \clip
      (-\gap,0)
      -- plot[smooth] coordinates {
        (-\gap-0.01,0.00) (-\gap-0.03,0.10) (-\gap-0.06,0.22)
        (-\gap-0.10,0.36) (-\gap-0.15,0.52) (-\gap-0.13,0.68)
        (-\gap-0.08,0.84) (-\gap-0.02,1.00)
      }
      -- (-\gap,1) -- cycle;
    \foreach \y in {0.22,0.54,0.80}{
      \draw[black,line width=0.7pt] (-0.30,\y) -- (0,\y);
    }
  \end{scope}

  \node[font=\large] at (0.50,1.06) {$\boldsymbol{K=U_1\times U_2}$};

  \node[font=\normalsize] at (0.50,-0.28) {$\rho_1(\mathbf u_1)$};
  \node[font=\normalsize, rotate=90] at (-0.28,0.50) {$\rho_2(\mathbf u_2)$};

  \node[font=\small] at (0.13,-0.02) {$B^{(m)}_{1,1}$};
  \node[font=\small] at (0.43,-0.02) {$B^{(m)}_{1,2}$};
  \node[font=\small] at (0.71,-0.02) {$B^{(m)}_{1,3}$};
  \node[font=\small] at (0.91,-0.02) {$B^{(m)}_{1,4}$};

  \node[font=\small, rotate=90] at (0.02,0.11) {$B^{(m)}_{2,1}$};
  \node[font=\small, rotate=90] at (0.02,0.38) {$B^{(m)}_{2,2}$};
  \node[font=\small, rotate=90] at (0.02,0.67) {$B^{(m)}_{2,3}$};
  \node[font=\small, rotate=90] at (0.02,0.90) {$B^{(m)}_{2,4}$};

\end{tikzpicture}}}
\hfill
\subfloat[Refined partition at level $m'>m$\label{fig:multiple_marginal_partition_b}]{%
\setlength{\fboxsep}{6pt}%
\colorbox{gray!35}{%
\begin{tikzpicture}[x=4.1cm,y=4.1cm]

  \def\gap{0.08}

  \fill[gray!35] (-0.26,-0.26) rectangle (1,1);

  \fill[white] (0,0) rectangle (1,1);

  \fill[gray!60]
    (0,-\gap)
    -- plot[smooth] coordinates {
      (0.00,-\gap-0.02) (0.08,-\gap-0.06) (0.18,-\gap-0.15)
      (0.30,-\gap-0.05) (0.42,-\gap-0.03) (0.54,-\gap-0.11)
      (0.68,-\gap-0.16) (0.82,-\gap-0.05) (1.00,-\gap-0.08)
    }
    -- (1,-\gap) -- cycle;

  \fill[gray!60]
    (-\gap,0)
    -- plot[smooth] coordinates {
      (-\gap-0.01,0.00) (-\gap-0.03,0.10) (-\gap-0.06,0.22)
      (-\gap-0.10,0.36) (-\gap-0.15,0.52) (-\gap-0.13,0.68)
      (-\gap-0.08,0.84) (-\gap-0.02,1.00)
    }
    -- (-\gap,1) -- cycle;

  \foreach \x in {0.26,0.60,0.82}{
    \draw[black,line width=0.85pt] (\x,0) -- (\x,1);
  }
  \foreach \y in {0.22,0.54,0.80}{
    \draw[black,line width=0.85pt] (0,\y) -- (1,\y);
  }

  \foreach \x in {0.12,0.40,0.50,0.71,0.91}{
    \draw[black,line width=0.45pt] (\x,0) -- (\x,1);
  }
  \foreach \y in {0.10,0.34,0.44,0.67,0.91}{
    \draw[black,line width=0.45pt] (0,\y) -- (1,\y);
  }

  \begin{scope}
    \clip
      (0,-\gap)
      -- plot[smooth] coordinates {
        (0.00,-\gap-0.02) (0.08,-\gap-0.06) (0.18,-\gap-0.15)
        (0.30,-\gap-0.05) (0.42,-\gap-0.03) (0.54,-\gap-0.11)
        (0.68,-\gap-0.16) (0.82,-\gap-0.05) (1.00,-\gap-0.08)
      }
      -- (1,-\gap) -- cycle;
    \foreach \x in {0.26,0.60,0.82}{
      \draw[black,line width=0.85pt] (\x,-0.30) -- (\x,0);
    }
    \foreach \x in {0.12,0.40,0.50,0.71,0.91}{
      \draw[black,line width=0.45pt] (\x,-0.30) -- (\x,0);
    }
  \end{scope}

  \begin{scope}
    \clip
      (-\gap,0)
      -- plot[smooth] coordinates {
        (-\gap-0.01,0.00) (-\gap-0.03,0.10) (-\gap-0.06,0.22)
        (-\gap-0.10,0.36) (-\gap-0.15,0.52) (-\gap-0.13,0.68)
        (-\gap-0.08,0.84) (-\gap-0.02,1.00)
      }
      -- (-\gap,1) -- cycle;
    \foreach \y in {0.22,0.54,0.80}{
      \draw[black,line width=0.85pt] (-0.30,\y) -- (0,\y);
    }
    \foreach \y in {0.10,0.34,0.44,0.67,0.91}{
      \draw[black,line width=0.45pt] (-0.30,\y) -- (0,\y);
    }
  \end{scope}

  \node[font=\large] at (0.50,1.06) {$\boldsymbol{K=U_1\times U_2}$};

  \node[font=\normalsize] at (0.50,-0.28) {$\rho_1(\mathbf u_1)$};
  \node[font=\normalsize, rotate=90] at (-0.28,0.50) {$\rho_2(\mathbf u_2)$};

\end{tikzpicture}}}

\caption[Product-cell refinement under prescribed marginal densities]{Illustration of Proposition~\ref{prop_gen_multiple_marginals_piecewise_refined}
for the case $K=U_1\times U_2$. Each panel shows the product domain together
with continuous marginal densities $\rho_1$ and $\rho_2$. The black grid lines
indicate partitions $\{B^{(m)}_{1,i}\}$ of $U_1$ and $\{B^{(m)}_{2,i}\}$ of
$U_2$, which induce the finitely constrained approximation problems
$ {{\mathcal D}^{(m)}}$ and the corresponding product-cell atomisation. In panel (b),
the refined grid illustrates a later approximation level $m'>m$, with smaller
cell diameters and hence a finer product-cell approximation to the original
multiple-marginal problem.}
\label{fig:multiple_marginal_approximation}
\end{figure}
\begin{proposition}[Multiple marginal-densities]
\label{prop_gen_multiple_marginals_piecewise_refined}
Let $K:=U_1\times\cdots\times U_M
=[0,1]^{d_1}\times\cdots\times[0,1]^{d_M}$, where $d_1,\dots,d_M\in\mathbb N$ and $d_1+\cdots+d_M=d$. For each $j=1,\ldots,M$, let $\rho_j:U_j\to[0,\infty)$ be Borel-measurable with $\int_{U_j}\rho_j({\mathbf u}_j)\,d{\mathbf u}_j=1$, define $\mu_j\in\mathcal P(U_j)$ by $\mu_j(A):=\int_A \rho_j({\mathbf u}_j)\,d{\mathbf u}_j$ for all $A\in\mathcal B(U_j)$,
and let
\begin{align*}
&\mathcal D:=\\[3pt]
&\Bigl\{
\mathbb P\in\mathcal P(K):
\mathbb P(U_1\times\cdots\times A_j\times\cdots\times U_M)=\mu_j(A_j)
\text{ for all }A_j\in\mathcal B(U_j),\ j=1,\ldots,M
\Bigr\}.
\end{align*}
Let $f,g:K\to[0,\infty)$ be bounded Borel functions, and assume $c:=\inf_{\mathbf x\in K}g(\mathbf x)>0$. For each $m\in\mathbb N$ and each $j=1,\dots,M$, let $B^{(m)}_{j,1},\dots,B^{(m)}_{j,n_j(m)}$ be a finite Borel partition of $U_j$. For each $j=1,\dots,M$ and $r=1,\dots,n_j(m)$, define $K^{(m)}_{j,r}:=
U_1\times\cdots\times B^{(m)}_{j,r}\times\cdots\times U_M,\,$ and $\,p^{(m)}_{j,r}:=\mu_j\bigl(B^{(m)}_{j,r}\bigr)$,
and let
\[
{\mathcal D^{(m)}}:=
\Bigl\{
\mathbb P\in\mathcal P(K):
\mathbb P\bigl(K^{(m)}_{j,r}\bigr)=p^{(m)}_{j,r}
\text{ for all }j=1,\dots,M,\ r=1,\dots,n_j(m)
\Bigr\}.
\]
Define product cells $\,C^{(m)}_{\mathbf i}:=
B^{(m)}_{1,i_1}\times\cdots\times B^{(m)}_{M,i_M}\,$ for 
$\,\mathbf i=(i_1,\dots,i_M)\in
\prod_{j=1}^M\{1,\dots,n_j(m)\}$,
and relabel them as $C^{(m)}_1,\dots,C^{(m)}_{N_m}$ for notational convenience. Assume the partitions are ``dominant'' for $(f,g)$ in the sense that, as $m\to\infty$,
\begin{align*}
\omega_f(m)&:=
\max_{1\leqslant \nu\leqslant N_m}\sup_{\textnormal{\textbf{x}},\textnormal{\textbf{y}}\in C^{(m)}_\nu}|f(\textnormal{\textbf{x}})-f(\textnormal{\textbf{y}})|\to 0,\\[4pt]
\omega_g(m)&:=
\max_{1\leqslant \nu\leqslant N_m}\sup_{\textnormal{\textbf{x}},\textnormal{\textbf{y}}\in C^{(m)}_\nu}|g(\textnormal{\textbf{x}})-g(\textnormal{\textbf{y}})|\to 0\,.
\end{align*}
Define the strip-consistency polytope
\[
{\mathcal W}_m:=
\left\{
\textnormal{\textbf{w}}=(w_\nu)_{\nu=1}^{N_m}\in[0,\infty)^{N_m}:\!
\sum_{\nu:\,C^{(m)}_\nu\subseteq K^{(m)}_{j,r}} w_\nu
=
p^{(m)}_{j,r}
\ \text{for all }\;\begin{array}{l}j=1,\dots,M,\\[3pt]r=1,\dots,n_j(m)\end{array}
\right\}.
\]
For $\textnormal{\textbf{w}}\in {\mathcal W}_m$, define $V_m(\textnormal{\textbf{w}}):=
\inf\limits_{\substack{\textnormal{\textbf{x}}_\nu\in C^{(m)}_\nu\\ w_\nu>0}}
\frac{\sum_{\nu=1}^{N_m} w_\nu f(\textnormal{\textbf{x}}_\nu)}
{\sum_{\nu=1}^{N_m} w_\nu g(\textnormal{\textbf{x}}_\nu)}$ where cells with $w_\nu=0$ are omitted from the inner optimisation. Finally, let $\,\phi_m:=
\inf\limits_{\textnormal{\textbf{w}}\in {\mathcal W}_m}V_m(\textnormal{\textbf{w}})$ and 
let $\,\phi^*:=\inf\limits_{\mathbb P\in {\mathcal D}}\frac{\mathbb E_{\mathbb P}[f(\textnormal{\textbf{X}})]}{\mathbb E_{\mathbb P}[g(\textnormal{\textbf{X}})]}$.

\noindent Then:

\begin{enumerate}
\item The $m$-th approximating problem is $\phi_m
=
\inf\limits_{\mathbb P\in {{\mathcal D}^{(m)}}}\frac{\mathbb E_{\mathbb P}[f(\textnormal{\textbf{X}})]}{\mathbb E_{\mathbb P}[g(\textnormal{\textbf{X}})]}$.

\item For $\,M_f:=\sup\limits_{\textnormal{\textbf{x}}\in K}f(\textnormal{\textbf{x}})\,$ and $\,\eta_m:=\frac{\omega_f(m)}{c}
+\frac{M_f\,\omega_g(m)}{c^2},\,$
$\,\phi^*-\eta_m\leqslant \phi_m\leqslant \phi^*+\eta_m\,$ holds for all $m\in\mathbb N$. That is, $\phi_m\to \phi^*$ as $m\to\infty$
\end{enumerate}
\end{proposition}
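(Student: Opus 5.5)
Part 1 follows the route of Proposition~\ref{prop_genprob1_v1}. The product cells $\{C^{(m)}_\nu\}_{\nu=1}^{N_m}$ form a finite Borel partition of $K$ that refines every strip family $\{K^{(m)}_{j,r}\}_r$. Given any $\mathbb P\in\mathcal D^{(m)}$, set $w_\nu:=\mathbb P(C^{(m)}_\nu)$. Each strip is the disjoint union of the cells it contains, so $\mathbf w\in\mathcal W_m$. Conversely, every $\mathbf w\in\mathcal W_m$ is realised by some element of $\mathcal D^{(m)}$, for instance $\sum_\nu w_\nu\delta_{\mathbf x_\nu}$ with $\mathbf x_\nu\in C^{(m)}_\nu$.

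With $\mathbf w$ fixed, the constraint set $\{\mathbb P:\mathbb P(C^{(m)}_\nu)=w_\nu\}$ is a problem of the form \eqref{eqn_genCBIprob} on the partition given by the cells with $w_\nu>0$. Proposition~\ref{prop_cbi_transform}, via \eqref{eqn_weak_opt_equiv_disc}, shows its infimum equals $V_m(\mathbf w)$. The positivity of $g$ required there holds because $g\geqslant c>0$. Taking the infimum over $\mathbf w\in\mathcal W_m$ then gives $\phi_m=\inf_{\mathcal D^{(m)}}\mathbb E[f]/\mathbb E[g]$.

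For part 2, the lower bound $\phi_m\leqslant\phi^*$ comes from $\mathcal D\subseteq\mathcal D^{(m)}$. Any $\mathbb P$ with the prescribed marginals $\mu_j$ satisfies $\mathbb P(K^{(m)}_{j,r})=\mu_j(B^{(m)}_{j,r})=p^{(m)}_{j,r}$, so $\phi_m\leqslant\phi^*\leqslant\phi^*+\eta_m$.

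The substantive step is $\phi^*\leqslant\phi_m+\eta_m$. Fix $\varepsilon>0$ and pick $\mathbf w\in\mathcal W_m$ and points $\mathbf x_\nu\in C^{(m)}_\nu$ whose discrete ratio is within $\varepsilon$ of $\phi_m$. The plan is to build some $\mathbb P\in\mathcal D$ with $\mathbb P(C^{(m)}_\nu)=w_\nu$ for every $\nu$. The construction is a product-of-conditionals coupling on each cell:
\[
\mathbb P:=\sum_{\nu:\,w_\nu>0} w_\nu\bigotimes_{j=1}^M \mu_j\bigl(\,\cdot\mid B^{(m)}_{j,i_j(\nu)}\bigr).
\]
Its $j$-th marginal is $\sum_r \mu_j(\cdot\mid B^{(m)}_{j,r})\sum_{\nu:\,i_j(\nu)=r}w_\nu=\sum_r\mu_j(\cdot\cap B^{(m)}_{j,r})=\mu_j$, using the strip constraints in $\mathcal W_m$. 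This is where the construction must be checked carefully. Whenever $p^{(m)}_{j,r}=0$ the strip constraint forces every $w_\nu$ with $i_j(\nu)=r$ to vanish, so no undefined conditional ever appears. Unlike Proposition~\ref{cor_refined_strip_oscillations}, this case needs no kernel assumption, because product cells admit this explicit coupling. That is why the constant here is $\eta_m$ rather than the $2\omega$ version of Proposition~\ref{prop_cont_marginal_approx}.

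It remains to compare objectives. On $C^{(m)}_\nu$ we have $|f-f(\mathbf x_\nu)|\leqslant\omega_f(m)$ and $|g-g(\mathbf x_\nu)|\leqslant\omega_g(m)$. Writing $A=\sum_\nu w_\nu f(\mathbf x_\nu)$ and $B=\sum_\nu w_\nu g(\mathbf x_\nu)\geqslant c$, this gives $\mathbb E_{\mathbb P}f\leqslant A+\omega_f$ and $\mathbb E_{\mathbb P}g\geqslant B-\omega_g$. We then bound the difference
\[
\frac{\mathbb E_{\mathbb P}f}{\mathbb E_{\mathbb P}g}-\frac AB=\frac{\mathbb E_{\mathbb P}f-A}{\mathbb E_{\mathbb P}g}+A\,\frac{B-\mathbb E_{\mathbb P}g}{B\,\mathbb E_{\mathbb P}g}.
\]
Both denominators are at least $c$ since $g\geqslant c$ pointwise, and $A\leqslant M_f$, so the difference is at most $\omega_f/c+M_f\omega_g/c^2=\eta_m$. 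Hence $\phi^*\leqslant\phi_m+\varepsilon+\eta_m$, and letting $\varepsilon\to0$ gives the upper bound. Since $\eta_m\to0$ by the dominance assumption, $\phi_m\to\phi^*$ follows.
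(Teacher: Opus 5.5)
Your proof is correct, and part~1 is the paper's own argument. For part~2 you take a more direct route. The paper does not compare $\phi_m$ with $\phi^*$ directly. Instead it introduces the cellwise step envelopes $f_m^\pm,g_m^\pm$ (cell infima and suprema) and proves that, for any numerator/denominator pair constant on the product cells, the infima over $\mathcal D^{(m)}$, over $\mathcal W_m$ and over $\mathcal D$ coincide. The inequality $\inf_{\mathcal D}\leqslant\inf_{\mathcal W_m}$ in that identity uses exactly your product-of-conditionals lift. It then sandwiches both $\phi_m$ and $\phi^*$ between the infima of $J_m^-(\mathbb P)=\mathbb E_{\mathbb P}[f_m^-]/\mathbb E_{\mathbb P}[g_m^+]$ and $J_m^+(\mathbb P)=\mathbb E_{\mathbb P}[f_m^+]/\mathbb E_{\mathbb P}[g_m^-]$, each of which lies within $\eta_m$ of the true objective for every $\mathbb P$. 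You instead get one inequality for free from $\mathcal D\subseteq\mathcal D^{(m)}$. You get the other by lifting a single $\varepsilon$-optimal discrete configuration into $\mathcal D$ and applying a one-sided perturbation estimate. Your version is shorter and gives the sharper $\phi^*-\eta_m\leqslant\phi_m\leqslant\phi^*$, the same one-sided form as in Proposition~\ref{cor_refined_strip_oscillations}. The paper's version buys the exact no-gap identity for cell-constant objectives and a certified, finitely computable bracket $\inf_{\mathcal D}J_m^-\leqslant\phi^*\leqslant\inf_{\mathcal D}J_m^+$, the kind of sandwich it later exploits in \eqref{eqn_extremisesimplefunctionsandwich}. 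One of your side remarks is misattributed, though it plays no role in your argument. The factor $2$ in Proposition~\ref{prop_cont_marginal_approx} does not come from a kernel issue. It comes from that proposition's oscillation being measured only in $\mathbf u$ at fixed $\mathbf v$, which forces a comparison of two measures through the intermediate $f(\mathbf u_i^{(m)},\cdot)$. Proposition~\ref{cor_refined_strip_oscillations} assumes kernels and still has no factor $2$.
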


\begin{proof} See Appendix~E, Supplementary Material (\cite{Salako_Muhammad_2025_suppmat}).
\end{proof}
\noindent\textbf{Remark: }
\cite{LavineWassermanWolpert1991-SpecifiedMarginals} prove convergence of product-cell discretisations for fixed-marginal upper-bounds on expectations of uniformly continuous functions. This overlaps with a special case of Proposition~\ref{prop_gen_multiple_marginals_piecewise_refined}. However, Proposition~\ref{prop_gen_multiple_marginals_piecewise_refined} generalises this overlap to approximating fractional objectives with approximation error that depends on the oscillations of the numerator and denominator of the objective function. Lavine et al. require the functions involved to be uniformly continuous, but they allow more general prescribed marginal measures than the density marginals considered here.

Corollaries~\ref{cor_finite_marginal_approx} and \ref{cor_fgvanishingoscillations} restrict Proposition~\ref{prop_gen_multiple_marginals_piecewise_refined}, respectively, to continuous functions and piecewise-defined functions whose oscillations vanish asymptotically over recursively refined partitions of $K$. 
\begin{corollary}[Continuous $f$ and $g$]
\label{cor_finite_marginal_approx}
Assume the setting of Proposition~\ref{prop_gen_multiple_marginals_piecewise_refined} without the partition dominance assumption. Assume $\delta_m:=
\max_{1\leqslant j\leqslant M}\max_{1\leqslant r\leqslant n_j(m)}
\operatorname{diam}\bigl(B^{(m)}_{j,r}\bigr)\to 0$ as $m\to\infty$. Suppose $f,g:K\to[0,\infty)$ are continuous on $K$. Then, for each $m\in\mathbb N$, 
\[
\phi_m:=\inf_{\mathbb P\in{\mathcal D^{(m)}}}\frac{\mathbb E_{\mathbb P}[f(\textnormal{\textbf{X}})]}{\mathbb E_{\mathbb P}[g(\textnormal{\textbf{X}})]}
\]
is an instance of Proposition~\ref{prop_genprob1_v1}, and hence admits the
equivalent finite-dimensional atomised formulation described there. In the
present product-partition setting, this atomisation coincides with the canonical
product-cell formulation of
Proposition~\ref{prop_gen_multiple_marginals_piecewise_refined}. Moreover, if
\[
\phi^*:=\inf_{\mathbb P\in {\mathcal D}}\frac{\mathbb E_{\mathbb P}[f(\textnormal{\textbf{X}})]}{\mathbb E_{\mathbb P}[g(\textnormal{\textbf{X}})]},
\]
then
\[
\phi_m\to \phi^*
\text{ as }m\to\infty.
\]
\end{corollary}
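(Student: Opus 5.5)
The proof will reduce the corollary to Proposition~\ref{prop_gen_multiple_marginals_piecewise_refined}, by checking that continuity of $f,g$ together with $\delta_m\to0$ implies the partition-dominance assumption. The finite-dimensional claim is handled separately, via Proposition~\ref{prop_genprob1_v1}.

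\textbf{Finite-dimensional part.} Take the overlapping sets in Proposition~\ref{prop_genprob1_v1} to be the strips $\tilde K_{j,r}:=K^{(m)}_{j,r}$, for $j=1,\dots,M$ and $r=1,\dots,n_j(m)$. For each $j$ the strips partition $K$, so their union is $K$. Strips with $p^{(m)}_{j,r}=0$ are discarded, since every feasible $\mathbb P$ assigns them probability zero. This does not affect feasibility, and the requirement $\tilde p_i>0$ then holds.

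An atom $A_I$ is nonempty only if $I$ contains exactly one index $r$ for each coordinate block $j$. Indeed, each point lies in exactly one strip per block, so $A_I$ is precisely a product cell $C^{(m)}_{\mathbf i}$ (possibly after the zero-mass cells are removed). The atom-consistency constraints $\sum_{I\ni i}p_I=\tilde p_i$ are then exactly the strip-consistency constraints that define $\mathcal W_m$. Hence \eqref{eq:overlap_refinement} coincides with $\phi_m=\inf_{\mathcal W_m}V_m$. Feasibility of $\mathcal D^{(m)}$ follows from the product measure $\mu_1\otimes\cdots\otimes\mu_M\in\mathcal D\subseteq\mathcal D^{(m)}$. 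The condition $\inf g>0$ follows from continuity and positivity of $g$ on the compact set $K$, which is part of the setting $c>0$.

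\textbf{Convergence.} $K$ is compact and $f,g$ are continuous, so both are uniformly continuous and bounded, with moduli $\varpi_f,\varpi_g$. Each product cell $C^{(m)}_\nu$ has Euclidean diameter at most $\bigl(\sum_j \operatorname{diam}(B^{(m)}_{j,i_j})^2\bigr)^{1/2}\leqslant \sqrt M\,\delta_m$. Therefore $\omega_f(m)\leqslant\varpi_f(\sqrt M\delta_m)\to0$, and similarly $\omega_g(m)\to0$. The dominance hypothesis of Proposition~\ref{prop_gen_multiple_marginals_piecewise_refined} thus holds. Its part 2 gives $|\phi_m-\phi^*|\leqslant\eta_m\to0$, with $M_f<\infty$ by boundedness.

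\textbf{Main obstacle.} The only delicate step is the identification of atoms with product cells. Two issues need care: checking that no atom mixes two strips of the same block, and treating zero-probability strips consistently so that Proposition~\ref{prop_genprob1_v1} applies verbatim. The convergence step itself is routine.
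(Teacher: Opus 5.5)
Your proposal is correct and follows the paper's proof essentially step for step. You identify the strip constraints, after discarding zero-mass strips, as an instance of Proposition~\ref{prop_genprob1_v1} whose nonempty atoms are the product cells, and then verify the dominance hypothesis of Proposition~\ref{prop_gen_multiple_marginals_piecewise_refined} via uniform continuity and $\operatorname{diam}(C^{(m)}_\nu)\leqslant\sqrt M\,\delta_m$. You also spell out details the paper leaves implicit: nonemptiness of $\mathcal D^{(m)}$ via the product measure $\mu_1\otimes\cdots\otimes\mu_M$, and why no nonempty atom can contain two strips from the same block.
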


\begin{proof}

For each fixed $m\in\mathbb N$, the class $\mathcal D^{(m)}$ is specified by finitely many
strip constraints $K_{j,r}^{(m)}$. Discarding any zero-mass strip
constraints, this is an overlapping finite-constraint problem of the
type treated by Proposition~\ref{prop_genprob1_v1}; in the present product-partition
setting, the induced atoms are the product cells
$C_{\mathbf i}^{(m)}$, and this atomisation coincides with the canonical
product-cell formulation of Proposition~\ref{prop_gen_multiple_marginals_piecewise_refined}.

Since $K$ is compact and $f,g$ are continuous on $K$, both functions are
uniformly continuous. Because $\delta_m\to0$, the diameters of the product cells
$C_{\mathbf i}^{(m)}$ tend to $0$ uniformly in $\mathbf i$; for example, under
the Euclidean product metric, $\operatorname{diam}\bigl(C_{\mathbf i}^{(m)}\bigr)\leqslant \sqrt{M}\,\delta_m$. Hence, the dominance hypothesis of
Proposition~\ref{prop_gen_multiple_marginals_piecewise_refined} holds
automatically; $\phi_m\to\phi^*$ follows immediately.
\end{proof}

\begin{corollary}[Piecewise defined $f$, $g$ with asymptotically vanishing oscillations]
\label{cor_fgvanishingoscillations}
Assume the setting of Proposition~\ref{prop_gen_multiple_marginals_piecewise_refined}. Assume, moreover, that the marginal partition
elements are chosen so that $\overline{B^{(m)}_{j,r}}$ is connected for every $j,r,m$. Let $E_1,\ldots,E_L$ be a fixed finite Borel
partition of $K$. For each $m\in\mathbb{N}$ and each product cell $C^{(m)}_\nu$, refine
$C^{(m)}_\nu$ by $\{E_\ell\}_{\ell=1}^L$, and relabel the nonempty refined atoms as $R^{(m)}_1,\ldots,R^{(m)}_{L_m}$. Thus, each $R^{(m)}_\tau$ is of the form $C^{(m)}_\nu\cap E_\ell$ for some $\nu,\ell$, and $K=\bigsqcup_{\tau=1}^{L_m} R^{(m)}_\tau$.

Define
\[
\widetilde{\omega}_f(m):=
\max_{1\leqslant \tau\leqslant L_m}\sup_{\textnormal{\textbf{x}},\textnormal{\textbf{y}}\in \overline{R^{(m)}_\tau}} |f(\textnormal{\textbf{x}})-f(\textnormal{\textbf{y}})|,\qquad
\widetilde{\omega}_g(m):=
\max_{1\leqslant \tau\leqslant L_m}\sup_{\textnormal{\textbf{x}},\textnormal{\textbf{y}}\in \overline{R^{(m)}_\tau}} |g(\textnormal{\textbf{x}})-g(\textnormal{\textbf{y}})|.
\]
Assume $\widetilde{\omega}_f(m)\to 0$ and $\widetilde{\omega}_g(m)\to 0$ as $m\to\infty$. Define the refined consistency polytope
\[
\widetilde{\mathcal W}_m:=
\left\{
\boldsymbol{\alpha}=(\alpha_\tau)_{\tau=1}^{L_m}\in [0,\infty)^{L_m}:\!\!
\sum_{\tau:\,R^{(m)}_\tau\subseteq K^{(m)}_{j,r}}\alpha_\tau=p^{(m)}_{j,r}
\text{ for all }\;\begin{array}{l}j=1,\ldots,M,\\[3pt]r=1,\ldots,n_j(m)\end{array}
\right\}.
\]
Finally, let $\widetilde{V}_m(\boldsymbol{\alpha}):=
\inf\limits_{{\textnormal{\textbf{x}}}_\tau\in R^{(m)}_\tau\ \text{for }\alpha_\tau>0}
\frac{\sum_{\tau=1}^{L_m}\alpha_\tau f({\textnormal{\textbf{x}}}_\tau)}
{\sum_{\tau=1}^{L_m}\alpha_\tau g({\textnormal{\textbf{x}}}_\tau)}\,$ for $\,\boldsymbol{\alpha}\in \widetilde{\mathcal W}_m,\,$
let $\,\widetilde{\phi}_m:=\inf\limits_{\boldsymbol{\alpha}\in \widetilde{\mathcal W}_m}\widetilde{V}_m(\boldsymbol{\alpha}),\,$ and let $\,\phi^*:=\inf\limits_{{\mathbb P}\in{\mathcal D}}\frac{\mathbb E_{\mathbb P}[f({\textnormal{\textbf{X}}})]}{\mathbb E_{\mathbb P}[g({\textnormal{\textbf{X}}})]}$.

\noindent Then:
\begin{enumerate}
\item The $m$-th approximating problem is $\widetilde{\phi}_m=\inf\limits_{\mathbb P\in {{\mathcal D}^{(m)}}}\frac{\mathbb E_{\mathbb P}[f({\textnormal{\textbf{X}}})]}{\mathbb E_{\mathbb P}[g({\textnormal{\textbf{X}}})]}$.
\item For $\,M_f:=\sup\limits_{{\textnormal{\textbf{x}}}\in K} f({\textnormal{\textbf{x}}})\,$ and 
$\,\widetilde{\eta}_m:=
\frac{L\,\widetilde{\omega}_f(m)}{c}
+\frac{L\,M_f\,\widetilde{\omega}_g(m)}{c^2},\,$ $\phi^*-\widetilde{\eta}_m\leqslant \widetilde{\phi}_m\leqslant \phi^*+\widetilde{\eta}_m$ holds for all $m\in\mathbb N$. That is, $\widetilde{\phi}_m\to \phi^*$ as $m\to\infty$.
\end{enumerate}
\end{corollary}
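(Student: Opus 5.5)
Part~1 follows the proof of Proposition~\ref{prop_gen_multiple_marginals_piecewise_refined}(1), with the product cells replaced by the refined atoms $R^{(m)}_\tau$. For fixed $m$, the class $\mathcal D^{(m)}$ is defined by finitely many strip constraints. Every $\mathbb P\in\mathcal D^{(m)}$ therefore induces the weights $\alpha_\tau:=\mathbb P(R^{(m)}_\tau)$. Each strip $K^{(m)}_{j,r}$ is a disjoint union of product cells, and hence of refined atoms, so these weights lie in $\widetilde{\mathcal W}_m$. Conversely, each $\boldsymbol\alpha\in\widetilde{\mathcal W}_m$ together with points $\mathbf x_\tau\in R^{(m)}_\tau$ gives the discrete prior $\sum_\tau\alpha_\tau\delta_{\mathbf x_\tau}\in\mathcal D^{(m)}$. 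Adding the redundant constraints $\mathbb P(R^{(m)}_\tau)=\alpha_\tau$ and applying Proposition~\ref{prop_cbi_transform} with the partition $\{R^{(m)}_\tau\}$ shows that, for fixed $\boldsymbol\alpha$, the infimum over priors with those cell masses equals $\widetilde V_m(\boldsymbol\alpha)$. Taking the infimum over $\boldsymbol\alpha$ gives $\widetilde\phi_m=\inf_{\mathcal D^{(m)}}$.

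Part~2 has two inequalities. The first is $\widetilde\phi_m\le\phi^*$ up to error. Since $\mathcal D\subseteq\mathcal D^{(m)}$, we get $\widetilde\phi_m\le\phi^*$ directly, which is stronger than the upper bound needed.

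The second is the lower bound $\widetilde\phi_m\ge\phi^*-\widetilde\eta_m$. Take a near-optimal discrete representative $\sum_\tau\alpha_\tau\delta_{\mathbf x_\tau}$ for $\widetilde\phi_m$. We want to "smear" it into some $\mathbb P\in\mathcal D$ with nearly the same objective value. Mimicking Proposition~\ref{prop_gen_multiple_marginals_piecewise_refined}, fix a product cell $C^{(m)}_\nu$ and write $\beta_\nu:=\sum_{\tau:R_\tau\subseteq C_\nu}\alpha_\tau$. The vector $(\beta_\nu)$ lies in $\mathcal W_m$, since the strip constraints only involve cell totals. Build $\mathbb P:=\sum_\nu\beta_\nu\,\pi_\nu$, where $\pi_\nu$ is the product of the normalised marginals $\mu_j(\cdot\cap B^{(m)}_{j,i_j})/\mu_j(B^{(m)}_{j,i_j})$. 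Its $j$-th marginal on each $B_{j,r}$ is $\sum_{\nu:i_j=r}\beta_\nu\mu_j(\cdot\cap B_{j,r})/p_{j,r}=\mu_j(\cdot\cap B_{j,r})$, so $\mathbb P\in\mathcal D$.

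The main obstacle is comparing $\mathbb E_{\pi_\nu}f$ with $\sum_{\tau\subseteq C_\nu}\alpha_\tau f(\mathbf x_\tau)/\beta_\nu$. The two use different points in different sub-atoms $E_\ell\cap C_\nu$, and $f$ need not have small oscillation across the whole cell $C_\nu$, only within each refined atom. The plan is to exploit the fact that, as $\overline{C_\nu}$ is connected and $\overline{C_\nu}=\bigcup_\ell\overline{R_{\nu,\ell}}$ is a finite union of closed sets, the closures of adjacent pieces intersect. Chaining through at most $L$ pieces gives $\operatorname{osc}_{C_\nu}f\le L\,\widetilde\omega_f(m)$, and similarly for $g$. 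This is exactly where the factor $L$ and the closures in $\widetilde\omega$ enter. One must be careful that the closure oscillation bounds values on the closures, and that the overlap graph of the closed pieces is connected; both follow from connectedness of the finite union.

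With $\operatorname{osc}_{C_\nu}f\le L\widetilde\omega_f$ and $\operatorname{osc}_{C_\nu}g\le L\widetilde\omega_g$, the numerators differ by at most $L\widetilde\omega_f$ and the denominators by at most $L\widetilde\omega_g$. The elementary ratio estimate
\[
\Bigl|\tfrac{a}{b}-\tfrac{a'}{b'}\Bigr|\le\tfrac{|a-a'|}{b'}+\tfrac{a\,|b-b'|}{b\,b'},
\]
with $b,b'\ge c$ and $a\le M_f$, then gives $\phi^*\le\widetilde\phi_m+\widetilde\eta_m+\varepsilon$. Letting $\varepsilon\to0$ completes the bound, and convergence $\widetilde\phi_m\to\phi^*$ follows since $\widetilde\eta_m\to0$.
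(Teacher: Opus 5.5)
Your proposal is correct and follows the paper's proof. Part~1 uses the same atom-mass decomposition together with Proposition~\ref{prop_cbi_transform}, which, as the paper notes, should be applied to the positive-mass subpartition $\{R^{(m)}_\tau:\alpha_\tau>0\}$; part~2 rests on the same connectedness-chaining bound $\omega_f(m)\leqslant L\,\widetilde\omega_f(m)$ and $\omega_g(m)\leqslant L\,\widetilde\omega_g(m)$ on the product cells. The only difference is that the paper then invokes the error estimate of Proposition~\ref{prop_gen_multiple_marginals_piecewise_refined} as a black box, whereas you re-derive that estimate inline by smearing a near-optimal discrete prior into $\sum_\nu\beta_\nu\pi_\nu\in\mathcal D$, and you also record the sharper one-sided bound $\widetilde\phi_m\leqslant\phi^*$.
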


\begin{proof}
See Appendix~F, Supplementary Material (\cite{Salako_Muhammad_2025_suppmat}).
\end{proof}

\subsection{Solutions to Problem~\eqref{eqn_genCBIprob} and Its Generalisations}
\label{subsec_solutions}
The following Theorems~\ref{thm:global_fp_tuple}, \ref{thm_gensol}, \ref{thm_refinedgensol}, \ref{thm:regulated_extension} present solutions and solution methods for \eqref{eqn_genCBIprob} and its generalisations (Propositions~\ref{prop_genprob1_v1}, \ref{prop_cont_marginal_approx}, \ref{cor_refined_strip_oscillations}, \ref{prop_gen_multiple_marginals_piecewise_refined}). 

\begin{figure}[htbp!]
\centering
\resizebox{0.53\linewidth}{!}{%
\setlength{\fboxsep}{6pt}%
\colorbox{gray!35}{%
\begin{tikzpicture}[x=7.2cm,y=7.2cm,>=Latex]

  \fill[gray!35] (-0.10,-0.10) rectangle (1.08,1.08);
  \fill[white] (0,0) rectangle (1,1);

  \fill[gray!60]
    (0.17,0.19)
    .. controls (0.03,0.22) and (0.02,0.34) ..
    (0.11,0.44)
    .. controls (0.21,0.54) and (0.03,0.69) ..
    (0.16,0.82)
    .. controls (0.27,0.94) and (0.41,1.00) ..
    (0.54,0.92)
    .. controls (0.64,0.86) and (0.69,1.02) ..
    (0.81,0.94)
    .. controls (0.93,0.86) and (1.01,0.93) ..
    (0.95,0.76)
    .. controls (0.91,0.65) and (0.80,0.62) ..
    (0.88,0.49)
    .. controls (0.98,0.34) and (0.89,0.25) ..
    (0.77,0.22)
    .. controls (0.66,0.19) and (0.80,0.03) ..
    (0.61,0.08)
    .. controls (0.48,0.12) and (0.46,0.24) ..
    (0.36,0.11)
    .. controls (0.28,-0.01) and (0.18,0.05) ..
    (0.20,0.15)
    .. controls (0.21,0.21) and (0.18,0.21) ..
    (0.17,0.19) -- cycle;

  \draw[white,line width=1.6pt]
    (0.24,0.72)
    .. controls (0.40,0.82) and (0.58,0.72) ..
    (0.68,0.58)
    .. controls (0.76,0.46) and (0.72,0.28) ..
    (0.56,0.20);

  \fill[black] (0.69,0.56) circle (0.0075);

  \draw[black,line width=0.95pt] (0.65,0.64) -- (0.74,0.48);

  \draw[black,line width=0.95pt] (0.62,0.515) -- (0.76,0.605);

  \node[font=\Large\bfseries, anchor=south east] at (0.98,1.0) {$\boldsymbol{K=[0,1]^2}$};

  \node[font=\large] at (0.8,0.81) {$\boldsymbol{\overline K_i}$};

  \node[font=\large, text=white] at (0.43,0.69) {$\boldsymbol{M_i}$};

  \node[font=\normalsize, anchor=west, text=white] at (0.67,0.62) {$\mathbf x_i^*$};

  \node[font=\normalsize, anchor=west] at (0.73,0.47) {$T_{\mathbf x_i^*}M_i$};

  \node[font=\normalsize, anchor=east] at (0.63,0.5) {$(T_{\mathbf x_i^*}M_i)^\perp$};

  \node[align=left, text=white] at (0.4,0.3) {%
    \textcolor{black}{$\boldsymbol{\nabla h_{\phi^*,f,g}(\mathbf x)=0}$}\\[2pt]
    \textcolor{black}{\textbf{for every }}$\boldsymbol{\mathbf x\textcolor{black}{\in} M_i}$
  };

\end{tikzpicture}%
}}%
\caption[Embedded manifold of fixed-point minimisers]{Geometric illustration of the embedded-manifold case in Theorem~\ref{thm:global_fp_tuple} for $K=[0,1]^2$. A point $\mathbf x_i^*\in M_i\subset \operatorname{int}(\overline K_i)$ lies on a $C^2$ embedded submanifold of minimisers of $h_{\phi^*,f,g}$. Along $M_i$, the gradient vanishes, and tangent directions belong to $\ker \nabla^2 h_{\phi^*,f,g}(\mathbf x)$; in the Morse--Bott non-degenerate case, the kernel coincides with the tangent space and the Hessian is positive definite in normal directions.}
\label{fig:manifold_theorem26}
\end{figure}

\begin{theorem}[Analytic $f,g$]\label{thm:global_fp_tuple}
In \eqref{eqn_genCBIprob}, let $K_1,\ldots,K_n$ be non-empty sets, let $\overline{K}_1,\ldots,\overline{K}_n$ be their closures, and suppose $f,\,g$ are analytic on an open neighbourhood of $K$. Then a fixed-point tuple $(\phi^*,{\mathbf x}_1^*,\ldots,{\mathbf x}_n^*)$ solves \eqref{eqn_weak_opt_equiv_disc},
where $\phi^*$ is the infimum of the discrete objective in \eqref{eqn_weak_opt_equiv_disc} and $({\mathbf x}_1^*,\ldots,{\mathbf x}_n^*)\in \overline{K}_1\times\cdots\times \overline{K}_n$: specifically, the tuple satisfies
\begin{enumerate}
\item[\textnormal{(E1)}]$
{\mathbf x}_i^* \in \underset{{\mathbf x}\in \overline{K}_i}{\mathrm{argmin}}\;h_{\phi^*,f,g}(\mathbf x)$, $i=1,\ldots,n$;\hfill
\textnormal{(E2)} $\phi^*=
\sum_{i=1}^n f({\mathbf x}_i^*)p_i/\sum_{i=1}^n g({\mathbf x}_i^*)p_i$.
\end{enumerate}
These imply $\phi^*$ is obtained from the discrete objective in \eqref{eqn_weak_opt_equiv_disc} by using an extremal prior distribution $\mathbb P$ with support in $\{{\mathbf x}_1^*,\ldots,{\mathbf x}_n^*\}$:
\begin{equation}
\mathbb P=\sum_{i=1}^n p_i\,\delta_{{\mathbf x}_i^*},
\qquad {\mathbf x}_i^*\in \overline{K}_i.
\label{eqn_analyticextremalprior}
\end{equation}
In general, the support of $\mathbb P$ could intersect some $K_i^c\cap\partial\overline{K}_i$, so $\mathbb P$ is the limit of feasible priors. More precisely:
\begin{enumerate}
\item if ${\mathbf x}_i^*\in \operatorname{int}(\overline{K}_i)$, then $\nabla h_{\phi^*,f,g}({\mathbf x}_i^*)=0$ and $\nabla^2 h_{\phi^*,f,g}({\mathbf x}_i^*)\succeq 0$;
\item if $\overline{K}_i$ is convex and ${\mathbf x}_i^*\in \partial \overline{K}_i$, then $\nabla h_{\phi^*,f,g}({\mathbf x}_i^*)(\mathbf y-{\mathbf x}_i^*) \geqslant 0$ for all $\mathbf y\in \overline{K}_i$;
equivalently, $-\nabla h_{\phi^*,f,g}({\mathbf x}_i^*)\in N_{\overline{K}_i}({\mathbf x}_i^*)$ where $N_{\overline{K}_i}({\mathbf x}_i^*)$ is the outward-pointing normal cone to $\overline{K}_i$ at ${\mathbf x}_i^*$;
\item if, for some neighbourhood $U_i$ of ${\mathbf x}_i^*$, the set
\[
M_i:=\{{\mathbf x}\in U_i\cap \overline{K}_i:\ h_{\phi^*,f,g}({\mathbf x})=h_{\phi^*,f,g}({\mathbf x}_i^*)\}
\]
is a $C^2$ embedded submanifold and $M_i\subset\mathrm{int}(\overline{K}_i)$ then, for every $\mathbf x\in M_i$,
\[
\nabla h_{\phi^*,f,g}(\mathbf x)=0,
\qquad
T_{\mathbf x}M_i \subseteq \ker \nabla^2 h_{\phi^*,f,g}(\mathbf x).
\]
In the Morse--Bott non-degenerate case, $\ker \nabla^2 h_{\phi^*,f,g}({\mathbf x})=T_{\mathbf x}M_i$, 
and $\nabla^2 h_{\phi^*,f,g}({\mathbf x})$ is positive definite on the normal space of $M_i$ at ${\mathbf x}$.
\end{enumerate}
\end{theorem}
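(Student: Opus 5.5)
The plan is to combine Proposition~\ref{prop_cbi_transform} with a Dinkelbach-type characterisation on the compact product $\overline K_1\times\cdots\times\overline K_n$.

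\textbf{Step 1: attainment.} By \eqref{eqn_weak_opt_equiv_disc}, $\phi^*$ equals the infimum of $R(\mathbf x_1,\ldots,\mathbf x_n):=\sum_i f(\mathbf x_i)p_i/\sum_i g(\mathbf x_i)p_i$ over $K_1\times\cdots\times K_n$. Analyticity on a neighbourhood of $K$ gives continuity of $f,g$ on $K$. Hence the last display of Proposition~\ref{prop_cbi_transform} replaces this infimum by a minimum over $\overline K_1\times\cdots\times\overline K_n$. That product is compact, and the denominator is positive there. Positivity follows because $\inf_{K_j}g>0$, so by continuity $g\geqslant \inf_{K_j} g>0$ on $\overline K_j$, and $p_j>0$. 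So choose a minimiser $(\mathbf x_1^*,\ldots,\mathbf x_n^*)$; this gives (E2) immediately. Each $\mathbf x_i^*$ is a limit of points of $K_i$. Therefore $\sum_i p_i\delta_{\mathbf x_i^{(k)}}\in\mathcal D$, with $\mathbf x_i^{(k)}\to\mathbf x_i^*$, converges weakly to $\mathbb P$ in \eqref{eqn_analyticextremalprior}, and the objective values converge to $\phi^*$ by continuity. This is the sense in which $\mathbb P$ is a limit of feasible priors.

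\textbf{Step 2: (E1) via the Dinkelbach argument.} Since $\phi^*$ is the minimum of $R$ and the denominator is positive, $\sum_i p_i h_{\phi^*,f,g}(\mathbf y_i)\geqslant 0$ for every $(\mathbf y_i)\in\prod_i\overline K_i$. Equality holds at $(\mathbf x_i^*)$, because $\sum_i p_i h_{\phi^*,f,g}(\mathbf x_i^*)=\sum_i p_i g(\mathbf x_i^*)\,(R(\mathbf x^*)-\phi^*)=0$. The sum is separable across blocks, so fix all coordinates except the $i$-th at $\mathbf x_j^*$. This gives $p_i h_{\phi^*,f,g}(\mathbf y)\geqslant p_i h_{\phi^*,f,g}(\mathbf x_i^*)$ for all $\mathbf y\in\overline K_i$. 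Since $p_i>0$, this is (E1). Conversely, any tuple satisfying (E1)--(E2) minimises $\sum_i p_i h_{\phi^*,f,g}$ with value $0$, so it attains $\phi^*$. Thus the fixed-point tuple solves \eqref{eqn_weak_opt_equiv_disc}.

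\textbf{Step 3: local conditions.} These follow from (E1) applied to $h:=h_{\phi^*,f,g}$, which is analytic, hence $C^2$, near $\overline K_i$.
\begin{enumerate*}[label=(\roman*)]
\item At an interior minimiser, the first- and second-order necessary conditions give $\nabla h=0$ and $\nabla^2h\succeq0$.
\item For convex $\overline K_i$, the segment $\mathbf x_i^*+t(\mathbf y-\mathbf x_i^*)$, $t\in[0,1]$, stays in $\overline K_i$. The one-sided derivative at $t=0^+$ of a function minimised at $t=0$ is nonnegative, and this is exactly the variational inequality, i.e.\ the normal-cone statement.
\item On $M_i\subset\operatorname{int}(\overline K_i)$, $h$ takes the constant value $h(\mathbf x_i^*)=\min_{\overline K_i}h$. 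So every $\mathbf x\in M_i$ is an interior minimiser, and part (i) gives $\nabla h(\mathbf x)=0$ and $\nabla^2h(\mathbf x)\succeq0$. For $v\in T_{\mathbf x}M_i$, take a $C^2$ curve $\gamma\subset M_i$ with $\gamma(0)=\mathbf x$ and $\gamma'(0)=v$. Differentiating $h\circ\gamma\equiv$ const twice gives $v^\top\nabla^2h(\mathbf x)v+\nabla h(\mathbf x)\cdot\gamma''(0)=0$, hence $v^\top\nabla^2h(\mathbf x)v=0$. Since $\nabla^2 h(\mathbf x)$ is positive semidefinite, a zero quadratic form at $v$ forces $\nabla^2h(\mathbf x)v=0$, so $v\in\ker\nabla^2h(\mathbf x)$. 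The Morse--Bott clause is then the definition of non-degeneracy: the kernel equals $T_{\mathbf x}M_i$. A positive semidefinite matrix is positive definite on the orthogonal complement of its kernel, which here is the normal space.
\end{enumerate*}

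\textbf{Expected obstacle.} The substantive analytic point is Step 1: ensuring that the infimum over the possibly non-closed $K_i$ is attained on the closures, and that the resulting $\mathbb P$ is legitimately a limit of feasible priors. This requires the positivity of the denominator on $\overline K_j$. Analyticity is used only for smoothness; the argument in fact needs only $C^2$.
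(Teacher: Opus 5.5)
Your proposal is correct and follows essentially the same route as the paper's proof. It obtains attainment on the compact product $\overline K_1\times\cdots\times\overline K_n$ via Proposition~\ref{prop_cbi_transform} and positivity of the denominator. It then uses separability of $\sum_i p_i h_{\phi^*,f,g}(\mathbf x_i)$ to get (E1), approximates closure points by feasible priors, and applies the standard first/second-order and variational-inequality arguments for the local conditions. The only difference is in part (iii): you deduce $T_{\mathbf x}M_i\subseteq\ker\nabla^2 h_{\phi^*,f,g}(\mathbf x)$ from $v^\top\nabla^2 h_{\phi^*,f,g}(\mathbf x)\,v=0$ together with positive semidefiniteness, whereas the paper differentiates the identity $\nabla h_{\phi^*,f,g}(\gamma(t))\equiv 0$ along the curve directly; both are valid.
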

\begin{proof}
See Appendix~G, Supplementary Material (\cite{Salako_Muhammad_2025_suppmat}).
\end{proof}
\begin{figure}[ht!]
\centering
\resizebox{0.5\linewidth}{!}{%
\setlength{\fboxsep}{6pt}%
\colorbox{gray!35}{%
\begin{tikzpicture}[x=7.0cm,y=7.0cm,>=Latex]

  \fill[gray!35] (-0.08,-0.08) rectangle (1.06,1.06);

  \path[fill=white]
    (0,0) -- (1,0) -- (1,1) -- (0,1) -- cycle;

  \fill[gray!60]
    (0,0) -- (0.33,0) -- (0.29,0.26) -- (0,0.31) -- cycle;

  \fill[gray!30]
    (0.33,0) -- (0.71,0) -- (0.67,0.22) -- (0.51,0.34) -- (0.29,0.26) -- cycle;

  \fill[gray!50]
    (0.71,0) -- (1,0) -- (1,0.30) -- (0.84,0.36) -- (0.67,0.22) -- cycle;

  \fill[gray!40]
    (0,0.31) -- (0.29,0.26) -- (0.35,0.57) -- (0,0.63) -- cycle;

  \fill[gray!60]
    (0.29,0.26) -- (0.51,0.34) -- (0.63,0.56) -- (0.47,0.72) -- (0.35,0.57) -- cycle;

  \fill[gray!40]
    (0.67,0.22) -- (0.84,0.36) -- (1,0.30) -- (1,0.62) -- (0.78,0.66) -- (0.63,0.56) -- (0.51,0.34) -- cycle;

  \fill[gray!30]
    (0,0.63) -- (0.35,0.57) -- (0.47,0.72) -- (0.40,1) -- (0,1) -- cycle;

  \fill[gray!40]
    (0.47,0.72)
    .. controls (0.56,0.84) and (0.67,0.88) ..
    (0.80,0.74)
    -- (0.78,0.66) -- (0.63,0.56) -- cycle;

  \fill[gray!50]
    (0.40,1) -- (1,1) -- (1,0.62) -- (0.78,0.66)
    -- (0.80,0.74)
    .. controls (0.67,0.88) and (0.56,0.84) ..
    (0.47,0.72) -- cycle;

  \draw[white,line width=1.2pt] (0,0) -- (0.33,0) -- (0.29,0.26) -- (0,0.31);
  \draw[white,line width=1.2pt] (0.33,0) -- (0.71,0) -- (0.67,0.22) -- (0.51,0.34) -- (0.29,0.26);
  \draw[white,line width=1.2pt] (0.71,0) -- (1,0) -- (1,0.30) -- (0.84,0.36) -- (0.67,0.22);

  \draw[white,line width=1.2pt] (0,0.31) -- (0.29,0.26) -- (0.35,0.57) -- (0,0.63);
  \draw[white,line width=1.2pt] (0.29,0.26) -- (0.51,0.34) -- (0.63,0.56) -- (0.47,0.72) -- (0.35,0.57);
  \draw[white,line width=1.2pt] (0.51,0.34) -- (0.67,0.22) -- (0.84,0.36) -- (1,0.30);
  \draw[white,line width=1.2pt] (0.63,0.56) -- (0.78,0.66) -- (1,0.62);

  \draw[white,line width=1.2pt] (0,0.63) -- (0.35,0.57) -- (0.47,0.72) -- (0.40,1);
  \draw[white,line width=1.2pt]
    (0.47,0.72)
    .. controls (0.56,0.84) and (0.67,0.88) ..
    (0.80,0.74);
  \draw[white,line width=1.2pt] (0.78,0.66) -- (0.80,0.74);
  \draw[white,line width=1.2pt] (0,0) rectangle (1,1);

  \node[font=\Large\bfseries, anchor=south east] at (0.98,1) {$\boldsymbol{K=[0,1]^2}$};

  \node[font=\normalsize] at (0.14,0.2) {$\boldsymbol{K_1}$};
  \node[font=\normalsize] at (0.50,0.13) {$\boldsymbol{K_2}$};
  \node[font=\normalsize] at (0.88,0.1) {$\boldsymbol{K_3}$};

  \node[font=\normalsize] at (0.19,0.48) {$\boldsymbol{K_4}$};
  \node[font=\normalsize] at (0.43,0.50) {$\boldsymbol{K_5}$};
  \node[font=\normalsize] at (0.81,0.49) {$\boldsymbol{K_6}$};

  \node[font=\normalsize] at (0.16,0.73) {$\boldsymbol{K_7}$};
  \node[font=\normalsize] at (0.67,0.73) {$\boldsymbol{K_8}$};
  \node[font=\normalsize] at (0.74,0.91) {$\boldsymbol{K_9}$};


  \fill[black] (0.15,0.12) circle (0.008);
  \node[font=\normalsize, anchor=west] at (0.15,0.12) {$\mathbf x_1^*$};

  \fill[black] (0.42,0.31) circle (0.008);
  \node[font=\normalsize, anchor=west] at (0.44,0.32) {$\mathbf x_2^*=\mathbf x_5^*$};

  \fill[black] (0.92,0.33) circle (0.008);
  \node[font=\normalsize, anchor=west] at (0.77,0.28) {$\mathbf x_3^*=\mathbf x_6^*$};

  \fill[black] (0.00,0.47) circle (0.008);
  \node[font=\normalsize, anchor=west] at (0,0.47) {$\mathbf x_4^*$};

  \fill[black] (0.14,0.84) circle (0.008);
  \node[font=\normalsize, anchor=west] at (0.14,0.85) {$\mathbf x_7^*$};

  \fill[black] (0.64,0.83) circle (0.008);
  \node[font=\normalsize, anchor=west] at (0.66,0.83) {$\mathbf x_8^*=\mathbf x_9^*$};

  \node[font=\small, align=center] at (0.18,0.05) {interior \\ point};
  \node[font=\small, align=center] at (0.10,0.40) {boundary \\ point};
  \node[font=\small, align=center] at (0.88,0.2) {shared \\boundary point};

\end{tikzpicture}%
}}%
\caption[Discrete extremal prior on a partitioned domain]{Schematic illustration of an extremal prior in the analytic case of Theorem~\ref{thm:global_fp_tuple}. The partitioned domain $K=[0,1]^2$ is divided into irregular cells $K_1,\ldots,K_9$, and each $K_i$ cell has an associated minimizer location $\mathbf x_i^*\in \overline K_i$. The extremal prior $\mathbb P=\sum_{i=1}^9 p_i\,\delta_{\mathbf x_i^*}$ is therefore discrete, while some minimizer locations may coincide on pairwise shared cell boundaries, as indicated by equalities such as $\mathbf x_2^*=\mathbf x_5^*$ and $\mathbf x_8^*=\mathbf x_9^*$. Minimizers on shared boundaries in a partition make clear that extremal priors may violate the Problem~\eqref{eqn_genCBIprob} constraints: extremal priors are weak limits of feasible priors.}
\label{fig:extremal_prior_partitioned_domain}
\end{figure}
\begin{algorithm}[ht!]
\caption{Dinkelbach-type iteration for the fixed points of Theorem~\ref{thm:global_fp_tuple}}
\label{alg:dinkelbach}
\begin{center}
\fbox{%
\begin{minipage}{0.6\linewidth}
Choose $\widehat{\phi}_0>0$ and tolerance $\varepsilon>0$.
For $t=0,1,2,\ldots$ repeat:
\begin{enumerate}
\item For $i=1,\ldots,n$ compute $\widehat{\mathbf x}^{(t)}_i
\in
\underset{\mathbf x\in \overline{K}_i}{\mathrm{argmin}}\;
h_{\widehat{\phi}_t,f,g}(\mathbf x)$.
\item Stop if $
\left|
\sum_{i=1}^n p_i
h_{\widehat{\phi}_t,f,g}(\widehat{\mathbf x}^{(t)}_i)
\right|
\leqslant \varepsilon$. 
\item Update $\widehat{\phi}_{t+1}
=
\frac{\sum_{i=1}^n p_i f(\widehat{\mathbf x}^{(t)}_i)}
     {\sum_{i=1}^n p_i g(\widehat{\mathbf x}^{(t)}_i)}$.
\end{enumerate}
Return $\widehat{\phi}_\varepsilon:=\widehat{\phi}_{t}$ and
$\widehat{\mathbf x}_{i,\varepsilon}:=\widehat{\mathbf x}^{(t)}_i$.
\end{minipage}}
\end{center}
\end{algorithm}

\noindent\textbf{Remarks: }Algorithm~\ref{alg:dinkelbach} is a Dinkelbach-type iteration for Theorem~\ref{thm:global_fp_tuple} solutions: $\widehat{\phi}_t$ converges to $\phi^*$, and any accumulation point of the sequence $\{(\widehat{\phi}_t,\widehat{\mathbf x}^{(t)}_1,\ldots,\widehat{\mathbf x}^{(t)}_n)\}_{t\in\mathbb Z^{\geqslant0}}$ satisfies the fixed-point conditions (E1) and (E2). For a proof of the convergence, see Appendix~H of the Supplementary Material (\cite{Salako_Muhammad_2025_suppmat}). 
Dinkelbach-type iteration is a classical method for solving nonlinear
fractional programming problems. Likewise, finite linear-fractional programmes
admit the Charnes--Cooper transformation
(see \cite{CharnesCooper1962LinearFractionalFunctionals}). In the robust Bayesian
setting, \cite{Cozman1999PosteriorBounds} explicitly identified
Lavine-type posterior-bound bracketing with Dinkelbach-type fractional
programming, and the White--Snow reformulation with the Charnes--Cooper
transformation. Thus, when the support points are fixed and the
admissible probabilities are described by finite linear constraints, this programme falls within the Cozman/fractional-programming setting.

More generally, the support points of extremal priors are unknown and
must themselves be determined. Following the finite-support reduction in Proposition~\ref{prop_cbi_transform}, Theorem~\ref{thm:global_fp_tuple} identifies the extremal support configurations as those satisfying the fixed-point conditions (E1)--(E2). The subsequent extensions give
approximation and convergence guarantees for broader function classes,
overlapping-set constraints, and marginal-density settings.

\begin{theorem}[Continuous $f,\,g$]
\label{thm_gensol}
In \eqref{eqn_genCBIprob} let $f,g:K\to [0,\infty)$ be continuous functions. So, there exist sequences $\{f_m\}_{m\in\mathbb N}$, $\{g_m\}_{m\in\mathbb N}$ of non-negative functions such that each $f_m$ and $g_m$ are analytic
on an open neighbourhood of $K$, $\|f_m-f\|_{\infty}\to 0$, 
and $\|g_m-g\|_{\infty}\to 0$, as $m\to\infty$. Restrict to all sufficiently large $m$ (to ensure $\inf_{{\mathbf x}\in K_j} g_m({\mathbf x})>0$ for some $j$). For $\mathbf x=({\mathbf x}_1,\dots,{\mathbf x}_n)\in K_1\times\cdots\times K_n$, define
\[
\phi_m({\mathbf x}_1,\ldots,{\mathbf x}_n):=
\frac{\sum_{i=1}^n f_m({\mathbf x}_i)p_i}{\sum_{i=1}^n g_m({\mathbf x}_i)p_i},
\qquad
\tilde{\phi}({\mathbf x}_1,\ldots,{\mathbf x}_n):=
\frac{\sum_{i=1}^n f({\mathbf x}_i)p_i}{\sum_{i=1}^n g({\mathbf x}_i)p_i}.
\]
Let $\phi_m^*:=\inf\limits_{{\mathbf x}_1\in K_1,\dots,{\mathbf x}_n\in K_n}\phi_m({\mathbf x}_1,\dots,{\mathbf x}_n)\,$ and let 
$\,\tilde{\phi}^*:=\inf\limits_{{\mathbf x}_1\in K_1,\dots,{\mathbf x}_n\in K_n}\tilde{\phi}({\mathbf x}_1,\dots,{\mathbf x}_n)$. Then $\phi_m^*\to \tilde{\phi}^*$ as $m\to\infty$. Moreover, if ${\mathbf x}^{(m)}=({\mathbf x}^{(m)}_1,\dots,{\mathbf x}^{(m)}_n)\in
\overline{K}_1\times\cdots\times\overline{K}_n$
is any minimiser of $\phi_m$, then every accumulation point
$\tilde{{\mathbf x}}=(\tilde{{\mathbf x}}_1,\dots,\tilde{{\mathbf x}}_n)$ of the sequence
$\{{\mathbf x}^{(m)}\}$ minimises $\tilde{\phi}$. 

Each ${\mathbf x}^{(m)}$ determines an extremal prior $\mathbb P_m$,
\begin{equation}
\mathbb P_m=\sum_{i=1}^n p_i\,\delta_{{\mathbf x}_i^{(m)}},
\qquad {\mathbf x}_i^{(m)}\in\overline{K}_i.
\label{eqn_seqofpriors}
\end{equation}
Along any convergent subsequence ${\mathbf x}^{(m_k)}\to\tilde{{\mathbf x}}$, the
corresponding priors $\mathbb P_{m_k}$ converge weakly to an
extremal prior $\mathbb P$ for the problem with objective $\tilde{\phi}$, supported on $\tilde{{\mathbf x}}_1,\dots,\tilde{{\mathbf x}}_n$. Limiting extremal priors may depend on the subsequence. The limiting fixed-point tuple $(\tilde{\phi}^*,\tilde{{\mathbf x}})$ solves \eqref{eqn_weak_opt_equiv_disc}: it satisfies 
\begin{align*}
\textnormal{(E1)}\;\tilde{{\mathbf x}}_i\in\underset{{\mathbf x}\in\overline{K}_i}{\mathrm{argmin}}\;h_{\tilde \phi^*,f,g}(\mathbf x),\;i=1,\ldots,n;\hspace{0.5cm} \textnormal{(E2)}\;\tilde{\phi}^*=
\tilde{\phi}(\tilde{{\mathbf x}}_1,\ldots,\tilde{{\mathbf x}}_n),
\end{align*}
with limiting extremal prior
\begin{equation}
\mathbb P=\sum_{i=1}^n p_i\,\delta_{\tilde{{\mathbf x}}_i},
\qquad \tilde{{\mathbf x}}_i\in\overline{K}_i.
\label{eqn_limitofseqofpriors}
\end{equation}
\end{theorem}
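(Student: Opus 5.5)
The plan has five steps: construct the analytic approximants, prove uniform convergence of the ratio objectives, compare infima, pass minimisers to the limit by compactness, and read off the fixed-point conditions.

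\textbf{Approximants.} By Stone--Weierstrass on the compact set $K$, continuous $f,g\geqslant 0$ are uniform limits of polynomials. Adding $1/m$ and a small uniform correction keeps them non-negative, so take $f_m,g_m\geqslant 0$ polynomials, hence entire. Since $\inf_{K_j} g>0$, uniform convergence gives $\inf_{K_j} g_m\geqslant c:=\tfrac12\inf_{K_j} g>0$ for all large $m$. Consequently both denominators $\sum_i g_m(\mathbf x_i)p_i$ and $\sum_i g(\mathbf x_i)p_i$ are at least $c\,p_j$, uniformly over $\overline K_1\times\cdots\times\overline K_n$.

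\textbf{Uniform convergence of the objectives.} Use the same ratio estimate as in Proposition~\ref{prop_cont_marginal_approx}:
\[
|\phi_m-\tilde\phi|\leqslant \frac{\|f_m-f\|_\infty}{c p_j}+\frac{(\|f\|_\infty+1)\|g_m-g\|_\infty}{(c p_j)^2}=:\epsilon_m\to 0 .
\]
This holds on the whole closed product, not only on $K_1\times\cdots\times K_n$. Taking infima over $K_1\times\cdots\times K_n$ gives $|\phi_m^*-\tilde\phi^*|\leqslant\epsilon_m$, which is the first claim.

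\textbf{Minimisers and accumulation points.} By continuity, the infimum over $K_1\times\cdots\times K_n$ equals the minimum over the compact product of closures; this is the last display of Proposition~\ref{prop_cbi_transform}, applied to $(f_m,g_m)$ and to $(f,g)$. So minimisers $\mathbf x^{(m)}$ exist and $\phi_m(\mathbf x^{(m)})=\phi_m^*$. Let $\mathbf x^{(m_k)}\to\tilde{\mathbf x}$. Then
\[
|\tilde\phi(\tilde{\mathbf x})-\tilde\phi^*|\leqslant|\tilde\phi(\tilde{\mathbf x})-\tilde\phi(\mathbf x^{(m_k)})|+\epsilon_{m_k}+|\phi_{m_k}^*-\tilde\phi^*|\to0,
\]
using continuity of $\tilde\phi$ and closedness of the product to keep $\tilde{\mathbf x}$ admissible. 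Hence $\tilde{\mathbf x}$ minimises $\tilde\phi$, and (E2) holds.

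\textbf{Weak convergence of the priors.} For bounded continuous $\psi$, $\int\psi\,d\mathbb P_{m_k}=\sum_i p_i\psi(\mathbf x_i^{(m_k)})\to\sum_i p_i\psi(\tilde{\mathbf x}_i)$. So $\mathbb P_{m_k}$ converges weakly to the limiting prior \eqref{eqn_limitofseqofpriors}. This limit is extremal because its objective value $\tilde\phi(\tilde{\mathbf x})=\tilde\phi^*$. Different subsequences may give different limits, which is why limiting extremal priors may depend on the subsequence.

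\textbf{Fixed-point condition (E1).} This is the step needing the most care, and I would argue directly rather than via Theorem~\ref{thm:global_fp_tuple}. Write $D(\mathbf x)=\sum_i g(\mathbf x_i)p_i>0$. The minimality $\tilde\phi(\mathbf x)\geqslant\tilde\phi^*$ on the closed product is equivalent to
\[
\sum_i p_i\,h_{\tilde\phi^*,f,g}(\mathbf x_i)\geqslant 0 \quad\text{for all } \mathbf x,
\]
with equality at $\tilde{\mathbf x}$ by (E2). Suppose some $\tilde{\mathbf x}_i$ were not a minimiser of $h_{\tilde\phi^*,f,g}$ on $\overline K_i$. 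Replacing that coordinate alone by a minimiser, which exists by compactness and continuity, makes the weighted sum strictly negative because $p_i>0$. That contradicts the inequality, so (E1) holds for every $i$.

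An alternative route is to invoke Theorem~\ref{thm:global_fp_tuple} for each $(f_m,g_m)$ and pass to the limit through the uniform convergence $h_{\phi_m^*,f_m,g_m}\to h_{\tilde\phi^*,f,g}$. I would mention it only as a remark, since the direct argument above avoids it.
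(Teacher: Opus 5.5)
Your proposal is correct and follows essentially the same route as the paper's proof in Appendix~I. Both arguments use Stone--Weierstrass approximants shifted to be non-negative, then a uniform positive lower bound on the denominators that yields a uniform ratio estimate and hence $\phi_m^*\to\tilde\phi^*$. Both then use compactness with a triangle inequality for (E2), weak convergence by testing against bounded continuous functions, and the same single-coordinate replacement contradiction for (E1). The paper, like you, proves (E1) directly rather than by passing Theorem~\ref{thm:global_fp_tuple} to the limit; your explicit use of the closure-minimum identity from Proposition~\ref{prop_cbi_transform} to guarantee that minimisers exist is only a cosmetic difference.
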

\begin{proof}
See Appendix~I, Supplementary Material (\cite{Salako_Muhammad_2025_suppmat}).
\end{proof}

\noindent\textbf{Remarks: }Together, Theorem~\ref{thm:global_fp_tuple} and Algorithm~\ref{alg:dinkelbach} are the basis of a general solution pipeline for Problem~\eqref{eqn_genCBIprob}; see Figure~\ref{fig_CBIMethodologyPipeline}. The Dinkelbach/\emph{Karush–Kuhn–Tucker} (KKT) analysis involved in Theorem~\ref{thm:global_fp_tuple} yields the fixed-point characterisation of the extremal solutions, identifying ``\emph{where}'' extremal priors assign probability mass in their domain---this determines explicit extremal distributions for obtaining the bounds studied in \cite{1991_Lavine} and \cite{moreno1991robust}. Consequently, Problem~\eqref{eqn_genCBIprob} reduces to the finite-dimensional (E1)--(E2) system, which can be solved by Dinkelbach iteration (\emph{cf.} (E$2$) with Example~1 of \cite{WassermanLavineWolpert1993}). The resulting extremal configurations can often be reused to derive other bounds for related problems through symmetry or limiting arguments. Seminal works for KKT analysis are \cite{Karush1939,KuhnTucker1951}, while \cite{NocedalWright2006} contains a modern treatment.

In Theorem~\ref{thm_gensol}, the class $C(K)$ of continuous functions on $K$ is closed under uniform limits of continuous functions (including uniformly convergent series of continuous functions), and the objective in \eqref{eqn_weak_opt_equiv_disc} depends continuously on $f$ and $g$ under uniform convergence (given $\inf_{K_i}g>0$ for some $i$). Hence, analytic approximations give a general solution method for continuous $f,\,g$, in \eqref{eqn_weak_opt_equiv_disc}, even when $f,\,g$ are defined as uniformly converging series. 

Proposition~\ref{prop:semicont_refinement} is useful in extending the results beyond $C(K)$.

\begin{proposition}[Refinement of \eqref{eqn_weak_opt_equiv_disc} for bounded piecewise continuous $f$, $g$ with a measurable set of discontinuities]
\label{prop:semicont_refinement}
Let $K_1,\dots,K_n$ be the partition of $K$ in \eqref{eqn_genCBIprob}. 
Assume each $K_i$ is partitioned into Borel-measurable sets $K_{i,\ell}$ ($\ell=1,\ldots,r_i$) on which $f$ and $g$ are continuous, and a Borel-measurable set $D_i$ on which $f$ or $g$ are discontinuous; so $K_i=\Bigl(\bigsqcup_{\ell=1}^{r_i} K_{i,\ell}\Bigr)\sqcup D_i$. Then, \eqref{eqn_genCBIprob} is equivalent to
\begin{equation}
\inf_{\substack{p_{i,\ell}\geqslant0,\;q_{i}\geqslant0\\
\sum_{\ell=1}^{r_i}p_{i,\ell}+q_{i}=p_i}}
\;
\inf_{{\mathbf x}_{i,\ell}\in K_{i,\ell},\,\mathbf d_i\in D_i}
\frac{
\sum_{i=1}^n\left(
\sum_{\ell=1}^{r_i} f({\mathbf x}_{i,\ell})\,p_{i,\ell}
+
f(\mathbf d_i)\,q_{i}
\right)
}{
\sum_{i=1}^n\left(
\sum_{\ell=1}^{r_i} g({\mathbf x}_{i,\ell})\,p_{i,\ell}
+
g(\mathbf d_i)\,q_{i}
\right)
}.
\label{eqn_v2_morenoetalCBIprob}
\end{equation}
\end{proposition}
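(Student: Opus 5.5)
The plan is to derive the equivalence from Proposition~\ref{prop_genprob1_v1}. The refined family $\{K_{i,\ell}\}_{i,\ell}\cup\{D_i\}_i$ is a finite Borel partition of $K$. Relative to this partition, the constraint set $\mathcal D$ in \eqref{eqn_genCBIprob} is an instance of the overlapping-constraint class $\widetilde{\mathcal D}$ generated by the sets $K_1,\dots,K_n$. For the refined partition, the masses on the pieces are free except for the aggregate equalities $\mathbb P(K_i)=p_i$. I will therefore prove the two inequalities between \eqref{eqn_genCBIprob} and \eqref{eqn_v2_morenoetalCBIprob} directly, conditioning on how $\mathbb P$ distributes $p_i$ over the pieces of $K_i$.

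\emph{First inequality: \eqref{eqn_genCBIprob} $\geqslant$ \eqref{eqn_v2_morenoetalCBIprob}.} Fix $\mathbb P\in\mathcal D$ and set $p_{i,\ell}:=\mathbb P(K_{i,\ell})$ and $q_i:=\mathbb P(D_i)$. These satisfy $\sum_\ell p_{i,\ell}+q_i=p_i$. Now regard $\mathbb P$ as an element of the finer class $\mathcal D'$, defined by fixing the masses of all pieces $K_{i,\ell}$ and $D_i$; this is again a partition problem of type \eqref{eqn_genCBIprob}. Note that $g$ is bounded below on some $K_j$, so the denominator is positive. Applying \eqref{eqn_weak_opt_equiv_disc} of Proposition~\ref{prop_cbi_transform} to $\mathcal D'$ gives
\[
\frac{\mathbb E_{\mathbb P}[f]}{\mathbb E_{\mathbb P}[g]}\geqslant \inf_{{\mathbf x}_{i,\ell}\in K_{i,\ell},\,\mathbf d_i\in D_i}\frac{\sum_i\bigl(\sum_\ell f({\mathbf x}_{i,\ell})p_{i,\ell}+f(\mathbf d_i)q_i\bigr)}{\sum_i\bigl(\sum_\ell g({\mathbf x}_{i,\ell})p_{i,\ell}+g(\mathbf d_i)q_i\bigr)}.
\]
Pieces of zero mass are simply ignored, exactly as in the convention for zero-weight terms. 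Taking the infimum over the weights yields this inequality.

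\emph{Second inequality: \eqref{eqn_genCBIprob} $\leqslant$ \eqref{eqn_v2_morenoetalCBIprob}.} Given admissible weights and points, define the discrete measure
\[
\mathbb Q:=\sum_{i,\ell}p_{i,\ell}\delta_{{\mathbf x}_{i,\ell}}+\sum_i q_i\delta_{\mathbf d_i}.
\]
It satisfies $\mathbb Q(K_i)=p_i$ because all its atoms for index $i$ lie in $K_i$, so $\mathbb Q\in\mathcal D$. Integrability is trivial for finitely supported measures. The objective value of $\mathbb Q$ is exactly the inner expression of \eqref{eqn_v2_morenoetalCBIprob}, which gives the reverse inequality. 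Combining both directions proves the equivalence.

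\emph{Main obstacle.} The delicate point is the degenerate bookkeeping. Some $K_{i,\ell}$ or $D_i$ may be empty; these are dropped, with their weights forced to zero. One must also ensure that the denominator in \eqref{eqn_v2_morenoetalCBIprob} stays positive for every admissible choice. This follows because the atoms placed in $K_j$ carry total mass $p_j>0$, and $g\geqslant\inf_{K_j}g>0$ there. Finally, I should check that applying Proposition~\ref{prop_cbi_transform} with possibly zero refined masses is legitimate, since that proposition assumes $p_i>0$. I would handle this by first discarding the null pieces and then applying the proposition to the remaining positive-mass partition.
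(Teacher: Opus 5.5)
Your proof is correct and follows essentially the paper's route. Both split the infimum according to the refined masses $p_{i,\ell}=\mathbb P(K_{i,\ell})$, $q_i=\mathbb P(D_i)$ and apply Proposition~\ref{prop_cbi_transform} to the positive-mass refined partition; your explicit discrete $\mathbb Q$ for the reverse inequality is a direct form of the paper's remark that every admissible weight vector determines a subclass of $\mathcal D$, and the opening appeal to Proposition~\ref{prop_genprob1_v1} plays no role in what you actually prove (applied to the disjoint $K_1,\dots,K_n$ it just returns the coarse partition), so you can drop it.
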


\begin{proof}
See Appendix~J, Supplementary Material (\cite{Salako_Muhammad_2025_suppmat}).
\end{proof}


\begin{theorem}[Bounded piecewise continuous $f$, $g$ with a measurable set of discontinuities]
\label{thm_refinedgensol}
Let $f,g$ in \eqref{eqn_v2_morenoetalCBIprob} be such that, for each $i,\ell$, their restrictions to $K_{i,\ell}$ admit continuous extensions $\tilde f_{i,\ell},\tilde g_{i,\ell}$ to $\overline K_{i,\ell}$. Also, for each $i$, their restrictions to $D_i$ admit continuous extensions $\tilde f_i,\tilde g_i$ to $\overline D_i$.

Then, the infimum in \eqref{eqn_v2_morenoetalCBIprob}, $\phi^*$, is obtained in the limit by a sequence of feasible priors in \eqref{eqn_v2_morenoetalCBIprob} whose weak limit is an extremal prior $\mathbb P$,
\begin{equation}
\mathbb P=\sum_{i=1}^n p_i\,\delta_{{\mathbf x}_i^*},
\qquad {\mathbf x}_i^*\in\overline{K}_i,
\label{eqn_semicontprior}
\end{equation}
with the support of $\mathbb P$ forming part of a fixed-point tuple
$(\phi^*,{\mathbf x}_1^*,\dots,{\mathbf x}_n^*)$ that satisfies
\begin{enumerate}
\item[\text{(E1)}]${\mathbf x}_i^* \in M_i(\phi^*)$,
\item[\text{(E2)}]$\phi^*=\textstyle \sum_{i=1}^{n}\overset{\hspace{0.2em}*}{f}({\mathbf x}^*_i)p_i/\sum_{i=1}^{n}\overset{\hspace{0.2em}*}{g}({\mathbf x}^*_i)p_i$,
\end{enumerate}
where 
\begin{align*}
&(\overset{\hspace{0.2em}*}{f}({\mathbf x}^*_i),\,\overset{\hspace{0.2em}*}{g}({\mathbf x}^*_i))\\[5pt]
={}&\begin{cases}
(\tilde f_{i,\ell}({\mathbf x}_i^*),\tilde g_{i,\ell}({\mathbf x}_i^*)),
& \text{for some }\ell\text{ with }{\mathbf x}_i^*\in \underset{{\mathbf x}\in\overline K_{i,\ell}}{\mathrm{argmin}}\;h_{\phi^*,\tilde f_{i,\ell},\tilde g_{i,\ell}}({\mathbf x})\subseteq M_{i,0}(\phi^*)
\\[4pt]
( \tilde f_i({\mathbf x}_i^*),\tilde g_i({\mathbf x}_i^*)),
& {\mathbf x}_i^*\in M_{i,1}(\phi^*)
   \setminus M_{i,0}(\phi^*),   
\end{cases}
\end{align*}
\begin{gather*}
M_{i,0}(\varphi)
:={}
\!\!\bigcup_{\ell=1}^{r_i}
\underset{{\mathbf x}\in\overline K_{i,\ell}}{\mathrm{argmin}}\;h_{\varphi,\tilde f_{i,\ell},\tilde g_{i,\ell}}({\mathbf x}),\;M_{i,1}(\varphi)
:={}
\underset{{\mathbf x}\in\overline D_{i}}{\mathrm{argmin}}\;h_{\varphi,\tilde f_i,\tilde g_i}({\mathbf x}),\\
M_i(\varphi):=
M_{i,0}(\varphi)\bigcup M_{i,1}(\varphi)\,,
\end{gather*}
with the convention $M_{i,1}(\varphi)=\varnothing$ when $D_i=\varnothing$.

Indeed, the fixed-point tuple must be chosen so that ${\mathbf x}_i^*$ is in the set 
\[
\mathcal C_i(\phi^*)
:=
\Big(
\bigcup_{\ell=1}^{r_i}\mathcal C_{\overline{K}_{i,\ell}}(\phi^*)
\Big)
\cup
\mathcal C_{\overline{D}_{i}}(\phi^*) 
\]
for $i=1,\ldots,n$, where $\mathcal C_{\overline{K}_{i,\ell}}(\phi^*)$ and $\mathcal C_{\overline{D}_{i}}(\phi^*)$ consist of all minimisers of $h_{\phi^*,\tilde{f}_{i,\ell},\tilde{g}_{i,\ell}}$ on $\overline{K}_{i,\ell}$ and $h_{\phi^*,\tilde{f}_{i},\tilde{g}_{i}}$ on $\overline{D}_i$, respectively, 
that are subsequential limits of global minimisers of Dinkelbach differences formed from analytic approximating sequences that converge uniformly to the $\tilde f_{i,\ell}$, $\tilde g_{i,\ell}$, $\tilde f_{i}$, $\tilde g_{i}$ extensions. That is, $M_i(\phi^*)=\mathcal C_i(\phi^*)$. Theorems~\ref{thm:global_fp_tuple} and \ref{thm_gensol} give global minimisers. Each $\mathbf{x}^*\in\mathcal C_i(\phi^*)$ is a boundary or interior point of either $\overline{D}_i$ or some $\overline{K}_{i,\ell}$.
\end{theorem}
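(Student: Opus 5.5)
We reduce Theorem~\ref{thm_refinedgensol} to Theorem~\ref{thm_gensol} by treating each piece as its own partition cell with continuous data. By Proposition~\ref{prop:semicont_refinement}, \eqref{eqn_genCBIprob} equals \eqref{eqn_v2_morenoetalCBIprob}. For fixed weights $\{p_{i,\ell},q_i\}$, the inner problem is an instance of \eqref{eqn_weak_opt_equiv_disc} on the partition $\{K_{i,\ell}\}\cup\{D_i\}$. Restricted to each piece, $f,g$ agree with the continuous extensions $\tilde f_{i,\ell},\tilde g_{i,\ell}$ (respectively $\tilde f_i,\tilde g_i$). The inner infimum is therefore unchanged if we minimise over closures using the extensions. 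Theorem~\ref{thm_gensol}, applied piecewise with continuous data on compact closures and uniform analytic approximants obtained by Stone--Weierstrass/Tietze, then yields minimisers in $\prod\overline K_{i,\ell}\times\prod\overline D_i$, attained as subsequential limits of analytic-approximation minimisers.

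The outer infimum is over a compact polytope of weights. We first show the optimal weights may be taken to concentrate each block's mass $p_i$ on a single piece. Fix the support points and consider the objective as a function of the weights within block $i$. It is a linear-fractional function of the weights, with positive denominator by the assumption $\inf_{K_j}g>0$, so it is quasi-linear. Its minimum over the simplex $\{\sum_\ell p_{i,\ell}+q_i=p_i\}$ is therefore attained at a vertex. Equivalently, in Dinkelbach form at level $\phi^*$, block $i$ contributes $p_i\min\{\min_\ell\min_{\overline K_{i,\ell}}h_{\phi^*,\tilde f_{i,\ell},\tilde g_{i,\ell}},\min_{\overline D_i}h_{\phi^*,\tilde f_i,\tilde g_i}\}$. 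This gives the one-atom-per-$K_i$ form \eqref{eqn_semicontprior}, with ${\mathbf x}_i^*$ a minimiser of the block's Dinkelbach difference over the union of pieces, i.e.\ ${\mathbf x}_i^*\in M_i(\phi^*)$, which is (E1). Condition (E2) follows exactly as in Theorem~\ref{thm:global_fp_tuple}. Take $F(\varphi):=\sum_i p_i\min_{M_i\text{-pieces}} h_{\varphi,\cdot,\cdot}$, which is continuous and strictly decreasing since $g$ is bounded below on $K_j$. Then $F(\phi^*)=0$, and rearranging gives the ratio with the starred values. The case split defining $(\overset{*}{f},\overset{*}{g})$ records which piece achieves the block minimum, giving priority to the $K_{i,\ell}$ pieces when there are ties.

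For feasibility and the weak limit, choose for each $i$ a sequence of points in the actual piece (not its closure) converging to ${\mathbf x}_i^*$. Because the restrictions of $f,g$ to that piece converge along it to the extension values, the ratio converges to $\phi^*$. The corresponding priors $\sum_i p_i\delta$ are feasible for \eqref{eqn_genCBIprob} and converge weakly to $\mathbb P$.

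For the characterisation $M_i(\phi^*)=\mathcal C_i(\phi^*)$, the inclusion $\mathcal C_i\subseteq M_i$ follows from uniform convergence: minimisers of $h_{\phi_m,f_m,g_m}$ converge to minimisers of the limit, with $\phi_m\to\phi^*$ from Theorem~\ref{thm_gensol}. The reverse inclusion, that every minimiser is a subsequential limit of approximate minimisers, is the main obstacle, since generic approximations need not select a given minimiser. We plan to use a perturbation to handle it. Given a minimiser ${\mathbf y}$, replace the analytic approximant $f_m$ by $f_m+\epsilon_m\|{\mathbf x}-{\mathbf y}\|^2$ with $\epsilon_m\to0$, which is still analytic and still converges uniformly. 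Then ${\mathbf y}$ becomes the unique limit of minimisers, and so lies in $\mathcal C_i$. The final remark, that each point is a boundary or interior point of some closure, is immediate.
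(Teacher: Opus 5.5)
Your route matches the paper's proof almost step for step. It refines via Proposition~\ref{prop:semicont_refinement}, replaces infima over the pieces by minima of the extensions over compact closures, and concentrates each block's mass $p_i$ on a piece realising $m_i(\phi^*)$; your linear-fractional vertex argument is an equivalent way to see this. It then gets (E2) from $F(\phi^*)=0$, approximates $\mathbb P$ by priors supported inside the actual pieces, and uses a quadratic perturbation for $M_i(\phi^*)\subseteq\mathcal C_i(\phi^*)$.

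The step that fails as written is the perturbation. Adding $\epsilon_m\|\mathbf x-\mathbf y\|^2$ with merely $\epsilon_m\to0$ does not make $\mathbf y$ the limit of the perturbed minimisers. Write $h:=h_{\phi^*,\tilde f_{i,\ell},\tilde g_{i,\ell}}$, $H_m:=h_{\phi^*,f_m,g_m}+\epsilon_m\|\cdot-\mathbf y\|^2$ and $\delta_m:=\|f_m-\tilde f_{i,\ell}\|_\infty+\phi^*\|g_m-\tilde g_{i,\ell}\|_\infty$. For $\|\mathbf x-\mathbf y\|\geqslant\rho$ the available bounds are
\[
H_m(\mathbf x)\geqslant h(\mathbf y)-\delta_m+\epsilon_m\rho^2,
\qquad
H_m(\mathbf y)\leqslant h(\mathbf y)+\delta_m,
\]
so the minimisers of $H_m$ are confined to the $\rho$-ball about $\mathbf y$ only once $\epsilon_m\rho^2>2\delta_m$. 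Suppose $h$ has another minimiser $\mathbf z$, and the approximation error favours $\mathbf z$ by more than $\epsilon_m\|\mathbf z-\mathbf y\|^2$. This can happen as soon as $\epsilon_m\|\mathbf z-\mathbf y\|^2<2\delta_m$, and then the perturbed minimisers can accumulate at $\mathbf z$ instead of $\mathbf y$.

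The fix is a rate condition, $\delta_m=o(\epsilon_m)$, for example $\epsilon_m:=\sqrt{\delta_m}+1/m$. Alternatively, do what the paper does: fix $\eta_m\downarrow0$ and a shrinking neighbourhood $U_m$ of the minimiser first, and only then choose the analytic approximants uniformly within $\eta_m\rho_{U_m}^2/3$. Work at the fixed level $\phi^*$, as the paper does. If you instead form the Dinkelbach differences at levels $\phi_m\to\phi^*$, as in your forward inclusion, the extra term $(\phi_m-\phi^*)g_m$ must also be $o(\epsilon_m)$ uniformly. With that quantitative choice, the rest of your proposal goes through.
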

\begin{proof} See Appendix~K, Supplementary Material (\cite{Salako_Muhammad_2025_suppmat}).
\end{proof}
\noindent\textbf{Remarks: }In the continuous special case $r_i=1$, $K_{i,1}=K_i$ and $D_i=\varnothing$ for all $i=1,\ldots,n$, Theorem~\ref{thm_refinedgensol} reduces to the fixed-point characterisation of Theorem~\ref{thm_gensol}. More generally, Theorem~\ref{thm_refinedgensol} shows that analytic approximation is ``exhaustive'': it identifies the extremal
support points, and there are no extremal support candidates outside the class of extremal support points obtained from analytic approximations. 

Under the assumptions of Proposition~\ref{prop:semicont_refinement} and Theorem~\ref{thm_refinedgensol}, Algorithm~\ref{alg:dinkelbach_piecewise} and its convergence proof are analogous (with slight modifications) to Algorithm~\ref{alg:dinkelbach}.

\begin{algorithm}[ht!]
\caption{Dinkelbach-type iteration for Theorem~\ref{thm_refinedgensol}, Proposition~\ref{prop:semicont_refinement}}
\label{alg:dinkelbach_piecewise}
\begin{center}
\fbox{%
\begin{minipage}{0.98\linewidth}
Choose $\widehat{\phi}_0>0$ and tolerance $\varepsilon>0$.
For $t=0,1,2,\ldots$ repeat:
\begin{enumerate}
\item For $i=1,\ldots,n$ and $\ell=1,\ldots,r_i$, compute $\widehat{\mathbf x}^{(t)}_{i,\ell}
\in
\underset{\mathbf x\in \overline{K}_{i,\ell}}{\mathrm{argmin}}\;
h_{\widehat{\phi}_t,\widetilde f_{i,\ell},\widetilde g_{i,\ell}}(\mathbf x)$. If $D_i\neq\varnothing$, also compute $\widehat{\mathbf d}^{(t)}_{i}
\in
\underset{\mathbf d\in \overline{D}_i}{\mathrm{argmin}}\;
h_{\widehat{\phi}_t,\widetilde f_i,\widetilde g_i}(\mathbf d)$.

\item For $i=1,\ldots,n$, compute refined masses
$\widehat p^{(t)}_{i,\ell}\geqslant0$ and $\widehat q^{(t)}_i\geqslant0$ satisfying $\sum_{\ell=1}^{r_i}\widehat p^{(t)}_{i,\ell}
+\widehat q^{(t)}_i
=
p_i$ and attaining
\[
\underset{
\substack{
p_{i,\ell}\geqslant0,\ q_i\geqslant0\\
\sum_{\ell=1}^{r_i}p_{i,\ell}+q_i=p_i
}}
{\mathrm{argmin}}\;
\left\{
\sum_{\ell=1}^{r_i}
p_{i,\ell}
h_{\widehat{\phi}_t,\widetilde f_{i,\ell},\widetilde g_{i,\ell}}
(\widehat{\mathbf x}^{(t)}_{i,\ell})
+
q_i
h_{\widehat{\phi}_t,\widetilde f_i,\widetilde g_i}
(\widehat{\mathbf d}^{(t)}_i)
\right\}.
\]
Terms involving $D_i$ are omitted when $D_i=\varnothing$, in which case set
$\widehat q^{(t)}_i=0$.

\item Stop if
\[
\left|
\sum_{i=1}^n
\left\{
\sum_{\ell=1}^{r_i}
\widehat p^{(t)}_{i,\ell}
h_{\widehat{\phi}_t,\widetilde f_{i,\ell},\widetilde g_{i,\ell}}
(\widehat{\mathbf x}^{(t)}_{i,\ell})
+
\widehat q^{(t)}_i
h_{\widehat{\phi}_t,\widetilde f_i,\widetilde g_i}
(\widehat{\mathbf d}^{(t)}_i)
\right\}
\right|
\leqslant \varepsilon .
\]

\item Update
\[
\widehat{\phi}_{t+1}
=
\frac{
\sum_{i=1}^n
\left\{
\sum_{\ell=1}^{r_i}
\widehat p^{(t)}_{i,\ell}
\widetilde f_{i,\ell}(\widehat{\mathbf x}^{(t)}_{i,\ell})
+
\widehat q^{(t)}_i
\widetilde f_i(\widehat{\mathbf d}^{(t)}_i)
\right\}
}{
\sum_{i=1}^n
\left\{
\sum_{\ell=1}^{r_i}
\widehat p^{(t)}_{i,\ell}
\widetilde g_{i,\ell}(\widehat{\mathbf x}^{(t)}_{i,\ell})
+
\widehat q^{(t)}_i
\widetilde g_i(\widehat{\mathbf d}^{(t)}_i)
\right\}
}.
\]
\end{enumerate}
Return $\widehat{\phi}_\varepsilon:=\widehat{\phi}_{t}$ and the approximately extremal prior $\widehat {\mathbb P}_\varepsilon
:=
\sum_{i=1}^n
\left\{
\sum_{\ell=1}^{r_i}
\widehat p^{(t)}_{i,\ell}
\delta_{\widehat{\mathbf x}^{(t)}_{i,\ell}}
+
\widehat q^{(t)}_i
\delta_{\widehat{\mathbf d}^{(t)}_i}
\right\}$.
\end{minipage}}
\end{center}
\end{algorithm}

\begin{theorem}[Uniform-closure generalisation of Theorem~\ref{thm_refinedgensol}]
\label{thm:regulated_extension} 
Let $\mathcal R_{2.13}$ be the class of bounded functions having the piecewise extension properties in Theorem~\ref{thm_refinedgensol}, and let $\overline{\mathcal R}_{2.13}^{\|\cdot\|_\infty}$ be its closure under uniform limits: that is, for each $f\in \overline{\mathcal R}_{2.13}^{\|\cdot\|_\infty}$ there exists a sequence $\{f_m\}\subset \mathcal R_{2.13}$ such that $\|f_m-f\|_{\infty}\to 0$ as $m\to\infty$. Suppose $f,g\in \overline{\mathcal R}_{2.13}^{\|\cdot\|_\infty}$ and $\inf_{{\mathbf x}\in K_j} g({\mathbf x})>0$ for some $j\in\{1,\dots,n\}$. 

Then, the infimum $\phi^*$ of \eqref{eqn_weak_opt_equiv_disc} is obtained in the limit by a sequence of feasible priors in \eqref{eqn_v2_morenoetalCBIprob} whose weak limit is some extremal prior
\[
\mathbb P^*=\sum_{i=1}^n p_i\,\delta_{{\mathbf x}_i^*},
\qquad {\mathbf x}_i^*\in \overline K_i .
\]
More precisely, let $\{f_m\},\{g_m\}\subset\mathcal R_{2.13}$ be such that $\|f_m-f\|_\infty\to 0$, $\|g_m-g\|_\infty\to 0$. Restrict to all sufficiently large $m$ (to ensure $\inf_{{\mathbf x}\in K_j} g_m({\mathbf x})>0$) and denote the infimum of \eqref{eqn_weak_opt_equiv_disc} for $f_m,g_m$ as $\phi_m^*$. 
Then:
\begin{enumerate}
\item[\textnormal{(i)}] $\phi_m^*\to \phi^*$;
\item[\textnormal{(ii)}] Theorem~\ref{thm_refinedgensol} applies to the pair
$(f_m,g_m)$ after passing to a common refinement of their individual partitions, and
there is an extremal prior
\[
\mathbb P_m=\sum_{i=1}^n p_i\,\delta_{{\mathbf x}_i^{(m)}},
\qquad {\mathbf x}_i^{(m)}\in \overline K_i ;
\]
\item[\textnormal{(iii)}] the sequence of tuples $\{({\mathbf x}_1^{(m)},\dots,{\mathbf x}_n^{(m)})\}$ has a
convergent subsequence,
\[
({\mathbf x}_1^{(m_k)},\dots,{\mathbf x}_n^{(m_k)})
\to
({\mathbf x}_1^*,\dots,{\mathbf x}_n^*)
\in \overline K_1\times\cdots\times \overline K_n.\] The limiting tuple $({\mathbf x}_1^*,\dots,{\mathbf x}_n^*)$ is the support of some extremal prior $\mathbb P^*$. The sequence of extremal priors $\{\mathbb P_{m_k}\}$ weakly converges to $\mathbb P^*$;
\item[\textnormal{(iv)}] there exists a sequence of feasible priors $\{\mathbb Q_k\}\subset \mathcal D$
such that the sequence weakly converges  to the prior $\mathbb P^*$ in \textnormal{(iii)}, and
\[
\frac{\mathbb E_{\mathbb Q_k}[f(\mathbf X)]}{\mathbb E_{\mathbb Q_k}[g(\mathbf  X)]}\to \phi^*.
\]
\end{enumerate}
\end{theorem}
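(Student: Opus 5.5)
The plan is to combine a uniform perturbation estimate for the discrete objective in \eqref{eqn_weak_opt_equiv_disc} with compactness of $\overline K_1\times\cdots\times\overline K_n$. Fix $j$ with $c:=\inf_{K_j}g>0$. Since $\|g_m-g\|_\infty\to0$, for $m$ large we have $\inf_{K_j}g_m\geqslant c/2$. Every tuple ${\mathbf x}\in K_1\times\cdots\times K_n$ then satisfies $\sum_i p_i g_m({\mathbf x}_i)\geqslant p_jc/2$, and the same bound holds for $g$. Writing $\tilde\phi_m,\tilde\phi$ for the discrete objectives of $(f_m,g_m)$ and $(f,g)$, the elementary identity $a/b-a'/b'=(a-a')/b+a'(b'-b)/(bb')$ gives
\[
\sup_{{\mathbf x}}\bigl|\tilde\phi_m({\mathbf x})-\tilde\phi({\mathbf x})\bigr|\leqslant \frac{2\|f_m-f\|_\infty}{p_jc}+\frac{4\sup_m\|f_m\|_\infty\,\|g_m-g\|_\infty}{(p_jc)^2}=:\varepsilon_m\to0 .
\]
Taking infima over the same set gives $|\phi_m^*-\phi^*|\leqslant\varepsilon_m$, which is (i). By Proposition~\ref{prop_cbi_transform}, $\phi^*$ coincides with the infimum over $\mathcal D$.

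For (ii), I take a common refinement: intersect the pieces $K_{i,\ell}$ for $f_m$ with those for $g_m$, and take the union of the discontinuity sets. Both restrictions still extend continuously to closures of the refined pieces, since the closure of a subset lies inside the closure of the original piece. Theorem~\ref{thm_refinedgensol} then supplies a fixed-point tuple $({\mathbf x}^{(m)}_i)$ with ${\mathbf x}^{(m)}_i\in\overline K_i$, an extremal prior $\mathbb P_m$, and feasible priors approximating it whose objective values tend to $\phi_m^*$.

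For (iii), Bolzano--Weierstrass in the compact set $\prod_i\overline K_i$ gives a subsequence ${\mathbf x}^{(m_k)}\to{\mathbf x}^*$. Weak convergence $\mathbb P_{m_k}\to\mathbb P^*:=\sum_ip_i\delta_{{\mathbf x}_i^*}$ is immediate because the weights are fixed and the atoms converge. To show that $\mathbb P^*$ is extremal in the limiting sense, I avoid evaluating $f$ at ${\mathbf x}^*$, since $f$ may be discontinuous there. Instead I define extremality through approximating feasible priors, which is the content of (iv).

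For (iv), I use a diagonal argument. For each $k$, Theorem~\ref{thm_refinedgensol} (its proof is where the feasible sequence for $\mathbb P_{m_k}$ is built) gives $\mathbb Q\in\mathcal D$ with the following two properties. First, its ratio under $(f_{m_k},g_{m_k})$ is within $1/k$ of $\phi^*_{m_k}$. Second, its Lévy--Prokhorov distance to $\mathbb P_{m_k}$ is less than $1/k$. Call this $\mathbb Q_k$. Then $\mathbb Q_k\to\mathbb P^*$ weakly, by the triangle inequality for the metrising distance. Moreover, $|\mathbb E_{\mathbb Q_k}f-\mathbb E_{\mathbb Q_k}f_{m_k}|\leqslant\|f-f_{m_k}\|_\infty$, and similarly for $g$, with denominators bounded below by $p_jc/2$. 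The same estimate as above therefore gives
\[
\Bigl|\frac{\mathbb E_{\mathbb Q_k}f}{\mathbb E_{\mathbb Q_k}g}-\phi^*\Bigr|\leqslant \varepsilon_{m_k}+1/k+|\phi^*_{m_k}-\phi^*|\to0 .
\]
This shows $\phi^*$ is attained in the limit by feasible priors converging weakly to $\mathbb P^*$.

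The main obstacle is the step in (iv) that extracts from Theorem~\ref{thm_refinedgensol} feasible priors that are close to $\mathbb P_{m_k}$ in weak distance and nearly optimal at the same time. The theorem statement only asserts a weak limit, so I would need to reopen its construction. If that construction is not available, I would build the priors directly. For each $i$, take ${\mathbf y}\in K_i$ near ${\mathbf x}^{(m_k)}_i$ inside the relevant piece $K_{i,\ell}$ or $D_i$. By continuity of the extension on that piece, $f_{m_k}({\mathbf y})$ is close to the extended value $\overset{\hspace{0.2em}*}{f}({\mathbf x}_i^{(m_k)})$. The prior $\sum_ip_i\delta_{{\mathbf y}}$ is then feasible, close to $\mathbb P_{m_k}$, and has objective near $\phi^*_{m_k}$ by (E2).
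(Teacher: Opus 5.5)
Your architecture is the paper's. You use the same quotient-perturbation bound for (i); the paper takes the supremum over $\mathcal D$ and you take it over tuples, which is equivalent by Proposition~\ref{prop_cbi_transform}. You use compactness of $\overline K_1\times\cdots\times\overline K_n$ for (iii). For (iv) you make the same diagonal choice of $\mathbb Q_k$ in a metric for weak convergence, followed by the same bound.

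The one step that fails as written is the common refinement in (ii). Taking the union $D_i^{f}\cup D_i^{g}$ of the two discontinuity sets as the new $D_i$ does not preserve a hypothesis of Theorem~\ref{thm_refinedgensol}. That hypothesis requires both restrictions to $D_i$ to extend continuously to $\overline{D_i}$. On $D_i^{g}\setminus D_i^{f}$, the function $f_m$ is given by the extensions attached to other $f_m$-pieces, and these need not glue continuously with the extension on $\overline{D_i^{f}}$. The same holds symmetrically for $g_m$. Your justification, that the closure of a subset lies in the closure of the original piece, covers only the intersected continuity pieces, not this union.

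For example, take $n=1$ and $K_1=[0,1]$. Present $f_m=\mathbf 1_{[1/2,1]}$ with continuity piece $[0,1/2)$ and $D^{f}=[1/2,1]$. Present $g_m=1+\mathbf 1_{[0,1/2)}$ with continuity piece $[1/2,1]$ and $D^{g}=[0,1/2)$. Your intersected pieces are then empty, your merged set is all of $[0,1]$, and $f_m$ jumps there.

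The repair is to treat each discontinuity set as one more piece. Partition $K_i$ into all nonempty sets $P\cap Q$, where $P$ ranges over the $f_m$-pieces together with $D_i^{f}$, and $Q$ ranges over the $g_m$-pieces together with $D_i^{g}$. Each cell inherits continuous extensions of both functions from $\overline P$ and $\overline Q$. Theorem~\ref{thm_refinedgensol} then applies with every cell treated as a continuity piece. This is exactly the paper's $S^{(m)}_{i,a,b}=A^{(m)}_{i,a}\cap B^{(m)}_{i,b}$, with index $0$ reserved for the discontinuity sets.

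Finally, the obstacle you flag in (iv) is not one. Theorem~\ref{thm_refinedgensol} already provides a single sequence of feasible priors that converges weakly to $\mathbb P_m$ and whose objective values converge to $\phi_m^*$. One element far enough along that sequence therefore satisfies both of your $1/k$ conditions at once. That is how the paper argues, and your fallback construction is precisely the proof of that theorem.
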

\begin{proof} See Appendix~L, Supplementary Material (\cite{Salako_Muhammad_2025_suppmat}).
\end{proof}

\begin{figure}[ht!]
\centering

\setlength{\fboxsep}{3pt}%
\setlength{\fboxrule}{1.5pt}%

\fcolorbox{black}{gray!50}{%
  \resizebox{0.95\linewidth}{!}{%
    \begin{tikzpicture}[
      font=\small,
      node distance=8mm and 12mm,
      >=Latex,
      box/.style={
        draw=none,
        fill=white,
        rounded corners=2pt,
        align=center,
        inner xsep=6pt,
        inner ysep=5pt,
        minimum height=9mm,
        text width=39mm
      },
      proc/.style={
        draw=none,
        fill=white,
        rounded corners=2pt,
        align=center,
        inner xsep=6pt,
        inner ysep=5pt,
        minimum height=9mm,
        text width=43mm
      },
      wideproc/.style={
        draw=none,
        fill=white,
        rounded corners=2pt,
        align=center,
        inner xsep=6pt,
        inner ysep=5pt,
        minimum height=9mm,
        text width=70mm
      },
      arrow/.style={->, line width=0.6pt},
      dashedarrow/.style={->, dashed, line width=0.6pt},
      bidashed/.style={<->, dashed, line width=0.6pt},
      note/.style={
        draw=none,
        fill=white,
        rounded corners=2pt,
        align=left,
        inner xsep=7pt,
        inner ysep=6pt,
        text width=0.93\linewidth
      }
    ]

\node[box] (p1) {Problem~\eqref{eqn_genCBIprob}\\[1pt]
$\displaystyle \inf_{\mathbb P\in\mathcal D}\ \frac{\mathbb E_{\mathbb P}[f(\mathbf X)]}{\mathbb E_{\mathbb P}[g(\mathbf X)]}$\\
subject to prior constraints};

\node[proc, right=of p1] (s1) {Reduce to a discrete\\extremal-prior problem\\(Proposition~\ref{prop_cbi_transform})};

\node[box, right=of s1] (p2) {Equivalent finite / discrete formulation,\\[1pt] or sequences of finite approximating formulations (e.g. Problems~\eqref{eqn_weak_opt_equiv_disc}, \eqref{eq:overlap_refinement}, and consistency-polytope formulations)};

\draw[arrow] (p1) -- (s1);
\draw[arrow] (s1) -- (p2);

\node[wideproc, above=10mm of p2] (g1) {More general prior specifications:\\overlapping measurable-set constraints\\and multiple marginal-density constraints\\(Propositions~\ref{prop_genprob1_v1}, \ref{prop_cont_marginal_approx}, \ref{cor_refined_strip_oscillations}, \ref{prop_gen_multiple_marginals_piecewise_refined})};

\draw[dashedarrow] (p1.north) to[bend left=15] (g1.west);
\draw[dashedarrow] (g1.south) -- (p2.north);

\node[proc, below=11mm of p2] (s2) {For analytic $f$ and $g$, characterise\\extremal-prior support via\\fixed-point conditions (E1)--(E2)\\(Theorem~\ref{thm:global_fp_tuple})};

\node[proc, left=35mm of s2] (s3) {Numerically solve the\\resulting fixed-point system\\using (E1)--(E2) and\\Algorithms~\ref{alg:dinkelbach},\;\ref{alg:dinkelbach_piecewise}, as applicable};

\draw[arrow] (p2.south) -- (s2.north);
\draw[arrow] (s2) -- (s3);

\node[wideproc, below=11mm of s2] (g2) {Extend the analytic framework by approximation:\\continuous functions (Theorem~\ref{thm_gensol}),\\piecewise continuous functions (Proposition~\ref{prop:semicont_refinement}+Theorem~\ref{thm_refinedgensol}),\\and uniform-limit classes (Theorem~\ref{thm:regulated_extension})};

\draw[bidashed] (s2.south) -- (g2.north);
\draw[bidashed] (g2.north west) to[bend right=40] (s3.south east);

\node[wideproc, below=11mm of s3, xshift=0mm] (out) {Outputs: extremal values, extremal priors\\(or limits of feasible priors), support locations,\\and convergence / asymptotic properties};

\draw[arrow] (s3.south) -- (out.north);
\draw[dashedarrow] (g2.west) to[bend right=-60] (out.south);

\end{tikzpicture}
}}
\caption[Fixed-point solution pipeline for robust inference]{Methodological pipeline for solving Problem~\eqref{eqn_genCBIprob} generalisations via discrete extremal-prior reduction, fixed-point support characterisation, and Dinkelbach-type iteration.}
\label{fig_CBIMethodologyPipeline}
\end{figure}

\noindent\textbf{Remarks: }$\mathcal S_b^+(K)\subseteq\overline{\mathcal R}_{2.13}^{\|\cdot\|_\infty}$, where $\mathcal S_b^+(K)$ is the set of  all simple non-negative Borel functions on $K$. Since $\overline{\mathcal R}_{2.13}^{\|\cdot\|_\infty}$ is a uniform closure, it contains the uniform closure of its subsets, so it contains $B^+_b(K)={\overline{\mathcal S_b^+(K)}}^{\|\cdot\|_\infty}\subseteq \overline{\mathcal R}_{2.13}^{\|\cdot\|_\infty}$, where $B^+_b(K)$ is the set of all non-negative bounded Borel functions on $K$. Conversely, the functions in $\mathcal R_{2.13}$ are non-negative, bounded, and Borel measurable. So, $\mathcal R_{2.13}\subseteq B^+_b(K)$. The uniform closure of such functions preserves these properties, and $B^+_b(K)$ is also closed in the uniform norm so contains the uniform closure of its subsets, hence $\overline{\mathcal R}_{2.13}^{\|\cdot\|_\infty}\subseteq B^+_b(K)$. Thus, $\overline{\mathcal R}_{2.13}^{\|\cdot\|_\infty}= B^+_b(K)$.

Since $f,g\in B^+_b(K)$ one can uniformly approximate $f$ and $g$ using canonical adjacent simple functions. For example, with $\delta_m\downarrow0$, construct $f_m^-:=\delta_m\left\lfloor\frac{f}{\delta_m}\right\rfloor$, $f_m^+:=\delta_m\left\lceil\frac{f}{\delta_m}\right\rceil$,
and similarly for $g_m^\pm$. Alternatively, let $\Pi_m=\{C_{m,1},\ldots,C_{m,N_m}\}$ be a nested sequence of finite Borel partitions of $K$ such that $\operatorname{mesh}(\Pi_m)
:=
\max_{1\leqslant r\leqslant N_m}\operatorname{diam}(C_{m,r})
\longrightarrow 0$. If $f$ and $g$ are uniformly continuous on $K$, define, for $\mathbf{x}\in C_{m,r}$, $\,f_m^-(\mathbf{x}):=\inf_{\mathbf{y}\in C_{m,r}} f(\mathbf{y})$, $\,f_m^+(\mathbf{x}):=\sup_{\mathbf{y}\in C_{m,r}} f(\mathbf{y})$, 
and similarly for $g_m^\pm$. 

With either of these example approximations, $\,f_m^-\leqslant f\leqslant f_m^+\,$ and $\,g_m^-\leqslant g\leqslant g_m^+$
with uniformly vanishing gaps; i.e. $\,\|f_m^+-f_m^-\|_\infty\longrightarrow0$, $\,\|g_m^+-g_m^-\|_\infty\longrightarrow0$. For every admissible $\mathbb P$ and all sufficiently large $m$, $g_m^\pm$ are close enough to $g$ to inherit $g$'s positivity, and
\begin{equation}
\frac{\mathbb E_{\mathbb P}[f_m^-]}{\mathbb E_{\mathbb P}[g_m^+]}
\leqslant
\frac{\mathbb E_{\mathbb P}[f]}{\mathbb E_{\mathbb P}[g]}
\leqslant
\frac{\mathbb E_{\mathbb P}[f_m^+]}{\mathbb E_{\mathbb P}[g_m^-]}.
\label{eqn_simplefunctionsandwich}
\end{equation}

Hence, the extremal objective is certified between two adjacent fixed-point problems:
\begin{equation}
\inf_{\mathbb P\in\mathcal D}\frac{\mathbb E_{\mathbb P}[f_m^-]}{\mathbb E_{\mathbb P}[g_m^+]}
\leqslant
\inf_{\mathbb P\in\mathcal D}\frac{\mathbb E_{\mathbb P}[f]}{\mathbb E_{\mathbb P}[g]}
\leqslant
\inf_{\mathbb P\in\mathcal D}\frac{\mathbb E_{\mathbb P}[f_m^+]}{\mathbb E_{\mathbb P}[g_m^-]}.
\label{eqn_extremisesimplefunctionsandwich}
\end{equation}

Each pair $(f_m^-,g_m^+)$ and $(f_m^+,g_m^-)$ admits simple-function numerator and denominator representations on partitions consisting of finitely many Borel level sets. The common refinement of the numerator and denominator partitions brings each pair within the scope of Theorem~\ref{thm_refinedgensol}. For all sufficiently large $m$, these pairs have benefits:
\begin{itemize}
    \item \textbf{Systematic construction:}
    ``canonical'' approximating sequences for $f$ and $g$.

    \item \textbf{Certified and convergent bounds:}
    each approximation stage gives a lower and upper bound on the
    extremal value, and these bounds converge to the extremal value.

    \item \textbf{Stagewise support characterisation:}
    at stage $m$, each bounding fixed-point problem has an extremal distribution whose support points provide approximate extremal support locations for the target problem.

    \item \textbf{Spatial localisation under domain refinement:}
    when the approximations are based on partitions with vanishing mesh (such as our example construction using $\Pi_m$ with uniformly continuous $f$, $g$),
    every stage-$m$ support point lies in a partition cell whose
    diameter tends to zero. Along any convergent subsequence of the
    stagewise support points, the corresponding cells localise the limiting extremal support point.
    
    \item \textbf{Generality of value quantisation:}
    when approximations are based on value quantisation (e.g. our first example construction using $\delta_m$) this guarantees uniform approximation for every $f,g\in B_b^+(K)$. The level-set cells need not have vanishing spatial diameter and, therefore, need not provide comparable spatial localisation compared with ``vanishing mesh'' approximations.
\end{itemize}




In summary, Theorem~\ref{thm:regulated_extension} is a computational route for solving \eqref{eqn_weak_opt_equiv_disc} for a general class of functions: solve increasingly accurate approximating problems, use their extremal objective forms guaranteed by Theorem~\ref{thm_refinedgensol}, recover arbitrarily accurate approximations to $\phi^*$, construct asymptotically optimal feasible priors, and identify extremal weak-limit priors for the Theorem~\ref{thm:regulated_extension} objective. In particular, \textbf{i)}~to approximate $\phi^*$, solve the approximating problems from Theorem~\ref{thm_refinedgensol} and monitor the convergence of the extremal values $\phi_m^*\to\phi^*$; \textbf{ii)}~to approximate an extremal prior, one should examine the corresponding extremal support tuples for clustering, convergent subsequences, or approximate stabilisation.
\section{Applications}
\label{subsec_examples}
The following examples illustrate this paper's methodological pipeline (see Figure~\ref{fig_CBIMethodologyPipeline}):
\subsection{A 1-Dimensional Extremal-Prior Problem With Two Support-Points}$\varepsilon, b, \theta, n$ are constants such that $\varepsilon,b,\theta\in(0,1)$ and $n\in{\mathbb N}\cup\{0\}$. For $F:[0,1]\to\mathbb R$, consider
\begin{gather}
\inf_{0\leqslant x \leqslant \varepsilon\leqslant y \leqslant 1}\frac{\theta(1-x)^{n}F(x)\,+\,(1-\theta)(1-y)^{n}F(y)}{(1-x)^{n}\theta + (1-y)^{n}(1-\theta)}\,.
\label{eqn_2intervalGenForm}    
\end{gather}
So, $f(z):=(1-z)^n F(z)$ and $g(z):=(1-z)^n$. Figures~\ref{figa:simpleCBI_step-univariate_approx-power-and-trunc}, \ref{figa:simpleCBI_univariate_infima-convergence}, \ref{figa:simpleCBI_bivariate_infima-convergence}, and Figures~\ref{figb:simpleCBI_step-univariate_approx-power-and-trunc}, \ref{figb:simpleCBI_univariate_infima-convergence}, \ref{figb:simpleCBI_bivariate_infima-convergence} illustrate Theorem~\ref{thm:regulated_extension}. Note that when $F$ is analytic, Theorem~\ref{thm:global_fp_tuple} applies to \eqref{eqn_2intervalGenForm} without ``$m$'' approximations. More examples illustrate Theorem~\ref{thm:regulated_extension} in Figures~\ref{figa:nontrivial-approximation-illustrations}, \ref{figa:nontrivial-convergence-illustrations}, \ref{figb:nontrivial-approximation-illustrations}, \ref{figb:nontrivial-convergence-illustrations}.

\newlength{\subfigwidthA}
\newlength{\subfigwidthB}
\setlength{\subfigwidthA}{0.4\textwidth}
\setlength{\subfigwidthB}{0.4\textwidth}

\begin{figure}[htbp!]
\centering

\subfloat[Approximators to $F(z)=(1-z)^2$ on $0\leqslant z\leqslant 1$. With the uniform partition $I_{j,m}=[(j-1)/m,j/m)$ for $j<m$ and $I_{m,m}=\{z:(m-1)/m\leqslant z\leqslant 1\}$, define $F_m(z)=\sum_{j=1}^m (1-(j-1)/m)^2\,\mathbf 1_{I_{j,m}}(z)$.%
\label{figa:simpleCBI_step-univariate_approx-power-and-trunc}%
]{%
\resizebox{\subfigwidthA}{!}{%
\begin{tikzpicture}
\begin{axis}[
    width=0.48\textwidth,
    height=6.2cm,
    xmin=0, xmax=1,
    ymin=-0.03, ymax=1.05,
    axis lines=left,
    xtick={0,1},
    ytick={0,0.25,0.5,0.75,1},
    xlabel={$z$},
    ylabel={$F_m,F$},
    every axis label/.style={font=\small},
    tick label style={font=\scriptsize},
    clip=false,
    legend style={
        draw=black,
        fill=white,
        rounded corners=1pt,
        font=\scriptsize,
        at={(0.03,0.03)},
        anchor=south west
    },
    legend cell align=left
]

\addplot[
    smooth,
    domain=0:1,
    samples=200,
    line width=1.0pt,
    color=gray!60!black,
    dash pattern=on 2.4pt off 1.4pt,
    forget plot
] {(1-x)^2};

\addplot[
    const plot,
    line width=1.8pt,
    color=black,
    forget plot
] coordinates {
    (0.000000,1.000000)
    (0.125000,0.765625)
    (0.250000,0.562500)
    (0.375000,0.390625)
    (0.500000,0.250000)
    (0.625000,0.140625)
    (0.750000,0.062500)
    (0.875000,0.015625)
    (1.000000,0.015625)
};

\addlegendimage{no markers, color=gray!60!black, dash pattern=on 2.4pt off 1.4pt, line width=1.0pt}
\addlegendentry{$F(z)$}
\addlegendimage{no markers, color=black, solid, line width=1.8pt}
\addlegendentry{$F_8(z)$}

\node[
    draw=black, fill=white, rounded corners=1pt,
    inner sep=2pt, font=\scriptsize, align=left
] at (axis cs:0.72,0.72)
{uniform left-endpoint\\step approximations\\track the monotone decay};
\end{axis}
\end{tikzpicture}%
}%
}%
\hspace{0.05\textwidth}
\subfloat[Approximators to $F(z)=(1-z)^2\mathbf 1_{z\leqslant b}$. Partition $0\leqslant z \leqslant b$ into $I_{j,m}=\{z:(j-1)b/m\leqslant z < jb/m\}$ for $j<m$ and $I_{m,m}=\{z:(m-1)b/m\leqslant z\leqslant b\}$, and set $F_m(z)=\sum_{j=1}^m (1-(j-1)b/m)^2\,\mathbf 1_{I_{j,m}}(z)$ on $0\leqslant z\leqslant b$, with $F_m(z)=0$ on $b<z\leqslant 1$.%
\label{figb:simpleCBI_step-univariate_approx-power-and-trunc}%
]{%
\resizebox{\subfigwidthB}{!}{%
\begin{tikzpicture}
\begin{axis}[
    width=0.48\textwidth,
    height=6.2cm,
    xmin=0, xmax=1,
    ymin=-0.03, ymax=1.05,
    axis lines=left,
    xtick={0,0.6,1},
    xticklabels={$0$,$b$,$1$},
    ytick={0,0.25,0.5,0.75,1},
    xlabel={$z$},
    ylabel={$F_m,F$},
    every axis label/.style={font=\small},
    tick label style={font=\scriptsize},
    clip=false,
    legend style={
        draw=black,
        fill=white,
        rounded corners=1pt,
        font=\scriptsize,
        at={(0.03,0.03)},
        anchor=south west
    },
    legend cell align=left
]

\addplot[
    smooth,
    domain=0:0.6,
    samples=200,
    line width=1.0pt,
    color=gray!60!black,
    dash pattern=on 2.4pt off 1.4pt,
    forget plot
] {(1-x)^2};

\addplot[
    line width=1.0pt,
    color=gray!60!black,
    dash pattern=on 2.4pt off 1.4pt,
    forget plot
] coordinates {
    (0.600000,0.160000)
    (0.600000,0.000000)
};

\addplot[
    line width=1.0pt,
    color=gray!60!black,
    dash pattern=on 2.4pt off 1.4pt,
    forget plot
] coordinates {
    (0.600000,0.000000)
    (1.000000,0.000000)
};

\addplot[
    const plot,
    line width=1.8pt,
    color=black,
    forget plot
] coordinates {
    (0.000000,1.000000)
    (0.075000,0.855625)
    (0.150000,0.722500)
    (0.225000,0.600625)
    (0.300000,0.490000)
    (0.375000,0.390625)
    (0.450000,0.302500)
    (0.525000,0.225625)
    (0.600000,0.225625)
    (0.600000,0.000000)
    (1.000000,0.000000)
};

\addlegendimage{no markers, color=gray!60!black, dash pattern=on 2.4pt off 1.4pt, line width=1.0pt}
\addlegendentry{$F(z)$}
\addlegendimage{no markers, color=black, solid, line width=1.8pt}
\addlegendentry{$F_8(z)$}

\node[
    draw=black, fill=white, rounded corners=1pt,
    inner sep=2pt, font=\scriptsize, align=left
] at (axis cs:0.7,0.72)
{jump-preserving steps\\converge uniformly\\to the truncated profile};
\end{axis}
\end{tikzpicture}%
}%
}%
\caption[Step approximations to continuous and truncated F functions]{Approximators for two $F$ functions for \eqref{eqn_2intervalGenForm}.  
The black curves approximate the limiting dashed $F$ functions. The indicators $\mathbf 1_{\mathtt S}$ equal 1 when boolean $\mathtt S$ is true, and equal 0 otherwise.}
\label{fig:simpleCBI_step-univariate_approx-power-and-trunc}
\end{figure}
%
%
%
%
%
%
%
%
%
%
%
%
%
%
%
%
%

\setlength{\subfigwidthA}{0.44\textwidth}
\setlength{\subfigwidthB}{0.44\textwidth}

\begin{figure}[htbp!]
\centering

\subfloat[%
Converging $\phi_m^*$ from \eqref{eqn_2intervalGenForm}, for the approximants to $F(z)=(1-z)^2$ in Figure~\ref{figa:simpleCBI_step-univariate_approx-power-and-trunc}.%
\label{figa:simpleCBI_univariate_infima-convergence}%
]{%
\resizebox{\subfigwidthA}{!}{%
\begin{tikzpicture}[baseline=(current bounding box.north)]
\begin{axis}[
    width=0.47\textwidth,
    height=6.2cm,
    xmin=1, xmax=40,
    ymin=0.39, ymax=1.02,
    axis lines=left,
    xtick={1,5,10,15,20,25,30,35,40},
    ytick={0.4,0.5,0.6,0.7,0.8,0.9,1.0},
    xlabel={$m$},
    ylabel={infimum value},
    every axis label/.style={font=\small},
    tick style={black},
    grid=both,
    grid style={gray!20},
    legend style={
        draw=black,
        fill=white,
        font=\scriptsize,
        at={(0.97,0.97)},
        anchor=north east
    },
    clip=false
]

\addplot[
    color=black,
    line width=1.4pt,
    mark=*,
    mark size=1.4pt
] coordinates {
    (1,1.0000000000)
    (2,0.9492758251)
    (3,0.4440256637)
    (4,0.5413649271)
    (5,0.5532184323)
    (6,0.4312937324)
    (7,0.4705056644)
    (8,0.4932023905)
    (9,0.4210577590)
    (10,0.4497085578)
    (11,0.4734885756)
    (12,0.4180796486)
    (13,0.4424227854)
    (14,0.4644098664)
    (15,0.4182722256)
    (16,0.4395823909)
    (17,0.4572774835)
    (18,0.4195647836)
    (19,0.4363520603)
    (20,0.4031291143)
    (21,0.4189590408)
    (22,0.4326804930)
    (23,0.4043499582)
    (24,0.4180796486)
    (25,0.4306627409)
    (26,0.4054756790)
    (27,0.4179523431)
    (28,0.4296207141)
    (29,0.4067773257)
    (30,0.4182722256)
    (31,0.4291331694)
    (32,0.4081644253)
    (33,0.4185572943)
    (34,0.4275668815)
    (35,0.4090763088)
    (36,0.4180796486)
    (37,0.4265744608)
    (38,0.4094782465)
    (39,0.4179328573)
    (40,0.4016555642)
};
\addlegendentry{$\phi_m^*$}

\addplot[
    color=gray!60!black,
    dash pattern=on 2.4pt off 1.4pt,
    line width=1.0pt
] coordinates {
    (1,0.4016555642) (40,0.4016555642)
};
\addlegendentry{$\phi^*$}

\end{axis}
\end{tikzpicture}%
}%
}%
\hspace{0.05\textwidth}
\subfloat[%
Converging $\phi_m^*$ from \eqref{eqn_2intervalGenForm}, for the approximants to
$F(z)=(1-z)^2\mathbf{1}_{z\leqslant 0.6}$ in Figure~\ref{figb:simpleCBI_step-univariate_approx-power-and-trunc}. %
\label{figb:simpleCBI_univariate_infima-convergence}%
]{%
\resizebox{\subfigwidthB}{!}{%
\begin{tikzpicture}[baseline=(current bounding box.north)]
\begin{axis}[
    width=0.47\textwidth,
    height=6.2cm,
    xmin=1, xmax=40,
    ymin=0.39, ymax=1.02,
    axis lines=left,
    xtick={1,5,10,15,20,25,30,35,40},
    ytick={0.4,0.5,0.6,0.7,0.8,0.9,1.0},
    xlabel={$m$},
    ylabel={infimum value},
    every axis label/.style={font=\small},
    tick style={black},
    grid=both,
    grid style={gray!20},
    legend style={
        draw=black,
        fill=white,
        font=\scriptsize,
        at={(0.97,0.97)},
        anchor=north east
    },
    clip=false
]

\addplot[
    color=black,
    line width=1.4pt,
    mark=*,
    mark size=1.4pt
] coordinates {
    (1,1.0000000000)
    (2,0.4900000000)
    (3,0.5532184323)
    (4,0.4603089844)
    (5,0.5000987455)
    (6,0.4497085578)
    (7,0.4090763088)
    (8,0.4499333821)
    (9,0.4182722256)
    (10,0.4527638708)
    (11,0.4260095185)
    (12,0.4031291143)
    (13,0.4282652909)
    (14,0.4090763088)
    (15,0.4306627409)
    (16,0.4138945196)
    (17,0.4333668465)
    (18,0.4182722256)
    (19,0.4044798183)
    (20,0.4216274444)
    (21,0.4090763088)
    (22,0.4237929687)
    (23,0.4123457524)
    (24,0.4016555642)
    (25,0.4153959115)
    (26,0.4054561526)
    (27,0.4182722256)
    (28,0.4089414267)
    (29,0.4202469363)
    (30,0.4115516666)
    (31,0.4033011069)
    (32,0.4138945196)
    (33,0.4061107243)
    (34,0.4161302460)
    (35,0.4087415268)
    (36,0.4016893878)
    (37,0.4110683137)
    (38,0.4043495050)
    (39,0.4129713022)
    (40,0.4065705959)
};
\addlegendentry{$\phi_m^*$}

\addplot[
    color=gray!60!black,
    dash pattern=on 2.4pt off 1.4pt,
    line width=1.0pt
] coordinates {
    (1,0.4065705959) (40,0.4065705959)
};
\addlegendentry{$\phi^*$}

\end{axis}
\end{tikzpicture}%
}%
}%
\caption[1-dimensional convergence of approximating infima]{Converging infima from \eqref{eqn_2intervalGenForm} using the approximations in Figure~\ref{fig:simpleCBI_step-univariate_approx-power-and-trunc}: here, $n=10$, $\theta=0.5$, $\varepsilon=0.35$. This illustrates a 1D application of Theorem~\ref{thm:regulated_extension}. 
The black curves plot the approximating infima $\phi_m^*$. The grey dashed line is the last computed $\phi^*_m$ value, used as a numerical estimate of the limiting infimum $\phi^*\approx 0.401621$. For these parameter values, $\phi^*$ is the same in both figures: the continuous-problem minimizer satisfies $y^*\approx 0.42148<0.6=b$, so the truncation at $b$ is inactive at the optimizer.}
\label{fig:simpleCBI_univariate_infima-convergence}
\end{figure}
%
%
%
%
%
%
%

%
%
%
%
%
%
%
%
%
\setlength{\subfigwidthA}{0.45\textwidth}
\setlength{\subfigwidthB}{0.45\textwidth}

\begin{figure}[htbp!]
\centering

\subfloat[Converging infima and minimiser locations from $F_m$ approximants in Figure~\ref{figa:simpleCBI_step-univariate_approx-power-and-trunc}.%
\label{figa:simpleCBI_bivariate_infima-convergence}%
]{%
\resizebox{\subfigwidthA}{!}{%
\begin{tikzpicture}
\begin{axis}[
    width=0.48\textwidth,
    height=7.0cm,
    view={42}{28},
    xmin=0, xmax=1,
    ymin=0, ymax=1,
    zmin=-5.05, zmax=-0.18,
    axis lines=box,
    xlabel={$x$},
    ylabel={$y$},
    zlabel={$\phi_m^*,\phi^*$},
    every axis label/.style={font=\small},
    tick label style={font=\scriptsize},
    xtick={0,0.35,1},
    xticklabels={$0$,$\varepsilon$,$1$},
    ytick={0,0.35,1},
    yticklabels={$0$,$\varepsilon$,$1$},
    ztick={-5,-4,-3,-2,-1,-0.223016},
    zticklabels={$0.401621$,$0.401711$,$0.402611$,$0.411611$,$0.501611$,$1.000000$},
    legend style={
        draw=black,
        fill=white,
        rounded corners=1pt,
        font=\scriptsize,
        at={(0.03,1.2)},
        anchor=north west
    },
    legend cell align=left,
    clip=false,
]
\addplot3[
    surf,
    shader=flat,
    draw=gray!60!black,
    fill=gray!20,
    opacity=0.60,
    domain=0:1,
    y domain=0:1,
    samples=2,
    samples y=2
] {-5.000000};
\addlegendentry{$\phi^*=0.401621$ plateau}

\addplot3[
    color=black,
    line width=0.8pt,
    mark=*,
    mark size=1.5pt
] coordinates {
    (0.000000,0.350000,-0.223016) (0.350000,0.500000,-0.261485) (0.350000,0.666667,-1.372485) (0.350000,0.500000,-0.854636) (0.350000,0.400000,-0.819280) (0.350000,0.500000,-1.527497) (0.350000,0.428571,-1.161815) (0.350000,0.375000,-1.038146) (0.350000,0.444444,-1.711155) (0.350000,0.400000,-1.317878) (0.350000,0.363636,-1.143407) (0.350000,0.416667,-1.783344) (0.350000,0.384615,-1.389215) (0.350000,0.357143,-1.202049) (0.350000,0.400000,-1.778295) (0.350000,0.375000,-1.420544) (0.350000,0.411765,-1.254407) (0.350000,0.388889,-1.745846) (0.350000,0.421053,-1.459158) (0.350000,0.400000,-2.818719) (0.350000,0.428571,-1.760752) (0.350000,0.409091,-1.507667) (0.350000,0.434783,-2.562428) (0.350000,0.416667,-1.783344) (0.350000,0.400000,-1.536829) (0.350000,0.423077,-2.412896) (0.350000,0.407407,-1.786714) (0.350000,0.392857,-1.552693) (0.350000,0.413793,-2.286825) (0.350000,0.400000,-1.778295) (0.350000,0.419355,-1.560319) (0.350000,0.406250,-2.183537) (0.350000,0.424242,-1.770927) (0.350000,0.411765,-1.585766) (0.350000,0.428571,-2.126957) (0.350000,0.416667,-1.783344) (0.350000,0.405405,-1.602697) (0.350000,0.421053,-2.104182) (0.350000,0.410256,-1.787233) (0.350000,0.425000,-4.351823)
};
\addlegendentry{$m\mapsto(\phi^*_m,x_m^*,y_m^*)$}

\addplot3[
    only marks,
    mark=star,
    mark options={scale=1.05, draw=black, fill=white},
    line width=0.8pt
] coordinates {
    (0.350000,0.421481,-5.000000)
};
\addlegendentry{limit point}

\addplot3[dash pattern=on 2.2pt off 1.3pt, color=gray!60!black, line width=0.5pt]
coordinates {(0.35,0,-5.05) (0.35,1,-5.05)};
\addplot3[dash pattern=on 2.2pt off 1.3pt, color=gray!60!black, line width=0.5pt]
coordinates {(0,0.35,-5.05) (1,0.35,-5.05)};

\node[font=\scriptsize, fill=white, inner sep=1pt] at (axis cs:-0.15,0.350000,-0.223016) {$m=1$};
\node[font=\scriptsize, inner sep=1pt] at (axis cs:0.49000,0.56000,-4.351823) {$m=40$};
\end{axis}
\end{tikzpicture}%
}%
}%
\hspace{0.05\textwidth}
\subfloat[Converging infima and minimiser locations from $F_m$ approximants in Figure~\ref{figb:simpleCBI_step-univariate_approx-power-and-trunc}.%
\label{figb:simpleCBI_bivariate_infima-convergence}%
]{%
\resizebox{\subfigwidthB}{!}{%
\begin{tikzpicture}
\begin{axis}[
    width=0.48\textwidth,
    height=7.0cm,
    view={42}{28},
    xmin=0, xmax=1,
    ymin=0, ymax=1,
    zmin=-5.05, zmax=-0.18,
    axis lines=box,
    xlabel={$x$},
    ylabel={$y$},
    zlabel={$\phi_m^*,\phi^*$},
    every axis label/.style={font=\small},
    tick label style={font=\scriptsize},
    xtick={0,0.35,0.6,1},
    xticklabels={$0$,$\varepsilon$,$b$,$1$},
    ytick={0,0.35,0.6,1},
    yticklabels={$0$,$\varepsilon$,$b$,$1$},
    ztick={-5,-4,-3,-2,-1,-0.223016},
    zticklabels={$0.401621$,$0.401711$,$0.402611$,$0.411611$,$0.501611$,$1.000000$},
    legend style={
        draw=black,
        fill=white,
        rounded corners=1pt,
        font=\scriptsize,
        at={(0.03,1.2)},
        anchor=north west
    },
    legend cell align=left,
    clip=false,
]
\addplot3[
    surf,
    shader=flat,
    draw=gray!60!black,
    fill=gray!20,
    opacity=0.60,
    domain=0:1,
    y domain=0:1,
    samples=2,
    samples y=2
] {-5.000000};
\addlegendentry{$\phi^*=0.401621$ plateau}

\addplot3[
    color=black,
    line width=0.8pt,
    mark=*,
    mark size=1.7pt
] coordinates {
    (0.000000,0.350000,-0.223016) (0.300000,0.350000,-1.053602) (0.350000,0.400000,-0.819280) (0.350000,0.450000,-1.231377) (0.350000,0.360000,-1.006618) (0.350000,0.400000,-1.317878) (0.350000,0.428571,-2.126957) (0.350000,0.375000,-1.315852) (0.350000,0.400000,-1.778295) (0.350000,0.420000,-1.291131) (0.350000,0.381818,-1.612638) (0.350000,0.400000,-2.818719) (0.350000,0.415385,-1.574234) (0.350000,0.428571,-2.126957) (0.350000,0.400000,-1.536829) (0.350000,0.412500,-1.910680) (0.350000,0.388235,-1.498177) (0.350000,0.400000,-1.778295) (0.350000,0.410526,-2.542310) (0.350000,0.420000,-1.698615) (0.350000,0.428571,-2.126957) (0.350000,0.409091,-1.654002) (0.350000,0.417391,-1.969211) (0.350000,0.425000,-4.351823) (0.350000,0.408000,-1.860599) (0.350000,0.415385,-2.415096) (0.350000,0.400000,-1.778295) (0.350000,0.407143,-2.134876) (0.350000,0.413793,-1.729651) (0.350000,0.420000,-2.002588) (0.350000,0.425806,-2.772107) (0.350000,0.412500,-1.910680) (0.350000,0.418182,-2.346822) (0.350000,0.405882,-1.838058) (0.350000,0.411429,-2.146883) (0.350000,0.416667,-4.106211) (0.350000,0.421622,-2.024236) (0.350000,0.426316,-2.562500) (0.350000,0.415385,-1.944613) (0.350000,0.420000,-2.304561)
};
\addlegendentry{$m\mapsto(\phi^*_m,x_m^*,y_m^*)$}

\addplot3[
    only marks,
    mark=star,
    mark options={scale=1.05, draw=black, fill=white},
    line width=0.8pt
] coordinates {
    (0.350000,0.421481,-5.000000)
};
\addlegendentry{limit point}

\addplot3[dash pattern=on 2.2pt off 1.3pt, color=gray!60!black, line width=0.5pt]
coordinates {(0.35,0,-5.05) (0.35,1,-5.05)};
\addplot3[dash pattern=on 2.2pt off 1.3pt, color=gray!60!black, line width=0.5pt]
coordinates {(0,0.35,-5.05) (1,0.35,-5.05)};
\addplot3[dash pattern=on 2.2pt off 1.3pt, color=gray!60!black, line width=0.5pt]
coordinates {(0,0.6,-5.05) (1,0.6,-5.05)};

\node[font=\scriptsize, fill=white, inner sep=1pt] at (axis cs:-0.15,0.36,-0.22) {$m=1$};
\node[font=\scriptsize, inner sep=1pt] at (axis cs:0.520000,0.52,-4.24561) {$m=40$};
\end{axis}
\end{tikzpicture}%
}%
}%
\caption[2-dimensional convergence of infima and fixed-point triples]{Convergence of infima from approximations when \eqref{eqn_2intervalGenForm} is treated as a 2D application of Theorem~\ref{thm:regulated_extension} over the subset $[0,\varepsilon]\times[\varepsilon,1]$ of the unit square. This 2D reformulation uses the bivariate functions $f_m(x,y)=\theta(1-x)^nF_m(x)+(1-\theta)(1-y)^nF_m(y)$, $g(x,y)=\theta(1-x)^n+(1-\theta)(1-y)^n$,
so the plotted infima $\phi^*_m$ are $\inf f_m(x,y)/g(x,y)$ over
$0\leqslant x\leqslant \varepsilon\leqslant y\leqslant 1$. These optimisations are equivalent to those in Figure~\ref{fig:simpleCBI_univariate_infima-convergence}. The black points are the sequences of fixed-point triples from each $m$-th approximating problem.
}
\label{fig:simpleCBI_bivariate_infima-convergence}
\end{figure}
%
%
%
%
%

%
%
%

\setlength{\subfigwidthA}{0.44\textwidth}
\setlength{\subfigwidthB}{0.44\textwidth}

\begin{figure}[htbp!]
\centering

\subfloat[Start from $F_0\equiv \tfrac12$. At stage $m$, each interval is split in half. If the parent height is $1$, the children are $(1-2^{-m},\,1)$; if it is $0$, they are $(0,\,2^{-m})$; otherwise they are $\bigl(\min\{1,h+2^{-m}\},\,\max\{0,h-2^{-m}\}\bigr)$. Shown are $F_4$ and a finer approximation representing limit $F=\lim_{m\to\infty}F_m$.%
\label{figa:nontrivial-approximation-illustrations}%
]{%
\resizebox{\subfigwidthA}{!}{%
\begin{tikzpicture}
\begin{axis}[
    width=0.48\textwidth,
    height=6.2cm,
    xmin=0, xmax=1,
    ymin=-0.03, ymax=1.05,
    axis lines=left,
    xtick={0,1},
    ytick={0,0.5,1},
    xlabel={$z$},
    ylabel={$F_m,F$},
    every axis label/.style={font=\small},
    tick label style={font=\scriptsize},
    clip=false,
    legend style={
        draw=black,
        fill=white,
        rounded corners=1pt,
        font=\scriptsize,
        at={(0.03,0.03)},
        anchor=south west
    },
    legend cell align=left
]

\addplot[
    const plot,
    line width=1.0pt,
    color=gray!60!black,
    dash pattern=on 2.4pt off 1.4pt
] coordinates {
    (0.000000,0.984375)
    (0.015625,0.984375)
    (0.031250,0.953125)
    (0.046875,0.921875)
    (0.062500,0.890625)
    (0.078125,0.859375)
    (0.093750,0.828125)
    (0.109375,0.796875)
    (0.125000,0.765625)
    (0.140625,0.734375)
    (0.156250,0.703125)
    (0.171875,0.671875)
    (0.187500,0.640625)
    (0.203125,0.609375)
    (0.218750,0.578125)
    (0.234375,0.546875)
    (0.250000,0.515625)
    (0.265625,0.984375)
    (0.281250,0.953125)
    (0.296875,0.921875)
    (0.312500,0.890625)
    (0.328125,0.859375)
    (0.343750,0.828125)
    (0.359375,0.796875)
    (0.375000,0.765625)
    (0.390625,0.984375)
    (0.406250,0.953125)
    (0.421875,0.921875)
    (0.437500,0.890625)
    (0.453125,0.984375)
    (0.468750,0.953125)
    (0.484375,0.984375)
    (0.500000,1.000000)
    (0.515625,0.000000)
    (0.531250,0.015625)
    (0.546875,0.046875)
    (0.562500,0.015625)
    (0.578125,0.109375)
    (0.593750,0.078125)
    (0.609375,0.046875)
    (0.625000,0.015625)
    (0.640625,0.234375)
    (0.656250,0.203125)
    (0.671875,0.171875)
    (0.687500,0.140625)
    (0.703125,0.109375)
    (0.718750,0.078125)
    (0.734375,0.046875)
    (0.750000,0.015625)
    (0.765625,0.484375)
    (0.781250,0.453125)
    (0.796875,0.421875)
    (0.812500,0.390625)
    (0.828125,0.359375)
    (0.843750,0.328125)
    (0.859375,0.296875)
    (0.875000,0.265625)
    (0.890625,0.234375)
    (0.906250,0.203125)
    (0.921875,0.171875)
    (0.937500,0.140625)
    (0.953125,0.109375)
    (0.968750,0.078125)
    (0.984375,0.046875)
    (1.000000,0.015625)
};
\addlegendentry{$F_(z)$}

\addplot[
    const plot,
    line width=1.8pt,
    color=black
] coordinates {
    (0.000000,0.937500)
    (0.062500,0.937500)
    (0.125000,0.812500)
    (0.187500,0.687500)
    (0.250000,0.562500)
    (0.312500,0.937500)
    (0.375000,0.812500)
    (0.437500,0.937500)
    (0.500000,1.000000)
    (0.562500,0.000000)
    (0.625000,0.062500)
    (0.687500,0.187500)
    (0.750000,0.062500)
    (0.812500,0.437500)
    (0.875000,0.312500)
    (0.937500,0.187500)
    (1.000000,0.062500)
};
\addlegendentry{$F_4(z)$}

\node[
    draw=black, fill=white, rounded corners=1pt,
    inner sep=2pt, font=\scriptsize, align=left
] at (axis cs:0.88,0.74)
{refinements create\\persistent multiscale\\plateau structure};
\end{axis}
\end{tikzpicture}%
}%
}%
\hspace{0.05\textwidth}
\subfloat[The approximants are
$F_m(z)=0.55+\sum_{j=0}^m 0.08\cdot 0.6^j\cos(2^{j+1}\pi z)$. Shown are $F_1$ and a finer approximation representing limit $F=\lim_{m\to\infty}F_m$.%
\label{figb:nontrivial-approximation-illustrations}%
]{%
\resizebox{\subfigwidthB}{!}{%
\begin{tikzpicture}
\begin{axis}[
    width=0.48\textwidth,
    height=6.2cm,
    xmin=0, xmax=1,
    ymin=0.44, ymax=0.76,
    axis lines=left,
    xtick={0,1},
    ytick={0.45,0.55,0.65,0.75},
    xlabel={$z$},
    ylabel={$F_m,F$},
    every axis label/.style={font=\small},
    tick label style={font=\scriptsize},
    clip=false,
    samples=500,
    domain=0:1,
    legend style={
        draw=black,
        fill=white,
        rounded corners=1pt,
        font=\scriptsize,
        at={(0.07,0.97)},
        anchor=north west
    },
    legend cell align=left
]

\addplot[
    color=gray!60!black,
    line width=1.05pt,
    dash pattern=on 2.4pt off 1.4pt
] expression {
    0.55
    + 0.08*cos(deg(2*pi*x))
    + 0.048*cos(deg(4*pi*x))
    + 0.0288*cos(deg(8*pi*x))
    + 0.01728*cos(deg(16*pi*x))
    + 0.010368*cos(deg(32*pi*x))
    + 0.0062208*cos(deg(64*pi*x))
    + 0.00373248*cos(deg(128*pi*x))
    + 0.002239488*cos(deg(256*pi*x))
    + 0.0013436928*cos(deg(512*pi*x))
};
\addlegendentry{$F(z)$}

\addplot[
    color=black,
    line width=1.8pt,
    samples=300,
    domain=0:1
] {0.55
    + 0.08*cos(deg(2*pi*x))
    + 0.048*cos(deg(4*pi*x))
};
\addlegendentry{$F_1(z)$}

\node[
    draw=black, fill=white, rounded corners=1pt,
    inner sep=2pt, font=\scriptsize, align=left
] at (axis cs:0.50,0.62)
{additional modes\\appear in the limit};
\end{axis}
\end{tikzpicture}%
}%
}%

\caption[Uniform-limit constructions from piecewise-continuous and analytic approximants]{Two uniform-limit constructions: Figure~\ref{figa:nontrivial-approximation-illustrations} shows piecewise continuous approximations (Theorem~\ref{thm:regulated_extension}). Figure~\ref{figb:nontrivial-approximation-illustrations} shows analytic approximants (Theorem~\ref{thm_gensol}).}
\label{fig:nontrivial-approximation-illustrations}
\end{figure}
%
%
%
%
%
%
%

\setlength{\subfigwidthA}{0.45\textwidth}
\setlength{\subfigwidthB}{0.45\textwidth}

\begin{figure}[htbp!]
\centering

\subfloat[%
Numerical convergence of $\phi_m^*$ for the approximants in Figure~\ref{figa:nontrivial-approximation-illustrations}. The corresponding objective is $\phi_m^*=\inf\limits_{0\leqslant x\leqslant 0.1\leqslant y\leqslant 1}\frac{0.5(1-x)^{10}F_m(x)+0.5(1-y)^{10}F_m(y)}{0.5(1-x)^{10}+0.5(1-y)^{10}}$.%
\label{figa:nontrivial-convergence-illustrations}%
]{%
\resizebox{\subfigwidthA}{!}{%
\begin{tikzpicture}[baseline=(current bounding box.north)]
\begin{axis}[
    width=0.47\textwidth,
    height=6.2cm,
    xmin=1, xmax=19,
    ymin=0.72, ymax=1.01,
    axis lines=left,
    xtick={1,3,5,7,9,11,13,15,17,19},
    ytick={0.75,0.80,0.85,0.90,0.95,1.00},
    xlabel={$m$},
    ylabel={infimum value},
    every axis label/.style={font=\small},
    tick style={black},
    grid=both,
    grid style={gray!20},
    legend style={
        draw=black,
        fill=white,
        font=\scriptsize,
        at={(0.97,0.97)},
        anchor=north east
    },
    clip=false
]

\addplot[
    color=black,
    line width=1.4pt,
    mark=*,
    mark size=1.8pt
] coordinates {
    (1,0.9972070684)
    (2,0.7479053013)
    (3,0.7674912754)
    (4,0.7463788519)
    (5,0.7316591390)
    (6,0.7468971375)
    (7,0.7546941345)
    (8,0.7525883112)
    (9,0.7515276480)
    (10,0.7525042105)
    (11,0.7529924918)
    (12,0.7528599089)
    (13,0.7527935912)
    (14,0.7528546263)
    (15,0.7528851417)
    (16,0.7528768484)
    (17,0.7528727016)
    (18,0.7528765162)
    (19,0.7528784236)
};
\addlegendentry{$\phi_m^*$}

\addplot[
    color=gray!60!black,
    dash pattern=on 2.4pt off 1.4pt,
    line width=1.0pt
] coordinates {
    (1,0.7528784236) (19,0.7528784236)
};
\addlegendentry{$\phi^*$}

\end{axis}
\end{tikzpicture}%
}%
}%
\hspace{0.05\textwidth}
\subfloat[%
Numerical convergence of $\phi_m^*$ for the approximants in Figure~\ref{figb:nontrivial-approximation-illustrations}.
The corresponding objective is
$\phi_m^*=\inf\limits_{0\leqslant x\leqslant 0.35\leqslant y\leqslant 1}
\frac{0.5(1-x)^{10}F_m(x)+0.5(1-y)^{10}F_m(y)}
{0.5(1-x)^{10}+0.5(1-y)^{10}}$.%
\label{figb:nontrivial-convergence-illustrations}%
]{%
\resizebox{\subfigwidthB}{!}{%
\begin{tikzpicture}[baseline=(current bounding box.north)]
\begin{axis}[
    width=0.47\textwidth,
    height=6.2cm,
    xmin=1, xmax=19,
    ymin=0.448, ymax=0.488,
    axis lines=left,
    xtick={1,3,5,7,9,11,13,15,17,19},
    ytick={0.45,0.46,0.47,0.48},
    xlabel={$m$},
    ylabel={infimum value},
    every axis label/.style={font=\small},
    tick style={black},
    grid=both,
    grid style={gray!20},
    legend style={
        draw=black,
        fill=white,
        font=\scriptsize,
        at={(0.97,0.97)},
        anchor=north east
    },
    clip=false
]

\addplot[
    color=black,
    line width=1.4pt,
    mark=*,
    mark size=1.8pt
] coordinates {
    (1,0.4853333333)
    (2,0.4640768142)
    (3,0.4623806458)
    (4,0.4554562582)
    (5,0.4543342545)
    (6,0.4521037516)
    (7,0.4515229729)
    (8,0.4507885507)
    (9,0.4505378308)
    (10,0.4502912045)
    (11,0.4501908267)
    (12,0.4501065910)
    (13,0.4500679687)
    (14,0.4500387996)
    (15,0.4500242813)
    (16,0.4500140776)
    (17,0.4500086935)
    (18,0.4500050924)
    (19,0.4500031198)
};
\addlegendentry{$\phi_m^*$}

\addplot[
    color=gray!60!black,
    dash pattern=on 2.4pt off 1.4pt,
    line width=1.0pt
] coordinates {
    (1,0.4500031198) (19,0.4500031198)
};
\addlegendentry{$\phi^*$}

\end{axis}
\end{tikzpicture}%
}%
}%
\caption[Infimum convergence under uniform-limit approximations]{Numerical convergence of \eqref{eqn_2intervalGenForm} infima using the sequence of approximations illustrated in Figure~\ref{fig:nontrivial-approximation-illustrations}; here, $n=10$ and $\theta=0.5$. In Figure~\ref{figa:nontrivial-convergence-illustrations}, $\varepsilon=0.1$. In Figure~\ref{figb:nontrivial-convergence-illustrations}, $\varepsilon=0.35$. The black curves plot the computed approximating infima $\phi_m^*$. The dashed line is the last computed value---i.e. an approximation of the exact limit.}
\label{fig:nontrivial-convergence-illustrations}
\end{figure}

\FloatBarrier
\subsection{A Two-Parameter Markov Chain Likelihood under Mixed Constraints}Define $\mathcal R:=\{(x,\lambda)\mid 0\leqslant x <1,\;\max \left\{0,2-1/x\right\} \leqslant\lambda\leqslant1\}$ and define $g(x,\lambda ) {}:={} (1-x)\left(1-x(1-\lambda)/(1-x)\right)^{999}$. Consider 
\begin{equation}
\inf_{\text{all feasible }\mathbb P\text{ over }\mathcal R}\frac{\mathbb E[f(X,\Lambda)]}{\mathbb E[g(X,\Lambda)]}\,,
\label{eqn_xKlotzlklhdFn_maintxt}
\end{equation}
where $f(x,\lambda)=g(x,\lambda){\bf 1}_{0\leqslant x\leqslant 10^{-4}}$, $\mathbb P(\Lambda>X)=\mathbb P(\Lambda<X)=0.05$, and $X\allowbreak\sim\allowbreak\operatorname{Beta}(1,\allowbreak 10000)$, for all feasible $\mathbb P$. The likelihood $g(x,\lambda)$ is induced by Klotz’s 2-state discrete-time Markov-chain model (see \cite{klotz_statistical_1973}). Solving \eqref{eqn_xKlotzlklhdFn_maintxt} proceeds by applying Proposition~\ref{cor_refined_strip_oscillations}, Proposition~\ref{prop:semicont_refinement} and Theorem~\ref{thm:regulated_extension}; see Figure~\ref{fig:combined-klotz-and-convergence}.

%
%

\begin{figure}[h!]
\centering

\subfloat[Step-function $f_m$ approximates $f$ in \eqref{eqn_xKlotzlklhdFn_maintxt}.\label{figa:combined-klotz-and-convergence}]{
\begin{minipage}[t]{0.5\textwidth}
\centering
\resizebox{\textwidth}{!}{%
\begin{tikzpicture}[
    x={(8.2cm,0cm)},
    y={(2.9cm,1.9cm)},
    z={(0cm,4.3cm)},
    line join=round,
    line cap=round
]

\def\bvis{0.40}
\def\M{8}
\def\K{5}
\def\diagw{0.010}
\def\sheetlift{0.035}
\def\sheetfillopacity{0.34}
\def\sheetfilltone{black!78}
\def\axisstub{0.06}

\def\labelfont{\fontsize{17}{19}\selectfont}
\def\titlefont{\fontsize{17}{19}\selectfont}

\pgfmathdeclarefunction{KlotzL}{2}{%
  \pgfmathparse{(1-#1)*(1-((1-#2)*#1)/(1-#1))^2}%
}

\draw[->,very thick] (0,0,0) -- (1.08,0,0);
\draw[->,very thick] (0,0,0) -- (0,2.35,0);
\draw[->,very thick] (0,0,0) -- (0,0,1.10);

\draw[very thick] (\bvis,1,0)--(1,1,0);
\draw[very thick] (0,0,0)--(0.50,0,0);
\draw[very thick] (0,0,0)--(0,1,0);
\draw[very thick] plot[smooth] coordinates {
  (0.50,0.0000,0)
  (0.62,0.3871,0)
  (0.74,0.6486,0)
  (0.86,0.8372,0)
  (1.00,1.0000,0)
};

\filldraw[fill=black!14,draw=black]
  (0.4000,0.0000,0.0000) -- (0.5000,0.0000,0.0000) --
  (0.5000,1.0000,0.0000) -- (0.4000,1.0000,0.0000) -- cycle;

\filldraw[fill=black!9,draw=black]
  (0.5000,0.0000,0.0000) -- (0.6250,0.4000,0.0000) --
  (0.6250,1.0000,0.0000) -- (0.5000,1.0000,0.0000) -- cycle;

\filldraw[fill=black!14,draw=black]
  (0.6250,0.4000,0.0000) -- (0.7500,0.6667,0.0000) --
  (0.7500,1.0000,0.0000) -- (0.6250,1.0000,0.0000) -- cycle;

\filldraw[fill=black!9,draw=black]
  (0.7500,0.6667,0.0000) -- (0.8750,0.8571,0.0000) --
  (0.8750,1.0000,0.0000) -- (0.7500,1.0000,0.0000) -- cycle;

\filldraw[fill=black!14,draw=black]
  (0.8750,0.8571,0.0000) -- (1.0000,1.0000,0.0000) --
  (1.0000,1.0000,0.0000) -- (0.8750,1.0000,0.0000) -- cycle;

\filldraw[fill=white,draw=none]
  (0,\axisstub,0.000) --
  (0,1.115,0.000) --
  (0,1.115,1.02) --
  (0,\axisstub,1.02) -- cycle;

\filldraw[fill=white,draw=none]
  (0.000,1,0.000) --
  (\bvis,1,0.000) --
  (\bvis,1,1.02) --
  (0.000,1,1.02) -- cycle;

\draw[very thick] (0,0,0) -- (0,\axisstub,0);

\draw[black, line width=1.2pt, dash pattern=on 4pt off 2pt]
  (0,0,0) -- (1,1,0);

\foreach \k in {5,4,3,2,1} {
  \pgfmathsetmacro{\xL}{0}
  \pgfmathsetmacro{\xR}{\bvis/\M}
  \pgfmathsetmacro{\xM}{0.5*\xR}
  \pgfmathsetmacro{\lamML}{\xM + (\k-1)*(1-\xM)/\K}
  \pgfmathsetmacro{\lamMR}{\xM + \k*(1-\xM)/\K}
  \pgfmathsetmacro{\lamM}{0.5*(\lamML+\lamMR)}
  \pgfmathsetmacro{\zH}{KlotzL(\xM,\lamM)}
  \pgfmathsetmacro{\yL}{(\k-1)/\K}
  \pgfmathsetmacro{\yR}{\k/\K}
  \filldraw[fill=black!22,draw=black]
    (0,\yL,0) -- (0,\yR,0) -- (0,\yR,\zH) -- (0,\yL,\zH) -- cycle;
}

\foreach \i in {8,7,6,5,4,3,2,1} {
  \pgfmathsetmacro{\xL}{(\i-1)*\bvis/\M}
  \pgfmathsetmacro{\xR}{\i*\bvis/\M}
  \pgfmathsetmacro{\xM}{0.5*(\xL+\xR)}
  \pgfmathsetmacro{\lamM}{0.5*(\xM/\K)}
  \pgfmathsetmacro{\zH}{KlotzL(\xM,\lamM)}
  \filldraw[fill=black!24,draw=black]
    (\xL,0,0) -- (\xR,0,0) -- (\xR,0,\zH) -- (\xL,0,\zH) -- cycle;
}

\foreach \i in {8,7,6,5,4,3,2} {
  \pgfmathsetmacro{\xL}{(\i-1)*\bvis/\M}
  \pgfmathsetmacro{\xR}{\i*\bvis/\M}
  \pgfmathsetmacro{\xPrevL}{(\i-2)*\bvis/\M}
  \pgfmathsetmacro{\xPrevR}{(\i-1)*\bvis/\M}
  \pgfmathsetmacro{\xPrevM}{0.5*(\xPrevL+\xPrevR)}
  \pgfmathsetmacro{\xM}{0.5*(\xL+\xR)}

  \foreach \k in {1,2,3,4,5} {
    \pgfmathsetmacro{\yL}{(\k-1)*\xL/\K}
    \pgfmathsetmacro{\yR}{\k*\xL/\K}
    \pgfmathsetmacro{\lamPrevL}{(\k-1)*\xPrevM/\K}
    \pgfmathsetmacro{\lamPrevR}{\k*\xPrevM/\K}
    \pgfmathsetmacro{\lamPrevM}{0.5*(\lamPrevL+\lamPrevR)}
    \pgfmathsetmacro{\lamCurL}{(\k-1)*\xM/\K}
    \pgfmathsetmacro{\lamCurR}{\k*\xM/\K}
    \pgfmathsetmacro{\lamCurM}{0.5*(\lamCurL+\lamCurR)}
    \pgfmathsetmacro{\zPrev}{KlotzL(\xPrevM,\lamPrevM)}
    \pgfmathsetmacro{\zCur}{KlotzL(\xM,\lamCurM)}
    \filldraw[fill=black!28,draw=black]
      (\xL,\yL,\zCur) -- (\xL,\yR,\zCur) --
      (\xL,\yR,\zPrev) -- (\xL,\yL,\zPrev) -- cycle;
  }

  \foreach \k in {1,2,3,4,5} {
    \pgfmathsetmacro{\yL}{\xL + (\k-1)*(1-\xL)/\K}
    \pgfmathsetmacro{\yR}{\xL + \k*(1-\xL)/\K}
    \pgfmathsetmacro{\lamPrevL}{\xPrevM + (\k-1)*(1-\xPrevM)/\K}
    \pgfmathsetmacro{\lamPrevR}{\xPrevM + \k*(1-\xPrevM)/\K}
    \pgfmathsetmacro{\lamPrevM}{0.5*(\lamPrevL+\lamPrevR)}
    \pgfmathsetmacro{\lamCurL}{\xM + (\k-1)*(1-\xM)/\K}
    \pgfmathsetmacro{\lamCurR}{\xM + \k*(1-\xM)/\K}
    \pgfmathsetmacro{\lamCurM}{0.5*(\lamCurL+\lamCurR)}
    \pgfmathsetmacro{\zPrev}{KlotzL(\xPrevM,\lamPrevM)}
    \pgfmathsetmacro{\zCur}{KlotzL(\xM,\lamCurM)}
    \filldraw[fill=black!22,draw=black]
      (\xL,\yL,\zCur) -- (\xL,\yR,\zCur) --
      (\xL,\yR,\zPrev) -- (\xL,\yL,\zPrev) -- cycle;
  }
}

\foreach \k in {5,4,3,2,1} {
  \foreach \i in {8,7,6,5,4,3,2,1} {
    \pgfmathsetmacro{\xL}{(\i-1)*\bvis/\M}
    \pgfmathsetmacro{\xR}{\i*\bvis/\M}
    \pgfmathsetmacro{\xM}{0.5*(\xL+\xR)}
    \pgfmathsetmacro{\lamL}{(\k-1)*\xM/\K}
    \pgfmathsetmacro{\lamR}{\k*\xM/\K}
    \pgfmathsetmacro{\lamM}{0.5*(\lamL+\lamR)}
    \pgfmathsetmacro{\zH}{KlotzL(\xM,\lamM)}
    \filldraw[fill=black!46,draw=black]
      (\xL,{(\k-1)*\xL/\K},\zH) --
      (\xR,{(\k-1)*\xR/\K},\zH) --
      (\xR,{\k*\xR/\K},\zH) --
      (\xL,{\k*\xL/\K},\zH) -- cycle;
  }
}

\foreach \k in {5,4,3,2,1} {
  \foreach \i in {8,7,6,5,4,3,2,1} {
    \pgfmathsetmacro{\xL}{(\i-1)*\bvis/\M}
    \pgfmathsetmacro{\xR}{\i*\bvis/\M}
    \pgfmathsetmacro{\xM}{0.5*(\xL+\xR)}
    \pgfmathsetmacro{\lamL}{\xM + (\k-1)*(1-\xM)/\K}
    \pgfmathsetmacro{\lamR}{\xM + \k*(1-\xM)/\K}
    \pgfmathsetmacro{\lamM}{0.5*(\lamL+\lamR)}
    \pgfmathsetmacro{\zH}{KlotzL(\xM,\lamM)}
    \filldraw[fill=black!34,draw=black]
      (\xL,{\xL + (\k-1)*(1-\xL)/\K},\zH) --
      (\xR,{\xR + (\k-1)*(1-\xR)/\K},\zH) --
      (\xR,{\xR + \k*(1-\xR)/\K},\zH) --
      (\xL,{\xL + \k*(1-\xL)/\K},\zH) -- cycle;
  }
}

\foreach \k in {1,2,3,4,5} {
  \pgfmathsetmacro{\xLastL}{(7)*\bvis/\M}
  \pgfmathsetmacro{\xLastR}{(8)*\bvis/\M}
  \pgfmathsetmacro{\xLastM}{0.5*(\xLastL+\xLastR)}
  \pgfmathsetmacro{\yL}{(\k-1)*\bvis/\K}
  \pgfmathsetmacro{\yR}{\k*\bvis/\K}
  \pgfmathsetmacro{\lamL}{(\k-1)*\xLastM/\K}
  \pgfmathsetmacro{\lamR}{\k*\xLastM/\K}
  \pgfmathsetmacro{\lamM}{0.5*(\lamL+\lamR)}
  \pgfmathsetmacro{\zH}{KlotzL(\xLastM,\lamM)}
  \filldraw[fill=black!30,draw=black]
    (\bvis,\yL,0) -- (\bvis,\yR,0) --
    (\bvis,\yR,\zH) -- (\bvis,\yL,\zH) -- cycle;
}

\foreach \k in {1,2,3,4,5} {
  \pgfmathsetmacro{\xLastL}{(7)*\bvis/\M}
  \pgfmathsetmacro{\xLastR}{(8)*\bvis/\M}
  \pgfmathsetmacro{\xLastM}{0.5*(\xLastL+\xLastR)}
  \pgfmathsetmacro{\yL}{\bvis + (\k-1)*(1-\bvis)/\K}
  \pgfmathsetmacro{\yR}{\bvis + \k*(1-\bvis)/\K}
  \pgfmathsetmacro{\lamL}{\xLastM + (\k-1)*(1-\xLastM)/\K}
  \pgfmathsetmacro{\lamR}{\xLastM + \k*(1-\xLastM)/\K}
  \pgfmathsetmacro{\lamM}{0.5*(\lamL+\lamR)}
  \pgfmathsetmacro{\zH}{KlotzL(\xLastM,\lamM)}
  \filldraw[fill=black!30,draw=black]
    (\bvis,\yL,0) -- (\bvis,\yR,0) --
    (\bvis,\yR,\zH) -- (\bvis,\yL,\zH) -- cycle;
}

\foreach \i in {1,2,3,4,5,6,7,8} {
  \pgfmathsetmacro{\xL}{(\i-1)*\bvis/\M}
  \pgfmathsetmacro{\xR}{\i*\bvis/\M}
  \pgfmathsetmacro{\xM}{0.5*(\xL+\xR)}
  \pgfmathsetmacro{\zH}{KlotzL(\xM,\xM)}
  \filldraw[fill=white,draw=black]
    (\xL,\xL,\zH) --
    (\xR,\xR,\zH) --
    (\xR,{\xR+\diagw},\zH) --
    (\xL,{\xL+\diagw},\zH) -- cycle;
}

\foreach \i in {1,2,3,4,5,6,7,8} {
  \pgfmathsetmacro{\xL}{(\i-1)*\bvis/\M}
  \pgfmathsetmacro{\xR}{\i*\bvis/\M}
  \foreach \k in {1,2,3,4,5} {
    \pgfmathsetmacro{\yLL}{(\k-1)*\xL/\K}
    \pgfmathsetmacro{\yLR}{\k*\xL/\K}
    \pgfmathsetmacro{\yRL}{(\k-1)*\xR/\K}
    \pgfmathsetmacro{\yRR}{\k*\xR/\K}
    \pgfmathsetmacro{\zLL}{KlotzL(\xL,\yLL)}
    \pgfmathsetmacro{\zLR}{KlotzL(\xL,\yLR)}
    \pgfmathsetmacro{\zRL}{KlotzL(\xR,\yRL)}
    \pgfmathsetmacro{\zRR}{KlotzL(\xR,\yRR)}

    \fill[fill=\sheetfilltone,fill opacity=\sheetfillopacity]
      (\xL,\yLL,{\zLL+\sheetlift}) --
      (\xR,\yRL,{\zRL+\sheetlift}) --
      (\xR,\yRR,{\zRR+\sheetlift}) -- cycle;
    \fill[fill=\sheetfilltone,fill opacity=\sheetfillopacity]
      (\xL,\yLL,{\zLL+\sheetlift}) --
      (\xR,\yRR,{\zRR+\sheetlift}) --
      (\xL,\yLR,{\zLR+\sheetlift}) -- cycle;
  }
}

\foreach \i in {1,2,3,4,5,6,7,8} {
  \pgfmathsetmacro{\xL}{(\i-1)*\bvis/\M}
  \pgfmathsetmacro{\xR}{\i*\bvis/\M}
  \foreach \k in {1,2,3,4,5} {
    \pgfmathsetmacro{\yLL}{\xL + (\k-1)*(1-\xL)/\K}
    \pgfmathsetmacro{\yLR}{\xL + \k*(1-\xL)/\K}
    \pgfmathsetmacro{\yRL}{\xR + (\k-1)*(1-\xR)/\K}
    \pgfmathsetmacro{\yRR}{\xR + \k*(1-\xR)/\K}
    \pgfmathsetmacro{\zLL}{KlotzL(\xL,\yLL)}
    \pgfmathsetmacro{\zLR}{KlotzL(\xL,\yLR)}
    \pgfmathsetmacro{\zRL}{KlotzL(\xR,\yRL)}
    \pgfmathsetmacro{\zRR}{KlotzL(\xR,\yRR)}

    \fill[fill=\sheetfilltone,fill opacity=\sheetfillopacity]
      (\xL,\yLL,{\zLL+\sheetlift}) --
      (\xR,\yRL,{\zRL+\sheetlift}) --
      (\xR,\yRR,{\zRR+\sheetlift}) -- cycle;
    \fill[fill=\sheetfilltone,fill opacity=\sheetfillopacity]
      (\xL,\yLL,{\zLL+\sheetlift}) --
      (\xR,\yRR,{\zRR+\sheetlift}) --
      (\xL,\yLR,{\zLR+\sheetlift}) -- cycle;
  }
}

\draw[white, line width=3.6pt] (0.50,0,0) -- (0.50,1,0);
\draw[black, line width=1.8pt, dash pattern=on 4pt off 2pt]
  (0.50,0,0) -- (0.50,1,0);

\draw[white, line width=3.6pt] (\bvis,0,0) -- (\bvis,1,0);
\draw[black, line width=1.8pt, dash pattern=on 4pt off 2pt]
  (\bvis,0,0) -- (\bvis,1,0);

\draw[white, line width=3.2pt] (\bvis,1,0) -- (1,1,0);
\draw[black, line width=1.6pt] (\bvis,1,0) -- (1,1,0);

\node[font=\labelfont,fill=white,inner sep=1.0pt] at (0.60,0.60,0) {$x=\lambda$};
\node[font=\labelfont,inner sep=1.0pt,anchor=south,rotate=0] at (\bvis+0.12,-0.33,0) {$b$};
\node[font=\labelfont,inner sep=1.0pt,anchor=south,rotate=0] at (0.63,-0.36,-0.05) {$\tfrac12$};

\node[font=\labelfont,anchor=east]  at (1.10,0,-0.08) {$\boldsymbol{x}$};
\node[font=\labelfont,anchor=west]  at (0,2.27,-0.07) {$\boldsymbol{\lambda}$};
\node[font=\labelfont,anchor=south] at (-0.05,0,1.12) {$\boldsymbol{f_m(x,\lambda)}$};

\node[font=\titlefont, fill=black!20, inner sep=1.2pt]
  at (0.33,1.05,0.84) {$f(x,\lambda)$};

\end{tikzpicture}%
}
\end{minipage}
}
\hspace{0.03\textwidth}
\subfloat[Approximating infima $\phi_m^*$ versus $m$.\label{figb:combined-klotz-and-convergence}]{
\begin{minipage}[t]{0.4\textwidth}
\centering
\begin{tikzpicture}[baseline=(current bounding box.north)]
\begin{axis}[
    width=\textwidth,
    height=5.7cm,
    xmode=log,
    log basis x=2,
    xmin=16, xmax=262144,
    ymin=0.64, ymax=0.96,
    axis lines=left,
    xtick={16,256,4096,65536,262144},
    xticklabels={$16$,$256$,$4096$,$65536$,$262144$},
    xticklabel style={font=\scriptsize, rotate=30, anchor=east},
    ytick={0.65,0.70,0.75,0.80,0.85,0.90,0.95},
    xlabel={$m$},
    ylabel={infimum value},
    every axis label/.style={font=\small},
    tick label style={font=\scriptsize},
    tick style={black},
    grid=both,
    grid style={gray!20},
    minor grid style={gray!12},
    legend style={
        draw=black,
        fill=white,
        font=\scriptsize,
        at={(0.975,0.975)},
        anchor=north east
    },
    clip=false
]

\addplot[
    color=black,
    line width=1.2pt,
    mark=*,
    mark size=1.7pt
] coordinates {
    (16,0.9526836069)
    (32,0.9324237079)
    (64,0.9182284044)
    (128,0.8705052605)
    (256,0.7932081640)
    (512,0.7281508267)
    (1024,0.6878128481)
    (2048,0.6657235328)
    (4096,0.6582828889)
    (8192,0.6561086986)
    (16384,0.6555871730)
    (32768,0.6554425819)
    (65536,0.6554122345)
    (131072,0.6554043143)
    (262144,0.6554021690)
};
\addlegendentry{$\phi_m^*$}

\addplot[
    color=gray!60!black,
    dash pattern=on 2.4pt off 1.4pt,
    line width=1.0pt
] coordinates {
    (16,0.6554021690) (262144,0.6554021690)
};
\addlegendentry{$\phi^*$}

\end{axis}
\end{tikzpicture}
\end{minipage}
}

\caption[Step-prism approximation and convergence for a Markov chain likelihood problem]{
For \eqref{eqn_xKlotzlklhdFn_maintxt}, an illustration of a step-function approximation $f_m(x,\lambda)$ to $f(x,\lambda)$ and numerical illustration of convergence of infima.
In Figure~\ref{figa:combined-klotz-and-convergence}, over the $x$--$\lambda$ region $\mathcal R$ with $b=10^{-4}$ (see assumptions stated for \eqref{eqn_xKlotzlklhdFn_maintxt}), the nonzero region $x\leqslant b$
is partitioned into eight $x$-strips. Within each strip, the $\lambda<x$ and $\lambda>x$ regions are each subdivided into five
$\lambda$-bands. The approximating $f_m(x,\lambda)$ is defined as centroid-sampled step prisms, with the steps over the $x=\lambda$ diagonal
depicted by a thin step ribbon.
The translucent gray sheet at the top is the graph of $f(x,\lambda)$.
In Figure~\ref{figb:combined-klotz-and-convergence}, the computed approximating infima
$\phi_m^*$ are shown for \eqref{eqn_xKlotzlklhdFn_maintxt}. The horizontal axis is dyadic in $m=2^k$, and the dashed line marks the finest
computed infimum estimate $\phi^*\approx 0.6554021690$.}
\label{fig:combined-klotz-and-convergence}
\end{figure}
\FloatBarrier
\subsection{Robust Reliability Inference under Interval-Mass Constraints} The following problem reproduced from \cite{salako2025conservative} illustrates Theorem~\ref{thm:global_fp_tuple}'s fixed-point characterisation of extremal solutions. Consider an i.i.d. Bernoulli process with unknown parameter $X$ for the probability of success, and the set $\mathcal D$ of all feasible prior distributions of $X$ over the unit interval. For $0=y_0<y_1<\ldots<y_n=1$, $X$ may lie in each of $n$ intervals $K_1:=[y_0,y_1]$, $K_2:=(y_1,y_2]$, \ldots, $K_n:=(y_{n-1},y_n]$, with probabilities $\mathbb P\in\mathcal D$ that obey $\mathbb P (X\in K_i)=p_{i}$ for $i=1,\ldots,n$ for some unknown feasible $\mathbb P\in \mathcal D$. For $r$ successes and $m$, $k$ failures ($r>0,\,m\geqslant 1,\,k>0$),  Problem~\eqref{eqn_weak_opt_equiv_disc} becomes\begin{align}
\underset{x_1\in K_1,\ldots,x_n\in K_n}{\inf}\frac{\sum_{i=1}^{n}x_i^r(1-x_i)^{m+k} p_i}{\sum_{i=1}^{n}x_i^r(1-x_i)^{k} p_i}\,.
\label{eqn_morenoetalBer}
\end{align}
\begin{theorem}
\label{thm_CBIwithfails_sol}
For $n\geqslant 2$ a unique fixed point triplet ($\phi^\ast$, $y_\ast$, $y_{**}$) solves \eqref{eqn_morenoetalBer}---for some $1\leqslant j_1\leqslant j_2\leqslant n$ such that $\ y_{**}\leqslant\frac{r}{r+m+k}<\frac{r}{r+k}\leqslant y_\ast$ (with strict inequalities when $\phi^*>0$), $\ y_{**}\in \overline{K}_{j_1}$, and $\ y_\ast\in \overline{K}_{j_2}$, the tuple satisfies
\begin{gather*}
(1-y_{**})^m\left(\frac{r-y_{**}(m+k+r)}{r-y_{**}(k+r)}\right)=\phi^\ast = (1-y_\ast)^m\left(\frac{r-y_\ast(m+k+r)}{r-y_\ast(k+r)}\right),\\ \phi^\ast=\min\{\phi^\ast_1,\phi^\ast_2\}\,,
\end{gather*}
where
\begin{align*}
\phi^\ast_1&=\left\{\begin{array}{ll}\dfrac{\sum_{i=2}^{j_1}f(y_{i-1})p_i+\sum_{i=j_1+1}^{j_2-1}f(y_{i})p_i+f({y_\ast})p_{j_2}+\sum_{i=j_2+1}^{n}f(y_{i-1})p_i}{\sum_{i=2}^{j_1}g(y_{i-1})p_i+\sum_{i=j_1+1}^{j_2-1}g(y_{i})p_i+g({y_\ast})p_{j_2}+\sum_{i=j_2+1}^{n}g(y_{i-1})p_i}\,,\hfill j_1<j_2\\
\dfrac{\sum_{i=2}^{n}f(y_{i-1})p_i}{\sum_{i=2}^{n}g(y_{i-1})p_i}\,,\hfill j_1=j_2\end{array}\right.\\
\phi^\ast_2&=\left\{\begin{array}{ll}\dfrac{\sum_{i=2}^{j_1-1}f(y_{i-1})p_i+\sum_{i=j_1}^{j_2-1}f(y_{i})p_i+f({y_\ast})p_{j_2}+\sum_{i=j_2+1}^{n}f(y_{i-1})p_i}{\sum_{i=2}^{j_1-1}g(y_{i-1})p_i+\sum_{i=j_1}^{j_2-1}g(y_{i})p_i+g({y_\ast})p_{j_2}+\sum_{i=j_2+1}^{n}g(y_{i-1})p_i}\,,\hfill j_1<j_2\\
\dfrac{\sum_{i=2}^{j_2-1}f(y_{i-1})p_i+f(y_\ast)p_{j_2}+\sum_{i=j_2+1}^{n}f(y_{i-1})p_i}{\sum_{i=2}^{j_2-1}g(y_{i-1})p_i+g(y_\ast)p_{j_2}+\sum_{i=j_2+1}^{n}g(y_{i-1})p_i}\,,\hfill j_1=j_2\,.\end{array}\right.
\end{align*} 
For $n\geqslant 1$ the infimum $\phi^*$ is attained by the prior distribution $\mathbb P$,
\begin{align}
\label{eqn_priorCBIsolBer}
    \mathbb P(X=x)=\left\{\begin{array}{cl}
    p_1,  & \text{if } x=\mathbbm{1}_{n=1},\\ 
     p_{i}, & \left\{\begin{array}{l}\text{if } x=y_{i-1} \text{ and } \{2\leqslant i<j_1\}\cup\{j_2<i\leqslant n\},\\\text{if } x=y_{i} \text{ and } \{j_{1}<i<j_{2}\},\end{array}\right.\\
        p_{j_1}, & \text{if } x=y_{j_{1}-1}\mathbbm{1}_{{\phi}^*=\phi_1^*}+\min\{y_{j_1},\,y_*\}\mathbbm{1}_{{\phi^*}=\phi_2^*\ne\phi_1^*},\\
           p_{j_2}, & \text{if } x=\left(y_{j_{1}-1}\mathbbm{1}_{j_1=j_2}+y_{*}\mathbbm{1}_{j_1<j_2}\right)\mathbbm{1}_{{\phi}^*=\phi_1^*}+y_{*}\mathbbm{1}_{{\phi^*}=\phi_2^*\ne\phi_1^*},\\
           0, & \text{otherwise.}
    \end{array}\right.
\end{align}
where $\mathbb P$ is the limit of feasible priors in Problem~\eqref{eqn_morenoetalBer}. Note that $\phi^* = 0$ when $n=1,2$: the corresponding weak-limit extremal priors are $\mathbb P(X=1)=p_1=1$ for $n=1$, and $\mathbb P(X=0)=p_1$, $\mathbb P(X=1)=p_2=1-p_1$ for $n=2$.
\end{theorem}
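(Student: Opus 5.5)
The plan is to apply Theorem~\ref{thm:global_fp_tuple} with $f(x)=x^r(1-x)^{m+k}$ and $g(x)=x^r(1-x)^k$. Both are polynomials, hence analytic, and $g>0$ on the interior of every $K_i$. The problem then reduces to analysing, in one dimension, where the Dinkelbach difference $h_{\varphi,f,g}(x)=x^r(1-x)^k\bigl((1-x)^m-\varphi\bigr)$ attains its minimum over each interval $\overline K_i=[y_{i-1},y_i]$. Condition (E1) says each support point $x_i^*$ minimises $h_{\phi^*,f,g}$ on $\overline K_i$. Condition (E2) says $\phi^*$ is the resulting ratio, and $0\leqslant\phi^*<1$.

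\textbf{Step 1: shape of $h$.} Factor out $x^{r-1}(1-x)^{k-1}$ from $h'$. Setting the remaining factor to zero gives
\[
(1-x)^m\bigl(r-x(r+k+m)\bigr)=\varphi\bigl(r-x(r+k)\bigr),
\]
which is equivalent to $\psi(x):=(1-x)^m\frac{r-x(m+k+r)}{r-x(k+r)}=\varphi$. I will show the following:
\begin{enumerate*}[label=\roman*)]
\item $h(0)=0$, and $h(1)=0$ because $k>0$;
\item $h<0$ exactly where $(1-x)^m<\varphi$, that is, on $(1-\varphi^{1/m},1)$;
\item on $[0,1]$ the equation $\psi=\varphi$ has exactly two relevant roots when $0<\varphi<1$: a local maximiser $y_{**}<\frac{r}{r+m+k}$ and a global minimiser $y_*>\frac{r}{r+k}$.
\end{enumerate*}
For (iii), $\psi$ decreases from $1$ to $0$ on $[0,\frac{r}{r+m+k}]$ and is negative between $\frac{r}{r+m+k}$ and $\frac{r}{r+k}$, where the denominator vanishes. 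On $(\frac{r}{r+k},1]$ the function is positive, tends to $+\infty$ near the pole, and decreases to $0$ at $1$. So $h$ rises from $0$, reaches a local maximum at $y_{**}$, dips to its global minimum at $y_*$, and returns to $0$ at $1$. This unimodal-down-then-up profile is the main technical check. I expect to verify the monotonicity of $\psi$ on each branch by a direct sign computation of $\psi'$; that computation is the step I expect to be the main obstacle.

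\textbf{Step 2: location of minimisers per interval.} From this profile, the minimiser of $h$ on $[y_{i-1},y_i]$ depends on where the interval sits:
\begin{itemize}
\item For intervals entirely left of $y_{**}$, $h$ is increasing, so the minimiser is the left endpoint $y_{i-1}$.
\item For intervals between $y_{**}$ and $y_*$, $h$ is decreasing, so the minimiser is the right endpoint $y_i$.
\item For intervals right of $y_*$, $h$ is increasing, so the minimiser is the left endpoint $y_{i-1}$.
\item For the interval $K_{j_2}\ni y_*$, the minimiser is $y_*$.
\item For the interval $K_{j_1}\ni y_{**}$, the minimiser is whichever endpoint has the smaller $h$.
\end{itemize}
The interval containing the local maximiser is the source of the two candidate configurations $\phi_1^*$ and $\phi_2^*$. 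If the left endpoint $y_{j_1-1}$ wins we get $\phi_1^*$; if the right endpoint $\min\{y_{j_1},y_*\}$ wins we get $\phi_2^*$. When $j_1=j_2$, the same comparison collapses to the stated degenerate formulas. For $K_1$ with left endpoint $0$ we have $h(0)=0$ and $g(0)=0$, so that term drops from both sums, which explains why the sums start at $i=2$.

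\textbf{Step 3: fixed point and uniqueness.} Substituting these support points into (E2) gives the displayed expressions. Evaluating (E1) at the critical points gives $\psi(y_{**})=\phi^*=\psi(y_*)$. To show uniqueness, I will use that the Dinkelbach map $F(\varphi)=\min_{\mathbf x}\sum_i p_i h_{\varphi,f,g}(x_i)$ is strictly decreasing in $\varphi$, since $g>0$ on a set of positive weight. Its unique zero is $\phi^*$, and then $y_*$ and $y_{**}$ are determined by the strict monotonicity of $\psi$ on each branch. Taking the smaller of the two candidate configurations gives $\phi^*=\min\{\phi_1^*,\phi_2^*\}$.

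\textbf{Step 4: the extremal prior and small cases.} The extremal prior \eqref{eqn_priorCBIsolBer} follows from \eqref{eqn_analyticextremalprior}. It is a weak limit of feasible priors, because endpoints $y_{i-1}\notin K_i$ are approximated from inside the half-open intervals.

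For $n=1$, placing the mass at $1$ gives $h=0$ and $\phi^*=0$ as a limiting value. For $n=2$, placing mass at $0$ in $K_1$ and at $1$ in $K_2$ gives the limiting ratio $0$. Both are confirmed directly, since $f/g=(1-x)^m\to0$ as $x\to1$.
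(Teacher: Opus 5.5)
Your proof is correct for $n\geqslant 3$, with $n=1,2$ settled directly as you do, but it is organised differently from the paper's.

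\textbf{The paper's route.} It works with the partial derivatives of the fractional objective itself: $\partial\phi/\partial x_i$ is a positive multiple of $(r-x_i(r+k))(h(x_i)-\phi)$, and the paper's $h$ is your $\psi$. From the signs of these derivatives it argues where each mass should sit (the ``source''/``sink'' picture). Because $\phi$ moves as the $x_i$ move, it must then enumerate finitely many candidate functional forms (cases I--III). It shows by an IVT/simple-root argument that each form meets $h$ only once, selects the lowest, and gives a separate convex-combination argument that $x_1=0$.

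\textbf{Your route.} You freeze $\varphi=\phi^*$ and use the separability behind Theorem~\ref{thm:global_fp_tuple}, so every $x_i$ independently minimises one fixed profile. Since $h_{\varphi,f,g}'(x)=x^{r-1}(1-x)^{k-1}(r-x(r+k))(\psi(x)-\varphi)$ for $x\neq r/(r+k)$, the function $h_{\phi^*,f,g}$ increases on $[0,y_{**}]$, decreases on $[y_{**},y_*]$ and increases on $[y_*,1]$. The endpoint rules, the two branches in $K_{j_1}$, and the treatment of $K_1$ then fall out with no case enumeration. Uniqueness of $\phi^*$ comes from the strict decrease of the Dinkelbach value $F$ (as in Appendix~H). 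Uniqueness of $y_*$ and $y_{**}$ comes from the monotonicity of $\psi$ on its two positive branches.

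\textbf{Trade-off.} The paper's argument is self-contained calculus and gives the attractor/repeller interpretation directly. Yours is shorter and less ad hoc, but leans on the general theorem and its hypotheses.

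\textbf{Points to tighten.}
\begin{enumerate}
\item The hypothesis you need is $\inf_{K_j}g>0$ for some $j$, equivalently a uniform positive lower bound on $\sum_i p_i g(x_i)$ over $\overline K_1\times\cdots\times\overline K_n$. This is not the same as ``$g>0$ on every interior''. It holds only when an interior interval exists, i.e.\ $n\geqslant 3$. For $n=2$ the denominator vanishes at $(0,1)$, so $F$ is not strictly decreasing.
\item The relation $\psi(y_{**})=\phi^*$ is the definition of $y_{**}$, not a consequence of (E1): $y_{**}$ is a local maximiser and not a support point. The same holds for $\psi(y_*)=\phi^*$ when $j_1=j_2$ and the left endpoint wins.
\item Justify $\phi^*=\min\{\phi^*_1,\phi^*_2\}$ explicitly. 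The configuration with the smaller $h_{\phi^*,f,g}$-value in $K_{j_1}$ has Dinkelbach sum $F(\phi^*)=0$, hence ratio exactly $\phi^*$. The other configuration has a nonnegative sum, hence ratio at least $\phi^*$.
\end{enumerate}
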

\begin{proof} Appendix~M, Supplementary Material (\cite{Salako_Muhammad_2025_suppmat}). 
\end{proof}
\noindent\textbf{Remark: } Special cases of Theorem~\ref{thm_CBIwithfails_sol} support conservative software reliability assessments (e.g. see \cite{bishop_toward_2011}, \cite{strigini_software_2013}).
%
%
%
%
\begin{figure}[htbp!]
\centering
\scalebox{0.9}{
   \begin{tikzpicture}
  \draw[->] (0,0) -- (14,0);
\draw[darkgray, line width=2.5pt] (0,-0.005) -- (0,0.90) node[below=5pt] {\space};
\draw[darkgray, line width=2.5pt] (6.5,-0.005) -- (6.5,0.5) node[below=5pt] {\space};
\draw[darkgray, line width=2.5pt] (2,-0.005) -- (2,0.75) node[below=5pt] {\space};
\draw[darkgray, line width=2.5pt] (3.5,-0.005) -- (3.5,0.9) node[below=5pt] {\space};
\draw[darkgray, line width=2.5pt] (12.5,-0.005) -- (12.5,0.3) node[below=5pt] {\space};
\draw[darkgray, line width=2.5pt] (8.7,-0.005) -- (8.7,0.61) node[below=5pt] {\space};
\draw[darkgray, line width=2.5pt] (9.7,-0.005) -- (9.7,0.99) node[below=5pt] {\space};
\draw[darkgray, line width=2.5pt] (10.5,-0.005) -- (10.5,0.74) node[below=5pt] {\space};
   \draw (0,0) -- (0,-0.15) node[below=5pt] at (0,0) {$0$};
  \foreach \x/\label in {
    1/{\dots},
    2/{$y_{j_{1}-2}$},
    3.5/{$y_{{j_1}-1}$},
    5.5/{$y_{j_1}$},
    6.5/{$y_{j_1+1}$},
    7.5/{\dots},
    8.7/{$y_{{j_2}-1}$},
    10.5/{$y_{j_2}$},
    11.5/{\dots},
    12.5/{$y_{n-1}$},
    13.5/{$1$}
  }{
    \draw (\x,0) -- (\x,-0.15);
    \node[below=5pt] at (\x,0) {\label};
  }
  \node[below=5.4pt] at (4.6,0) {$y_{**}$};
    \node[below=2.5pt] at (8,0) {\space};
  \node[below=5.4pt] at (9.7,0) {$y_{*}$};
\node[above=22pt] at (0,0.1) {$p_1$};
\node[above=12pt] at (6.5,0.1) {$p_{j_1+1}$};
\node[above=15pt] at (2,0.1) {$p_{{j_1}-1}$};
 \node[above=22pt] at (3.5,0.1) {$p_{j_1}$};
  \node[above=12pt] at (8.7,0.1) {$p_{{j_2}-1}$};
  \node[above=23pt] at (9.7,0.1) {$p_{j_2}$};
   \node[above=15pt] at (10.5,0.1) {$p_{{j_2}+1}$}; \node[above=5pt] at (12.5,0.1) {$p_{n}$};
\end{tikzpicture}}
\caption[Limiting extremal prior for robust Bernoulli inference]{An illustration of the $j_1<j_2$, $\phi^*=\phi^*_1$ extremal-prior form in \eqref{eqn_priorCBIsolBer}. Reproduced from \cite{salako2025conservative}.}
\label{fig_worst_prior}
\end{figure}

\subsection{Generalised $k$-out-of-$n$ Boole–Fr\'echet Bounds} Problem~\eqref{eqn_genCBIprob} and its extensions (via the propositions) significantly generalise Boole-Fr\'echet bounds. Indeed, the theorems imply that generalised Boole-Fr\'echet bounds, and their extremal distributions, admit fixed-point characterisations (\emph{cf.} Dynamic Programming fixed-point characterisations of Boole-Fr\'echet bounds in \cite{salako2025constructive}). For example, consider the following Hailperin/R{\"u}ger upper bound on the probability of at least $k$-out-of-$n$ events occurring (see \cite{Ruger1978}), here given in fixed-point form:\begin{theorem}
\label{thm_atleastkoutofnbounds}
Let $A_1,\dots,A_n$ be marginal events on a common probability space, with $0<p_1<\cdots<p_n<1$, $\mathbb P(A_i)=p_i$ for $i=1,\dots,n$, and $k\in\{1,\dots,n\}$.
Define
\[
\Phi_k
:=
\sup\Bigl\{
\mathbb P\Bigl(\sum_{i=1}^n \mathbf 1_{A_i}\geqslant k\Bigr)
:\ \mathbb P(A_i)=p_i,\ i=1,\dots,n
\Bigr\}.
\] For each $j=1,\dots,k$, let $m_j:=n-k+j$ and $U_j:=\min\!\left\{\frac{\sum_{i=1}^{m_j}p_i}{j},\,1\right\}$.

Then, the solution is the fixed-point pair $(\Phi_k,j^*)$, alternatively $(\Phi_k,r^*)$,
\[
\Phi_k
=
U_{j^*}
=
\min\!\left\{\frac{\sum_{i=1}^{n-r^*}p_i}{k-r^*},\,1\right\},\qquad r^*=\sum_{i=1}^{k-1}{\boldsymbol 1}_{p_{n-k+1+i}\geqslant\Phi_k}=k-j^*.
\]
where 
\[
j^*:=\begin{cases}\min\{j\in\{1,\ldots,k-1\}:U_j\leqslant p_{n-k+1+j}\},\text{ if }\min\text{exists}\\k,\text{ otherwise}\end{cases}
\] 

Hence, $j^*$ determines a family of extremal priors with the following property: if an element in the support of the prior represents the occurrence of exactly $k$-out-of-$n$ marginal $A_i$ events, then it necessarily contains those $r^*=k-j^*$ marginal events with the largest $p_i$ probabilities; the remaining $j^*$ marginal events in the support element are drawn from the remaining $n-r^*$ marginal events with the smallest $p_i$ probabilities. Marginal events $A_{n-r^*+1}$, ..., $A_n$ ($r^*>0$) are ``dominating'' events.
\end{theorem}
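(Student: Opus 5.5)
The plan is to prove the theorem by matching an upper bound that holds for every feasible joint law with an explicit construction attaining it. I would also check that the two forms of the answer, $U_{j^*}$ and the $r^*$ formula, agree.

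\textbf{Reduction.} By Proposition~\ref{prop_cbi_transform} (with the moment constraints $\mathbb E[\mathbf 1_{A_i}]=p_i$ on $K=\{0,1\}^n$), it suffices to optimise over laws $\mathbb P$ on $\{0,1\}^n$ with marginals $p_i$. The objective $\mathbb P(S\geqslant k)$, where $S:=\sum_i\mathbf 1_{A_i}$, is linear in $\mathbb P$. This is the fractional problem with $g\equiv 1$, so the Dinkelbach difference reduces to the Lagrangian of a linear programme.

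\textbf{Upper bound.} Fix $r\in\{0,\dots,k-1\}$ and let $B:=\{S\geqslant k\}$. On $B$, at most $r$ of the events $A_{n-r+1},\dots,A_n$ occur, so at least $k-r$ of $A_1,\dots,A_{n-r}$ occur. Hence
\[
(k-r)\,\mathbf 1_B\leqslant \sum_{i=1}^{n-r}\mathbf 1_{A_i}.
\]
Taking expectations gives $\mathbb P(B)\leqslant \sum_{i\leqslant n-r}p_i/(k-r)$. Writing $j=k-r$, so that $n-r=m_j$, this reads $\Phi_k\leqslant U_j$ for every $j$, and therefore $\Phi_k\leqslant\min_j U_j$. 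This is a dual feasible certificate: the multipliers are $1/(k-r)$ on the $n-r$ smallest marginals.

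\textbf{Attainment.} Put $\Phi:=U_{j^*}$ and $r^*=k-j^*$. I would construct a law achieving $\Phi$ as follows.
\begin{itemize}
\item The $r^*$ dominating events $A_{n-r^*+1},\dots,A_n$ are arranged to contain the set $B$, which has mass $\Phi$. This is possible because the definition of $j^*$ gives $p_{n-r^*+1}\geqslant\Phi$.
\item On $B$, exactly $j^*$ of the remaining events $A_1,\dots,A_{n-r^*}$ must hold. I would spread their masses over $B$ using the standard ``wrap-around'' (staircase) filling of the interval $[0,\Phi)$ with total length $\sum_{i\leqslant n-r^*}p_i=j^*\Phi$. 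Each $p_i\leqslant\Phi$ is needed so that no event overlaps itself under the wrap.
\item Any leftover mass of the dominating events is placed off $B$.
\end{itemize}
The key inequality in the second step is $p_{n-r^*}\leqslant U_{j^*}$. I would derive it from minimality of $j^*$: for $j<j^*$ we have $U_j>p_{n-k+1+j}$, and since
\[
U_{j}=\frac{(j-1)U_{j-1}+p_{m_j}}{j},
\]
this propagates to give the required bound. When $U_{j^*}=1$, take $B$ to be the whole space and check feasibility directly. This construction also identifies the extremal priors described in the final paragraph of the theorem.

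\textbf{Fixed-point identity and main obstacle.} Finally I would show the identity $r^*=\#\{i\leqslant k-1:p_{n-k+1+i}\geqslant\Phi_k\}$, equivalently that $U_{j^*}=\min_jU_j$. The first ingredient is the same recursion: $U_{j}\leqslant U_{j-1}$ if and only if $p_{m_j}\leqslant U_{j-1}$, which makes the sequence $U_j$ unimodal-decreasing up to $j^*$ and nondecreasing afterwards. The second ingredient is that the $p_i$ are increasing, so the threshold count against $\Phi$ equals $k-j^*$. I expect this step, the monotonicity and threshold bookkeeping linking the minimiser of $U_j$ to the definition of $j^*$, to be the main obstacle, together with the edge cases where the minimum is attained by $U_j=1$. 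I would handle both by a careful induction on $j$ using the recursion.
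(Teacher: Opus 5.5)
Your proof is correct. It shares only the upper bound with the paper: your certificate $j\,\mathbf 1_{\{S\geqslant k\}}\leqslant\sum_{i\leqslant m_j}\mathbf 1_{A_i}$ is exactly the paper's Step~2.

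For attainment the paper stays inside its framework. It atomises the sets $\{x_i\geqslant\tfrac12\}$ via Proposition~\ref{prop_genprob1_v1}, restricts to exact-$k$ atoms and introduces stage-$j$ approximants $f_j$. It then argues that each stage contributes one new (E2) quotient $U_j$; this presupposes that, at an optimum, the non-dominating marginals count each successful atom exactly $t$ times. It invokes Theorem~\ref{thm_refinedgensol} to get $\Phi_k\in\{U_1,\dots,U_k\}$, and finally proves unimodality of $(U_j)$ to identify the minimiser with $j^*$.

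Your wrap-around coupling on $[0,\Phi)$ replaces that enumeration with an explicit primal witness. It also removes the step you flag as the main obstacle. Once a law with $\mathbb P(S\geqslant k)\geqslant U_{j^*}$ exists, the chain $U_{j^*}\leqslant\Phi_k\leqslant\min_jU_j\leqslant U_{j^*}$ gives $\Phi_k=U_{j^*}=\min_jU_j$ with no induction. The threshold count then follows from two facts:
\begin{itemize}
\item for $j^*\leqslant i\leqslant k-1$, $p_{n-k+1+i}\geqslant p_{n-k+1+j^*}\geqslant U_{j^*}$ by the definition of $j^*$;
\item for $i<j^*$, $p_{n-k+1+i}\leqslant p_{m_{j^*}}<U_{j^*}$, which is the inequality your construction needs anyway.
\end{itemize}

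Two small fixes are needed.
\begin{itemize}
\item The identity $jV_j=(j-1)V_{j-1}+p_{m_j}$ holds for the uncapped averages $V_j:=\sum_{i\leqslant m_j}p_i/j$, not for the capped $U_j$. One application with $V_{j^*-1}\geqslant U_{j^*-1}>p_{m_{j^*}}$, together with $p_i<1$, gives $p_{m_{j^*}}<U_{j^*}$ for $j^*\geqslant2$. For $j^*=1$ the non-strict version is immediate and suffices.
\item $U_{j^*}=1$ forces $j^*=k$, since otherwise $U_{j^*}\leqslant p_{n-k+1+j^*}<1$. This is why your ``whole space'' case has no dominating events.
\end{itemize}
The appeal to Proposition~\ref{prop_cbi_transform} is unnecessary, since the objective depends only on the law of the indicator vector.

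As for what each route buys: the paper's shows the bound as an instance of its (E1)--(E2) machinery. Yours is self-contained, and the equality case of your certificate gives the ``necessarily'' clause for \emph{every} extremal prior with $\Phi_k<1$, not merely for a constructed family. Indeed, almost surely on $\{S\geqslant k\}$ exactly $j^*$ of $A_1,\dots,A_{m_{j^*}}$ occur, which forces all $r^*$ dominating events to occur.
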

\begin{proof} Appendix~N, Supplementary Material (\cite{Salako_Muhammad_2025_suppmat}). \end{proof}

\subsection{Robust Posterior Bounds for Interval-Identified Econometric Models}Problem~\eqref{eqn_genCBIprob} has applications in Economics. For example,
Theorem~\ref{thm:2d-gk-fixed-point-extension-corrected} treats scalar
interval-identified models and provides an analogue of Theorem~1 in
\cite{GiacominiKitagawa2021RobustBayesian}. Both results express robust
posterior bounds through the same underlying containment principle: the
lower posterior probability of an event is determined by those values of
the identified parameter for which the corresponding identified set is
entirely contained in that event. Theorem~
\ref{thm:2d-gk-fixed-point-extension-corrected} as a whole is not a
direct corollary of Giacomini and Kitagawa's result. Although its
robust-bound formula is a specialization of their containment result, its additional contribution is to recast
the containment principle in fixed-point and extremal-prior form, yielding
a sequence of finite-dimensional extremal problems with explicit
finite-support prior representations and convergent bounds for the robust
posterior value. The theorem is stated for the infimum; the corresponding supremum result is analogous.
\begin{figure}[htb!]
\centering
\resizebox{0.47\linewidth}{!}{%
\setlength{\fboxsep}{6pt}%
\colorbox{gray!35}{%
\begin{tikzpicture}[
    x=7.2cm,y=7.2cm,>=Latex,
    line join=round,line cap=round
]

  \fill[gray!35] (-0.11,-0.13) rectangle (1.13,1.10);
  \fill[white] (0,0) rectangle (1,1);

  \fill[gray!60,draw=white,line width=1.15pt]
    (0.00,0.18)
    -- (0.00,0.77)
    .. controls (0.06,0.88) and (0.10,0.90) ..
    (0.14,0.90)
    .. controls (0.24,0.96) and (0.32,0.94) ..
    (0.37,0.91)
    .. controls (0.48,0.88) and (0.53,0.82) ..
    (0.61,0.86)
    .. controls (0.71,0.91) and (0.79,0.91) ..
    (0.86,0.88)
    .. controls (0.93,0.89) and (0.98,0.88) ..
    (1.00,0.84)
    -- (1.00,0.22)
    .. controls (0.98,0.18) and (0.93,0.15) ..
    (0.86,0.16)
    .. controls (0.80,0.14) and (0.74,0.14) ..
    (0.69,0.15)
    .. controls (0.57,0.16) and (0.49,0.21) ..
    (0.41,0.13)
    .. controls (0.31,0.07) and (0.22,0.07) ..
    (0.14,0.10)
    .. controls (0.08,0.11) and (0.02,0.14) ..
    (0.00,0.18)
    -- cycle;

  %
  \fill[gray!78,draw=white,line width=1.05pt]
    (0.14,0.90)
    .. controls (0.10,0.90) and (0.06,0.88) ..
    (0.00,0.77)
    -- (0.00,0.18)
    .. controls (0.02,0.14) and (0.08,0.11) ..
    (0.14,0.10)
    .. controls (0.22,0.20) and (0.25,0.30) ..
    (0.31,0.34)
    .. controls (0.42,0.40) and (0.54,0.31) ..
    (0.65,0.34)
    .. controls (0.76,0.37) and (0.81,0.24) ..
    (0.86,0.16)
    .. controls (0.93,0.15) and (0.98,0.18) ..
    (1.00,0.22)
    -- (1.00,0.84)
    .. controls (0.98,0.88) and (0.93,0.89) ..
    (0.86,0.88)
    .. controls (0.78,0.80) and (0.73,0.73) ..
    (0.69,0.70)
    .. controls (0.58,0.82) and (0.46,0.86) ..
    (0.35,0.79)
    .. controls (0.25,0.73) and (0.21,0.79) ..
    (0.14,0.90)
    -- cycle;

  %
  \def\phiX{0.28}

  \def\KphiBot{0.08258}
  \def\CphiBot{0.31286}
  \def\CphiTop{0.76627}
  \def\KphiTop{0.93845}

  \draw[white,line width=1.35pt]
    (\phiX,\KphiTop) -- (\phiX,\CphiTop);

  \draw[
    white,
    line width=1.35pt,
    dash pattern=on 3pt off 2pt
  ]
    (\phiX,\CphiTop) -- (\phiX,\CphiBot);

  \draw[white,line width=1.35pt]
    (\phiX,\CphiBot) -- (\phiX,\KphiBot);

  \fill[black] (\phiX,0.28) circle (0.008);


  \draw[black,line width=0.9pt]
    (\phiX,-0.012) -- (\phiX,0.012);

  \node[font=\normalsize,anchor=north]
    at (\phiX,-0.01) {$\varphi$};

  \def\Scca{0.14}
  \def\Sccb{0.86}

  \draw[black,line width=1.0pt]
    (\Scca,-0.075) -- (\Sccb,-0.075);

  \draw[black,line width=1.0pt]
    (\Scca,-0.06) -- (\Scca,-0.09);

  \draw[black,line width=1.0pt]
    (\Sccb,-0.06) -- (\Sccb,-0.09);

  \node[font=\large]
    at (0.50,-0.12)
    {$S_C^c$};

  \draw[black,line width=1.0pt]
    (1.04,\CphiBot) -- (1.04,\CphiTop);

  \draw[black,line width=1.0pt]
    (1.025,\CphiBot) -- (1.055,\CphiBot);

  \draw[black,line width=1.0pt]
    (1.025,\CphiTop) -- (1.055,\CphiTop);

  \node[font=\large,anchor=west]
    at (1.07,0.53957)
    {$C_{\varphi}$};

  \node[font=\Large\bfseries,anchor=south east]
    at (0.98,1.00)
    {$\boldsymbol{U\times V=[0,1]^2}$};

  \node[font=\large]
    at (0.76,0.23)
    {$\boldsymbol{K}$};

  \node[font=\large]
    at (0.52,0.64)
    {$\boldsymbol{C}$};

  \node[font=\large,anchor=west]
    at (0.29,0.28)
    {$s(\varphi)$};

  \node[font=\large,align=left,anchor=west]
    at (-0.03,0.56)
    {$K_{\varphi}\cap C_\varphi^c$};

  \draw[black,line width=0.8pt]
    (0.10,0.52) -- (\phiX-0.008,0.24);

  \draw[black,line width=0.8pt]
    (0.10,0.61) -- (\phiX-0.008,0.84);

  \node[font=\large,anchor=west]
    at (1.03,-0.002)
    {$U$};

  \node[font=\large,anchor=south]
    at (-0.01,1.02)
    {$V$};

\end{tikzpicture}%
}}%
\caption[Fibrewise repair for interval-identified robust inference]{Fibrewise repair for Theorem~\ref{thm:2d-gk-fixed-point-extension-corrected}. The unit square is $U\times V=[0,1]^2$. The compact set $K$ is shown in grey and the event set $C\subseteq K$ in darker grey. Outside the displayed interval $S_C^c$, the fibres of $C$ coincide with the corresponding fibres of $K$; within $S_C^c=\{\varphi\in U:K_{\varphi}\nsubseteq C_{\varphi}\}$, the fibres of $C$ are strict subsets of the fibres of $K$. The illustrated fibre is taken at $\varphi\in S_C^c$, so that $K_{\varphi}\cap C_\varphi^c\neq\varnothing$; the solid white line segments at $\varphi$ indicate the repairable set $K_{\varphi}\cap C_\varphi^c$, while the dashed white segment indicates the $\varphi$-fibre portion inside $C$. A fibre-wise ``selector'' may then choose a repaired point $s(\varphi)\in K_{\varphi}\cap C_\varphi^c$. Although $K_\varphi$, $C_\varphi \subseteq V$, we illustrate these as the fibre over $\varphi$ (using the embedding $\eta\mapsto(\varphi,\eta)$) to emphasise their geometric implications. See the proof for details.}
\label{fig:fibrewise_repair_theorem218}
\end{figure}
\begin{theorem}
\label{thm:2d-gk-fixed-point-extension-corrected}
Let $U=V=[0,1]$, let $\mu\in\mathcal P(U)$, and let $K\subseteq U\times V$ be compact with nonempty vertical sections $K_\varphi:=\{\eta\in V:(\varphi,\eta)\in K\}$  for $\varphi\in U$. Let $C\subseteq K$ be Borel, and let $L:U\to(0,\infty)$ be continuous with $0<c\leqslant L(\varphi)\leqslant M<\infty$. Define $f(\varphi,\eta):=L(\varphi)\mathbf 1_C(\varphi,\eta)$ and  $g(\varphi,\eta):=L(\varphi)$. Let $\mathcal D:= \bigl\{ \mathbb P\in\mathcal P(K): \mathbb P(A\times V)=\mu(A) \text{ for every Borel }A\subseteq U \bigr\}$, and define the lower robust posterior probability \[ I^-:= \inf_{\mathbb P\in\mathcal D}\frac{{\mathbb E}_{\mathbb P}[f]}{{\mathbb E}_{\mathbb P}[g]}. \] Denote $S_C:=\{\varphi\in U:K_\varphi\subseteq C_\varphi\}$ and $C_\varphi:=\{\eta\in V:(\varphi,\eta)\in C\}$, and assume $S_C$ is Borel. 
Then, \[ I^-= \frac{\int_{S_C}L(\varphi)\,\mu(d\varphi)} {\int_U L(\varphi)\,\mu(d\varphi)}. \] 
Moreover, $I^-$ is the limit of a monotonically increasing sequence of finite fixed-point extremal values obtained from $S_C$-adapted partitions of $K$. 
\end{theorem}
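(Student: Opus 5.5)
We prove the closed-form formula by matching a lower bound with an upper bound, and then obtain the monotone fixed-point approximation from a nested family of $S_C$-adapted partitions.

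\textbf{Lower bound.} For any $\mathbb P\in\mathcal D$ the denominator does not depend on $\mathbb P$: $\mathbb E_{\mathbb P}[g]=\int_U L\,d\mu$, because the $U$-marginal is $\mu$. For the numerator, if $\varphi\in S_C$ then every $(\varphi,\eta)\in K$ lies in $C$. Since $\mathbb P$ is supported on $K$, we get $\mathbf 1_C\geqslant \mathbf 1_{S_C\times V}$ $\mathbb P$-a.s. Hence
\[
\mathbb E_{\mathbb P}[f]\geqslant \int_{S_C}L\,d\mu ,
\]
and so $I^-\geqslant \int_{S_C}L\,d\mu/\int_U L\,d\mu$.

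\textbf{Upper bound (fibrewise repair).} We build $\mathbb P^\star=\mu\otimes\delta_{s(\varphi)}$, where $s(\varphi)\in K_\varphi\cap C_\varphi^c$ for $\varphi\in S_C^c$, and $s(\varphi)\in K_\varphi$ is arbitrary for $\varphi\in S_C$. Then $\mathbb E_{\mathbb P^\star}[f]=\int_{S_C}L\,d\mu$ exactly, which gives equality.

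The main obstacle is the measurability of the selector $s$. The set $\{(\varphi,\eta)\in K\setminus C\}$ is Borel but need not be closed. To handle this I would first try the Jankov--von Neumann uniformization theorem, which gives a $\mu$-measurable (universally measurable) selector on the analytic projection $S_C^c$. Modifying it on a $\mu$-null set yields a Borel selector $\mu$-a.e., which suffices. The selector for $\varphi\in S_C$ can be taken as $\varphi\mapsto \min K_\varphi$, which is Borel because $K$ is compact, so $\varphi\mapsto K_\varphi$ is upper semicontinuous. If one prefers to avoid descriptive set theory, there is a fallback: show $\inf\leqslant$ by approximation. Partition $U$ finely, pick finitely many repaired points per cell, and bound the error via continuity of $L$ (as in Proposition~\ref{prop_cont_marginal_approx}, where $\omega_g(m)\to0$ by uniform continuity of $L$). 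The remaining oscillation of $f$ is controlled because $\mathbf 1_C$ at the chosen points is exactly $0$ off $S_C$.

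\textbf{Monotone fixed-point sequence.} Take nested finite Borel partitions $\{B^{(m)}_i\}$ of $U$ whose cells are each contained in $S_C$ or in $S_C^c$ (this is $S_C$-adaptedness, and it is possible since $S_C$ is Borel), with mesh tending to $0$. Refine each $B_i^{(m)}\times V$ by $C$ and $K\setminus C$, and apply Proposition~\ref{cor_refined_strip_oscillations} together with Proposition~\ref{prop:semicont_refinement}. The finite problem $\phi_m$ relaxes the marginal constraint to the cell masses, so $\phi_m\leqslant I^-$.

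The relaxed problem places mass where $h_{\varphi_m,f,g}=L(\mathbf 1_C-\phi_m)$ is minimal. Off $S_C$-cells the minimum is $-\phi_m\inf L$ on $C^c$. On $S_C$-cells it is $(1-\phi_m)\inf L$. These are the fixed-point conditions (E1)--(E2), with values
\[
\phi_m=\frac{\sum_{B_i\subseteq S_C}p_i\,\underline L_i}{\sum_{B_i\subseteq S_C}p_i\,\underline L_i+\sum_{B_i\subseteq S_C^c}p_i\,\overline L_i},
\]
where $\underline L_i$ and $\overline L_i$ are the cellwise infimum and supremum of $L$ (the $\overline L_i$ arises because, off $S_C$, a larger denominator lowers the ratio). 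Since the partitions are nested, $\underline L_i$ increases and $\overline L_i$ decreases, so $\phi_m$ is increasing. By uniform continuity of $L$, $\phi_m$ converges to the formula.
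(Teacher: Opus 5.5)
Your argument is correct, but it proves the closed form by a different route from the paper.

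The paper never compares $\mathbf 1_C$ with $\mathbf 1_{S_C\times V}$ pointwise. It gets $I^-\geqslant\int_{S_C}L\,d\mu/\int_UL\,d\mu$ only in the limit, from the relaxation bound $\Lambda_m^-\leqslant I^-$ together with $\Lambda_m^-\to\int_{S_C}L\,d\mu/\int_UL\,d\mu$. It gets the reverse inequality from Lemma~\ref{lem:choquet-dyadic-vanishing-leakage}, which works as follows:
\begin{itemize}
\item Choquet capacitability of the projection capacity $E\mapsto\mu^*(\pi_U(E))$ yields compact cores $F_m\subseteq K\setminus C$ with $\mu(S_C^c\setminus\pi_U(F_m))\leqslant2^{-m}$.
\item A Borel selector is built on each core by dyadic bisection of its compact sections.
\item The resulting priors $\mathbb P_m\in\mathcal D$ have leakage $\mathbb P_m((S_C^c\times V)\cap C)\leqslant2^{-m}$.
\end{itemize}
The value is then pinned down by the sandwich $\Lambda_m^-\leqslant I^-\leqslant\Gamma_m^-$.

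Your pointwise lower bound needs no discretisation at all. Your Jankov--von Neumann selector produces a single $\mathbb P^\star\in\mathcal D$ that attains the bound. This is stronger than what the paper proves; the paper only remarks that attainment could be obtained by modifying its argument. The price is a universally measurable uniformisation theorem used as a black box. The paper's selector, by contrast, is elementary and Borel, and comes with finitely supported approximants $\mathbb Q_{m,r}$ that fit its fixed-point framework.

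For the ``Moreover'' clause your construction coincides with the paper's Steps~1--3 and~5: your $\phi_m$ is its $\Lambda_m^-=A_m^\ell/(A_m^\ell+B_m^u)$.

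Three small points.
\begin{itemize}
\item On an $S_C^c$-cell the Dinkelbach minimum is $-\phi_m\sup_{B_i}L=-\phi_m\overline L_i$, not $-\phi_m\inf L$. Your displayed formula already uses $\overline L_i$, correctly.
\item Proposition~\ref{cor_refined_strip_oscillations} is not needed, and is not literally applicable here. It assumes $K=U\times V$, a Lebesgue density $\rho$, and a kernel hypothesis that is itself a measurable-selection assumption. Proposition~\ref{prop_cbi_transform} applied to the partition $\{K\cap(B_i\times V)\}$ already gives the finite discrete problem.
\item Your fallback would not work as described. Priors with finitely many repaired points per cell do not have $U$-marginal $\mu$, so they bound only the relaxed problem. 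Spreading $\mu|_{B_i}$ against a fixed $V$-law, as in Proposition~\ref{prop_cont_marginal_approx}, also fails: $\mathbf 1_C$ has non-vanishing oscillation in the $\varphi$-direction, and the spread points need not even lie in $K$. That obstruction is exactly what the paper's compact-core selector lemma, or your uniformisation theorem, is there to overcome.
\end{itemize}
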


\begin{proof}
Appendix~O, Supplementary Material (\cite{Salako_Muhammad_2025_suppmat}).
\end{proof}

\noindent\textbf{Remark: }Although Giacomini and Kitagawa establish the existence of feasible priors that attain the robust bound---this can also be established by a modest modification of the fixed-point argument used in proving Theorem~\ref{thm:2d-gk-fixed-point-extension-corrected}---the formulation developed here \textbf{i)} broadens the solution description to include sequences of feasible priors whose objective values approach the bound, and \textbf{ii)} provides a constructive approximation route for solving the problem.  At each approximation
level, a finitely supported prior solves, or arbitrarily closely solves, a finite relaxation of the original problem. As partitions of the prior domain are refined, the corresponding
extremal objective values converge to the robust bound. A subtlety is that a weak limit of these approximating priors may itself fail to attain the robust bound. This is because the objective contains the indicator of the Borel event $C$, so the objective---viewed as a functional over priors---is not necessarily continuous under weak convergence. Thus, the framework provides a fixed-point characterisation that reveals the structural basis of the robust bound; it also describes finite approximations to the bound and the limiting process by which they approach it.

\subsection{Robust Value-at-Risk Aggregation under Prescribed Marginals}
\citet{EmbrechtsPuccettiRueschendorf2013} study robust
\emph{Value-at-Risk} (VaR) aggregation under prescribed one-dimensional and
higher-dimensional marginal information.
Theorem~\ref{thm_VaRExample} characterises extremal probabilities of Borel events over probability measures with uniform coordinate marginals. Corollary~\ref{Cor_VaRExample} applies these results to the
reduced robust VaR problems defined in
\citet[eqs.~(30a)--(30b)]{EmbrechtsPuccettiRueschendorf2013}, thereby extending their treatment with threshold-event
representations, ``exchange'' fixed-point characterisations of extremal
priors, and convergent finite approximations.  

\begin{theorem}[Extremal event probabilities under prescribed uniform marginals]
\label{thm_VaRExample}

Let $K=[0,1]^d$, let $\lambda$ denote Lebesgue probability measure on
$[0,1]$, and let
\begin{equation}
\mathcal D
:=
\left\{
\mathbb P\in\mathcal P(K):
(\operatorname{pr}_j)_\#\mathbb P=\lambda,
\quad j=1,\ldots,d
\right\}.
\label{eqn_VaRMathcalD}
\end{equation}
For Borel set $G\subseteq K$, define
\begin{align}
V(G)
&:=
\sup_{\mathbb P\in\mathcal D}\mathbb P(G).
\label{eqn_VaRVG} \\
I(G)
&:=
\inf_{\mathbb P\in\mathcal D}\mathbb P(G)
=
1-V(G^c).
\label{eqn_VaRIG}
\end{align}

For a finite positive measure $\alpha\in\mathcal M_+(K)$ on $K$, define its
marginal-completion fibre by
\begin{equation}
\mathcal C(\alpha)
:=
\left\{
\beta\in\mathcal M_+(K):
(\operatorname{pr}_j)_\#\beta
=
(\operatorname{pr}_j)_\#\alpha,
\quad j=1,\ldots,d
\right\}.
\label{eqn_betaforalpha}
\end{equation}
Every $\beta\in\mathcal C(\alpha)$ has $\beta(K)=\alpha(K)$. For $\mathbb P\in\mathcal D$, define the complete admissible exchange class $\mathfrak X(\mathbb P)
:=
\left\{
(\alpha,\beta):
0\leqslant\alpha\leqslant\mathbb P,
\ 
\beta\in\mathcal C(\alpha)
\right\}$,
and the associated exchange map $T_{\alpha,\beta}\mathbb P
:=
\mathbb P-\alpha+\beta$. Consider the extremal event-probability problems
\begin{equation}
\phi_G^+
:=
\sup_{\mathbb P\in\mathcal D}
\int_K\mathbf 1_G\,d\mathbb P
=
V(G),
\qquad
\phi_G^-
:=
\inf_{\mathbb P\in\mathcal D}
\int_K\mathbf 1_G\,d\mathbb P
=
I(G).
\label{eqn_event_extremal_problems}
\end{equation}
The following assertions hold.

\medskip
\noindent
\textbf{(a) Fixed-point characterisation of the general extremal
problem.}

For $\mathbb P\in\mathcal D$, $\phi\in\mathbb R$, and
$(\alpha,\beta)\in\mathfrak X(\mathbb P)$, define the exchange-space
extremal objective
\begin{equation}
h_{\phi,G}^{\mathbb P}(\alpha,\beta)
:=
\bigl(T_{\alpha,\beta}\mathbb P\bigr)(G)-\phi.
\label{eqn_exchange_extremal_objective}
\end{equation}
A fixed-point pair $(\mathbb P^\ast,\phi^\ast)\in\mathcal D\times\mathbb R$ solves
the maximizing problem in \eqref{eqn_event_extremal_problems} if and only if
it satisfies the fixed-point conditions
\begin{align}
(E1)_G &:\ 
(0,0)
\in
\operatorname*{arg\,max}_{(\alpha,\beta)\in\mathfrak X(\mathbb P^\ast)}
h_{\phi^\ast,G}^{\mathbb P^\ast}(\alpha,\beta),
\label{eqn_exchange_E1}
\\
(E2)_G &:\ 
\phi^\ast=\mathbb P^\ast(G).
\label{eqn_exchange_E2}
\end{align}
Equivalently, $\beta(G)-\alpha(G)\leqslant 0
\ 
\text{for every }
(\alpha,\beta)\in\mathfrak X(\mathbb P^\ast)$. The corresponding Dinkelbach difference is the
mass-exchange objective $\ \Delta_G(\alpha,\beta)
:=
\beta(G)-\alpha(G)$, which satisfies $\Delta_G(\alpha,\beta)
=
\bigl(T_{\alpha,\beta}\mathbb P\bigr)(G)-\mathbb P(G)$. The residual $\ \mathscr R_G(\mathbb P)
:=\sup_{(\alpha,\beta)\in\mathfrak X(\mathbb P)}\allowbreak\Delta_G(\alpha,\beta)$ over the exchange class for $\mathbb P$ is the global objective gap
\begin{equation}
\mathscr R_G(\mathbb P)
=
V(G)-\mathbb P(G).
\label{eqn_exchange_residual_gap}
\end{equation}
Consequently, $\mathscr R_G(\mathbb P)=0
\Longleftrightarrow
\mathbb P\in
\operatorname*{arg\,max}_{\mathbb Q\in\mathcal D}\mathbb Q(G),\,$ and $\,\mathbb P_r(G)\longrightarrow V(G)
\Longleftrightarrow
\mathscr R_G(\mathbb P_r)\longrightarrow0$. All corresponding statements for the minimizing problem follow by applying
the maximizing result to $G^c$ and using $I(G)=1-V(G^c)$.

\begin{figure}[htb!]
\centering
\resizebox{0.47\linewidth}{!}{%
\setlength{\fboxsep}{6pt}%
\colorbox{gray!35}{%
\begin{tikzpicture}[
    x=7.2cm,y=7.2cm,>=Latex,
    line join=round,line cap=round
]

  \fill[gray!35] (-0.11,-0.13) rectangle (1.13,1.10);
  \fill[white] (0,0) rectangle (1,1);

  \fill[gray!60,draw=white,line width=1.15pt]
    (0.12,0.30)
    .. controls (0.14,0.54) and (0.20,0.84) ..
    (0.39,0.90)
    .. controls (0.56,0.95) and (0.74,0.88) ..
    (0.80,0.73)
    .. controls (0.84,0.62) and (0.76,0.52) ..
    (0.73,0.42)
    .. controls (0.69,0.31) and (0.58,0.23) ..
    (0.46,0.24)
    .. controls (0.34,0.25) and (0.29,0.15) ..
    (0.19,0.12)
    .. controls (0.11,0.14) and (0.09,0.22) ..
    (0.12,0.30) -- cycle;

  \fill[gray!78,fill opacity=0.58,draw=white,line width=1.05pt]
    (0.25,0.45)
    .. controls (0.24,0.58) and (0.31,0.73) ..
    (0.43,0.77)
    .. controls (0.56,0.80) and (0.67,0.73) ..
    (0.70,0.61)
    .. controls (0.71,0.50) and (0.63,0.41) ..
    (0.51,0.39)
    .. controls (0.39,0.37) and (0.29,0.39) ..
    (0.25,0.45) -- cycle;

  \begin{scope}

    \foreach \i in {9,...,18}{
      \path[
        fill=gray!40,
        fill opacity=0.28,
        draw=black!55,
        line width=0.22pt
      ]
        ({\i/32},{12/32})
        rectangle
        ({(\i+1)/32},{13/32});
    }

    \foreach \i in {8,...,19}{
      \path[
        fill=gray!40,
        fill opacity=0.28,
        draw=black!55,
        line width=0.22pt
      ]
        ({\i/32},{13/32})
        rectangle
        ({(\i+1)/32},{14/32});
    }

    \foreach \i in {7,...,20}{
      \path[
        fill=gray!40,
        fill opacity=0.28,
        draw=black!55,
        line width=0.22pt
      ]
        ({\i/32},{14/32})
        rectangle
        ({(\i+1)/32},{15/32});
    }

    \foreach \i in {7,...,21}{
      \path[
        fill=gray!40,
        fill opacity=0.28,
        draw=black!55,
        line width=0.22pt
      ]
        ({\i/32},{15/32})
        rectangle
        ({(\i+1)/32},{16/32});
    }

    \foreach \i in {7,...,22}{
      \path[
        fill=gray!40,
        fill opacity=0.28,
        draw=black!55,
        line width=0.22pt
      ]
        ({\i/32},{16/32})
        rectangle
        ({(\i+1)/32},{17/32});
    }

    \foreach \i in {8,...,22}{
      \path[
        fill=gray!40,
        fill opacity=0.28,
        draw=black!55,
        line width=0.22pt
      ]
        ({\i/32},{17/32})
        rectangle
        ({(\i+1)/32},{18/32});
    }

    \foreach \i in {8,...,22}{
      \path[
        fill=gray!40,
        fill opacity=0.28,
        draw=black!55,
        line width=0.22pt
      ]
        ({\i/32},{18/32})
        rectangle
        ({(\i+1)/32},{19/32});
    }

    \foreach \i in {8,...,22}{
      \path[
        fill=gray!40,
        fill opacity=0.28,
        draw=black!55,
        line width=0.22pt
      ]
        ({\i/32},{19/32})
        rectangle
        ({(\i+1)/32},{20/32});
    }

    \foreach \i in {9,...,22}{
      \path[
        fill=gray!40,
        fill opacity=0.28,
        draw=black!55,
        line width=0.22pt
      ]
        ({\i/32},{20/32})
        rectangle
        ({(\i+1)/32},{21/32});
    }

    \foreach \i in {9,...,21}{
      \path[
        fill=gray!40,
        fill opacity=0.28,
        draw=black!55,
        line width=0.22pt
      ]
        ({\i/32},{21/32})
        rectangle
        ({(\i+1)/32},{22/32});
    }

    \foreach \i in {10,...,21}{
      \path[
        fill=gray!40,
        fill opacity=0.28,
        draw=black!55,
        line width=0.22pt
      ]
        ({\i/32},{22/32})
        rectangle
        ({(\i+1)/32},{23/32});
    }

    \foreach \i in {11,...,20}{
      \path[
        fill=gray!40,
        fill opacity=0.28,
        draw=black!55,
        line width=0.22pt
      ]
        ({\i/32},{23/32})
        rectangle
        ({(\i+1)/32},{24/32});
    }

    \foreach \i in {12,...,19}{
      \path[
        fill=gray!40,
        fill opacity=0.28,
        draw=black!55,
        line width=0.22pt
      ]
        ({\i/32},{24/32})
        rectangle
        ({(\i+1)/32},{25/32});
    }

  \end{scope}

  \coordinate (p1) at (0.33,0.62); 
  \coordinate (p2) at (0.84,0.62); 
  \coordinate (p3) at (0.33,0.18); 
  \coordinate (p4) at (0.84,0.18); 

  \draw[black,line width=0.9pt]
    (p1) -- (p2) -- (p4) -- (p3) -- cycle;

  \fill[black] (p1) circle (0.008);
  \fill[black] (p2) circle (0.008);
  \fill[black] (p3) circle (0.008);
  \fill[black] (p4) circle (0.008);

  \def\sigHalf{0.022}     
  \def\sigHalfV{0.022}    
  \def\sigLW{1.25pt}      

  \draw[black,line width=\sigLW]
    ($(p1)+(-0.028,0.030)+(-\sigHalf,0)$) --
    ($(p1)+(-0.028,0.030)+(\sigHalf,0)$);
  \draw[black,line width=\sigLW]
    ($(p1)+(-0.028,0.030)+(0,-\sigHalfV)$) --
    ($(p1)+(-0.028,0.030)+(0,\sigHalfV)$);

  \draw[black,line width=\sigLW]
    ($(p2)+(0.028,0.030)+(-\sigHalf,0)$) --
    ($(p2)+(0.028,0.030)+(\sigHalf,0)$);

  \draw[black,line width=\sigLW]
    ($(p3)+(-0.028,-0.030)+(-\sigHalf,0)$) --
    ($(p3)+(-0.028,-0.030)+(\sigHalf,0)$);

  \draw[black,line width=\sigLW]
    ($(p4)+(0.028,-0.030)+(-\sigHalf,0)$) --
    ($(p4)+(0.028,-0.030)+(\sigHalf,0)$);
  \draw[black,line width=\sigLW]
    ($(p4)+(0.028,-0.030)+(0,-\sigHalfV)$) --
    ($(p4)+(0.028,-0.030)+(0,\sigHalfV)$);

  \node[font=\Large\bfseries,anchor=south east] at (0.98,1.00)
    {$\boldsymbol{K=[0,1]^2}$};

  \node[font=\large] at (0.67,0.30)
    {$\boldsymbol{G}$};

  \node[font=\large] at (0.48,0.56)
    {$\boldsymbol{F_\ell}$};

  \node[font=\large,anchor=west] at (0.77,0.82)
    {dyadic cells};

  \draw[black,line width=0.8pt]
    (0.78,0.8) -- (0.61,0.765);

  \node[font=\large,anchor=east,align=center] at (0.16,0.56)
    {4-point\\mass exchange};

  \draw[black,line width=0.8pt]
    (0.0,0.49) -- (0.32,0.32);

\end{tikzpicture}%
}}%
\caption[Marginal-preserving dyadic exchange increasing target-set mass]{Compact-core and dyadic-cell illustration for Theorem~\ref{thm_VaRExample} in two dimensions. The domain is the unit square $K=[0,1]^2$. The Borel target set $G\subseteq K$ is shown in grey, and a compact core $F_\ell\subseteq G$ is shown in darker grey. Dyadic cells intersecting $F_\ell$ are highlighted. The four corners of the rectangle depict an equal-mass $2\times2$ exchange:
mass is removed at the two points marked ``$-$'' and added at the two
points marked ``$+$''. This preserves both coordinate marginals. The
upper-left added-mass point lies in $G$, while the other three points
lie in $K\setminus G$, so the exchange strictly increases $G$-mass. See the proof for details.}
\label{fig:theorem219_dyadic_compact_core_exchange}
\end{figure}

\medskip
\noindent
\textbf{(b) Fibrewise fixed points, extremal support, and tight mass exchanges.}

Let $\mathbb P\in\mathcal D$ and let
$0<\alpha\leqslant\mathbb P$. For $\rho\in\mathbb R$ and
$\beta\in\mathcal C(\alpha)$, define $h_{\rho,G}^{\alpha}(\beta)
:=
\beta(G)-\rho\,\beta(K)$
and $\ \Psi_{\alpha,G}(\rho)
:=
\sup_{\beta\in\mathcal C(\alpha)}
h_{\rho,G}^{\alpha}(\beta)$.
Writing $\Gamma_G(\alpha)
:=
\sup_{\beta\in\mathcal C(\alpha)}\beta(G)$,
one has $\Psi_{\alpha,G}(\rho)
=
\Gamma_G(\alpha)-\rho\,\alpha(K)$ 
whose unique root is $\widehat\rho_G(\alpha)
=
\frac{\Gamma_G(\alpha)}{\alpha(K)}$. 

A prior $\mathbb P^\ast\in\mathcal D$ is extremal for $V(G)$ if and only if
every nonzero submeasure $0<\alpha\leqslant\mathbb P^\ast$ satisfies the
fibrewise fixed-point conditions
\begin{align}
(E1)_{\alpha,G} &:\ 
\alpha
\in
\operatorname*{arg\,max}_{\beta\in\mathcal C(\alpha)}
h_{\rho_\alpha,G}^{\alpha}(\beta),
\label{eqn_fibre_E1}
\\
(E2)_{\alpha,G} &:\ 
\rho_\alpha
=
\frac{\alpha(G)}{\alpha(K)}.
\label{eqn_fibre_E2}
\end{align}
Equivalently, $\ \Gamma_G(\alpha)=\alpha(G)
\ \text{for every }
0<\alpha\leqslant\mathbb P^\ast$. Thus, an extremal prior is hereditarily fibre-extremal: every removable
portion of its support already maximizes $G$-mass among all positive
measures having the same induced coordinate marginals. 

For an extremiser $\mathbb P^\ast$, $\mathfrak X_G^0(\mathbb P^\ast)
:=
\bigl\{
(\alpha,\beta)\in\mathfrak X(\mathbb P^\ast)
\mathrel{\big|}
\allowbreak
\Delta_G(\alpha,\beta)=0
\bigr\}$ is the class of tight exchanges for $\mathbb P^\ast$. Then, $\ \operatorname*{arg\,max}_{\mathbb P\in\mathcal D}\mathbb P(G)
=
\left\{
\mathbb P^\ast-\alpha+\beta:
(\alpha,\beta)\in\mathfrak X_G^0(\mathbb P^\ast)
\right\}$. Hence, tight exchanges generate the complete family of extremal solutions.

\noindent
\textbf{(c) Fixed-point solutions of intermediate approximation problems.}

There exist increasing compact sets $F_\ell\subseteq G$ such that $V(F_\ell)\uparrow V(G)
\ 
\text{as }\ell\to\infty$. For every fixed $\ell$, there exists a nested sequence of discretised
optimisation problems, with $m$-th optimal value
$\phi_{\ell,m}^{\ast}$ such that $\phi_{\ell,m}^{\ast}\downarrow V(F_\ell)$ as $m\to\infty$.  It follows that $V(G)
=
\lim_{\ell\to\infty}
\lim_{m\to\infty}
\phi_{\ell,m}^\ast$. Each $\phi_{\ell,m}^{\ast}$ is the value component of a fixed-point
pair that solves the corresponding discretised problem. For every fixed $\ell$, there exists a sequence of discrete extremal
priors for these discretised problems that converges weakly to some $\mathbb P_\ell^\ast\in\mathcal D$
satisfying $\mathbb P_\ell^\ast(F_\ell)
=
V(F_\ell)$.
\end{theorem}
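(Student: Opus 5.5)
We will treat the three parts in order, relying mainly on two facts about $\mathcal D$. First, $\mathcal D$ is convex and weakly compact: marginal constraints are closed under weak convergence and $K$ is compact. Second, for any $0\leqslant\alpha\leqslant\mathbb P$ and $\beta\in\mathcal C(\alpha)$, the measure $T_{\alpha,\beta}\mathbb P$ is nonnegative, because $\mathbb P-\alpha\geqslant 0$ and $\beta\geqslant0$. It also has uniform marginals, so it lies in $\mathcal D$.

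\textbf{Part (a).} We will prove that the exchange class reaches all of $\mathcal D$. Given $\mathbb Q\in\mathcal D$, take $\alpha:=\mathbb P$ and $\beta:=\mathbb Q$. Since both have uniform marginals, $\beta\in\mathcal C(\alpha)$, and $T_{\alpha,\beta}\mathbb P=\mathbb Q$. Hence $\{T_{\alpha,\beta}\mathbb P:(\alpha,\beta)\in\mathfrak X(\mathbb P)\}=\mathcal D$. It follows that $\mathscr R_G(\mathbb P)=\sup_{\mathbb Q\in\mathcal D}\mathbb Q(G)-\mathbb P(G)=V(G)-\mathbb P(G)$, which is \eqref{eqn_exchange_residual_gap}. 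Condition $(E1)_G$ together with $(E2)_G$ says exactly that $\mathscr R_G(\mathbb P^\ast)=0$ and $\phi^\ast=\mathbb P^\ast(G)$. This gives the equivalence, and the convergence statement for sequences follows immediately. The minimizing version is obtained by applying the result to $G^c$.

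\textbf{Part (b).} For the fibrewise problem, $h^\alpha_{\rho,G}(\beta)=\beta(G)-\rho\alpha(K)$ because $\beta(K)=\alpha(K)$. Therefore $\Psi_{\alpha,G}$ is affine in $\rho$ with slope $-\alpha(K)<0$, and its unique root is $\Gamma_G(\alpha)/\alpha(K)$. Conditions $(E1)_{\alpha,G}$ and $(E2)_{\alpha,G}$ together are equivalent to $\Gamma_G(\alpha)=\alpha(G)$.

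\emph{Necessity.} Suppose some $\alpha\leqslant\mathbb P^\ast$ admits a $\beta\in\mathcal C(\alpha)$ with $\beta(G)>\alpha(G)$. Then $T_{\alpha,\beta}\mathbb P^\ast\in\mathcal D$ has strictly larger $G$-mass, contradicting extremality.

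\emph{Sufficiency.} Take $\alpha=\mathbb P^\ast$. Since $\mathcal C(\mathbb P^\ast)=\mathcal D$, we get $\Gamma_G(\mathbb P^\ast)=V(G)$, and the condition forces $\mathbb P^\ast(G)=V(G)$.

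For the tight-exchange description, one inclusion is immediate, since a tight exchange preserves $G$-mass. For the other, given a maximiser $\mathbb Q$, use the exchange $(\mathbb P^\ast,\mathbb Q)$. It has $\Delta_G=\mathbb Q(G)-\mathbb P^\ast(G)=0$, so it is tight.

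\textbf{Part (c).} This is the main obstacle. First, we approximate $G$ from inside by compact sets. By inner regularity, choose compact sets $F_\ell\uparrow$ inside $G$. We cannot directly pass $\mathbb P(F_\ell)\to\mathbb P(G)$ uniformly in $\mathbb P$. Instead, pick $\mathbb P_k$ with $\mathbb P_k(G)>V(G)-1/k$, then choose a compact $F\subseteq G$ with $\mathbb P_k(G\setminus F)<1/k$, and take $F_\ell$ to be the union of these compacts for $k\leqslant\ell$. The resulting $F_\ell$ are compact and increasing, and $V(F_\ell)\geqslant\mathbb P_\ell(F_\ell)\to V(G)$. Monotonicity gives $V(F_\ell)\leqslant V(G)$.

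For fixed $\ell$, let $\Pi_m$ be the dyadic partition of $K$ with mesh $2^{-m}$, and let $F_{\ell,m}$ be the union of the cells meeting $F_\ell$. These sets decrease to $F_\ell$ because $F_\ell$ is closed. We then discretise the marginals by the dyadic strips, which is the setting of Proposition~\ref{prop_gen_multiple_marginals_piecewise_refined} with $f=\mathbf 1_{F_{\ell,m}}$, a function constant on cells. Define
\[
\phi^\ast_{\ell,m}:=\sup\{\mathbb P(F_{\ell,m}):\mathbb P(\text{strips})=2^{-m}\}.
\]
This is a finite linear program over the strip-consistency polytope, and it is attained at a vertex. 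Equivalently, it is the fixed point of Theorem~\ref{thm_refinedgensol} applied with $g\equiv1$, for which the Dinkelbach step is trivial. This gives the fixed-point pair, and a discrete extremal prior $\mathbb P_{\ell,m}$ is obtained by placing cell masses at points of the cells.

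Monotonicity in $m$ follows from nesting: strip constraints at level $m+1$ imply those at level $m$, and $F_{\ell,m}$ decreases, so $\phi^\ast_{\ell,m}$ is nonincreasing. Every $\mathbb P\in\mathcal D$ is feasible, so $\phi^\ast_{\ell,m}\geqslant V(F_\ell)$.

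For the limit, spread each cell mass uniformly within its cell to obtain feasible elements of $\mathcal D$ without changing cell masses. Take a weak limit point $\mathbb P_\ell^\ast\in\mathcal D$. Portmanteau for the closed set $F_{\ell,m'}$ (take closed cells), with $m\geqslant m'$, gives $\mathbb P^\ast_\ell(\overline{F_{\ell,m'}})\geqslant\limsup_m\phi^\ast_{\ell,m}$. Letting $m'\to\infty$, and using that the closures also decrease to $F_\ell$, yields $\mathbb P^\ast_\ell(F_\ell)\geqslant\lim_m\phi^\ast_{\ell,m}\geqslant V(F_\ell)$. This forces equality throughout, so $\phi^\ast_{\ell,m}\downarrow V(F_\ell)$ and $\mathbb P^\ast_\ell(F_\ell)=V(F_\ell)$. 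The discrete priors converge weakly to the same limit, since they differ from the spread priors by at most the mesh. The double limit for $V(G)$ then follows.
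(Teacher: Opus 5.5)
Your proof is correct. It reaches the same conclusions by a lighter route.

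In (a) and (b) you show that every $\mathbb Q\in\mathcal D$ is reachable, and that a second maximiser is a tight exchange, using the full replacement $(\alpha,\beta)=(\mathbb P,\mathbb Q)$. The paper instead uses the Jordan decomposition $\mathbb Q-\mathbb P=\beta-\alpha$, which gives the minimal, mutually singular exchange. No claim in the statement needs that minimality, so your shortcut suffices, and it makes visible that the exchange-space reduction is essentially tautological.

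The real divergence is in (c). For the compact cores, the paper proves that $V$ is a Choquet capacity and invokes Choquet's capacitability theorem for analytic sets. You use only that $V$ is a supremum of measures, so the suprema over $\mathbb P\in\mathcal D$ and over compact $F\subseteq G$ commute, together with inner regularity of each single $\mathbb P_k$. For the discretised values, the paper proceeds in four steps:
\begin{itemize}
\item it shows $\phi^\ast_{\ell,m}=V(E_{\ell,m})$ by lifting;
\item it uses that dyadic boundary hyperplanes are null under uniform marginals to pass to closures;
\item it applies the compact-set continuity of $V$ from its Step~7;
\item only then does it extract a weak limit to prove attainment.
\end{itemize}
You get monotonicity and $\phi^\ast_{\ell,m}\geqslant V(F_\ell)$ directly from nesting. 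A single Portmanteau argument on the closed sets $\overline{F_{\ell,m'}}$, applied to a weak limit of the spread optimisers, then closes the chain $\mathbb P_\ell^\ast(F_\ell)\geqslant\lim_m\phi^\ast_{\ell,m}\geqslant V(F_\ell)\geqslant\mathbb P_\ell^\ast(F_\ell)$. This identifies the limit and proves attainment at once, with no null-boundary step.

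Two small points. First, keep half-open cells in the definition of $F_{\ell,m}$ and use closures only inside the Portmanteau step, as your later notation $\overline{F_{\ell,m'}}$ suggests. If closed cells were used in the finite problem itself, relaxed priors in $\mathcal D_m$ could charge shared cell faces, and the spread optimisers would no longer attain $\phi^\ast_{\ell,m}$. Second, Theorem~\ref{thm_refinedgensol} is stated for partition constraints and infima. Citing it literally therefore requires the atomisation of Proposition~\ref{prop_gen_multiple_marginals_piecewise_refined} and passage to $1-\mathbf 1_{F_{\ell,m}}$. The paper instead proves a finite exchange fixed-point characterisation (its Step~10). For a finite linear programme, both notions amount to optimality of the cell table together with $\phi^\ast_{\ell,m}$ being its objective value, so nothing is lost.
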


\begin{proof}
Appendix~P, Supplementary Material (\cite{Salako_Muhammad_2025_suppmat}).
\end{proof}


\noindent\textbf{Remark: }The support characterisation in Theorem~\ref{thm_VaRExample} differs from the other example support characterisations in the framework. In those results, the Dinkelbach difference identifies extremal support locations through local or cellwise minimisation
conditions. Here, the Dinkelbach difference instead acts on marginal-preserving mass exchanges. Because such exchanges
preserve total mass, their Dinkelbach difference reduces to $\beta(G)-\alpha(G)$, the resulting change in event probability. Its vanishing defines the tight exchanges that generate an equivalence class of extremal priors---an ``exchange-orbit'' of priors with potentially different support---that all attain the
extremal event value.
\begin{corollary}[Application to Value-at-Risk aggregation with prescribed
marginals and unknown dependence]
\label{Cor_VaRExample}
Let $K=[0,1]^n$, let $\mathcal D$ be the set of probability measures on
$K$ with uniform coordinate marginals, and let $H_1,\ldots,H_n$ be
continuous distribution functions with bounded supports. For
$j=1,\ldots,n$, define
\begin{equation}
Q_j^-(u)
:=
\inf\{x\in\mathbb R:H_j(x)\geqslant u\},
\quad
Q_j^+(u)
:=
\inf\{x\in\mathbb R:H_j(x)>u\},
\quad u\in(0,1),
\label{eqn_VaR_quantile_versions}
\end{equation}
with endpoint values given by the corresponding one-sided limits, and put
\begin{equation}
S^-(\mathbf u)
:=
\sum_{j=1}^n Q_j^-(u_j),
\qquad
S^+(\mathbf u)
:=
\sum_{j=1}^n Q_j^+(u_j),
\qquad \mathbf u\in K.
\label{eqn_VaR_aggregate_versions}
\end{equation}
For $\mathbb P\in\mathcal D$ and $\eta\in(0,1)$, define
\begin{equation}
\operatorname{VaR}_{\eta}(S;\mathbb P)
:=
\inf\{s\in\mathbb R:\mathbb P(S^-\leqslant s)>\eta\}
=
\sup\{s\in\mathbb R:\mathbb P(S^+\geqslant s)\geqslant1-\eta\}.
\label{eqn_symmetric_VaR_representation}
\end{equation}
The robust VaR bounds are
\begin{equation}
\operatorname{VaR}_{\eta}^{\mathrm{lower}}
:=
\inf_{\mathbb P\in\mathcal D}
\operatorname{VaR}_{\eta}(S;\mathbb P),
\qquad
\operatorname{VaR}_{\eta}^{\mathrm{upper}}
:=
\sup_{\mathbb P\in\mathcal D}
\operatorname{VaR}_{\eta}(S;\mathbb P).
\label{eqn_reduced_VaR_target_problems}
\end{equation}

For $s\in\mathbb R$, define the paired threshold events
\begin{equation}
C_s
:=
\{\mathbf u\in K:S^-(\mathbf u)\leqslant s\},
\qquad
G_s
:=
\{\mathbf u\in K:S^+(\mathbf u)\geqslant s\},
\label{eqn_VaR_threshold_events}
\end{equation}
and the extremal envelopes
\begin{equation}
G_r^+(s)
:=
\sup_{\mathbb P\in\mathcal D}\mathbb P(S^-\leqslant s)
=
V(C_s),
\qquad
G_r^-(s)
:=
\inf_{\mathbb P\in\mathcal D}\mathbb P(S^+<s)
=
1-V(G_s).
\label{eqn_reduced_envelopes}
\end{equation}
Then \eqref{eqn_reduced_VaR_target_problems} is equivalent to
\begin{equation}
\operatorname{VaR}_{\eta}^{\mathrm{lower}}
=
\inf\{s\in\mathbb R:G_r^+(s)>\eta\},
\qquad
\operatorname{VaR}_{\eta}^{\mathrm{upper}}
=
\max\{s\in\mathbb R:G_r^-(s)\leqslant\eta\}.
\label{eqn_reduced_lower_VaR}
\end{equation}

Using Theorem~\ref{thm_VaRExample}, for every $\mathbb P\in\mathcal D$, the exchange residuals are
\begin{align}
\mathscr R_{C_s}(\mathbb P)
&=
G_r^+(s)-\mathbb P(C_s),
\label{eqn_VaR_positive_residual}
\\
\mathscr R_{G_s}(\mathbb P)
&=
1-G_r^-(s)-\mathbb P(G_s).
\label{eqn_VaR_negative_residual}
\end{align}
For every $s\in\mathbb R$, there exist exchange fixed-point pairs
\[
\bigl(\mathbb P_s^{+,\ast},G_r^+(s)\bigr)
\quad\text{and}\quad
\bigl(\mathbb P_s^{-,\ast},1-G_r^-(s)\bigr)
\]
satisfying the respective
$(E1)_{C_s}$--$(E2)_{C_s}$ and
$(E1)_{G_s}$--$(E2)_{G_s}$ systems. 
The extremal priors are precisely those satisfying
\begin{align}
\beta(C_s)-\alpha(C_s)
&\leqslant0
\quad
\forall(\alpha,\beta)\in
\mathfrak X(\mathbb P_s^{+,\ast}),
\label{eqn_VaR_positive_fixed_point}
\\
\beta(G_s)-\alpha(G_s)
&\leqslant0
\quad
\forall(\alpha,\beta)\in
\mathfrak X(\mathbb P_s^{-,\ast}).
\label{eqn_VaR_negative_fixed_point}
\end{align}
Moreover,
\begin{align}
\alpha(C_s)
&=
\sup_{\beta\in\mathcal C(\alpha)}\beta(C_s)
\quad
\forall\,0<\alpha\leqslant\mathbb P_s^{+,\ast},
\label{eqn_VaR_positive_hereditary_support}
\\
\alpha(G_s)
&=
\sup_{\beta\in\mathcal C(\alpha)}\beta(G_s)
\quad
\forall\,0<\alpha\leqslant\mathbb P_s^{-,\ast}.
\label{eqn_VaR_negative_hereditary_support}
\end{align}
All other extremal priors are obtained by tight exchanges for the corresponding threshold event.

There exist coherently nested discretised problems with optimal values
$\phi_{s,m}^+$ and $\phi_{s,m}^-$ satisfying
\begin{align}
\phi_{s,m}^+
&\downarrow
G_r^+(s),
\label{eqn_VaR_upper_envelope_limit}
\\
\phi_{s,m}^-
&\downarrow
1-G_r^-(s)
=
V(G_s)
\label{eqn_VaR_lower_envelope_limit}
\end{align}
as $m\to\infty$. For every fixed $s$, there are sequences of extremal priors for the
discretised $C_s$ and $G_s$ problems, with weakly convergent
subsequences whose limits attain $V(C_s)$ and $V(G_s)$, respectively. For $\gamma\in(0,1)$, define
\begin{equation}
q_m^+(\gamma)
:=
\inf\{s\in\mathbb R:\phi_{s,m}^+>\gamma\},
\qquad
q_m^-(\gamma)
:=
\sup\{s\in\mathbb R:\phi_{s,m}^-\geqslant1-\gamma\}.
\label{eqn_finite_symmetric_VaR_inversions}
\end{equation}
For every $\eta\in(0,1)$, continuity levels
$\eta_r\downarrow\eta$ may be chosen such that
\begin{equation}
q_m^+(\eta_r)
\longrightarrow
\operatorname{VaR}_{\eta_r}^{\mathrm{lower}}
\ \text{as }m\to\infty,
\qquad
\operatorname{VaR}_{\eta_r}^{\mathrm{lower}}
\downarrow
\operatorname{VaR}_{\eta}^{\mathrm{lower}}
\ \text{as }r\to\infty.
\label{eqn_stable_level_lower_VaR_convergence}
\end{equation}
The upper robust bound is obtained directly at level $\eta$:
\begin{equation}
q_m^-(\eta)
\downarrow
\operatorname{VaR}_{\eta}^{\mathrm{upper}}
\ 
\text{as }m\to\infty.
\label{eqn_direct_upper_VaR_convergence}
\end{equation}

Writing
\begin{equation}
s_\eta^\ast
:=
\operatorname{VaR}_{\eta}^{\mathrm{upper}}
=
\max\{s\in\mathbb R:V(G_s)\geqslant1-\eta\},
\label{eqn_upper_VaR_extremal_threshold}
\end{equation}
every extremal prior $\mathbb P_{s_\eta^\ast}^{-,\ast}$ for
$G_{s_\eta^\ast}$ attains the robust upper VaR:
\begin{equation}
\operatorname{VaR}_{\eta}
\bigl(S;\mathbb P_{s_\eta^\ast}^{-,\ast}\bigr)
=
\operatorname{VaR}_{\eta}^{\mathrm{upper}}.
\label{eqn_upper_VaR_attainment}
\end{equation}
Thus, the two robust VaR bounds are obtained by threshold inversion of the extremal-value functions associated with the paired fixed-point problems for $C_s$ and $G_s$.
\end{corollary}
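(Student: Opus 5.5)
The plan is to reduce everything to Theorem~\ref{thm_VaRExample}, applied to the two threshold families $C_s$ and $G_s$. That theorem is used for every $s$. The only extra work is (i) the threshold-inversion identities, (ii) closedness of the threshold sets, which gives attainment, and (iii) monotonicity arguments in $s$ and $m$ for the inverted quantities.

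\emph{Step 1 (threshold reformulation).} Fix $\mathbb P\in\mathcal D$. Interchanging infima in the first representation in \eqref{eqn_symmetric_VaR_representation} gives
\[
\operatorname{VaR}^{\mathrm{lower}}_\eta=\inf\{s:\exists\,\mathbb P\in\mathcal D,\ \mathbb P(C_s)>\eta\}.
\]
The condition ``$\exists\,\mathbb P\in\mathcal D$ with $\mathbb P(C_s)>\eta$'' is equivalent to $V(C_s)>\eta$, that is, $G_r^+(s)>\eta$. Symmetrically, interchanging suprema in the second representation gives
\[
\operatorname{VaR}^{\mathrm{upper}}_\eta=\sup\{s:V(G_s)\geqslant 1-\eta\}.
\]
To upgrade this $\sup$ to a $\max$, I use the structure of the quantiles. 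Each $Q_j^+$ is nondecreasing and right-continuous, hence upper semicontinuous, so $S^+$ is upper semicontinuous and $G_s$ is closed. Likewise each $Q_j^-$ is left-continuous, so $S^-$ is lower semicontinuous and $C_s$ is closed. The set $\mathcal D$ is weakly compact, since it is a closed subset of $\mathcal P(K)$ with $K$ compact. By the Portmanteau theorem, $\mathbb P\mapsto\mathbb P(F)$ is upper semicontinuous for closed $F$. Therefore $V(C_s)$ and $V(G_s)$ are attained.

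For the $\max$, take $s_k\uparrow s$ with $V(G_{s_k})\geqslant1-\eta$, choose maximisers $\mathbb P_k$, and pass to a weak limit $\mathbb P$. Since the sets $G_{s_k}$ decrease to the closed set $G_s$, Portmanteau gives $\mathbb P(G_s)\geqslant1-\eta$. This shows $V(G_s)\geqslant 1-\eta$ at $s=s^\ast_\eta$. Boundedness of the supports of the $H_j$ keeps everything finite.

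\emph{Step 2 (fixed points, residuals, hereditary support).} The residual identities \eqref{eqn_VaR_positive_residual}--\eqref{eqn_VaR_negative_residual} are \eqref{eqn_exchange_residual_gap} with $G=C_s$ and $G=G_s$, using $V(G_s)=1-G_r^-(s)$. Attainment from Step 1 provides maximisers $\mathbb P_s^{\pm,\ast}$. By Theorem~\ref{thm_VaRExample}(a), these are exactly the solutions of $(E1)$--$(E2)$, which yields \eqref{eqn_VaR_positive_fixed_point}--\eqref{eqn_VaR_negative_fixed_point}. Part (b) of that theorem gives the hereditary conditions \eqref{eqn_VaR_positive_hereditary_support}--\eqref{eqn_VaR_negative_hereditary_support} and the description of all extremisers via tight exchanges.

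\emph{Step 3 (discretisation).} Since $C_s$ and $G_s$ are already compact, I take $F_\ell$ equal to $C_s$ (respectively $G_s$) in Theorem~\ref{thm_VaRExample}(c). This gives nested problems with $\phi^{\pm}_{s,m}\downarrow V(\cdot)$, together with weakly convergent sequences of discrete extremisers whose limits attain $V(C_s)$ and $V(G_s)$. To make the discretisations ``coherently nested'' in $s$, I use the same dyadic grid for all $s$. The outer dyadic approximation of a set is then monotone in the set, so $\phi^+_{s,m}$ is nondecreasing in $s$ and $\phi^-_{s,m}$ is nonincreasing in $s$.

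\emph{Step 4 (inversion limits).} \emph{Upper bound.} Because $\phi^-_{s,m}$ decreases in $m$, the quantities $q_m^-(\eta)$ decrease in $m$, and each is at least $s^\ast_\eta$ since $\phi^-_{s^\ast_\eta,m}\geqslant V(G_{s^\ast_\eta})\geqslant1-\eta$. Now take $s>s^\ast_\eta$. Then $V(G_s)<1-\eta$, so $\phi^-_{s,m}<1-\eta$ for all large $m$. Monotonicity in $s$ then forces $q_m^-(\eta)\leqslant s$ eventually. Hence $q_m^-(\eta)\downarrow s^\ast_\eta$.

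\emph{Attainment.} For any extremiser $\mathbb P$ of $V(G_{s^\ast_\eta})$ we have $\mathbb P(G_{s^\ast_\eta})\geqslant1-\eta$, so $\operatorname{VaR}_\eta(S;\mathbb P)\geqslant s^\ast_\eta$. The reverse inequality holds by definition of the supremum. This proves \eqref{eqn_upper_VaR_attainment}.

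\emph{Lower bound (main obstacle).} Here the strict inequality ``$>\eta$'' interacts badly with convergence from above: $\phi^+_{s,m}\downarrow G_r^+(s)$ only gives $\liminf_m q^+_m(\eta)\geqslant$ the target when $G_r^+$ has a flat piece at level $\eta$. I would handle this by choosing $\eta_r\downarrow\eta$ outside the at most countably many levels at which the nondecreasing function $s\mapsto G_r^+(s)$ is constant on an interval. At such levels, the generalised inverse $\gamma\mapsto\inf\{s:G_r^+(s)>\gamma\}$ is continuous. Pointwise monotone convergence of $\phi^+_{\cdot,m}$ then gives $q_m^+(\eta_r)\to\operatorname{VaR}^{\mathrm{lower}}_{\eta_r}$. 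Finally, right-continuity of that inverse in $\gamma$ gives $\operatorname{VaR}^{\mathrm{lower}}_{\eta_r}\downarrow\operatorname{VaR}^{\mathrm{lower}}_\eta$. The delicate point is verifying continuity of the inverse at $\eta_r$ with only one-sided convergence of $\phi^+_{s,m}$. I would check this by sandwiching $q_m^+(\eta_r)$ between $\inf\{s:G_r^+(s)>\eta_r\}$ and $\inf\{s:G_r^+(s)>\eta_r+\delta\}$ for arbitrary $\delta>0$.
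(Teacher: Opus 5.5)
Your route is the paper's own (Step~14 of Appendix~P). It uses threshold inversion by interchanging $\inf$/$\sup$, semicontinuity of $S^\mp$ to get compact $C_s,G_s$ with attained values, and parts (a)--(b) of Theorem~\ref{thm_VaRExample} for the fixed-point, residual and hereditary statements. It then takes dyadic outer covers on a common grid for coherence in $s$, and uses the same monotonicity arguments for $q_m^-(\eta)$ and for \eqref{eqn_upper_VaR_attainment}. Two points need repair, plus a small citation issue.

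\textbf{The lower-bound sandwich is on the wrong side.} Because $\phi^+_{s,m}\geqslant G_r^+(s)$ for all $s,m$, you have $\{s:G_r^+(s)>\gamma\}\subseteq\{s:\phi^+_{s,m}>\gamma\}$. Hence $q_m^+(\gamma)\leqslant q^+(\gamma):=\inf\{s:G_r^+(s)>\gamma\}$ for every $m$. So the finite inversions undershoot, and they can do so strictly: if $G_r^+$ crosses $\gamma$ at $s_0$ and the outer covers overestimate just left of $s_0$, then $q_m^+(\gamma)<s_0$.

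Consequently the lower half of your proposed sandwich $q^+(\eta_r)\leqslant q_m^+(\eta_r)\leqslant q^+(\eta_r+\delta)$ fails in general, and the upper half is trivial. The nontrivial bound is from below, with $\delta$ subtracted. Take $x<q^+(\gamma-\delta)$; then $G_r^+(x)\leqslant\gamma-\delta$. Pointwise convergence at the single point $x$ gives $\phi^+_{x,m}<\gamma$ for all large $m$. Monotonicity of $s\mapsto\phi^+_{s,m}$ then forces $q_m^+(\gamma)\geqslant x$. Hence
\[
q^+(\gamma-\delta)\leqslant\liminf_{m\to\infty}q_m^+(\gamma)\leqslant\limsup_{m\to\infty}q_m^+(\gamma)\leqslant q^+(\gamma),
\]
and it is left-continuity of $q^+$ at $\gamma=\eta_r$ that closes the gap. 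This is the paper's argument, with $\gamma'<\gamma$ in place of $\gamma-\delta$.

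\textbf{The equality inside \eqref{eqn_symmetric_VaR_representation} is assumed, not proved.} That equality is part of the statement, and your proof depends on it. You invert the lower bound through the $\inf$-form with $S^-$, but the upper bound and \eqref{eqn_upper_VaR_attainment} go through the $\sup$-form with $S^+$. Without the equality, the two bounds concern different functionals.

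The paper proves it in two steps. First, $Q_j^-$ and $Q_j^+$ differ only at the countably many levels where $H_j$ is flat, a $\lambda$-null set. Uniform marginals therefore give $S^-=S^+$ $\mathbb P$-a.s. for every $\mathbb P\in\mathcal D$. Second, for any real random variable $X$,
\[
\inf\{t:\mathbb P(X\leqslant t)>\eta\}=\sup\{t:\mathbb P(X\geqslant t)\geqslant1-\eta\}.
\]

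\textbf{Citation of part (c).} Part (c) of Theorem~\ref{thm_VaRExample} is stated existentially in the cores $F_\ell$. Taking $F_\ell=C_s$ (or $G_s$) should therefore be justified through its proof, Steps~8--12, which work for any compact target. That is what the paper does.
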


\begin{proof}
Appendix~P, Supplementary Material (\cite{Salako_Muhammad_2025_suppmat}).
\end{proof}
\subsection{Sharp Lower Bound for Two-Asset Basket Options in Finance}Consider the two-asset basket option problem studied by \cite{LaurenceWang2005Basket}. In that setting, the Problem~\eqref{eqn_genCBIprob} objective is the expected payoff of a contract whose value depends on the sum of two underlying quantities. Given only the one-dimensional marginal distributions of those quantities, one seeks the sharpest possible lower and upper bounds on that expected payoff that are consistent with absence of arbitrage. In the two-asset lower-bound case, \cite{LaurenceWang2005Basket} obtain a closed form. We re-derive this lower bound within the fixed-point framework and highlight the added insights this brings. 
Let
\[
K:=[0,1]^2,
\qquad
\mathcal{D}
:=
\left\{
\mathbb P\in\mathcal{P}(K):
(\pi_1)_{\#}\mathbb P=\lambda,\ (\pi_2)_{\#}\mathbb P=\lambda
\right\},
\]
where $\lambda$ denotes Lebesgue measure on $[0,1]$, the $\pi_1,\pi_2$ are coordinate
projections, and $(\pi_i)_{\#}\mathbb P$ is the push-forward measure of $\mathbb P$ under $\pi_i$. Thus, $\mathcal{D}$ is the class of all couplings of two
$\mathrm{Unif}[0,1]$ marginals; i.e. the $M=2$ special case of Proposition~\ref{prop_gen_multiple_marginals_piecewise_refined} with
$\rho_1\equiv \rho_2\equiv 1$ and $U_1=U_2=[0,1]$. Let $Q_1,Q_2:[0,1]\to\mathbb{R}$ be continuous increasing functions, let $k\in\mathbb{R}$,
and define
\[
f(\textnormal{\bf x})
:=
\bigl(Q_1(u)+Q_2(v)-k\bigr)^+,
\qquad
g(\textnormal{\bf x})\equiv 1,
\qquad
\textnormal{\bf x}=(u,v)\in K.
\]
Set
\begin{equation}
\phi^\ast
:=
\inf_{\mathbb P\in\mathcal{D}}
\frac{\mathbb{E}_{\mathbb P}[f(\textnormal{\bf X})]}{\mathbb{E}_{\mathbb P}[g(\textnormal{\bf X})]}
=
\inf_{\mathbb P\in\mathcal{D}}
\mathbb{E}_{\mathbb P}[f(\textnormal{\bf X})].
\label{eqn_assetbasketoptimisation}
\end{equation}
For each $m\in\mathbb{N}$, write $N_m:=2^m$, define the dyadic intervals
\[
I^{(m)}_1:=\Bigl[0,\frac{1}{N_m}\Bigr]\quad\text{ and }\quad I^{(m)}_i:=\Bigl(\frac{i-1}{N_m},\frac{i}{N_m}\Bigr],
\quad
2\leqslant i\leqslant N_m,
\]
and the dyadic product cells
\[C_{ij}^{(m)}
:=
I^{(m)}_i
\times
I^{(m)}_j,
\qquad
1\leqslant i,j\leqslant N_m.
\]
Define the lower and upper cell envelopes of the original basket payoff
\[
a_{ij}^{(m)}
:=
\inf_{\textnormal{\bf x}\in C_{ij}^{(m)}} f(\textnormal{\bf x}),
\qquad
b_{ij}^{(m)}
:=
\sup_{\textnormal{\bf x}\in C_{ij}^{(m)}} f(\textnormal{\bf x}).
\]

We also define the full-support stage-$m$ cell array by a choice of payoff for each cell: $F_{ij}^{(m)}
:=
f(\textbf{x}_{ij})$ for some chosen $\textbf{x}_{ij}^{(m)}\in \overline C_{ij}^{(m)}$ ($1\leqslant i,j\leqslant N_m$) such that $\pi_1(\mathbf{x}_{ij}^{(m)})=\pi_1(\mathbf{x}_{il}^{(m)})$ and $\pi_2(\mathbf{x}_{ij}^{(m)})=\pi_2(\mathbf{x}_{kj}^{(m)})$ for all $i,\; j,\; k,\; l$.
These define piecewise-constant stage-$m$ approximants, $f_m^{\mathrm{full}}(\textnormal{\bf x})
=
F_{ij}^{(m)}$ for $\textnormal{\bf x}\in C_{ij}^{(m)}$. Clearly, for every $m\in\mathbb{N}$ and every $1\leqslant i,j\leqslant N_m$, we have $a_{ij}^{(m)}\leqslant F_{ij}^{(m)}\leqslant b_{ij}^{(m)}$.

Finally, let
\[
{\mathcal W}_m
=
\left\{
\textnormal{\textbf{w}}=(w_{ij})_{1\leqslant i,j\leqslant N_m}\in[0,\infty)^{N_m\times N_m}:
\sum_{j=1}^{N_m} w_{ij}=\frac{1}{N_m},\
\sum_{i=1}^{N_m} w_{ij}=\frac{1}{N_m}
\right\},
\]
and set
\[
\underline{\phi}_m
:=
\inf_{\textnormal{\textbf{w}}\in {\mathcal W}_m}\sum_{i,j=1}^{N_m} w_{ij}a_{ij}^{(m)},
\qquad
\overline{\phi}_m
:=
\inf_{\textnormal{\textbf{w}}\in {\mathcal W}_m}\sum_{i,j=1}^{N_m} w_{ij}b_{ij}^{(m)},
\]
\[
\phi_m^{\mathrm{full}}
:=
\inf_{\textnormal{\textbf{w}}\in {\mathcal W}_m}\sum_{i,j=1}^{N_m} w_{ij}F_{ij}^{(m)}.
\]

We now recursively define stage-$m$ anti-diagonal dyadic families.
At level $r=0$, let $\mathscr{B}_{m,0}
:=
\bigl\{\{1,\dots,N_m\}\times\{1,\dots,N_m\}\bigr\}$.
Suppose $\mathscr{B}_{m,r-1}$ has been defined. For each block
\[
B=I\times J\in\mathscr{B}_{m,r-1},
\]
with $I$ and $J$ contiguous intervals of equal cardinality $2^{m-r+1}$, split
\[
I=I^{-}\sqcup I^{+},
\qquad
J=J^{-}\sqcup J^{+}
\]
into their lower and upper halves, and define the four children
\[
B^{\mathrm{SW}}:=I^{-}\times J^{-},
\quad
B^{\mathrm{NW}}:=I^{-}\times J^{+},
\quad
B^{\mathrm{SE}}:=I^{+}\times J^{-},
\quad
B^{\mathrm{NE}}:=I^{+}\times J^{+}.
\]
Set $\mathscr{B}_{m,r}
:=
\bigcup_{B\in\mathscr{B}_{m,r-1}}
\{B^{\mathrm{NW}},B^{\mathrm{SE}}\}$.
Let $S_{m,r}
:=
\bigcup_{B\in\mathscr{B}_{m,r}}
\bigcup_{(i,j)\in B} C_{ij}^{(m)}$.
In particular, $S_{m,m}
=
\bigcup_{i=1}^{N_m} C_{i,N_m+1-i}^{(m)}$, the union of the dyadic cells whose interiors intersect the anti-diagonal $v=1-u$.

\begin{figure}[t!]
\centering

\begin{subfigure}[t]{0.47\linewidth}
\centering
\resizebox{\linewidth}{!}{%
\setlength{\fboxsep}{6pt}%
\colorbox{gray!35}{%
\begin{tikzpicture}[x=7.2cm,y=7.2cm,>=Latex]

  \fill[gray!35] (-0.10,-0.10) rectangle (1.08,1.08);
  \fill[white] (0,0) rectangle (1,1);

  \fill[gray!55] (0,0.5) rectangle (0.5,1);
  \fill[gray!55] (0.5,0) rectangle (1,0.5);

  \draw[white,line width=0.9pt] (0,0) rectangle (1,1);

  \draw[white,line width=1.2pt] (0,0.5) rectangle (0.5,1);
  \draw[white,line width=1.2pt] (0.5,0) rectangle (1,0.5);

  \draw[white,line width=2.8pt] (0,1) -- (1,0);

  \node[font=\Large\bfseries, anchor=north] at (0.50,-0.04) {$\boldsymbol{u}$};
  \node[font=\Large\bfseries, anchor=east, rotate=90] at (-0.04,0.50) {$\boldsymbol{v}$};

  \node[font=\Large\bfseries, anchor=south east] at (0.98,1.01)
    {$\boldsymbol{K=[0,1]^2}$};

  \node[font=\Large\bfseries, text=black] at (0.18,0.22)
    {$\boldsymbol{S_{2,1}^{\,c}}$};

  \node[font=\Large\bfseries, text=black] at (0.34,0.86)
    {$\boldsymbol{S_{2,1}}$};

  \node[font=\Large\bfseries, text=black] at (0.67,0.15)
    {$\boldsymbol{S_{2,1}}$};

  \node[font=\Large\bfseries, text=black, rotate=-45] at (0.53,0.54)
    {$\boldsymbol{u+v=1}$};

\end{tikzpicture}%
}}%
\caption{First retained anti-diagonal branch $\boldsymbol{S_{2,1}}$.}
\label{fig:anti_diagonal_s21}
\end{subfigure}
\hfill
\begin{subfigure}[t]{0.47\linewidth}
\centering
\resizebox{\linewidth}{!}{%
\setlength{\fboxsep}{6pt}%
\colorbox{gray!35}{%
\begin{tikzpicture}[x=7.2cm,y=7.2cm,>=Latex]

  \fill[gray!35] (-0.10,-0.10) rectangle (1.08,1.08);
  \fill[white] (0,0) rectangle (1,1);

  \fill[gray!55] (0,0.5) rectangle (0.5,1);
  \fill[gray!55] (0.5,0) rectangle (1,0.5);

  \fill[gray!78] (0,0.75) rectangle (0.25,1);
  \fill[gray!78] (0.25,0.5) rectangle (0.5,0.75);
  \fill[gray!78] (0.5,0.25) rectangle (0.75,0.5);
  \fill[gray!78] (0.75,0) rectangle (1,0.25);

  \draw[white,line width=0.9pt] (0,0) rectangle (1,1);

  \draw[white,line width=1.2pt] (0,0.5) rectangle (0.5,1);
  \draw[white,line width=1.2pt] (0.5,0) rectangle (1,0.5);

  \draw[white,line width=1.0pt] (0,0.75) rectangle (0.25,1);
  \draw[white,line width=1.0pt] (0.25,0.5) rectangle (0.5,0.75);
  \draw[white,line width=1.0pt] (0.5,0.25) rectangle (0.75,0.5);
  \draw[white,line width=1.0pt] (0.75,0) rectangle (1,0.25);

  \draw[white,line width=2.8pt] (0,1) -- (1,0);

  \node[font=\Large\bfseries, anchor=north] at (0.50,-0.04) {$\boldsymbol{u}$};
  \node[font=\Large\bfseries, anchor=east, rotate=90] at (-0.04,0.50) {$\boldsymbol{v}$};

  \node[font=\Large\bfseries, anchor=south east] at (0.98,1.01)
    {$\boldsymbol{K=[0,1]^2}$};




  \node[font=\Large\bfseries, text=black] at (0.17,0.94)
    {$\boldsymbol{S_{2,2}}$};

  \node[font=\Large\bfseries, text=black] at (0.83,0.07)
    {$\boldsymbol{S_{2,2}}$};

  \node[font=\Large\bfseries, text=black, rotate=-45] at (0.53,0.54)
    {$\boldsymbol{u+v=1}$};

\end{tikzpicture}%
}}%
\caption{Next retained branch $\boldsymbol{S_{2,2}\subset S_{2,1}}$.}
\label{fig:anti_diagonal_s22}
\end{subfigure}

\caption[Dyadic concentration of extremal mass on the anti-diagonal]{Dyadic anti-diagonal branch selection in the basket-payoff construction. Figure~\ref{fig:anti_diagonal_s21} shows the first retained set $\boldsymbol{S_{2,1}}$, consisting of the two level-1 dyadic squares whose interiors intersect the anti-diagonal. Figure~\ref{fig:anti_diagonal_s22} shows the next retained set inside these squares: the darker region is $\boldsymbol{S_{2,2}}$, obtained by keeping only the level-2 dyadic children that continue to hug the anti-diagonal $\boldsymbol{u+v=1}$. Thus, the nested sets $\boldsymbol{S_{m,r}}$ concentrate increasingly tightly around the anti-diagonal, consistent with $\operatorname{supp}(\mathbb P_m^{\mathrm{ad}})\subseteq S_{m,m}\subseteq \{(u,v)\in K: |u+v-1|\leqslant 2^{-m}\}$.}
\label{fig:anti_diagonal_nested_squares}
\end{figure}

\begin{theorem}[Canonical strike-payoff formula]
\label{thm:canonical-strike-recursive}
With the above notation, the following hold.

\begin{enumerate}
\item For every $m\in\mathbb{N}$ and every $r\in\{0,1,\dots,m\}$,
\begin{equation}
\phi_m^{\mathrm{full}}
=
\inf_{\substack{\mathbf{w}\in{\mathcal W}_m\\ \operatorname{supp}(\mathbf w)\subseteq\!\!\!\! \bigcup\limits_{B\in\mathscr{B}_{m,r}}\!\!\!\!\!\!\!\!B}}
\,\sum_{i,j=1}^{N_m} w_{ij}F_{ij}^{(m)}.
\label{eqn_BasketPayoff_1}
\end{equation}

\item In particular,
\begin{equation}
\phi_m^{\mathrm{full}}
=
\frac{1}{N_m}\sum_{i=1}^{N_m}F_{i,N_m+1-i}^{(m)}.
\label{eqn_BasketPayoff_2}
\end{equation}

\item The values $\phi_m^{\mathrm{full}}$ converge to
\begin{equation}
\phi^\ast
=
\int_0^1 \bigl(Q_1(u)+Q_2(1-u)-k\bigr)^+\,du.
\label{eqn_BasketPayoff_3}
\end{equation}

\item For each $m$, let
\[
\mathbb P_m^{\mathrm{ad}}
=
\frac1{N_m}\sum_{i=1}^{N_m}\delta_{\textnormal{\textbf{x}}_i^{(m)}},
\qquad
\textnormal{\textbf{x}}_i^{(m)}
=
\left(\frac{i-\frac12}{N_m},\,1-\frac{i-\frac12}{N_m}\right).
\]
Then
\begin{equation}
\operatorname{supp}(\mathbb P_m^{\mathrm{ad}})
\subseteq S_{m,m}
\subseteq
\{(u,v)\in K:\ |u+v-1|\leqslant 2^{-m}\}.
\label{eqn_BasketPayoff_4}
\end{equation}
Moreover, for each $m$, one may choose a stage-$m$ extremal anti-diagonal distribution
\[
\mathbb P_{m,\star}^{\mathrm{ad}}
=
\frac1{N_m}\sum_{i=1}^{N_m}\delta_{\textnormal{\textbf{x}}_{i,\star}^{(m)}},
\qquad
\textnormal{\textbf{x}}_{i,\star}^{(m)}\in \overline C_{i,N_m+1-i}^{(m)},
\]
such that
\[
\max_{1\leqslant i\leqslant N_m}\|\textnormal{\textbf{x}}_i^{(m)}-\textnormal{\textbf{x}}_{i,\star}^{(m)}\|
\leqslant \max_{1\leqslant i\leqslant N_m}\operatorname{diam}(C_{i,N_m+1-i}^{(m)})\to 0.
\]
In particular, there is convergence in Wasserstein-1 distance,
\begin{equation}
W_1(\mathbb P_m^{\mathrm{ad}},\mathbb P_{m,\star}^{\mathrm{ad}})\to 0.
\label{eqn_Wass1Dist}
\end{equation}
\end{enumerate}
\end{theorem}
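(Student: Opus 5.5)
The plan is to exploit the product structure of the chosen representatives. By the constraints $\pi_1(\mathbf x_{ij}^{(m)})=\pi_1(\mathbf x_{il}^{(m)})$ and $\pi_2(\mathbf x_{ij}^{(m)})=\pi_2(\mathbf x_{kj}^{(m)})$, there are points $u_i,v_j$ with $\mathbf x_{ij}^{(m)}=(u_i,v_j)$. These satisfy $u_i\in[(i-1)/N_m,i/N_m]$, and similarly for $v_j$, so both sequences are nondecreasing in their index. Put $\alpha_i:=Q_1(u_i)$ and $\beta_j:=Q_2(v_j)$; both are nondecreasing, and $F_{ij}^{(m)}=\psi(\alpha_i+\beta_j)$ with $\psi(t)=(t-k)^+$ convex.

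The key lemma is the Monge (supermodularity) inequality
\[
F_{ij}+F_{i'j'}\geqslant F_{ij'}+F_{i'j}\qquad (i<i',\ j<j').
\]
It follows from convexity of $\psi$: the sums $\alpha_i+\beta_{j'}$ and $\alpha_{i'}+\beta_j$ lie between $\alpha_i+\beta_j$ and $\alpha_{i'}+\beta_{j'}$ and have the same total. I expect this step, together with its use in the exchange argument below, to be the main obstacle, since ties and the zero-mass case must be handled carefully.

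For item~2, I would argue as follows. By Birkhoff--von Neumann, $N_m\mathcal W_m$ is the permutation polytope, so the linear infimum is attained at $w=\frac1{N_m}P_\sigma$ for some permutation $\sigma$. If $\sigma$ has an ascending pair $i<i'$ with $\sigma(i)<\sigma(i')$, swapping the two images does not increase the cost, by the Monge inequality. Repeating the swap terminates at the anti-identity $\sigma(i)=N_m+1-i$, which gives \eqref{eqn_BasketPayoff_2}. Item~1 then follows directly. Every $B\in\mathscr B_{m,r}$ is built by repeatedly keeping the NW/SE children, so the anti-diagonal cells $(i,N_m+1-i)$ lie in $\bigcup_{B\in\mathscr B_{m,r}}B$ for every $r$ (an easy induction on $r$). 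Hence the restricted infimum is at most the anti-diagonal value, which equals $\phi_m^{\mathrm{full}}$. It is also at least $\phi_m^{\mathrm{full}}$, because restricting the support only shrinks the feasible set.

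For item~3, since $a\leqslant F\leqslant b$ entrywise, we have $\underline\phi_m\leqslant\phi_m^{\mathrm{full}}\leqslant\overline\phi_m\leqslant\underline\phi_m+\omega_f(m)$. Here $\omega_f(m)$ is the maximal oscillation of $f$ on a cell, which tends to $0$ by uniform continuity of $f$ on $K$. With $g\equiv1$, the quantity $\underline\phi_m$ coincides with $\phi_m$ of Proposition~\ref{prop_gen_multiple_marginals_piecewise_refined}, since the inner infimum over points in each cell separates into $\sum_{ij}w_{ij}a_{ij}$. Corollary~\ref{cor_finite_marginal_approx} (mesh $2^{-m}\to0$) then gives $\underline\phi_m\to\phi^\ast$. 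Separately, \eqref{eqn_BasketPayoff_2} is a Riemann sum for $\int_0^1(Q_1(u)+Q_2(1-u)-k)^+du$: its sample points $(u_i,v_{N_m+1-i})$ lie within $\sqrt2/N_m$ of $(u,1-u)$ for $u\in I_i^{(m)}$, so uniform continuity again yields convergence. This identifies $\phi^\ast$ with the integral in \eqref{eqn_BasketPayoff_3}.

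For item~4, the midpoints $\mathbf x_i^{(m)}$ lie in $C_{i,N_m+1-i}^{(m)}\subseteq S_{m,m}$. Every point of such a cell has $|u+v-1|\leqslant 2^{-m}$, which gives \eqref{eqn_BasketPayoff_4}. Next, take $\mathbf x_{i,\star}^{(m)}:=(u_i,v_{N_m+1-i})$; by item~2 the resulting measure is a stage-$m$ extremiser, and each of its points lies in $\overline C_{i,N_m+1-i}^{(m)}$, within $\operatorname{diam}=\sqrt2\,2^{-m}$ of $\mathbf x_i^{(m)}$. Finally, the coupling that pairs $\mathbf x_i^{(m)}$ with $\mathbf x_{i,\star}^{(m)}$ bounds $W_1$ by that maximal distance, which proves \eqref{eqn_Wass1Dist}.
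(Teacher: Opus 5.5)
Your proposal is correct, but it runs the argument in the opposite order from the paper.

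The paper proves item~1 first, by induction on $r$. On each block $B\in\mathscr B_{m,r-1}$ it applies Birkhoff--von Neumann and uncrossing locally, pushing the support into $B^{\mathrm{NW}}\cup B^{\mathrm{SE}}$. It then gets item~2 at $r=m$, because the anti-diagonal allocation is the only element of $\mathcal W_m$ supported on $S_{m,m}$.

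You instead uncross once on the whole square, which is the paper's block lemma in the base case $r=0$, and obtain item~2 directly. Item~1 then needs only one invariant: every $I\times J\in\mathscr B_{m,r}$ satisfies $J=\{N_m+1-i:i\in I\}$. This is preserved on passing to the NW/SE children, so the anti-diagonal allocation stays feasible under every restriction. You should state this invariant explicitly as the content of your ``easy induction''.

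Your route is shorter and shows that the dyadic recursion is not needed for the values themselves. The paper's route produces the intermediate identities as the output of a constructive blockwise uncrossing, which is what it presents as its branch-selection mechanism.

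There are two smaller differences. First, your Riemann-sum estimate at the points $(u_i,v_{N_m+1-i})$ handles the given representatives directly, whereas the paper specialises to midpoints. Second, in item~4 your $\mathbb P^{\mathrm{ad}}_{m,\star}$ attains $\phi_m^{\mathrm{full}}$, while the paper reads ``stage-$m$ extremal'' as attaining $\underline\phi_m$. To cover that reading, apply item~2 to the array $a^{(m)}$ and take $\mathbf x^{(m)}_{i,\star}$ to be the lower-left corners $\bigl((i-1)/N_m,(j-1)/N_m\bigr)$. This array is admissible because $f$ is nondecreasing in each coordinate, so $a^{(m)}_{ij}$ is attained at that corner.

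Finally, a terminology point: the inequality you label Monge is the inverse (anti-) Monge condition in the terminology the paper uses.
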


\begin{proof}
Appendix~Q, Supplementary Material (\cite{Salako_Muhammad_2025_suppmat}).
\end{proof}
\noindent\textbf{Remarks: }
Theorem~\ref{thm:canonical-strike-recursive} does more than recover the Laurence--Wang lower bound in closed form. It also exhibits a \emph{selection mechanism} for the extremal branch. In particular,
the anti-diagonal/countermonotone solution is not only identified as an optimiser in the limit; it is selected at each finite stage $m$ by recursive uncrossing on dyadic blocks, which yields a canonical anti-diagonal-support representative of the stage-$m$ extremal class.

The theorem's proof also makes the \emph{support geometry} of the extremals explicit. The selected stage-$m$ extremal representatives are supported on cells contained in the shrinking tube $S_{m,m}\subseteq \{(u,v)\in K:\ |u+v-1|\leqslant 2^{-m}\}$, so the anti-diagonal manifold appears as an asymptotic support ``attractor'' for the extremal equivalence classes selected by the recursion. The Wasserstein estimate $W_1(\mathbb P_m^{\mathrm{ad}},\mathbb P_{m,\star}^{\mathrm{ad}})\to 0$ further shows that the
canonical midpoint branch is not merely a convenient visual representative, but is asymptotically equivalent to an exact stage-$m$ extremal solution branch.

More broadly, the proof of Theorem~\ref{thm:canonical-strike-recursive} illustrates a reusable methodological pattern within the fixed-point framework: one first controls theorem-covered stage-$m$ values by lower and upper envelope problems, then uses structural properties of approximants to select canonical surrogate branches (ideally with closed-form limits) that are shown, via a sandwiching argument, to be asymptotically extremal. Thus, beyond the closed-form lower bound itself, the framework supplies a finite-level branch-selection principle, explicit support localisation, and a general approximation strategy for extracting canonical extremisers from broader admissible classes.

%
%
%
%
%
\FloatBarrier
\subsection{Robust Next-State Inference for a 2-State Markov Chain with Random Transition Probabilities} 
Fix $n\geqslant 1$, $0<t<\epsilon<1$, $0<\theta<1$, and let $\Theta=[0,1]^{1+2n}$. Here, $n$ is the number of events in the observed history of a $2$-state Markov chain (with states $0$ and $1$) and $\Theta$ is the parameter space for the chain. Write a point $\textnormal{\textbf{q}}\in\Theta$ as $\textnormal{\textbf{q}}=
\bigl(
q^{(1)}_1,
q^{(2)}_{01},q^{(2)}_{11},
\ldots,
q^{(n+1)}_{01},q^{(n+1)}_{11}
\bigr)$. The initial probability of state-$1$ is $q^{(1)}_1$,
so that the initial distribution in the chain is
\[
Q^{(1)}_0(\textnormal{\textbf{q}}):=1-q^{(1)}_1,
\qquad
Q^{(1)}_1(\textnormal{\textbf{q}}):=q^{(1)}_1.
\]
For $i=2,\ldots,n+1$, define the transition probabilities
\[
Q^{(i)}_{01}(\textnormal{\textbf{q}}):=q^{(i)}_{01},
\qquad
Q^{(i)}_{00}(\textnormal{\textbf{q}}):=1-q^{(i)}_{01},\qquad Q^{(i)}_{11}(\textnormal{\textbf{q}}):=q^{(i)}_{11},
\qquad
Q^{(i)}_{10}(\textnormal{\textbf{q}}):=1-q^{(i)}_{11}.
\]
Equivalently, the $i$-th transition matrix is
\[
Q^{(i)}(\textnormal{\textbf{q}})
:=
\begin{pmatrix}
1-q^{(i)}_{01} & q^{(i)}_{01}\\
1-q^{(i)}_{11} & q^{(i)}_{11}
\end{pmatrix},
\qquad i=2,\ldots,n+1.
\]

Let $U=\{0,1\}^n$, so $U$ contains $m=2^n$ elements. For $\sigma=(\sigma^{(1)},\ldots,\sigma^{(n)})\in U$, define the likelihood of observing the $n$-history $\sigma$ by
\[
G_\sigma(\textnormal{\textbf{q}})
=
Q^{(1)}_{\sigma^{(1)}}(\textnormal{\textbf{q}})
\prod_{i=2}^{n}
Q^{(i)}_{\sigma^{(i-1)}\sigma^{(i)}}(\textnormal{\textbf{q}}),
\]
with the convention that the empty product is $1$ when $n=1$.

The probability of state-$0$ occurring next, conditional on the most recent state $\sigma^{(n)}$, is
\[
S_\sigma(\textnormal{\textbf{q}})
=
Q^{(n+1)}_{\sigma^{(n)}0}(\textnormal{\textbf{q}})
=
1-Q^{(n+1)}_{\sigma^{(n)}1}(\textnormal{\textbf{q}}).
\]
Thus, the likelihood of observing history $\sigma$ followed by event $n+1$ being state-$0$ is
\[
L_\sigma(\textnormal{\textbf{q}})
=
G_\sigma(\textnormal{\textbf{q}})S_\sigma(\textnormal{\textbf{q}})
=
Q^{(1)}_{\sigma^{(1)}}(\textnormal{\textbf{q}})
\prod_{i=2}^{n}
Q^{(i)}_{\sigma^{(i-1)}\sigma^{(i)}}(\textnormal{\textbf{q}})
\bigl(1-Q^{(n+1)}_{\sigma^{(n)}1}(\textnormal{\textbf{q}})\bigr).
\]
\begin{figure}[t!]
\resizebox{0.95\linewidth}{!}{%
\centering
\begin{tikzpicture}[
    x=1cm, y=1cm,
    >=Stealth,
    font=\small,
start/.style={
    circle,
    draw=black,
    line width=1.5pt,
    fill=gray!30,
    text=black,
    minimum size=10mm,
    inner sep=0pt
},
state/.style={
    circle,
    draw=black,
    line width=1.5pt,
    fill=gray!25,
    text=black,
    minimum size=12mm,
    inner sep=0pt
},
target/.style={
    circle,
    draw=black,
    line width=2pt,
    fill=gray!35,
    text=black,
    minimum size=12mm,
    inner sep=0pt
},
    trans/.style={
        draw=gray!70,
        line width=1.2pt,
        -{Stealth[length=3.2mm,width=2.2mm]},
        shorten <=4pt,
        shorten >=4pt
    },
    inittrans/.style={
        draw=gray!70,
        line width=1.2pt,
        -{Stealth[length=3.2mm,width=2.2mm]},
        shorten <=5pt,
        shorten >=4pt
    },
        soft/.style={
        draw=gray!45,
        line width=1.0pt,
        -{Stealth[length=2.8mm,width=2.0mm]},
        shorten <=4pt,
        shorten >=4pt
    },
    lab/.style={
        text=black,
        fill=white,
        fill opacity=0,
        text opacity=1,
        inner sep=1pt
    },
    timelab/.style={
        text=black,
        font=\small\itshape
    },
    annot/.style={
        text=black,
        fill=white,
        align=center,
        inner sep=2pt
    }
]

\node[start]  (start) at (-3.10,0) {start};

\node[state]  (s10) at (0,  1.25) {$0$};
\node[state]  (s11) at (0, -1.25) {$1$};

\node[state]  (s20) at (3.1,  1.25) {$0$};
\node[state]  (s21) at (3.1, -1.25) {$1$};

\node[state]  (sn0) at (8.2,  1.25) {$0$};
\node[state]  (sn1) at (8.2, -1.25) {$1$};

\node[target] (sp0) at (11.3,  1.25) {$0$};
\node[state]  (sp1) at (11.3, -1.25) {$1$};

\node[timelab] at (0,     2.35) {time $1$};
\node[timelab] at (3.1,   2.35) {time $2$};
\node[timelab] at (5.65,  2.35) {$\cdots$};
\node[timelab] at (8.2,   2.35) {time $n$};
\node[timelab] at (11.3,  2.35) {time $n+1$};

\draw[inittrans]
    (start) -- node[lab, below=1pt, xshift=8pt, pos=0.56] {$Q^{(1)}_{0}(\mathbf{q})$} (s10);

\draw[inittrans]
    (start) -- node[lab, above=1pt, xshift=8pt, pos=0.56] {$Q^{(1)}_{1}(\mathbf{q})$} (s11);

\draw[trans] (s10) edge node[lab, above] {$Q^{(2)}_{00}(\mathbf{q})$} (s20);
\draw[trans] (s10) edge[bend left=16] node[lab, left=2pt] {$Q^{(2)}_{01}(\mathbf{q})$} (s21);
\draw[trans] (s11) edge[bend right=16] node[lab, left=2pt] {$Q^{(2)}_{10}(\mathbf{q})$} (s20);
\draw[trans] (s11) edge node[lab, below] {$Q^{(2)}_{11}(\mathbf{q})$} (s21);

\node at (5.65,  1.25) {$\cdots$};
\node at (5.65, -1.25) {$\cdots$};

\draw[soft] (s20) -- (5.05,  1.25);
\draw[soft] (s21) -- (5.05, -1.25);
\draw[soft] (6.25,  1.25) -- (sn0);
\draw[soft] (6.25, -1.25) -- (sn1);

\node[annot] at (5.65,0)
    {intermediate transitions\\[-1pt] $Q^{(i)}(\mathbf{q})$, $i=3,\dots,n$};

\draw[trans] (sn0) edge node[lab, above] {$Q^{(n+1)}_{00}(\mathbf{q})$} (sp0);
\draw[trans] (sn0) edge[bend left=16] node[lab, left=2pt] {$Q^{(n+1)}_{01}(\mathbf{q})$} (sp1);
\draw[trans] (sn1) edge[bend right=16] node[lab, left=2pt] {$Q^{(n+1)}_{10}(\mathbf{q})$} (sp0);
\draw[trans] (sn1) edge node[lab, below] {$Q^{(n+1)}_{11}(\mathbf{q})$} (sp1);

\draw[gray!70, line width=0.9pt]
    (0,-2.35) -- (8.2,-2.35);
\draw[gray!70, line width=0.9pt]
    (0,-2.23) -- (0,-2.47);
\draw[gray!70, line width=0.9pt]
    (8.2,-2.23) -- (8.2,-2.47);

\node[annot] at (4.1,-2.78)
    {observed history $\sigma=(\sigma(1),\ldots,\sigma(n))\in\{0,1\}^n$};

\draw[gray!70, line width=0.9pt]
    (10.4,-2.35) -- (12.15,-2.35);
\draw[gray!70, line width=0.9pt]
    (10.4,-2.23) -- (10.4,-2.47);
\draw[gray!70, line width=0.9pt]
    (12.15,-2.23) -- (12.15,-2.47);

\node[annot] at (11.3,-2.78)
    {next state};

\node[annot, anchor=west] at (12.15,1.25)
    {state-$0$ next\\[-1pt] event};

\end{tikzpicture}%
}
\caption[The 2-state Markov chain structure for robust next-state inference]{The 2-state Markov chain structure used in Theorem~\ref{thm_Gen2stateMarkovChain}.
A parameter value $\mathbf{q}\in\Theta_{\mathrm{feas}}$ determines the initial law and the time-indexed
transition probabilities. The first $n$ states form the observed history $\sigma\in\{0,1\}^n$;
the terminal transition determines the next-state event used in the robust objective.}
\label{fig:theorem222_markov_chain}
\end{figure}
For each $\sigma_i\in U$, write $L_i(\textnormal{\textbf{q}}):=L_{\sigma_i}(\textnormal{\textbf{q}})$ and $G_i(\textnormal{\textbf{q}}):=G_{\sigma_i}(\textnormal{\textbf{q}})$. Fix a target history $\sigma^\star\in U$, and define $f(\textnormal{\textbf{q}}):=L_{\sigma^\star}(\textnormal{\textbf{q}})$, $g(\textnormal{\textbf{q}}):=G_{\sigma^\star}(\textnormal{\textbf{q}})$. All functions $G_\sigma$, $S_\sigma$, and $L_\sigma$ are polynomial, hence continuous,
on $\Theta$.

Let
\[
\Theta_{\mathrm{feas}}
=
\bigcap_{i=1}^m\{\textnormal{\textbf{q}}\in\Theta:L_i(\textnormal{\textbf{q}})\geqslant t\}.
\]
On $\Theta_{\mathrm{feas}}$ we have $g(\textnormal{\textbf{q}})=G_{\sigma^\star}(\textnormal{\textbf{q}})
\geqslant
L_{\sigma^\star}(\textnormal{\textbf{q}})
\geqslant t$, because $0\leqslant S_{\sigma^\star}(\textnormal{\textbf{q}})\leqslant1$ and $L_{\sigma^\star}(\textnormal{\textbf{q}})=G_{\sigma^\star}(\textnormal{\textbf{q}})S_{\sigma^\star}(\textnormal{\textbf{q}})$. Let $\mathcal D$ be the class of all ${\mathbb P}\in\mathcal P(\Theta)$ satisfying ${\mathbb P}(\Theta_{\mathrm{feas}})=1$ and, for every $i=1,\ldots,m$, ${\mathbb P}(t\leqslant L_i<\epsilon)=\theta$, ${\mathbb P}(\epsilon\leqslant L_i\leqslant 1)=1-\theta$. Assume $\mathcal D\neq\varnothing$ and define
\begin{equation}
\Phi^\ast
=
\inf_{{\mathbb P}\in\mathcal D}
\frac{{\mathbb E}_{\mathbb P}[f(\textnormal{\textbf{Q}})]}{{\mathbb E}_{\mathbb P}[g(\textnormal{\textbf{Q}})]}.
\label{eqn_2stateMarkovOptimisation}
\end{equation}
\eqref{eqn_2stateMarkovOptimisation} is the primary problem of interest here. The objective function is the probability that state $0$ occurs next after observing $n$ past events in the chain's history.

Recalling the definition of $m$ and using the notation $[m]:=\{1,\ldots,m\}$, for each $S\subseteq[m]$, define the half-open atom
\[
A_S
:=
\{\textnormal{\textbf{q}}\in\Theta_{\mathrm{feas}}:
L_i(\textnormal{\textbf{q}})\in[t,\epsilon)\text{ if }i\in S,\
L_i(\textnormal{\textbf{q}})\in[\epsilon,1]\text{ if }i\notin S\}.
\]
Let $\mathcal M_{\mathrm{miss}}
:=
\{S\subseteq[m]:A_S=\varnothing\}$. For each nonmissing atom, define its relative closure in $\Theta_{\mathrm{feas}}$ by $\overline A_S:=\operatorname{cl}_{\Theta_{\mathrm{feas}}}(A_S)$. For $S\subseteq[m]$, let $p_S$ denote the mass assigned to $A_S$. Define
\[
C_{\mathrm{full}}=[c_S]_{S\subseteq[m]},
\qquad
c_S=
\begin{pmatrix}
1\\
\mathbf 1_S
\end{pmatrix}
\in{\mathbb R}^{m+1},\qquad
b=
\begin{pmatrix}
1\\
\theta\\
\vdots\\
\theta
\end{pmatrix}.
\]
The atom-mass polytope is
\[
\mathcal P
=
\{p\in{\mathbb R}^{2^m}:
C_{\mathrm{full}}p=b,\
p_S=0\text{ for }S\in\mathcal M_{\mathrm{miss}},\
p_S\geqslant 0\text{ for }S\notin\mathcal M_{\mathrm{miss}}\}.
\]

For $\phi\geqslant 0$, the Dinkelbach difference is $h_\phi(\textnormal{\textbf{q}})=f(\textnormal{\textbf{q}})-\phi g(\textnormal{\textbf{q}})$ (\emph{cf.} Definition~\ref{def:Dinkelbachdifference}).
For every nonmissing atom, define
\[
m_S(\phi)
:=
\inf_{\textnormal{\textbf{q}}\in A_S}h_\phi(\textnormal{\textbf{q}})
=
\min_{\textnormal{\textbf{q}}\in\overline A_S}h_\phi(\textnormal{\textbf{q}}),
\]
and
\[
\overline N_S(\phi)
:=
\arg\min_{\textnormal{\textbf{q}}\in\overline A_S}h_\phi(\textnormal{\textbf{q}}).
\]
Write $m(\phi):=(m_S(\phi))_{S\subseteq[m]}$. For every $S\subseteq[m]$ with $|S|\geqslant 2$, define
\[
d_S
=
e_S+(|S|-1)e_\varnothing-\sum_{i\in S}e_{\{i\}},
\]
where $e_T$ denotes the standard coordinate vector indexed by $T\subseteq[m]$. Let $\underline D$ be the matrix whose columns are the $d_S$'s. Then the columns of $\underline D$ form a basis of $\ker C_{\mathrm{full}}$. For $p^\ast\in\mathcal P$, define the feasible exchange cone
\[
\mathcal A(p^\ast)
:=
\{\alpha:
({\underline D}\alpha)_S=0\text{ for }S\in\mathcal M_{\mathrm{miss}},
\quad
({\underline D}\alpha)_S\geqslant 0
\text{ for }S\notin\mathcal M_{\mathrm{miss}}\text{ with }p_S^\ast=0
\}.
\]

\begin{theorem}[General 2-state Markov chain problem]
\label{thm_Gen2stateMarkovChain}
There exist $p^\ast\in\mathcal P$ and points $\bar {\textnormal{\textbf{q}}}_S^\ast\in\overline A_S$, $S\in\mathcal S_{p^\ast}:=\{S\subseteq[m]:p_S^\ast>0\}$, such that a fixed-point tuple $(\phi^\ast,p^\ast,\bar {\textnormal{\textbf{q}}}^\ast)$
(with $\bar {\textnormal{\textbf{q}}}^\ast=(\bar {\textnormal{\textbf{q}}}_S^\ast)_{S\in\mathcal S_{p^\ast}}$) solves \eqref{eqn_2stateMarkovOptimisation} and satisfies the following three conditions:

(E1$_q^{\mathrm{cl}}$) \textbf{Extremal support-locations.}
\[
\bar {\textnormal{\textbf{q}}}_S^\ast\in\overline N_S(\phi^\ast)
=
\arg\min_{\textnormal{\textbf{q}}\in\overline A_S}h_{\phi^\ast}(\textnormal{\textbf{q}})\;\; \text{for } S\in\mathcal S_{p^\ast}.
\]

(E1$_p$) \textbf{Mass-exchange tightness.}
\[
\alpha^\top {\underline D}^\top m(\phi^\ast)\geqslant 0
\;\;\text{for every }\alpha\in\mathcal A(p^\ast).
\]
Equivalently, $p^\ast\in\arg\underset{p\in\mathcal P}{\min}\;\; p^\top m(\phi^\ast)$.


(E2$_{\mathrm{obj}}$) \textbf{Objective-value balance.} The infimum $\Phi^\ast$ has the value $\phi^\ast$, where 
\[
\phi^\ast =\frac{
\sum_{S\in\mathcal S_{p^\ast}}
p_S^\ast f(\bar {\textnormal{\textbf{q}}}_S^\ast)
}{
\sum_{S\in\mathcal S_{p^\ast}}
p_S^\ast g(\bar {\textnormal{\textbf{q}}}_S^\ast)
}.
\]

The closure support points need not lie in the half-open atoms $A_S$. However, for every $\eta>0$ and every active atom $S\in\mathcal S_{p^\ast}$, there exists ${\textnormal{\textbf{q}}}_{S,\eta}\in A_S$ such that $\|{\textnormal{\textbf{q}}}_{S,\eta}-\bar {\textnormal{\textbf{q}}}_S^\ast\|\leqslant \eta$ and 
$h_{\phi^\ast}({\textnormal{\textbf{q}}}_{S,\eta})
\leqslant
h_{\phi^\ast}(\bar {\textnormal{\textbf{q}}}_S^\ast)+\eta
=
m_S(\phi^\ast)+\eta$. Consequently, the discrete prior
\[
{\mathbb P}_\eta
=
\sum_{S\in\mathcal S_{p^\ast}}p_S^\ast\delta_{{\textnormal{\textbf{q}}}_{S,\eta}}
\]
belongs to $\mathcal D$ and satisfies $0
\leqslant
{\mathbb E}_{{\mathbb P}_\eta}[h_{\phi^\ast}(\textnormal{\textbf{Q}})]
\leqslant
\eta$. Hence,
\[
\frac{{\mathbb E}_{{\mathbb P}_\eta}[f(\textnormal{\textbf{Q}})]}
{{\mathbb E}_{{\mathbb P}_\eta}[g(\textnormal{\textbf{Q}})]}
\to
\Phi^\ast
\qquad
\text{as }\eta\downarrow 0.
\]

Finally, define the Dinkelbach residual function $F(\phi):=\min_{p\in\mathcal P}p^\top m(\phi)$. Then the scalar residual equation $F(\phi)=0$ has the unique solution $\phi=\Phi^\ast$. Moreover, $F$ is continuous and strictly
decreasing. Hence, any bracketed scalar root-finding method for $F$, in particular bisection,
converges to $\Phi^\ast$. 

If $\phi_r\to\Phi^\ast$ and $p^{(r)}\in\arg\min_{p\in\mathcal P}p^\top m(\phi_r)$, then every accumulation point of $\{p^{(r)}\}$ satisfies $(E1_p)$ and, after choosing
closure minimisers on its active atoms, forms a fixed-point tuple satisfying
$(E1_q^{\mathrm{cl}})$ and $(E2_{\mathrm{obj}})$ at $\Phi^\ast$. 
\end{theorem}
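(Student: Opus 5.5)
This is an overlapping measurable-set problem, so the plan is to reduce it to a finite atomised problem via Proposition~\ref{prop_genprob1_v1} and then analyse that problem with a parametric Dinkelbach argument.

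\textbf{Step 1: atomisation.} The constraints are $\mathbb P(\Theta_{\mathrm{feas}})=1$ and $\mathbb P(\{t\leqslant L_i<\epsilon\})=\theta$ for $i\in[m]$. The complementary constraint $\mathbb P(\{\epsilon\leqslant L_i\leqslant1\})=1-\theta$ is then automatic, because every $\mathbf q\in\Theta_{\mathrm{feas}}$ has $L_i(\mathbf q)\in[t,1]$. The atoms $A_S$ partition $\Theta_{\mathrm{feas}}$. As in Proposition~\ref{prop_genprob1_v1} (restricted to $\Theta_{\mathrm{feas}}$), a prior is feasible exactly when its atom-mass vector lies in $\mathcal P$. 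The infimum therefore equals $\inf_{p\in\mathcal P}\inf_{\mathbf q_S\in A_S}\sum p_Sf(\mathbf q_S)/\sum p_Sg(\mathbf q_S)$.

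On $\Theta_{\mathrm{feas}}$ we have $g\geqslant t>0$, so every denominator is bounded below by $t$. I would also verify the claim that the $d_S$ form a basis of $\ker C_{\mathrm{full}}$:
\begin{itemize}
\item each $d_S$ satisfies $C_{\mathrm{full}}d_S=0$ by a direct count;
\item the $d_S$ are linearly independent, since $e_S$ appears only in $d_S$;
\item there are $2^m-m-1$ of them, and $C_{\mathrm{full}}$ has rank $m+1$.
\end{itemize}

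\textbf{Step 2: the residual function.} I define $F(\phi)=\min_{p\in\mathcal P}p^\top m(\phi)$.

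\emph{Well-definedness and the min/inf identity.} The functions $f$ and $g$ are polynomials, hence uniformly continuous on the compact set $\Theta_{\mathrm{feas}}$. Each nonempty $A_S$ is dense in its closure $\overline A_S$, so $m_S(\phi)=\inf_{A_S}h_\phi=\min_{\overline A_S}h_\phi$, and the minimum exists by compactness.

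\emph{Continuity and strict monotonicity.} Each $m_S(\phi)$ is concave and Lipschitz in $\phi$, and it decreases with slope at most $-t$ because $g\geqslant t$. The polytope $\mathcal P$ is compact and nonempty, since $\mathcal D\neq\varnothing$. Taking the minimum over $\mathcal P$ preserves these properties, so $F$ is continuous and strictly decreasing with $F(\phi')-F(\phi)\leqslant -t(\phi'-\phi)$ for $\phi'>\phi$.

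\emph{Boundary values.} We have $F(0)\geqslant0$ since $f\geqslant0$, and $F(\phi)<0$ for $\phi>1$ since $f\leqslant g$. Hence $F$ has a unique root $\phi^\ast$.

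\emph{Identifying the root with $\Phi^\ast$.} This is the standard Dinkelbach equivalence. For any feasible discrete prior, $\mathbb E[h_{\Phi^\ast}]\geqslant0$, which gives $F(\Phi^\ast)\geqslant0$. Conversely, choosing near-minimising points in $A_S$ for a given $p$ gives $F(\phi)<0$ for every $\phi>\Phi^\ast$. Monotonicity then forces $\Phi^\ast=\phi^\ast$. Bisection converges because $F$ is a continuous, strictly monotone bracketed function.

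\textbf{Step 3: the fixed-point tuple.} I take $p^\ast$ to be an LP minimiser of $p^\top m(\phi^\ast)$ over $\mathcal P$, and $\bar{\mathbf q}^\ast_S\in\overline N_S(\phi^\ast)$ for the active atoms.
\begin{itemize}
\item (E1$_q^{\mathrm{cl}}$) holds by construction.
\item (E1$_p$) is the LP optimality condition written in the kernel basis. The feasible directions of $\mathcal P$ at $p^\ast$ are exactly the vectors $\underline D\alpha$ with $\alpha\in\mathcal A(p^\ast)$, and local optimality of a linear objective on a polytope is equivalent to global optimality.
\item (E2$_{\mathrm{obj}}$) follows from $\sum_S p^\ast_S h_{\phi^\ast}(\bar{\mathbf q}^\ast_S)=F(\phi^\ast)=0$; dividing by $\sum_S p^\ast_S g(\bar{\mathbf q}^\ast_S)\geqslant t>0$ gives the stated ratio.
\end{itemize}

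\textbf{Step 4: feasible approximating priors.} By density of $A_S$ in $\overline A_S$ and continuity of $h_{\phi^\ast}$, for each $\eta>0$ and active $S$ there is $\mathbf q_{S,\eta}\in A_S$ within distance $\eta$ of $\bar{\mathbf q}^\ast_S$ with $h_{\phi^\ast}(\mathbf q_{S,\eta})\leqslant m_S(\phi^\ast)+\eta$. The prior $\mathbb P_\eta=\sum_S p^\ast_S\delta_{\mathbf q_{S,\eta}}$ lies in $\mathcal D$ because its atom masses equal $p^\ast$. Its Dinkelbach expectation satisfies $0\leqslant\mathbb E_{\mathbb P_\eta}[h_{\phi^\ast}]\leqslant\eta$: the lower bound holds because $\phi^\ast=\Phi^\ast$ is the infimum, and the upper bound holds by construction. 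Dividing by $\mathbb E_{\mathbb P_\eta}[g]\geqslant t$ shows the ratio is within $\eta/t$ of $\Phi^\ast$, so it converges to $\Phi^\ast$.

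\textbf{Step 5: accumulation points.} The $m_S$ are continuous in $\phi$ (Lipschitz, in fact), so $p\mapsto p^\top m(\phi_r)$ converges uniformly on $\mathcal P$ to $p\mapsto p^\top m(\Phi^\ast)$. Argmin sets are upper semicontinuous under uniform convergence (Berge's maximum theorem), so every accumulation point of $\{p^{(r)}\}$ is an LP minimiser at $\Phi^\ast$. Steps 3 and 4 then apply to that accumulation point, producing a fixed-point tuple satisfying (E1$_q^{\mathrm{cl}}$) and (E2$_{\mathrm{obj}}$).

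\textbf{Expected main obstacle.} The delicate step is justifying $\inf_{A_S}h_\phi=\min_{\overline A_S}h_\phi$ with closure taken relative to $\Theta_{\mathrm{feas}}$. This needs $\overline A_S\subseteq\Theta_{\mathrm{feas}}$, which holds because $\Theta_{\mathrm{feas}}$ is closed. Alongside it, Step 1 must confirm that missing atoms contribute only the equality constraints $p_S=0$, so that the atom-mass polytope $\mathcal P$ describes the feasible priors exactly. I expect both points to go through by continuity of the $L_i$.
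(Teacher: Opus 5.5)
Your proposal is correct and follows essentially the same route as the paper's proof. Both arguments use atomisation via Proposition~\ref{prop_genprob1_v1}, closure minimisers on the compact sets $\overline A_S$, the separable residual $F(\phi)=\min_{p\in\mathcal P}p^\top m(\phi)$ with unique root $\Phi^\ast$, LP optimality expressed through the kernel basis $\underline D$, and density-based feasible approximants $\mathbb P_\eta$. The only differences are cosmetic: you derive strict monotonicity from the atomwise bound $m_S(\phi')\leqslant m_S(\phi)-t(\phi'-\phi)$ and bracket the root with $F(0)\geqslant 0>F(\phi)$ for $\phi>1$, whereas the paper defines $F(\phi)=\inf_{\mathbb P\in\mathcal D}\mathbb E_{\mathbb P}[h_\phi(\textnormal{\textbf{Q}})]$ and uses the sign change of $F$ across $\Phi^\ast$ directly.
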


\begin{proof}
Appendix~R, Supplementary Material (\cite{Salako_Muhammad_2025_suppmat}).
\end{proof}
%
%
%
%
%
%
%
%
%
%
%
%
%
%
%
%
%
%
%
%
%
%
%
%
%

%
%
%
\section{Discussion: Interpretation, Convergence, and Extensions}
\label{sec_discussion}

For non-trivial problems, the extremal distributions are often ``degenerate'': although the parameters may, in principle, take any value in the hypercube $K$, most parameter values are impossible under these distributions. In Bayesian applications, this degeneracy does not conflict with Bayesian principles. In particular, for conservative Bayesian inference, it is the class of feasible priors defined by the partial prior specifications---rather than any single degenerate extremal prior---that represents one’s prior beliefs. As new observations arrive, every feasible prior is updated, and the extremal priors may change.

In the following sense, Bayesian applications of Problem~\eqref{eqn_genCBIprob} ``converge'' to traditional Bayesian inference, in the limit of more detailed partial specifications defined over refinements of the partition of $K$ (\emph{cf.} convergence of $\epsilon$-contaminations to traditional Bayesian inference as $\epsilon\to 0$; e.g. in \cite{berger1990_SensitivityToPrior}, \cite{moreno1991robust}): 
\begin{proposition}[Convergence to ordinary Bayesian inference]
\label{prop_convergence_to_ordinary_BayesianInference}
Let $\mu\in\mathcal P(K)$ be a fixed Borel probability measure over $K$. For each $m\in\mathbb N$, let  $\mathcal K^{(m)}=\{K^{(m)}_1,\dots,K^{(m)}_{n_m}\}$ be a finite Borel partition of $\mathrm{supp}(\mu)$ in $K$, such that $\mu(K_i^{(m)})>0$ for $i=1,\dots,n_m$. Assume $\max\limits_{1\leqslant i\leqslant n_m}\operatorname{diam}\!\left(K_i^{(m)}\right)\to 0$ (as $m\to\infty$). Let $\mathcal D_m:=\Bigl\{\mathbb P\in\mathcal P(K):\mathbb P\!\left(K_i^{(m)}\right)=\mu\!\left(K_i^{(m)}\right)
\text{for }i=1,\dots,n_m\Bigr\}$. Let $f,g:K\to[0,\infty)$ be continuous and $\mathbb E_{\mu}[g]>0$. Restrict to all sufficiently large $m$, thereby ensuring $\mathbb E[g]>0$ for every $\mathbb P\in\mathcal D_m$. Define $\,L_m:=\inf_{\mathcal D_m}\frac{\mathbb E[f]}{\mathbb E[g]}\,$ and $\,U_m:=\sup_{\mathcal D_m}\frac{\mathbb E[f]}{\mathbb E[g]}$. Then,
\[
L_m\to \frac{\mathbb E_\mu[f]}{\mathbb E_\mu[g]},
\qquad
U_m\to \frac{\mathbb E_\mu[f]}{\mathbb E_\mu[g]}.
\]
\end{proposition}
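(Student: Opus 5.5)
The plan is a uniform-continuity sandwich argument, done directly at the level of individual priors. This avoids any need for the fixed-point machinery.

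First, handle the support issue. The partition $\mathcal K^{(m)}$ covers only $\mathrm{supp}(\mu)$, and $\sum_i\mu(K_i^{(m)})=\mu(\mathrm{supp}(\mu))=1$. So every $\mathbb P\in\mathcal D_m$ satisfies $\mathbb P(\bigcup_i K_i^{(m)})=1$ and places mass $\mu(K_i^{(m)})$ on each cell. Also $\mu$ itself belongs to $\mathcal D_m$, so $L_m\leqslant \mathbb E_\mu[f]/\mathbb E_\mu[g]\leqslant U_m$. It therefore suffices to show that $\sup_{\mathbb P\in\mathcal D_m}\bigl|\mathbb E_{\mathbb P}[f]-\mathbb E_\mu[f]\bigr|\to 0$, and the same for $g$.

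Next, the uniform estimate. Let $\omega_f,\omega_g$ be the moduli of continuity of $f,g$ on the compact set $K$; these tend to $0$ with their argument by uniform continuity. Set $\delta_m:=\max_i\operatorname{diam}(K_i^{(m)})$. For any $\mathbb P\in\mathcal D_m$, decompose
\[
\mathbb E_{\mathbb P}[f]-\mathbb E_\mu[f]=\sum_{i=1}^{n_m}\Bigl(\int_{K_i^{(m)}}f\,d\mathbb P-\int_{K_i^{(m)}}f\,d\mu\Bigr).
\]
Fix some $\mathbf y_i\in K_i^{(m)}$. Since both measures give mass $\mu(K_i^{(m)})$ to $K_i^{(m)}$, each summand equals $\int_{K_i^{(m)}}(f-f(\mathbf y_i))\,d\mathbb P-\int_{K_i^{(m)}}(f-f(\mathbf y_i))\,d\mu$. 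Its absolute value is therefore at most $2\omega_f(\delta_m)\mu(K_i^{(m)})$. Summing over $i$ gives $|\mathbb E_{\mathbb P}[f]-\mathbb E_\mu[f]|\leqslant 2\omega_f(\delta_m)$, uniformly in $\mathbb P$, and likewise $|\mathbb E_{\mathbb P}[g]-\mathbb E_\mu[g]|\leqslant 2\omega_g(\delta_m)$.

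The ratio step is where some care is needed; it is routine but is the only real obstacle. Choose $m$ large enough that $2\omega_g(\delta_m)\leqslant \mathbb E_\mu[g]/2$. Then $\mathbb E_{\mathbb P}[g]\geqslant \mathbb E_\mu[g]/2>0$ for every $\mathbb P\in\mathcal D_m$, which is the restriction to large $m$ made in the statement. Write $M_f:=\max_K f$ and $c:=\mathbb E_\mu[g]$. The elementary inequality
\[
\Bigl|\tfrac{a}{b}-\tfrac{a'}{b'}\Bigr|\leqslant \tfrac{|a-a'|}{b}+\tfrac{a'|b-b'|}{bb'}
\]
then gives
\[
\sup_{\mathbb P\in\mathcal D_m}\Bigl|\tfrac{\mathbb E_{\mathbb P}[f]}{\mathbb E_{\mathbb P}[g]}-\tfrac{\mathbb E_\mu[f]}{\mathbb E_\mu[g]}\Bigr|\leqslant \tfrac{4\omega_f(\delta_m)}{c}+\tfrac{4M_f\,\omega_g(\delta_m)}{c^2}\to 0.
\]

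Finally, combine. Together with $L_m\leqslant \mathbb E_\mu[f]/\mathbb E_\mu[g]\leqslant U_m$, this bound shows that both $L_m$ and $U_m$ converge to $\mathbb E_\mu[f]/\mathbb E_\mu[g]$. As a cross-check, the same bound could be obtained by viewing the problem through Proposition~\ref{prop_cbi_transform} as optimisation over tuples $\mathbf x_i\in K_i^{(m)}$, but the direct estimate above is shorter.
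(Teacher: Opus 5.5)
Your proof is correct and follows essentially the same route as the paper: both rest on the fact that every $\mathbb P\in\mathcal D_m$ and $\mu$ itself assign identical masses to cells on which the oscillations of $f$ and $g$ vanish uniformly, so all feasible expectations are uniformly pinned to $\mathbb E_\mu[f]$ and $\mathbb E_\mu[g]$. The paper packages this through the $\mathbb P$-independent cellwise envelopes $A_m^\pm,B_m^\pm$ and squeezes $L_m,U_m$ between $A_m^-/B_m^+$ and $A_m^+/B_m^-$, whereas you compare each $\mathbb P$ directly with $\mu$ and obtain an explicit uniform rate; your factor $2$ can be dropped by noting that both cell integrals lie between the cell infimum and supremum times $\mu(K_i^{(m)})$.
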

\begin{proof}Appendix~S, Supplementary Material (\cite{Salako_Muhammad_2025_suppmat}).\end{proof}
This convergence is consistent with the general principle in robust Bayesian inference that the more constrained an inference problem becomes, the tighter the bounds that sandwich the objective function in question (e.g. see Section~4 of \cite{WassermanLavineWolpert1993}).

The infimum of the objective function in \eqref{eqn_weak_opt_equiv_disc} is attained by certain functional forms related to the objective function: forms ultimately determined by (E1)--(E2) conditions in the Theorems (e.g. Theorem~\ref{thm:global_fp_tuple}). An extremal prior assigns probability mass to $n$ locations in its domain---probability $p_i$ is assigned to location $i$. When locations coincide, the probability the prior assigns at the common location is the sum of the corresponding $p_i$'s. Different extremal priors can have different support locations. The $i$-th location of an extremal prior lies in the closure of the measurable subset $K_i$, and hence is the limit of points in $K_i$. Many previously published explicit solutions of \eqref{eqn_genCBIprob} have each been constrained by finitely many such functional forms. (E1)--(E2) also make clear how the extremal priors depend on $f$, $g$, the $p_i$ probabilities, and the partition of $K$. 

Disparate analogues of (E$1$) and (E$2$) exist in the literature, but without the computational and approximating extremal prior convergence guarantees presented here (\emph{cf.} Remark~1 of \cite{LiseoMorenoSalinetti1996-GivenMarginals} for (E$1$) and Example~1 of \cite{WassermanLavineWolpert1993} for (E$2$)). The guarantees come at the price of less generality in the classes of $f$, $g$ functions. 


A common theme across many of these solutions is that fixed points often behave like ``attractors'' or ``repellers''---attracting or repelling locations where probability masses are assigned in the domain of the prior. The optimisation constraints induce a partition of the prior's domain, and the fixed points dictate those locations where probability mass should be assigned (in each subset of the partition) to achieve conservative results. For some problems, the fixed point exclusively attracts or repels. For other problems, the fixed point attracts locations in some subsets while repelling locations in other subsets. An example is the five solutions in the proof of the theorem in \cite{zhao_assessing_2019}; the placement of probability masses for each of these solutions is determined by the attraction and repulsion of the fixed point $r/(r+k)$ (i.e. the MLE for $x^r(1-x)^{k}$). 

\def\rval{1}
\def\mval{3}
\def\kval{2}
\def\xsing{\fpeval{\rval / (\rval + \kval)}}
\def\xroot{\fpeval{\rval / (\rval + \mval + \kval)}}
\begin{figure}[htbp!]
\begin{center}
\scalebox{0.78}
{\begin{tikzpicture}
  \begin{axis}[
    width=13cm,
    height=8cm,
    domain=0:1,
    samples=400,
    xlabel={$x$},
    ylabel={$h(x)$},
    axis lines=middle,
    xtick distance=1,
    ytick distance=1,
    enlargelimits=true,
    tick style={black},
    ymin=-1, ymax=1,
  ]

  \draw[line width=1mm,gray] ({axis cs:0,0.7}) -- ({axis cs:1,0.7});
  \node[black] at (axis cs:-0.01,0.7) [anchor=east] {$\pmb{\phi^*}$};

  \addplot[
    domain=0:\fpeval{\xsing - 0.001},
    line width = 0.5mm,
    black,
  ]
  {(1 - x)^\mval * (\rval - x*(\mval + \kval + \rval)) / ( \rval - x*(\kval + \rval) )};

  \addplot[
    domain=\fpeval{\xsing + 0.001}:0.999,
    line width = 0.5mm,
    black,
  ]
  {(1 - x)^\mval * (\rval - x*(\mval + \kval + \rval)) / ( \rval - x*(\kval + \rval) )};


  \draw[dashed,thick,black] ({axis cs:\xsing,-0.9}) -- ({axis cs:\xsing,1.1});
  \node[black] at (axis cs:\xsing+0.003,-1.06) {$\pmb{\frac{r}{r+k}}$};


  \draw[dashed,thick,black] ({axis cs:\xroot,-0.5}) -- ({axis cs:\xroot,0});
  \node[black] at (axis cs:\xroot,-0.65) {$\pmb{\frac{r}{r+m+k}}$};

  \draw[dashed,thick,black] ({axis cs:0.05,0.7}) -- ({axis cs:0.05,0});
  \node[black] at (axis cs:0.06,-0.1) {$\pmb{y_{**}}$};

  \draw[dashed,thick,black] ({axis cs:0.465,0.7}) -- ({axis cs:0.465,0});
  \node[black] at (axis cs:0.479,-0.1) {$\pmb{y_*}$};

  \end{axis}
\end{tikzpicture}}
\end{center}\caption[Attractor $y^*$ and repeller $y^{**}$ from $h(x)=\phi^*$]{How $\phi^*$, $y_{**}$, and $y_*$ are related by $h(x) = (1 - x)^m\left(\frac{r - x(m + k + r)}{r - x(k + r)}\right)$. Reproduced from \cite{salako2025conservative}.} 
\label{fig_h_stationarypoints}
\end{figure}

As a particular example, the components of Theorem~\ref{thm_CBIwithfails_sol}'s fixed--point triplet~---~i.e. $\phi^*$, $y_{**}$, $y_*$~---~are related by the function $h: [0,1]\setminus\{\frac{r}{r+k}\}\rightarrow \mathbb R$, $h(x) = (1 - x)^m\left(\frac{r - x(m + k + r)}{r - x(k + r)}\right)$; see Figure~\ref{fig_h_stationarypoints}. The gradient of the objective function defines $h$, where $h$ determines when gradient components are non-trivially zero. The roots of the difference $h-\phi$, between $h$ and the objective function $\phi$, imply the fixed point ``attracts'' (w.r.t. $y_{*}$) and ``repels'' (w.r.t. $y_{**}$). In terms of the more general setting of Theorem~\ref{thm:global_fp_tuple}, the $h-\phi$ difference is a rescaling of the Dinkelbach slope
$\nabla h_{\phi,f,g}(\mathbf{x})$ (see Definition~\ref{def:Dinkelbachdifference}), with the sign of the scaling factor changing at $x=r/(r+k)$. In this one-dimensional example, the ``repeller'' $y^{**}<r/(r+k)$ is a root at which $\nabla h_{\phi^*,f,g}(\mathbf{x})$ passes monotonically from
positive to negative, whereas the ``attractor''
$y^*>r/(r+k)$ is a root at which it passes monotonically from negative to positive. More generally, any interior point $x_i^*$
satisfying (E1) of Theorem~\ref{thm:global_fp_tuple} is a local ``attractor'' in this terminology, since it is a local minimiser of $h_{\phi^*,f,g}$. See Appendix~G of the Supplementary Material (\cite{Salako_Muhammad_2025_suppmat}).

As highlighted by Theorem~\ref{thm:global_fp_tuple}, ``attractors'' and ``repellers'' are not necessarily ``points''; they can be ``line segments'', ``surfaces'', or topological manifolds in the prior's domain. For example, the extremal priors for \eqref{eqn_xKlotzlklhdFn_maintxt} have line segments in their domain that ``attract'' univariate probability density (e.g. see \cite{SalakoZhao_TSE_2023}). 

Another example arises in Problem~\eqref{eqn_assetbasketoptimisation} and its solution in Theorem~\ref{thm:canonical-strike-recursive} (extending the solution in \cite{LaurenceWang2005Basket}): the attractor is the anti-diagonal of the unit square (see Figure~\ref{fig:anti_diagonal_nested_squares}), which is the support of the limiting extremal distribution. Moreover, for non-negative $X$ and $Y$, the classical lower bound for the
$\mathbb E[XY]$ problem with prescribed marginals
(see \cite{Whitt1976BivariateDistributions}) can be recovered; e.g. consider the following special case obtained in a manner directly
analogous to Theorem~\ref{thm:canonical-strike-recursive}. Writing
$X=F^{-1}(U)$ and $Y=G^{-1}(V)$, where $F^{-1}$ and $G^{-1}$
denote quantile functions that extend to continuous functions on $[0,1]$ and $U,V\sim{\rm Unif}(0,1)$, the problem reduces to minimizing $\mathbb{E}\!\left[F^{-1}(U)G^{-1}(V)\right]$ over all couplings of $U$ and $V$. The finite marginal approximants lead to the same uncrossing structure
as in Theorem~\ref{thm:canonical-strike-recursive}: mass assigned to
positively ordered pairs can be rearranged to oppositely ordered pairs
without increasing the objective. Consequently, the extremal approximants
concentrate on the anti-diagonal in quantile coordinates; in the limit
the extremal prior is supported on $\{(u,v)\in[0,1]^2: v=1-u\}$.
The lower bound is $\inf_{X\sim F,\;Y\sim G}\mathbb E[XY]=\int_0^1 F^{-1}(u)G^{-1}(1-u)\,du$. Equivalently, in $(X,Y)$-coordinates, the anti-diagonal is mapped to the one-dimensional curve $u\mapsto \bigl(F^{-1}(u),G^{-1}(1-u)\bigr)$: the intermediate extremal $(X,Y)$-priors concentrate on this curve, generalising the Theorem~\ref{thm:canonical-strike-recursive} anti-diagonal attractor.


Theorems~\ref{thm:2d-gk-fixed-point-extension-corrected} and \ref{thm_VaRExample} offer more general examples of ``attractors''---Borel subsets of the unit square and unit hypercube, respectively. The $K\cap C^c$ attractor in Theorem~\ref{thm:2d-gk-fixed-point-extension-corrected} only attracts probability mass that can reach it through vertical movement, leaving the remaining mass in vertical sections from which no such escape is possible; see Figure~\ref{fig:fibrewise_repair_theorem218}. In Theorem~\ref{thm_VaRExample}, mass is attracted from multiple locations and assigned to multiple locations (some lying within the attractor $G$) only when all of these locations form part of an admissible mass exchange: see Figure~\ref{fig:theorem219_dyadic_compact_core_exchange}.

The exchange formulation of Theorem~\ref{thm_VaRExample} also reveals a two-level orbit structure. At the level of admissibility, every admissible prior in the fixed-marginal class $\mathcal{D}$ can be obtained from any other admissible prior by a complete marginal-preserving mass exchange; thus, viewed from any $\mathbb P\in\mathcal{D}$, the entire feasible class $\mathcal D$ is $\mathbb P$'s complete exchange orbit. At the level of extremality, whenever the target event-probability bound is attained, the exchanges having zero Dinkelbach difference---the tight exchanges---generate the complete family of extremal priors from any extremal prior $\mathbb P^*$. The extremal solution set is, in general, a smaller tight-exchange orbit nested within the admissible exchange orbit. This global classification sharpens the attractor interpretation above. An extremising attractor includes the target event $G$ and describes the objective-improving direction of admissible mass exchange: mass can be reassigned so as to increase $\mathbb P(G)$ only through exchanges that preserve the prescribed marginals. The orbit structure describes the corresponding reachability: the complete exchange orbit through any admissible prior is the entire feasible class, and the tight-exchange orbit through any extremal prior is the entire extremal prior class; see Theorem~\ref{thm_VaRExample}(a)--(b), Figure~\ref{fig:theorem219_dyadic_compact_core_exchange}, and Appendix~P, Steps~1 and~6. Corollary~\ref{Cor_VaRExample} carries the same structure into prescribed-marginal robust VaR problems.

Degenerate extremal distributions supported on lower-dimensional manifolds also occur for multidimensional $\epsilon$-contamination problems; e.g. 2-dimensional extreme priors with support on 1-dimensional curves in the priors' domains---see Theorem~1 and Remark~1 of \cite{LiseoMorenoSalinetti1996-GivenMarginals}, Remark~2 of \cite{MorenoCano1995-CollectionOfSets}, and \cite{LavineWassermanWolpert1991-SpecifiedMarginals}.

Most of the results of this paper are subject to finite or discretised setwise mass constraints: Borel partition masses, overlapping Borel-event masses, fixed marginal laws expressed as cylinder-set probabilities, and combinations/refinements/limits of these. A notable exception is Proposition~\ref{prop_cbi_transform}, which considers moment constraints. A natural extension would be more general linear integral constraints (e.g. see \cite{karr1983extreme,BetroRuggeriMeczarski1994-JSPI}) where one fixes finitely many quantities of the form $\mathbb E[f_i]=c_i$; then, the current paper's finite atom-mass polytopes arising from Borel refinements would be replaced, or augmented, by consistency polytopes that encode the values of constraint functions on refined atoms. Karr-type results describe which finite supports can be extremal, while extensions using this fixed-point framework could potentially also identify where extremal support should concentrate, and approximate the corresponding extremal distributions by recursively solving related finite discretised problems.

\bibliographystyle{ba}
\bibliography{bibliography}

@article{BetroRuggeriMeczarski1994-JSPI,
  author  = {Bruno Betr{\`o} and Fabrizio Ruggeri and Marek M\k{e}czarski},
  title   = {Robust Bayesian analysis under generalized moments conditions},
  journal = {Journal of Statistical Planning and Inference},
  year    = {1994},
  volume  = {41},
  number  = {3},
  pages   = {257--266},
  month   = oct,
  issn    = {0378-3758},
  doi     = {10.1016/0378-3758(94)90022-1},
  url     = {https://doi.org/10.1016/0378-3758(94)90022-1}
}

@article{klotz_statistical_1973,
	title = {Statistical {inference} in {Bernoulli} {trials} with {dependence}},
	volume = {1},
	issn = {00905364},
	number = {2},
	journal = {The Annals of Statistics},
	author = {Klotz, Jerome},
	year = {1973},
	pages = {373--379},
    url = {https://doi.org/10.1214/aos/1176342377}
}

@article{1991_Lavine,
 ISSN = {01621459, 1537274X},
 URL = {http://www.jstor.org/stable/2290583},
 author = {Michael Lavine},
 journal = {Journal of the American Statistical Association},
 number = {414},
 pages = {396--399},
 publisher = {[American Statistical Association, Taylor & Francis, Ltd.]},
 title = {Sensitivity in Bayesian Statistics: The Prior and the Likelihood},
 urldate = {2025-11-04},
 volume = {86},
 year = {1991}
}

@incollection{LiseoMorenoSalinetti1996-GivenMarginals,
  author    = {Brunero Liseo and El\'{\i}as Moreno and Gabriella Salinetti},
  title     = {Bayesian Robustness for Classes of Bidimensional Priors with Given Marginals},
  booktitle = {Bayesian Robustness},
  series    = {Institute of Mathematical Statistics Lecture Notes{\textemdash}Monograph Series},
  volume    = {29},
  editor    = {Joseph O. Berger and Bruno Betr{\`o} and El\'{\i}as Moreno and Luis R. Pericchi and Fabrizio Ruggeri and Gabriella Salinetti and Larry Wasserman},
  publisher = {Institute of Mathematical Statistics},
  address   = {Hayward, CA},
  year      = {1996},
  pages     = {101--118},
  doi       = {10.1214/lnms/1215453063},
  url       = {https://doi.org/10.1214/lnms/1215453063}
}

@article{LavineWassermanWolpert1991-SpecifiedMarginals,
  author  = {Michael Lavine and Larry Wasserman and Robert L. Wolpert},
  title   = {Bayesian Inference with Specified Prior Marginals},
  journal = {Journal of the American Statistical Association},
  year    = {1991},
  volume  = {86},
  number  = {416},
  pages   = {964--971},
  doi     = {10.1080/01621459.1991.10475139},
  url     = {https://doi.org/10.1080/01621459.1991.10475139}
}

@article{MorenoCano1995-CollectionOfSets,
  author  = {El\'{\i}as Moreno and Juan A. Cano},
  title   = {Classes of bidimensional priors specified on a collection of sets: Bayesian robustness},
  journal = {Journal of Statistical Planning and Inference},
  year    = {1995},
  volume  = {46},
  number  = {3},
  pages   = {325--334},
  issn    = {0378-3758},
  doi     = {10.1016/0378-3758(94)00134-H},
  url     = {https://doi.org/10.1016/0378-3758(94)00134-H}
}

@article{salako2025constructive,
  title         = {Constructive Proofs of Generalized Boole--Frechet Bounds: A Dynamic Programming Approach},
  author        = {Salako, Kizito},
  journal       = {{arXiv preprint}},
  year          = {2025},
  eprint        = {2512.09161},
  archivePrefix = {arXiv},
  primaryClass  = {math.PR},
  url={https://doi.org/10.48550/arXiv.2512.09161},
  doi           = {10.48550/arXiv.2512.09161}
}

@mastersthesis{Karush1939,
  author = {Karush, William},
  title  = {Minima of Functions of Several Variables with Inequalities as Side Conditions},
  school = {University of Chicago},
  address = {Chicago, IL},
  year   = {1939},
  note   = {Department of Mathematics}
}

@inproceedings{KuhnTucker1951,
  author    = {Kuhn, H. W. and Tucker, A. W.},
  title     = {Nonlinear Programming},
  booktitle = {Proceedings of the Second Berkeley Symposium on Mathematical Statistics and Probability},
  editor    = {Neyman, Jerzy},
  pages     = {481--492},
  publisher = {University of California Press},
  address   = {Berkeley},
  year      = {1951}
}

@book{NocedalWright2006,
  author    = {Nocedal, Jorge and Wright, Stephen J.},
  title     = {Numerical Optimization},
  edition   = {2nd},
  series    = {Springer Series in Operations Research and Financial Engineering},
  publisher = {Springer},
  address   = {New York},
  year      = {2006},
  doi       = {10.1007/978-0-387-40065-5},
  url = {https://doi.org/10.1007/978-0-387-40065-5}
}

@article{Hailperin_1965,
 ISSN = {00029890, 19300972},
url={https://www.jstor.org/stable/2313491},
 doi = {10.2307/2313491},
 author = {Theodore Hailperin},
 journal = {The American Mathematical Monthly},
 number = {4},
 pages = {343--359},
 publisher = {[Taylor & Francis, Ltd., Mathematical Association of America]},
 title = {Best Possible Inequalities for the Probability of a Logical Function of Events},
 urldate = {2025-01-03},
 volume = {72},
 year = {1965}
}

@article{Whitt1976BivariateDistributions,
  author  = {Whitt, Ward},
  title   = {Bivariate Distributions with Given Marginals},
  journal = {The Annals of Statistics},
  year    = {1976},
  volume  = {4},
  number  = {6},
  pages   = {1280--1289},
  doi     = {10.1214/aos/1176343660},
  url     = {https://doi.org/10.1214/aos/1176343660}
}

@article{karr1983extreme,
  author  = {Karr, Alan F.},
  title   = {Extreme Points of Certain Sets of Probability Measures, with Applications},
  journal = {Mathematics of Operations Research},
  year    = {1983},
  volume  = {8},
  number  = {1},
  pages   = {74--85},
  doi     = {10.1287/moor.8.1.74},
  month   = feb,
  publisher = {INFORMS},
  url = {https://doi.org/10.1287/moor.8.1.74}
}

@article{GiacominiKitagawa2021RobustBayesian,
  author  = {Giacomini, Raffaella and Kitagawa, Toru},
  title   = {Robust {B}ayesian Inference for Set-Identified Models},
  journal = {Econometrica},
  volume  = {89},
  number  = {4},
  pages   = {1519--1556},
  year    = {2021},
  month   = jul,
  doi     = {10.3982/ECTA16773},
  url = {https://doi.org/10.3982/ECTA16773}
}

@article{Ruger1978,
  author  = {R{\"u}ger, B.},
  title   = {Das maximale Signifikanzniveau des Tests: ``Lehne $H_{0}$ ab, wenn $k$ unter $n$ gegebenen Tests zur Ablehnung f{\"u}hren''},
  journal = {Metrika},
  year    = {1978},
  volume  = {25},
  pages   = {171--178},
  url     = {https://doi.org/10.1007/BF02204362}
}

@misc{Salako_Muhammad_2025_suppmat,
  author = {Salako, Kizito and Muhammad, Rabiu Tsoho},
  title = {Supplement to ``Fixed-Point Characterisations of Extremal Distributions under Partial Distributional Constraints''},
  year         = {2026},
  note = {See appendices of the main paper}
}

@article{CharnesCooper1962LinearFractionalFunctionals,
  author  = {Charnes, A. and Cooper, W. W.},
  title   = {Programming with Linear Fractional Functionals},
  journal = {Naval Research Logistics Quarterly},
  year    = {1962},
  volume  = {9},
  number  = {3--4},
  pages   = {181--186},
  doi     = {10.1002/nav.3800090303},
  url = { https://doi.org/10.1002/nav.3800090303}
}

@article{salako2025conservative,
  author        = {Kizito Salako and Rabiu Tsoho Muhammad},
  title         = {Conservative Software Reliability Assessments Using Collections of Bayesian Inference Problems},
  journal = {arXiv preprint},
  year          = {2025},
  eprint        = {2511.07038},
  archivePrefix = {arXiv},
  primaryClass  = {stat.AP},
  url={https://doi.org/10.48550/arXiv.2511.07038},
  doi           = {10.48550/arXiv.2511.07038}
}

@article{Schaible1976Dinkelbach,
  author  = {Siegfried Schaible},
  title   = {Fractional Programming. II, On Dinkelbach's Algorithm},
  journal = {Management Science},
  year    = {1976},
  month   = {April},
  volume  = {22},
  number  = {8},
  pages   = {868--873},
  publisher = {INFORMS},
  doi     = {10.1287/mnsc.22.8.868},
  issn    = {0025-1909},
  eissn   = {1526-5501},
  url     = {https://doi.org/10.1287/mnsc.22.8.868}
}

@article{WassermanLavineWolpert1993,
  author  = {Wasserman, Larry and Lavine, Michael and Wolpert, Robert L.},
  title   = {Linearization of Bayesian Robustness Problems},
  journal = {Journal of Statistical Planning and Inference},
  year    = {1993},
  volume  = {37},
  number  = {3},
  pages   = {307--316},
  doi     = {10.1016/0378-3758(93)90109-J},
  url     = {https://doi.org/10.1016/0378-3758(93)90109-J}
}

@article{Richter1957Parameterfreie,
  author  = {Richter, Hans},
  title   = {Parameterfreie Absch{\"a}tzung und Realisierung von Erwartungswerten},
  journal = {Bl. Deutsch. Ges. Versicherungsmath.},
  volume  = {3},
  year    = {1957},
  pages   = {147--162}
}

@article{Winkler1988ExtremePointsMomentSets,
  author  = {Winkler, Gerhard},
  title   = {Extreme Points of Moment Sets},
  journal = {Mathematics of Operations Research},
  year    = {1988},
  volume  = {13},
  number  = {4},
  pages   = {581--587},
  month   = nov,
  url = {https://doi.org/10.1287/moor.13.4.581}
}

@article{Cozman1999PosteriorBounds,
  author  = {Cozman, Fabio G.},
  title   = {Calculation of Posterior Bounds Given Convex Sets of Prior Probability Measures and Likelihood Functions},
  journal = {Journal of Computational and Graphical Statistics},
  year    = {1999},
  volume  = {8},
  number  = {4},
  pages   = {824--838}
}

@book{BoydVandenberghe2004ConvexOptimization,
  author    = {Boyd, Stephen and Vandenberghe, Lieven},
  title     = {Convex Optimization},
  publisher = {Cambridge University Press},
  address   = {Cambridge},
  year      = {2004},
  isbn      = {978-0-521-83378-3},
  url = {
https://doi.org/10.1017/CBO9780511804441}
}

@article{Choquet1954TheoryOfCapacities,
  author    = {Choquet, Gustave},
  title     = {Theory of capacities},
  journal   = {Annales de l'Institut Fourier},
  volume    = {5},
  year      = {1954},
  pages     = {131--295},
  publisher = {Institut Fourier},
  address   = {Grenoble},
  doi       = {10.5802/aif.53},
  mrnumber  = {18,295g},
  zbl       = {0064.35101},
  language  = {en},
  url       = {https://www.numdam.org/articles/10.5802/aif.53/}
}

@article{Birkhoff1946Tres,
  author  = {Birkhoff, Garrett},
  title   = {Tres observaciones sobre el {\'a}lgebra lineal},
  journal = {Univ. Nac. Tucum{\'a}n. Revista A},
  volume  = {5},
  pages   = {147--151},
  year    = {1946}
}

@incollection{vonNeumann1953Assignment,
  author    = {von Neumann, John},
  title     = {A Certain Zero-Sum Two-Person Game Equivalent to the Optimal Assignment Problem},
  editor    = {Kuhn, H. W. and Tucker, A. W.},
  booktitle = {Contributions to the Theory of Games, Volume II},
  series    = {Annals of Mathematics Studies},
  number    = {28},
  pages     = {5--12},
  publisher = {Princeton University Press},
  address   = {Princeton, NJ},
  year      = {1953},
  url={https://doi.org/10.1515/9781400881970-002}
}

@article{LaurenceWang2005Basket,
  author       = {Laurence, Peter and Wang, {Tai-Ho}},
  title        = {Sharp Upper and Lower Bounds for Basket Options},
  journal      = {Applied Mathematical Finance},
  year         = {2005},
  volume       = {12},
  number       = {3},
  pages        = {253--282},
  doi          = {10.1080/1350486042000325179},
  url = {https://doi.org/10.1080/1350486042000325179},
  note    = {See also the shorter exposition ``What's a Basket Worth?'' in Risk Magazine, February 2004.},
}

@article{burkard1996perspectives,
  author  = {Burkard, Rainer E. and Klinz, Bettina and Rudolf, R{\"u}diger},
  title   = {Perspectives of Monge Properties in Optimization},
  journal = {Discrete Applied Mathematics},
  volume  = {70},
  number  = {2},
  pages   = {95--161},
  year    = {1996},
  doi     = {10.1016/0166-218X(95)00103-X},
  url = {https://doi.org/10.1016/0166-218X(95)00103-X}
}

@article{EmbrechtsPuccettiRueschendorf2013,
  author  = {Embrechts, Paul and Puccetti, Giovanni and R{\"u}schendorf, Ludger},
  title   = {Model uncertainty and {VaR} aggregation},
  journal = {Journal of Banking \& Finance},
  volume  = {37},
  number  = {8},
  pages   = {2750--2764},
  year    = {2013},
  issn    = {0378-4266},
  doi     = {10.1016/j.jbankfin.2013.03.014},
  url = {https://doi.org/10.1016/j.jbankfin.2013.03.014}
}

@article{Dinkelbach1967,
  author  = {Dinkelbach, Wolfgang},
  title   = {On Nonlinear Fractional Programming},
  journal = {Management Science},
  year    = {1967},
  volume  = {13},
  number  = {7},
  pages   = {492--498},
  doi     = {10.1287/mnsc.13.7.492},
  url     = {https://pubsonline.informs.org/doi/10.1287/mnsc.13.7.492}
}

@article{kizito_QRE_2024,
author={Salako,Kizito and Zhao,Xingyu},
year={2024},
title={Demonstrating software reliability using possibly correlated tests: Insights from a conservative Bayesian approach},
journal={Quality and Reliability Engineering International},
volume={40},
number={3},
pages={1197-1220},
isbn={0748-8017},
language={English},
url={https://doi.org/10.1002/qre.3460}
}

@article{moreno1991robust,
  title={Robust Bayesian Analysis with $\epsilon$-Contaminations Partially Known},
  author={Moreno, E and Cano, J A},
  journal={Journal of the Royal Statistical Society: Series B (Methodological)},
  volume={53},
  number={1},
  pages={143--155},
  year={1991},
  publisher={Wiley Online Library},
  url = {https://doi.org/10.1111/j.2517-6161.1991.tb01814.x}
}

@ARTICLE {SalakoZhao_TSE_2023,
author = {K. Salako and X. Zhao},
journal = {IEEE Transactions on Software Engineering},
title = {The Unnecessity of Assuming Statistically Independent Tests in Bayesian Software Reliability Assessments},
year = {2023},
volume = {49},
number = {4},
issn = {1939-3520},
pages = {2829-2838},
doi = {10.1109/TSE.2022.3233802},
publisher = {IEEE Computer Society},
address = {Los Alamitos, CA, USA},
month = {apr},
url = {https://doi.org/10.1109/TSE.2022.3233802}
}

@inproceedings{zhao_assessing_2019,
	address = {Berlin, Germany},
	title = {Assessing the {safety} and {reliability} of {autonomous} {vehicles} from {road} {testing}},
	booktitle = {the 30th {Int.} {Symp.} on {Software} {Reliability} {Engineering}},
	publisher = {IEEE},
	author = {Zhao, X. and Robu, V. and Flynn, D. and Salako, K. and Strigini, L.},
	year = {2019},
	pages = {13--23},
    url = {https://doi.org/10.1109/ISSRE.2019.00012}
}

@article{Berger_1994_RobustnessInBidinesionalModels,
author={Berger,James and Moreno,Elías},
year={1994},
title={{Bayesian} robustness in bidimensional models: Prior independence},
journal={Journal of Statistical Planning and Inference},
volume={40},
number={2},
pages={161-176},
isbn={0378-3758},
url = {https://doi.org/10.1016/0378-3758(94)90118-X}
}

@article{berger1990_SensitivityToPrior,
  title={Robust {Bayesian} Analysis: Sensitivity to the Prior},
  author={Berger, James O},
  year={1990},
  journal={Journal of Statistical Planning and Inference},
  volume={25},
  number={},
  publisher={North-Holland},
  location={Amsterdam, Netherlands},
  pages={303--328},
  url = {https://doi.org/10.1016/0378-3758(90)90079-A}
}

@article{littlewood_reliability_2020,
	title = {On reliability assessment when a software-based system is replaced by a thought-to-be-better one},
	volume = {197},
	copyright = {All rights reserved},
	issn = {0951-8320},
	doi = {10.1016/j.ress.2019.106752},
	journal = {Reliability Engineering \& System Safety},
	author = {Littlewood, Bev and Salako, Kizito and Strigini, Lorenzo and Zhao, Xingyu},
	year = {2020},
	pages = {106752},
    url = {https://doi.org/10.1016/j.ress.2019.106752}
}

@inproceedings{salako_conservative_2021,
	address = {Taipei, Taiwan},
	series = {{DSN}'21},
	title = {Conservative {confidence} {bounds} in {safety}, from {generalised} {claims} of {improvement} \& {statistical} {evidence}},
	booktitle = {51st {Annual} {IEEE}/{IFIP} {Int.} {Conf.} on {Dependable} {Systems} and {Networks}},
	publisher = {IEEE/IFIP},
	author = {Salako, Kizito and Strigini, Lorenzo and Zhao, Xingyu},
	year = {2021},
	pages={451--462},
    url = {https://doi.org/10.1109/DSN48987.2021.00055}
}

@inproceedings{strigini_software_2013,
	title = {Software fault-freeness and reliability predictions},
	isbn = {978-3-642-40793-2},
	booktitle = {Computer {Safety}, {Reliability}, and {Security}},
	publisher = {Springer Berlin Heidelberg},
	author = {Strigini, Lorenzo and Povyakalo, Andrey},
	year = {2013},
	volume={8153},
	series={LNCS},
	pages = {106--117},
    url={https://doi.org/10.1007/978-3-642-40793-2_10}
}

@article{bishop_toward_2011,
	title = {Toward a formalism for conservative claims about the dependability of software-based systems},
	volume = {37},
	number = {5},
	journal = {IEEE Transactions on Software Engineering},
	author = {Bishop, P. and Bloomfield, R. and Littlewood, B. and Povyakalo, A. and Wright, D.},
	year = {2011},
	pages = {708--717},
    url = {https://doi.org/10.1109/TSE.2010.67}
}

\newpage

\section*{Supplementary Material / Appendices}\label{supplementary-material}




\begin{description}
\item[Title:]
Supplement to ``Fixed-Point Characterisations of Extremal Distributions under Partial Distributional Constraints''.
\item[Appendix A:]
Proof of Proposition~\ref{prop_cbi_transform};
\item[Appendix B:]
Proof of Proposition~\ref{prop_genprob1_v1};
\item[Appendix C:]
Proof of Proposition~\ref{prop_cont_marginal_approx};
\item[Appendix D:]
Proof of Proposition~\ref{cor_refined_strip_oscillations};
\item[Appendix E:]
Proof of Proposition~\ref{prop_gen_multiple_marginals_piecewise_refined};
\item[Appendix F:]
Proof of Corollary~\ref{cor_fgvanishingoscillations};
\item[Appendix G:] 
Proof of Theorem~\ref{thm:global_fp_tuple};
\item[Appendix H:]
Proof of convergence of Algorithm~\ref{alg:dinkelbach};
\item[Appendix I:] 
Proof of Theorem~\ref{thm_gensol};
\item[Appendix J:] 
Proof of Proposition~\ref{prop:semicont_refinement};
\item[Appendix K:] 
Proof of Theorem~\ref{thm_refinedgensol};
\item[Appendix L:] 
Proof of Theorem~\ref{thm:regulated_extension};
\item[Appendix M:]
Proof of Theorem~\ref{thm_CBIwithfails_sol};
\item[Appendix N:]
Proof of Theorem~\ref{thm_atleastkoutofnbounds};
\item[Appendix O:]
Proof of Theorem~\ref{thm:2d-gk-fixed-point-extension-corrected};
\item[Appendix P:]
Proof of Theorem~\ref{thm_VaRExample} and Corollary~\ref{Cor_VaRExample};
\item[Appendix Q:]
Proof of Theorem~\ref{thm:canonical-strike-recursive};
\item[Appendix R:]
Proof of Theorem~\ref{thm_Gen2stateMarkovChain};
\item[Appendix S:]
Proof of Proposition~\ref{prop_convergence_to_ordinary_BayesianInference}.
\end{description}

\appendix

\newpage
\section{Proof of Proposition\ \ref{prop_cbi_transform}}
\label{sec_app_prop1}
\begin{proof}
Fix $\mathbb P\in\mathcal D_{p,\boldsymbol m}$ and set $\phi_{\mathbb P}
:=
\frac{\mathbb E_{\mathbb P}[f({\mathbf X})]}
{\mathbb E_{\mathbb P}[g({\mathbf X})]}$. Since $f,g\geqslant 0$ and $\mathbb E_{\mathbb P}[g({\mathbf X})]>0$, we have
$\phi_{\mathbb P}\geqslant 0$. By Definition~\ref{def:Dinkelbachdifference}, the Dinkelbach difference is $h_{\phi_{\mathbb P},f,g}({\mathbf x})
=
f({\mathbf x})-\phi_{\mathbb P}g({\mathbf x})$. Note that
\[
\mathbb E_{\mathbb P}[h_{\phi_{\mathbb P},f,g}({\mathbf X})]
=
\mathbb E_{\mathbb P}[f({\mathbf X})]
-
\phi_{\mathbb P}\mathbb E_{\mathbb P}[g({\mathbf X})]
=
0.
\]
Define the constraint map $T:K\to\mathbb R^{n+q}$ by \[T(\mathbf{x})
:=
\bigl(
\mathbf{1}_{\{\mathbf{x}\in K_1\}},
\allowbreak \ldots,
\allowbreak \mathbf{1}_{\{\mathbf{x}\in K_n\}},
\allowbreak \psi_1(\mathbf{x}),
\allowbreak \ldots,
\allowbreak \psi_q(\mathbf{x})
\bigr)\,.
\]
The first $n$ coordinates satisfy $\sum_{i=1}^n {\textbf 1}_{\{\mathbf x\in K_i\}}=1$ for all $\,{\mathbf x}\in K$,
so the set $T(K)$ lies in an affine subspace of dimension at most $n-1+q$. 

Now, extend $T$ by adjoining the Dinkelbach difference. Define
$Y_{\phi_{\mathbb P}}:K\to\mathbb R^{n+q+1}$ by $Y_{\phi_{\mathbb P}}({\mathbf x})
:=
\left(
T({\mathbf x}),
h_{\phi_{\mathbb P},f,g}({\mathbf x})
\right)$. Adjoining $h_{\phi_{\mathbb P},f,g}$ increases affine dimension by at most one, so
$Y_{\phi_{\mathbb P}}(K)$ lies in an affine subspace of dimension at most $n+q$.
Moreover, $\mathbb E_{\mathbb P}[Y_{\phi_{\mathbb P}}({\mathbf X})]
=
(p_1,\ldots,p_n,m_1,\ldots,m_q,0)$. By Lemma~\ref{lem_finite_dimensional_barycentric_reduction}, applied to
$Y_{\phi_{\mathbb P}}$ and the probability measure $\mathbb P$, there exist
points ${\mathbf x}_1,\ldots,{\mathbf x}_N\in K$, with
$N\leqslant n+q+1$, and weights
$w_1,\ldots,w_N\geqslant 0$, $\sum_{\ell=1}^N w_\ell=1$, such that
$\sum_{\ell=1}^N w_\ell
Y_{\phi_{\mathbb P}}({\mathbf x}_\ell)
=
\mathbb E_{\mathbb P}[Y_{\phi_{\mathbb P}}({\mathbf X})]$. Equivalently, the atomic probability measure $\mathbb Q_0
:=
\sum_{\ell=1}^N w_\ell\delta_{{\mathbf x}_\ell}$ satisfies
\[
\begin{aligned}
\mathbb Q_0(K_i) &= p_i,
&& i=1,\ldots,n, \\
\mathbb E_{\mathbb Q_0}[\psi_j(\mathbf X)] &= m_j,
&& j=1,\ldots,q, \\
\mathbb E_{\mathbb Q_0}[h_{\phi_{\mathbb P},f,g}(\mathbf X)] &= 0.
\end{aligned}
\]
Thus, $\mathbb Q_0\in\mathcal D_{p,\boldsymbol m}$. 

If $N\leqslant n+q$, then
$\mathbb Q_0\in\mathcal D_{p,\boldsymbol m}^{(n+q)}$ and $\frac{\mathbb E_{\mathbb Q_0}[f({\mathbf X})]}
{\mathbb E_{\mathbb Q_0}[g({\mathbf X})]}
=
\phi_{\mathbb P}$. There is then nothing more to prove. It remains to consider the case $N=n+q+1$. After merging repeated support points and
discarding zero weights, assume $w_\ell>0$ for $\ell=1,\ldots,N$. For each support point
${\mathbf x}_\ell$, let $i(\ell)$ denote the unique index such that
${\mathbf x}_\ell\in K_{i(\ell)}$. Define the linear map $L:\mathbb R^N\to\mathbb R^{n+q}$ by
\[
L({\mathbf c})
=
\left(
\sum_{\ell:{\mathbf x}_\ell\in K_1}c_\ell,\ldots,
\sum_{\ell:{\mathbf x}_\ell\in K_n}c_\ell,
\sum_{\ell=1}^N c_\ell\psi_1({\mathbf x}_\ell),\ldots,
\sum_{\ell=1}^N c_\ell\psi_q({\mathbf x}_\ell)
\right),
\]
where ${\mathbf c}=(c_1,\ldots,c_N)\in\mathbb R^N$. Since $N=n+q+1$,
$\ker L$ is non-trivial. Choose $0\neq{\mathbf c}\in\ker L$. The equations
$L({\mathbf c})=0$ mean exactly that perturbing the weights in the direction
${\mathbf c}$ preserves all partition-mass constraints and all moment constraints. Also, since $K_1,\ldots,K_n$ form a partition of $K$, $\sum_{\ell=1}^N c_\ell
=
\sum_{i=1}^n
\sum_{\ell:{\mathbf x}_\ell\in K_i}c_\ell
=
0$. Hence, ${\mathbf c}$ has at least one positive and at least one negative component. Let $H({\mathbf c})
:=
\sum_{\ell=1}^N c_\ell
h_{\phi_{\mathbb P},f,g}({\mathbf x}_\ell)$. Replacing ${\mathbf c}$ by $-{\mathbf c}$, if necessary, we may assume
$H({\mathbf c})\leqslant 0$. For $t\geqslant 0$, define $w_\ell(t):=w_\ell+t c_\ell,\qquad \ell=1,\ldots,N$, and let $t_*:=
\min_{\ell:c_\ell<0}
\frac{w_\ell}{-c_\ell}$. Since ${\mathbf c}$ has a negative component and all $w_\ell>0$, we have
$0<t_*<\infty$. By definition, $w_\ell(t_*)\geqslant 0$ for every $\ell$, and at least one
of these weights is equal to zero. Define
\[
\mathbb Q_1
:=
\sum_{\ell=1}^N w_\ell(t_*)\delta_{{\mathbf x}_\ell}.
\]
Since ${\mathbf c}\in\ker L$, the partition masses and moment constraints are preserved.
Thus,
\[
\begin{aligned}
\mathbb Q_1(K_i) &= p_i,
&& i=1,\ldots,n, \\
\mathbb E_{\mathbb Q_1}[\psi_j(\mathbf X)] &= m_j,
&& j=1,\ldots,q.
\end{aligned}
\]
Therefore, $\mathbb Q_1\in\mathcal D_{p,\boldsymbol m}$. Furthermore, at least one weight
has vanished, so $\mathbb Q_1$ has support size at most $N-1=n+q$. 

Finally, by altering the $w_\ell$ weights along a kernel vector $\mathbf c$ direction, $\mathbb E_{\mathbb Q_1}[h_{\phi_{\mathbb P},f,g}({\mathbf X})]
=
\mathbb E_{\mathbb Q_0}[h_{\phi_{\mathbb P},f,g}({\mathbf X})]
+
t_*H({\mathbf c})
\leqslant
0$. Hence, $\mathbb E_{\mathbb Q_1}[f({\mathbf X})]
-
\phi_{\mathbb P}
\mathbb E_{\mathbb Q_1}[g({\mathbf X})]
\leqslant
0$. From $\mathbb E_{\mathbb Q_1}[g({\mathbf X})]>0$, it follows that
\[
\frac{\mathbb E_{\mathbb Q_1}[f({\mathbf X})]}
{\mathbb E_{\mathbb Q_1}[g({\mathbf X})]}
\leqslant
\phi_{\mathbb P}.
\]
Thus, for every $\mathbb P\in\mathcal D_{p,\boldsymbol m}$, we have constructed
$\mathbb Q_1\in\mathcal D_{p,\boldsymbol m}^{(n+q)}$ whose fractional objective value is
no larger than that of $\mathbb P$.

Taking the infimum over $\mathbb P\in\mathcal D_{p,\boldsymbol m}$ gives
\[
\inf_{\mathbb Q\in\mathcal D_{p,\boldsymbol m}^{(n+q)}}
\frac{\mathbb E_{\mathbb Q}[f({\mathbf X})]}
{\mathbb E_{\mathbb Q}[g({\mathbf X})]}
\leqslant
\inf_{\mathbb P\in\mathcal D_{p,\boldsymbol m}}
\frac{\mathbb E_{\mathbb P}[f({\mathbf X})]}
{\mathbb E_{\mathbb P}[g({\mathbf X})]}.
\]
The reverse inequality is immediate because
$\mathcal D_{p,\boldsymbol m}^{(n+q)}\subseteq\mathcal D_{p,\boldsymbol m}$.
Hence, the two infima are equal. In particular, when $q=0$, every $\mathbb Q\in\mathcal D_p^{(n)}$ has exactly one positive-weight
support point in each $K_i$, since $\mathbb Q(K_i)=p_i>0$ for every $i$.
Thus, $\mathbb Q=\sum_{i=1}^n p_i\delta_{{\mathbf x}_i}$ with
${\mathbf x}_i\in K_i$, and the equality in \eqref{eqn_weak_opt_equiv_momentgen_disc} gives the discrete formulation in \eqref{eqn_weak_opt_equiv_disc}.

For the special case of continuous $f$, $g$, let $\mathcal K:=K_1\times\cdots\times K_n$, let 
$\underline{\mathcal K}:=\overline K_1\times\cdots\times \overline K_n$,
and let $\Phi({\mathbf x}_1,\dots,{\mathbf x}_n)
:=
(\,\sum_{j=1}^n p_j f({\mathbf x}_j)\,)/(\,\sum_{j=1}^n p_j g({\mathbf x}_j)\,)$. Since $\mathcal K\subseteq \underline{\mathcal K}$, we have
\begin{equation}
\inf_{({\mathbf x}_1,\dots,{\mathbf x}_n)\in \mathcal K}\Phi({\mathbf x}_1,\dots,{\mathbf x}_n)
\geqslant
\min_{({\mathbf x}_1,\dots,{\mathbf x}_n)\in \underline{\mathcal K}}\Phi({\mathbf x}_1,\dots,{\mathbf x}_n).
\label{eqn_appA_ineq1}
\end{equation}
For the reverse inequality, note that each $\overline K_j$ is closed and bounded in $\mathbb R^d$, hence compact. Therefore, $\underline{\mathcal K}$ is compact. By continuity of $\Phi$ on $\underline{\mathcal K}$, there exists ${\mathbf x}^*=({\mathbf x}_1^*,\dots,{\mathbf x}_n^*)\in \underline{\mathcal K}$ such that $\Phi({\mathbf x}^*)=\min_{{\mathbf x}\in \underline{\mathcal K}}\Phi({\mathbf x})=\inf_{{\mathbf x}\in \underline{\mathcal K}}\Phi({\mathbf x})$. Now, $\mathcal K$ is dense in $\underline{\mathcal K}$, since each $K_j$ is dense in $\overline K_j$. Hence, there exists a sequence ${\mathbf x}^{(m)}=({\mathbf x}_1^{(m)},\dots,{\mathbf x}_n^{(m)})\in \mathcal K$ such that ${\mathbf x}^{(m)}\to {\mathbf x}^*$. By continuity of $\Phi$,
\[
\Phi({\mathbf x}^{(m)})\to \Phi({\mathbf x}^*).
\]
Since $\inf_{{\mathbf x}\in \mathcal K}\Phi(\mathbf x)\leqslant \Phi({\mathbf x}^{(m)})$ for every $m$, passing to the limit yields 
\begin{equation}
\inf_{{\mathbf x}\in \mathcal K}\Phi({\mathbf x})\leqslant \Phi({\mathbf x}^*)=\min_{{\mathbf x}\in\underline{\mathcal K}}\Phi({\mathbf x}).
\label{eqn_appA_ineq2}
\end{equation}
Combining inequalities \eqref{eqn_appA_ineq1} and \eqref{eqn_appA_ineq2}, we conclude $\inf_{{\mathbf x}\in \mathcal K}\Phi({\mathbf x})=\min_{{\mathbf x}\in \underline{\mathcal K}}\Phi({\mathbf x})$.
\end{proof}
\begin{lemma}[Convex form of Richter's Theorem (\cite{Richter1957Parameterfreie}, Satz~4)]
\label{lem_finite_dimensional_barycentric_reduction}
Let $(S,\mathcal A,\mu)$ be a probability space and let
$Y:S\to\mathbb R^r$ be an integrable measurable map. Suppose that
$Y(S)$ is contained in an affine subspace of $\mathbb R^r$ of dimension at most
$d$. Then, there exist points $s_1,\ldots,s_N\in S$ and weights
$w_1,\ldots,w_N\geqslant0$, with $N\leqslant d+1$ and $\sum_{\ell=1}^N w_\ell=1$, 
such that $\int_S Y(s)\,d\mu(s)
=
\sum_{\ell=1}^N w_\ell Y(s_\ell)$.
\end{lemma}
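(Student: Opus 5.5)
The plan is to show that the barycentre $z:=\int_S Y\,d\mu$ lies in the convex hull $\operatorname{conv}(Y(S))$ itself, not merely in its closure. Carath\'eodory's theorem, applied inside the affine hull of $Y(S)$, then finishes the proof. I will argue by induction on $k:=\dim\operatorname{aff}(Y(S))\leqslant d$. The claim to be proved by induction is slightly stronger than the lemma: for every probability space and every integrable $Y$ whose image spans an affine subspace of dimension $k$, the mean lies in $\operatorname{conv}(Y(S))$. In the base case $k=0$, $Y$ is constant, so $z=Y(s_1)$ for any $s_1\in S$.

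For the inductive step, write $A:=\operatorname{aff}(Y(S))$ and $C:=\operatorname{conv}(Y(S))\subseteq A$. Since $Y\in A$ pointwise and $A$ is closed and affine, $z\in A$. I will split into two cases.

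\emph{Case 1: $z\in\operatorname{ri}(C)$.} Then $z\in C$ and there is nothing to prove.

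\emph{Case 2: $z\notin\operatorname{ri}(C)$.} The proper separation theorem, applied in $A$, gives an affine functional $\ell$ on $A$ that is non-constant on $A$ and satisfies $\ell(y)\leqslant\ell(z)$ for all $y\in C$. In particular $\ell(Y(s))\leqslant\ell(z)$ for every $s\in S$. Integrating, $\int_S\ell(Y)\,d\mu=\ell(z)$, which holds because $\ell$ is affine and $Y$ is integrable. Hence the non-negative function $\ell(z)-\ell(Y)$ has zero integral, and so $\ell(Y)=\ell(z)$ $\mu$-a.s. Let $H:=\{y\in A:\ell(y)=\ell(z)\}$ and $S':=Y^{-1}(H)$. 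Then $\mu(S')=1$, $\int_{S'}Y\,d\mu=z$, and $Y(S')\subseteq H$, an affine subspace of dimension at most $k-1$. The restricted probability space $(S',\mathcal A|_{S'},\mu|_{S'})$ satisfies the hypotheses in lower dimension, so the induction hypothesis gives $z\in\operatorname{conv}(Y(S'))\subseteq\operatorname{conv}(Y(S))$.

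Once $z\in\operatorname{conv}(Y(S))$ is established, $z$ is a finite convex combination of points $Y(s_\ell)$ lying in an affine space of dimension at most $d$. Carath\'eodory's theorem, applied within $A$ (equivalently, after an affine identification of $A$ with $\mathbb R^{\dim A}$), reduces this to at most $d+1$ points with non-negative weights summing to one. These are the required $s_\ell$ and $w_\ell$.

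I expect Case~2 to be the main obstacle. The separation must be proper, so that $\ell$ is genuinely non-constant on $A$ and the dimension strictly drops. This is why the separation is carried out relative to $A=\operatorname{aff}(Y(S))$ rather than in $\mathbb R^r$: a hyperplane separating in $\mathbb R^r$ could contain all of $A$ and give no reduction. The a.s.\ equality argument also uses integrability of $Y$, so that $\ell(Y)$ is integrable. Finally, the restriction to $S'$ must keep the mean equal to $z$, which holds because $\mu(S\setminus S')=0$.
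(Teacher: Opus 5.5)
Your proof is correct and follows the paper's argument: induction on the affine dimension, a split into a relative-interior case and a boundary case, a separating hyperplane that forces $\ell(Y)=\ell(z)$ $\mu$-a.s. and drops the dimension, and finally Carath\'eodory within $\operatorname{aff}(Y(S))$. The only difference is that you split on $\operatorname{ri}(C)$ and invoke the proper separation theorem. That theorem supplies two steps the paper proves by hand: that the mean lies in $\overline C$, and (via the perturbed-simplex argument) that points of $\operatorname{ri}(\overline C)$ already lie in $C$; this is why your Case~1 is trivial.
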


\begin{proof}
Let ${\mathbf y}:=\int_S Y(s)\,d\mu(s)$, $A:=Y(S)$, $C:=\operatorname{conv}(A)$, $\overline C:=\operatorname{cl}(C)$. Here, $\operatorname{conv}(A)$ denotes the set of all finite convex combinations
of points of $A$.

We first show that ${\mathbf y}\in\overline C$. Suppose, to the contrary, that
${\mathbf y}\notin\overline C$. Since $\overline C$ is a closed convex subset
of the finite-dimensional Euclidean space $\mathbb R^r$, there exist
${\mathbf u}\in\mathbb R^r$ and $\alpha\in\mathbb R$ such that ${\mathbf u}\cdot{\mathbf y}<\alpha
\leqslant
{\mathbf u}\cdot{\mathbf z}$ for every ${\mathbf z}\in\overline C$.
Since $Y(s)\in A\subseteq\overline C$ for every $s\in S$, this gives ${\mathbf u}\cdot Y(s)\geqslant\alpha$
for every $s\in S$. Integrating yields
\[
{\mathbf u}\cdot{\mathbf y}
=
{\mathbf u}\cdot\int_S Y(s)\,d\mu(s)
=
\int_S {\mathbf u}\cdot Y(s)\,d\mu(s)
\geqslant \alpha,
\]
which contradicts ${\mathbf u}\cdot{\mathbf y}<\alpha$. Hence, ${\mathbf y}\in\overline C$.

We next show that, in fact, ${\mathbf y}\in C$. This is proved by induction on $d_0:=\dim L$, where $L$ is the smallest affine space containing $\overline{C}$; i.e. $L:=\operatorname{aff}(\overline C)$. By the definitions of $A$, $C$, and the fact that affine subspaces of $\mathbb R^r$ are closed, we have $\operatorname{aff}(\overline C)=\operatorname{aff}(A)$. Thus, if $Y(S)$ is contained in an affine subspace of dimension at most $d$, then $d_0\leqslant d$. The induction hypothesis is that, for any integrable measurable map whose
associated closed convex hull has affine dimension strictly smaller than
$d_0$, the integral of that map belongs to the convex hull of its image.

If $d_0=0$, then $\overline C$ consists of a single point. Since $C$ is nonempty and $C\subseteq\overline C$, it follows that $C=\overline C$, and therefore ${\mathbf y}\in C$. Assume now that $d_0\geqslant1$ and that the assertion has already been proved in all smaller affine dimensions. Since ${\mathbf y}\in\overline C$, either $ {\mathbf y}\in\operatorname{ri}(\overline C)$ or $ {\mathbf y}\in\operatorname{rbd}(\overline C)$,  where both relative interior $\operatorname{ri}(\overline C)$ and relative boundary $\operatorname{rbd}(\overline C)$ are taken in the affine space $L$.

Suppose first that ${\mathbf y}\in\operatorname{ri}(\overline C)$. By the definition of relative interior, there exists $\rho>0$ such that the
relative open ball $B_L({\mathbf y},\rho)
:=
\{ {\mathbf z}\in L:\|{\mathbf z}-{\mathbf y}\|<\rho\}$ is contained in $\overline C$. Construct ${\mathbf z}_0,\ldots,{\mathbf z}_{d_0}$ that lie in
$B_L({\mathbf y},\rho)\subseteq\overline C$ as follows. Let $V:=L-{\mathbf y}$ be the $d_0$-dimensional linear space parallel to $L$,
and choose a basis ${\mathbf v}_1,\ldots,{\mathbf v}_{d_0}$ of $V$. Define
\[
{\mathbf z}_0
:=
{\mathbf y}
-
\varepsilon\sum_{k=1}^{d_0}{\mathbf v}_k,
\qquad
{\mathbf z}_j
:=
{\mathbf y}
+
\varepsilon{\mathbf v}_j,
\qquad j=1,\ldots,d_0 .
\]
For sufficiently small $\varepsilon$, the points
${\mathbf z}_0,\ldots,{\mathbf z}_{d_0}$ lie in
$B_L({\mathbf y},\rho)\subseteq\overline C$. Moreover, ${\mathbf y}
=
\frac{1}{d_0+1}
\sum_{j=0}^{d_0}{\mathbf z}_j$ and the points ${\mathbf z}_0,\ldots,{\mathbf z}_{d_0}$ are affinely independent.
Indeed, affine independence is equivalent to linear independence of ${\mathbf z}_1-{\mathbf z}_0,\ldots,
{\mathbf z}_{d_0}-{\mathbf z}_0$. If $\sum_{j=1}^{d_0}a_j({\mathbf z}_j-{\mathbf z}_0)=0$ then, since ${\mathbf z}_j-{\mathbf z}_0
=
\varepsilon
\left(
{\mathbf v}_j+\sum_{k=1}^{d_0}{\mathbf v}_k
\right)$, we obtain $\sum_{j=1}^{d_0}a_j{\mathbf v}_j
+
\left(\sum_{j=1}^{d_0}a_j\right)
\sum_{k=1}^{d_0}{\mathbf v}_k
=0$. Writing $a_{\mathrm{ sum}}:=\sum_{j=1}^{d_0}a_j$, the coefficient of each
${\mathbf v}_k$ is $a_k+a_{\mathrm{ sum}}$. Since
${\mathbf v}_1,\ldots,{\mathbf v}_{d_0}$ is a basis, $a_k+a_{\mathrm{ sum}}=0$ for every
$k$. Summing over $k$ gives $a_{\mathrm{ sum}}=-d_0 a_{\mathrm{ sum}}$, hence $a_{\mathrm{ sum}}=0$, and therefore
$a_k=0$ for every $k$. Thus, the difference vectors are linearly independent.

So, since each ${\mathbf z}_j\in\overline C$, for every
$\delta>0$ there exist points ${\mathbf c}_j\in C$ such that $\|{\mathbf c}_j-{\mathbf z}_j\|<\delta$ for $(j=0,\ldots,d_0)$.
Choose $\delta$ sufficiently small that
${\mathbf c}_0,\ldots,{\mathbf c}_{d_0}$ remain affinely independent. Since
these affinely independent points lie in the $d_0$-dimensional affine space $L$, their affine hull
is again $L$. This means the barycentric coordinates of
${\mathbf y}$ with respect to ${\mathbf c}_0,\ldots,{\mathbf c}_{d_0}$ are the unique scalars
$\lambda_0,\ldots,\lambda_{d_0}$ satisfying
\[
{\mathbf y}
=
\sum_{j=0}^{d_0}\lambda_j{\mathbf c}_j,
\qquad
\sum_{j=0}^{d_0}\lambda_j=1.
\]
These coordinates solve a nonsingular affine linear system, and therefore depend continuously on the vertices ${\mathbf c}_0,\ldots,{\mathbf c}_{d_0}$ as long as affine independence is
preserved. For the unperturbed vertices
${\mathbf z}_0,\ldots,{\mathbf z}_{d_0}$, the barycentric coordinates of
${\mathbf y}$ are all equal to $\frac{1}{d_0+1}>0$. Therefore, by choosing the approximating points ${\mathbf c}_j\in C$
sufficiently close to the corresponding ${\mathbf z}_j$, the barycentric
coordinates of ${\mathbf y}$ with respect to
${\mathbf c}_0,\ldots,{\mathbf c}_{d_0}$ remain strictly positive. Hence,
${\mathbf y}
=
\sum_{j=0}^{d_0}\lambda_j{\mathbf c}_j$, $\sum_{j=0}^{d_0}\lambda_j=1$ and $\lambda_j>0$ for $j=0,...,d_0$.
Thus, $
{\mathbf y}\in
\operatorname{conv}({\mathbf c}_0,\ldots,{\mathbf c}_{d_0})$. And, since each ${\mathbf c}_j$ belongs to $C$ and $C$ is convex, $\operatorname{conv}({\mathbf c}_0,\ldots,{\mathbf c}_{d_0})\subseteq C$. Therefore, ${\mathbf y}\in C$.

It remains to consider the case $ {\mathbf y}\in\operatorname{rbd}(\overline C)$. By the supporting-hyperplane theorem \cite[Section~2.5.2]{BoydVandenberghe2004ConvexOptimization} in the affine space $L$, there exists ${\mathbf u}\in\mathbb R^r$ whose restriction to the direction space of $L$ is not identically zero, such that ${\mathbf u}\cdot{\mathbf z} \geqslant {\mathbf u}\cdot{\mathbf y}$ for every ${\mathbf z}\in\overline C$.  Consequently, ${\mathbf u}\cdot Y(s) \geqslant {\mathbf u}\cdot{\mathbf y}$ for every $s\in S$. Define $Z(s):={\mathbf u}\cdot Y(s)-{\mathbf u}\cdot{\mathbf y}$. Then, $Z(s)\geqslant0$ for every $s\in S$, and \[ \int_S Z(s)\,d\mu(s) = {\mathbf u}\cdot\int_S Y(s)\,d\mu(s) - {\mathbf u}\cdot{\mathbf y} = 0. \] Therefore, $Z=0$ $\mu$-almost everywhere. Hence, there exists $S_0\in\mathcal A$ defined as $S_0:=\{s\in S: Z(s)=0\}$, satisfying $\mu(S_0)=1$ and ${\mathbf u}\cdot Y(s) = {\mathbf u}\cdot{\mathbf y}$ for every $s\in S_0$. Thus, $Y(S_0)$ is contained in the affine hyperplane $H:= \left\{ {\mathbf z}\in L: {\mathbf u}\cdot{\mathbf z} = {\mathbf u}\cdot{\mathbf y} \right\}$. Since the restriction of ${\mathbf u}$ to the direction space of $L$ is not identically zero, $H$ is a proper affine hyperplane of $L$. Hence, $\dim\operatorname{aff}(Y(S_0))<d_0$. Now, view $(S_0,\mathcal A|_{S_0},\mu|_{S_0})$ as a probability space. This is valid because $\mu(S_0)=1$. Since $Y$ is integrable and $S_0$ has full measure, $ {\mathbf y} = \int_S Y(s)\,d\mu(s) = \int_{S_0}Y(s)\,d\mu(s)$. The restricted map $Y|_{S_0}$ has image contained in an affine subspace of dimension smaller than $d_0$. Thus, by the induction hypothesis applied to $Y|_{S_0}$, \[ {\mathbf y} = \int_{S_0}Y(s)\,d\mu(s) \in \operatorname{conv}(Y(S_0)). \] Because $Y(S_0)\subseteq Y(S)=A$, we have $ \mathbf{y}\in\operatorname{conv}(Y(S_0))\subseteq \operatorname{conv}(A)=C$. Thus, ${\mathbf y}\in C$ in the relative-boundary case as well. Therefore, ${\mathbf y}\in C$ in all cases.

Since ${\mathbf y}\in C=\operatorname{conv}(A)$, there exist
$a_1,\ldots,a_M\in A$ and coefficients
$\alpha_1,\ldots,\alpha_M\geqslant0$, with
\[
\sum_{m=1}^M\alpha_m=1,
\qquad
{\mathbf y}=\sum_{m=1}^M\alpha_m a_m.
\]
Discarding zero coefficients if necessary, assume $\alpha_m>0$ for all $m$. If $M\leqslant d_0+1$, then the desired representation is obtained by choosing
$s_m\in S$ such that $Y(s_m)=a_m$. Suppose instead that $M>d_0+1$. Since
$a_1,\ldots,a_M$ lie in the affine space $L$ of dimension $d_0$, they are
affinely dependent. Hence, there exist real numbers
$\beta_1,\ldots,\beta_M$, not all zero, such that
\[
\sum_{m=1}^M\beta_m a_m=0,
\qquad
\sum_{m=1}^M\beta_m=0.
\]
Since $\sum_m\beta_m=0$ and the $\beta_m$ are not all zero, at least one
$\beta_m$ is positive and at least one is negative. Define $t_*:=\min_{\beta_m>0}\frac{\alpha_m}{\beta_m}$ and 
set $\alpha_m':=\alpha_m-t_*\beta_m$ for 
$m=1,\ldots,M$. Then, $\alpha_m'\geqslant0$ for every $m$, at least one $\alpha_m'$ is zero, and
\[
\sum_{m=1}^M\alpha_m'
=
\sum_{m=1}^M\alpha_m
-
t_*\sum_{m=1}^M\beta_m
=
1.
\]
Moreover,
\[
\sum_{m=1}^M\alpha_m'a_m
=
\sum_{m=1}^M\alpha_m a_m
-
t_*\sum_{m=1}^M\beta_m a_m
=
{\mathbf y}.
\]
Thus, one point with positive coefficient has been removed and we still have a convex combination that gives $\mathbf y$. Repeating this finite elimination procedure (if necessary) gives a
representation
\[
{\mathbf y}
=
\sum_{\ell=1}^N w_\ell a_\ell,
\qquad
N\leqslant d_0+1\leqslant d+1,
\qquad
w_\ell\geqslant0,
\qquad
\sum_{\ell=1}^N w_\ell=1.
\]
Finally, since $a_\ell\in A=Y(S)$, choose $s_\ell\in S$ with
$Y(s_\ell)=a_\ell$. Then, $\int_S Y(s)\,d\mu(s)
=
{\mathbf y}
=
\sum_{\ell=1}^N w_\ell Y(s_\ell)$,
with $N\leqslant d+1$. This proves the lemma.
\end{proof}

\newpage
\section{Proof of Proposition~\ref{prop_genprob1_v1}}
\begin{proof}
The sets $\{A_I:I\in\mathcal I\}$ form a Borel partition of $K$ generated by
$\tilde K_1,\ldots,\tilde K_n$.
For any $\mathbb P\in\widetilde{\mathcal D}$, define $p_I:=\mathbb P(A_I)$.
Then $p_I\geqslant 0$, $\sum_{I\in\mathcal I} p_I=1$, and, for each $i=1,\ldots,n$,
\[
\sum_{I\in\mathcal I:\, i\in I} p_I
=
\sum_{I\in\mathcal I:\, i\in I} \mathbb P(A_I)
=
\mathbb P(\tilde K_i)
=
\tilde p_i.
\]
Conversely, any nonnegative collection $\{p_I\}_{I\in\mathcal I}$ satisfying these linear
constraints determines a subclass $\widetilde{\mathcal D}(p)\subseteq\widetilde{\mathcal D}$.
Hence,
\[
\inf_{\mathbb P\in\widetilde{\mathcal D}}
\frac{\mathbb E[f({\mathbf X})]}{\mathbb E[g({\mathbf X})]}
=
\inf_{\substack{p_I\geqslant 0\text{ for all }I\in\mathcal I,\\
\sum_{I\in\mathcal I} p_I = 1\\
\sum_{I\in\mathcal I:\, i\in I} p_I = \tilde p_i,\ i=1,\ldots,n}}
\!\!\!\!
\inf_{\mathbb P\in\widetilde{\mathcal D}(p)}
\frac{\mathbb E[f({\mathbf X})]}{\mathbb E[g({\mathbf X})]},
\]
where the outer infimum is over all feasible $\{p_I\}_{I\in\mathcal I}$.
The $A_I$ (such that $p_I>0$) form a partition of some non-empty subset of $K$; indeed, there must be some $A_I$ with non-zero $p_I$ in each $\tilde{K}_i$, since $\tilde{p}_i>0$ for all $i$. Thus, Proposition~\ref{prop_cbi_transform} applies to this partition, yielding the inner optimisation in \eqref{eq:overlap_refinement}. The inner and outer optimisations can be switched: the outer optimisation becomes an inner linear-fractional program subject to, at most, $n+1$ linearly independent constraints. The extremal prior can assign probability masses to, at most, $n+1$ atoms in its domain. Hence, the form of the extremal prior must be as stated. The final claim is immediate with disjoint $\tilde{K}_i$.
\end{proof}

\newpage\section{Proof of Proposition~\ref{prop_cont_marginal_approx}}
\begin{proof}
For each $m$, the sets $K_i^{(m)}$ with $p_i^{(m)}>0$ partition $K$ up to cells of zero prescribed mass, and their $p_i^{(m)}$ sum to $1$.
Hence, Proposition~\ref{prop_cbi_transform} applies to this partition and the restriction of all priors in $ {{\mathcal D}^{(m)}}$ to this partition.

We first show that $\phi_m^*\leqslant \phi^*$ for every $m$.
Indeed, if $\mathbb P\in\mathcal D$ then, for every $i=1,\ldots,n_m$,
\[
\mathbb P\bigl(K_i^{(m)}\bigr)
=
\mathbb P\bigl(B_i^{(m)}\times V\bigr)
=
\int_{B_i^{(m)}}\rho({\mathbf u})\,d{\mathbf u}
=
p_i^{(m)},
\]
so $\mathbb P\in {{\mathcal D}^{(m)}}$.
Thus $\mathcal D\subseteq  {{\mathcal D}^{(m)}}$, hence
\begin{equation}
\phi_m^*\leqslant \phi^*.
\label{eqn_phimstarlephistar}
\end{equation}

For the reverse inequality, fix $m\in\mathbb N$ and $\mathbb Q\in {{\mathcal D}^{(m)}}$.
For each $i$ with $p_i^{(m)}>0$, define a probability measure $\nu_i^{(m)}$ on $V$ by
\[
\nu_i^{(m)}(C):=\frac{\mathbb Q(B_i^{(m)}\times C)}{p_i^{(m)}},
\qquad C\in\mathcal B(V).
\]
If $p_i^{(m)}=0$, choose $\nu_i^{(m)}$ arbitrarily in $\mathcal P(V)$.
Now define a probability measure $S_m(\mathbb Q)$ on $K$ by
\[
S_m(\mathbb Q)(d{\mathbf u},d{\mathbf v})
:=
\sum_{i=1}^{n_m}\mathbf 1_{B_i^{(m)}}({\mathbf u})\,
\rho({\mathbf u})\,d{\mathbf u}\,\nu_i^{(m)}(d{\mathbf v}).
\]
Then, for every Borel-measurable $A\subseteq U$,
\[
S_m(\mathbb Q)(A\times V)
=
\sum_{i=1}^{n_m}\int_{A\cap B_i^{(m)}} \rho({\mathbf u})\,d{\mathbf u}
=
\int_A \rho({\mathbf u})\,d{\mathbf u},
\]
so $S_m(\mathbb Q)\in\mathcal D$.

Next, compare the $f$- and $g$-integrals under $\mathbb Q$ and $S_m(\mathbb Q)$.
For each $i$ with $p_i^{(m)}>0$, choose an arbitrary point ${\mathbf u}_i^{(m)}\in B_i^{(m)}$.
Since both $\mathbb Q$ and $S_m(\mathbb Q)$ have the same $V$-marginal on the strip $B_i^{(m)}\times V$, namely $p_i^{(m)}\nu_i^{(m)}$, we have
\[
\int_{B_i^{(m)}\times V} f({\mathbf u}_i^{(m)},{\mathbf v})\,\mathbb Q(d{\mathbf u},d{\mathbf v})
=
\int_{B_i^{(m)}\times V} f({\mathbf u}_i^{(m)},{\mathbf v})\,S_m(\mathbb Q)(d{\mathbf u},d{\mathbf v}).
\]
Therefore,
\begin{align*}
&\Biggl|
\int_{B_i^{(m)}\times V} f({\mathbf u},{\mathbf v})\,S_m(\mathbb Q)(d{\mathbf u},d{\mathbf v})
-
\int_{B_i^{(m)}\times V} f({\mathbf u},{\mathbf v})\,\mathbb Q(d{\mathbf u},d{\mathbf v})
\Biggr| \\
=&\;\Biggl| \int_{B_i^{(m)}\times V}
\bigl(f({\mathbf u},{\mathbf v})-f({\mathbf u}_i^{(m)},{\mathbf v})\bigr)\,
S_m(\mathbb Q)(d{\mathbf u},d{\mathbf v})
-
\int_{B_i^{(m)}\times V}
\bigl(f({\mathbf u},{\mathbf v})-f({\mathbf u}_i^{(m)},{\mathbf v})\bigr)\,
\mathbb Q(d{\mathbf u},d{\mathbf v})\Biggr| \\
\leqslant
&\;\int_{B_i^{(m)}\times V}
\bigl|f({\mathbf u},{\mathbf v})-f({\mathbf u}_i^{(m)},{\mathbf v})\bigr|\,
S_m(\mathbb Q)(d{\mathbf u},d{\mathbf v})
+
\int_{B_i^{(m)}\times V}
\bigl|f({\mathbf u},{\mathbf v})-f({\mathbf u}_i^{(m)},{\mathbf v})\bigr|\,
\mathbb Q(d{\mathbf u},d{\mathbf v}) \\
\leqslant &\; 2\,p_i^{(m)}\,\omega_f(m).
\end{align*}
Summing over all $i$ with $p_i^{(m)}>0$ yields $\Bigl|\mathbb E_{S_m(\mathbb Q)}[f({\mathbf X})]-\mathbb {\mathbb E}_{\mathbb Q}[f({\mathbf X})]\Bigr|
\leqslant 2\,\omega_f(m)$. The same argument gives $
\Bigl|\mathbb E_{S_m(\mathbb Q)}[g({\mathbf X})]-\mathbb {\mathbb E}_{\mathbb Q}[g({\mathbf X})]\Bigr|
\leqslant 2\,\omega_g(m)$. For every probability measure $\mathbb P$ on $K$, $\mathbb E_{\mathbb P}[g({\mathbf X})]\geqslant c$ and $\mathbb E_{\mathbb P}[f({\mathbf X})]\leqslant M_f$. Hence, for every $\mathbb Q\in {{\mathcal D}^{(m)}}$,
\[
\left|
\frac{\mathbb E_{S_m(\mathbb Q)}[f({\mathbf X})]}{\mathbb E_{S_m(\mathbb Q)}[g({\mathbf X})]}
-
\frac{\mathbb {\mathbb E}_{\mathbb Q}[f({\mathbf X})]}{\mathbb {\mathbb E}_{\mathbb Q}[g({\mathbf X})]}
\right|
\leqslant
\frac{2\,\omega_f(m)}{c}
+
\frac{2M_f\,\omega_g(m)}{c^2}
=
\eta_m.
\]
Since $S_m(\mathbb Q)\in\mathcal D$, we have
\[
\phi^*
\leqslant
\frac{\mathbb E_{S_m(\mathbb Q)}[f({\mathbf X})]}{\mathbb E_{S_m(\mathbb Q)}[g({\mathbf X})]}
\leqslant
\frac{\mathbb {\mathbb E}_{\mathbb Q}[f({\mathbf X})]}{\mathbb {\mathbb E}_{\mathbb Q}[g({\mathbf X})]}
+\eta_m.
\]
Taking the infimum over $\mathbb Q\in {{\mathcal D}^{(m)}}$ yields
\begin{equation}
\phi^*\leqslant \phi_m^*+\eta_m.
\label{eqn_phistarlephimstar}
\end{equation}
Together, \eqref{eqn_phimstarlephistar} and \eqref{eqn_phistarlephimstar} imply
\[
\phi^*-\eta_m\leqslant \phi_m^*\leqslant \phi^*.
\]
In particular,
\[
\phi^*-\eta_m\leqslant \phi_m^*\leqslant \phi^*+\eta_m.
\]
Since $\omega_f(m)\to 0$ and $\omega_g(m)\to 0$, we have $\eta_m\to 0$, and therefore $\phi_m^*\to \phi^*$.
\end{proof}
\newpage

\section{Proof of Proposition~\ref{cor_refined_strip_oscillations}}
\begin{proof}
We prove item~(1) first. We first justify the decomposition 
\begin{equation}
\inf_{\substack{\mathbb P\in {\mathcal D_{F,q}}^{(m)}}}
\frac{\mathbb E_{\mathbb P}[f(\textnormal{\textbf{X}})]}{\mathbb E_{\mathbb P}[g(\textnormal{\textbf{X}})]}
=
\inf_{\boldsymbol{\alpha} \in {\mathcal W_{F,q,m}}}\ \inf_{\substack{\mathbb P\in {\mathcal D_{F,q}}^{(m)}\\ \mathbb P(R_\tau^{(m)})=\alpha_\tau,\ \tau=1,\ldots,L_m}}
\frac{\mathbb E_{\mathbb P}[f(\textnormal{\textbf{X}})]}{\mathbb E_{\mathbb P}[g(\textnormal{\textbf{X}})]}.
\label{eqn_appD_item1decomposition}
\end{equation}
Fix $m \in \mathbb N$ and define
\[
I_m^+ := \{\, i \in \{1,\ldots,n_m\} : p_i^{(m)} > 0 \,\},
\qquad
J^+ := \{\, \ell \in \{1,\ldots,L\} : q_\ell > 0 \,\}.
\]
Since $\sum_{i=1}^{n_m} p_i^{(m)} = 1$, we also have $\sum_{i\in I_m^+} p_i^{(m)} = 1$. Since $\{F_\ell\}_{\ell=1}^L$ is a partition of $K$ and $\sum_{\ell=1}^L q_\ell=1$, we
also have $\sum_{\ell\in J^+} q_\ell = 1$. Because the strips $\{K_i^{(m)}\}_{i=1}^{n_m}$ form a partition of $K$, it follows that
\[
{\mathcal D_{F,q}}^{(m)}
=
\Bigl\{
\mathbb P \in \mathcal P(K) :
\mathbb P\bigl(K_i^{(m)}\bigr)=p_i^{(m)},\ i\in I_m^+,\ 
\mathbb P(F_\ell)=q_\ell,\ \ell\in J^+
\Bigr\}.
\]
Indeed, if $\mathbb P$ satisfies the right-hand side, then
\[
1
=
\mathbb P(K)
=
\sum_{i=1}^{n_m} \mathbb P\bigl(K_i^{(m)}\bigr)
=
\sum_{i\in I_m^+} p_i^{(m)}
+
\sum_{i\notin I_m^+} \mathbb P\bigl(K_i^{(m)}\bigr)
=
1 + \sum_{i\notin I_m^+} \mathbb P\bigl(K_i^{(m)}\bigr),
\]
so $\mathbb P\bigl(K_i^{(m)}\bigr)=0=p_i^{(m)}$ for all $i\notin I_m^+$.

Likewise, because $\{F_\ell\}_{\ell=1}^L$ is a partition of $K$,
\[
1
=
\mathbb P(K)
=
\sum_{\ell=1}^{L} \mathbb P(F_\ell)
=
\sum_{\ell\in J^+} q_\ell
+
\sum_{\ell\notin J^+} \mathbb P(F_\ell)
=
1 + \sum_{\ell\notin J^+} \mathbb P(F_\ell),
\]
so $\mathbb P(F_\ell)=0=q_\ell$ for all $\ell\notin J^+$.
Hence, the omitted zero-mass strip constraints and zero-probability $F$-constraints hold.

Now let $\mathbb P \in {\mathcal D_{F,q}}^{(m)}$. Since the family $\{R_\tau^{(m)}\}_{\tau=1}^{L_m}$ is a
finite Borel partition of $K$ refining both $\{K_i^{(m)}\}_{i=1}^{n_m}$ and
$\{F_\ell\}_{\ell=1}^L$, define $\alpha_\tau := \mathbb P\bigl(R_\tau^{(m)}\bigr)$ for $\tau=1,\ldots,L_m$. Because the refined atoms partition each strip, $\sum_{\tau:\,R_\tau^{(m)} \subseteq K_i^{(m)}} \alpha_\tau
=
\mathbb P\bigl(K_i^{(m)}\bigr)
=
p_i^{(m)}$ for $i=1,\ldots,n_m$. Likewise, because the refined atoms partition each $F_\ell$, $\sum_{\tau:\,R_\tau^{(m)} \subseteq F_\ell} \alpha_\tau
=
\mathbb P(F_\ell)
=
q_\ell\,$ for 
$\,\ell=1,\ldots,L$. Hence, $\boldsymbol{\alpha} \in {\mathcal W_{F,q,m}}$.

Conversely, let $\boldsymbol{\alpha} \in {\mathcal W_{F,q,m}}$. For each $\tau$ with $\alpha_\tau>0$, choose
${\textnormal{\textbf{x}}}_\tau \in R_\tau^{(m)}$, and define $\mathbb P_{\boldsymbol{\alpha}} := \sum_{\tau:\,\alpha_\tau>0} \alpha_\tau \delta_{{\textnormal{\textbf{x}}}_\tau}$. Since the strips $\{K_i^{(m)}\}_{i=1}^{n_m}$ partition $K$, summing the strip constraints
gives $\sum_{\tau=1}^{L_m} \alpha_\tau
=
\sum_{i=1}^{n_m} p_i^{(m)}
=
1$, so $\mathbb P_{\boldsymbol{\alpha}}$ is a probability measure. Moreover,
\[
\mathbb P_{\boldsymbol{\alpha}}\bigl(K_i^{(m)}\bigr)
=
\sum_{\tau:\,R_\tau^{(m)} \subseteq K_i^{(m)}} \alpha_\tau
=
p_i^{(m)},
\qquad i=1,\ldots,n_m,
\]
and
\[
\mathbb P_{\boldsymbol{\alpha}}(F_\ell)
=
\sum_{\tau:\,R_\tau^{(m)} \subseteq F_\ell} \alpha_\tau
=
q_\ell,
\qquad \ell=1,\ldots,L.
\]
Thus, $\mathbb P_{\boldsymbol{\alpha}} \in {\mathcal D_{F,q}}^{(m)}$. This justifies the decomposition \eqref{eqn_appD_item1decomposition}.

The following argument further shows
\begin{equation}
\inf_{\substack{\mathbb P\in {\mathcal D_{F,q}}^{(m)}\\ \mathbb P(R_\tau^{(m)})=\alpha_\tau,\ \tau=1,\ldots,L_m}}
\frac{\mathbb E_{\mathbb P}[f(\textnormal{\textbf{X}})]}{\mathbb E_{\mathbb P}[g(\textnormal{\textbf{X}})]}
=
V_{F,q,m}(\boldsymbol{\alpha}).
\label{eqn_appD_Vfqmequivalence}
\end{equation}
Since ${\mathcal D_{F,q}}\neq\varnothing$ and ${\mathcal D_{F,q}}\subseteq {\mathcal D_{F,q}}^{(m)}$, the polytope
${\mathcal W_{F,q,m}}$ is nonempty. Fix $\boldsymbol{\alpha}\in {\mathcal W_{F,q,m}}$, and let $\mathcal T(\boldsymbol{\alpha}):=\{\tau\in\{1,\ldots,L_m\}:\alpha_\tau>0\}$, $K_{\boldsymbol{\alpha}}:=\bigcup_{\tau\in\mathcal T(\boldsymbol{\alpha})} R_\tau^{(m)}$. Then every probability measure $\mathbb P$ on $K$ that satisfies $\mathbb P\bigl(R_\tau^{(m)}\bigr)=\alpha_\tau$ ($\tau=1,\ldots,L_m$) is concentrated on $K_{\boldsymbol{\alpha}}$. For each $\tau\in\mathcal T(\boldsymbol{\alpha})$, define the
conditional probability measure
\[
\mathbb P_\tau(A):=\frac{\mathbb P(A\cap R_\tau^{(m)})}{\alpha_\tau},
\qquad A\in\mathcal B(K).
\]
Then, $\mathbb E_{\mathbb P}[f(\textnormal{\textbf{X}})]
=
\sum_{\tau\in\mathcal T(\boldsymbol{\alpha})} \alpha_\tau \mathbb E_{\mathbb P_\tau}[f(\textnormal{\textbf{X}})]$ and similarly for $g(\mathbf{X})$. Moreover, for each $\tau\in\mathcal T(\boldsymbol{\alpha})$, the pair $\bigl(\mathbb E_{\mathbb P_\tau}[f(\textnormal{\textbf{X}})],\,\mathbb E_{{\mathbb P}_\tau}[g(\textnormal{\textbf{X}})]\bigr)$ lies in the convex hull of $\{(f(\textnormal{\textbf{x}}),g(\textnormal{\textbf{x}})):\textnormal{\textbf{x}}\in R_\tau^{(m)}\}$. Hence, by the same reduction argument as in the proof of Proposition~\ref{prop_cbi_transform}, minimising
the ratio over measures with fixed refined masses $\boldsymbol{\alpha}$ is equivalent to minimising
over one support point ${\textnormal{\textbf{x}}}_\tau\in R_\tau^{(m)}$ for each $\tau\in\mathcal T(\boldsymbol{\alpha})$:
\[
\inf_{\substack{\mathbb P\in {\mathcal D_{F,q}}^{(m)}\\ \mathbb P(R_\tau^{(m)})=\alpha_\tau,\ \tau=1,\ldots,L_m}}
\frac{\mathbb E_{\mathbb P}[f(\textnormal{\textbf{X}})]}{\mathbb E_{\mathbb P}[g(\textnormal{\textbf{X}})]}
=
V_{F,q,m}(\boldsymbol{\alpha}),
\]
which proves \eqref{eqn_appD_Vfqmequivalence}.

Taking the infimum over all feasible $\boldsymbol{\alpha}\in {\mathcal W_{F,q,m}}$ and using \eqref{eqn_appD_item1decomposition} yields
\[
\inf_{\mathbb P \in {\mathcal D_{F,q}}^{(m)}} \frac{\mathbb E_{\mathbb P}[f(\textnormal{\textbf{X}})]}{\mathbb E_{\mathbb P}[g(\textnormal{\textbf{X}})]}
=
\inf_{\boldsymbol{\alpha} \in {\mathcal W_{F,q,m}}} V_{F,q,m}(\boldsymbol{\alpha})
=
\phi_{F,q,m}.
\]
This proves item~(1).

We next prove item~(2). Since every $\mathbb P \in {\mathcal D_{F,q}}$ satisfies
\[
\mathbb P\bigl(K_i^{(m)}\bigr)
=
\mathbb P\bigl(B_i^{(m)} \times V\bigr)
=
\int_{B_i^{(m)}} \rho(\textnormal{\textbf{u}})\,d\textnormal{\textbf{u}}
=
p_i^{(m)},
\qquad i=1,\ldots,n_m,
\]
and also $\mathbb P(F_\ell)=q_\ell$ for every $\ell$, we have ${\mathcal D_{F,q}} \subseteq {\mathcal D_{F,q}}^{(m)}$. Therefore,
\[
\inf_{\mathbb P \in {\mathcal D_{F,q}}^{(m)}} \frac{\mathbb E_{\mathbb  P}[f(\textnormal{\textbf{X}})]}{\mathbb E_{\mathbb  P}[g(\textnormal{\textbf{X}})]}
\leqslant
\inf_{\mathbb P \in {\mathcal D_{F,q}}} \frac{\mathbb E_{\mathbb P}[f(\textnormal{\textbf{X}})]}{{\mathbb E}_{\mathbb P}[g(\textnormal{\textbf{X}})]};
\]
that is,
\begin{equation}
\phi_{F,q,m} \leqslant \phi_{F,q}^*.
\label{eqn_appD_ineq1}
\end{equation}

The reverse inequality is shown by construction and an approximation-error-control argument. Fix $m \in \mathbb N$ and let $\mathbb Q \in {\mathcal D_{F,q}}^{(m)}$ be arbitrary. Define $\alpha_\tau := \mathbb Q\bigl(R_\tau^{(m)}\bigr)\,$ for $\,\tau=1,\ldots,L_m$ and, for each strip index $i$, let
\[
T_i^{(m)} := \{\, \tau \in \{1,\ldots,L_m\} : R_\tau^{(m)} \subseteq K_i^{(m)} \,\},
\qquad
T_i^{(m,{\mathbb Q})} := \{\, \tau \in T_i^{(m)} : \alpha_\tau>0 \,\}.
\]
For each $i\in I_m^+$ and $\textnormal{\textbf{u}}\in B_i^{(m)}$, define a probability kernel on $V$ by
\[
\kappa_{i,\textnormal{\textbf{u}}}^{(m,{\mathbb Q})}
:=
\sum_{\tau\in T_i^{(m,{\mathbb Q})}} \frac{\alpha_\tau}{p_i^{(m)}} \,\nu_{\tau,\textnormal{\textbf{u}}}^{(m)}.
\]
This is well-defined because if $\alpha_\tau>0$, then $\tau\in\mathcal T_m^+$ by
definition, so (by the kernel assumption in the Proposition) there is the associated kernel $\nu_{\tau,\textnormal{\textbf{u}}}^{(m)}$. Moreover, since $\sum_{\tau\in T_i^{(m,{\mathbb Q})}} \alpha_\tau
=
\sum_{\tau\in T_i^{(m)}} \alpha_\tau
=
p_i^{(m)}$, the kernel $\kappa_{i,\textnormal{\textbf{u}}}^{(m,{\mathbb Q})}$ is a probability measure on $V$ for each
$i\in I_m^+$ and $\rho$-a.e. $\textnormal{\textbf{u}}\in B_i^{(m)}$. 

Now, define a probability measure $\widetilde{\mathbb Q}$ on $K$ by
\[
\widetilde{\mathbb Q}(A)
:=
\sum_{i\in I_m^+}
\int_{B_i^{(m)}} \rho(\textnormal{\textbf{u}})\,
\kappa_{i,\textnormal{\textbf{u}}}^{(m,{\mathbb Q})}(A_{\textnormal{\textbf{u}}})\,d\textnormal{\textbf{u}},
\qquad A\in\mathcal B(K),
\]
where
\[
A_{\textnormal{\textbf{u}}} := \{\, \textnormal{\textbf{v}}\in V : (\textnormal{\textbf{u}},\textnormal{\textbf{v}})\in A \,\}.
\]
Since each $\kappa_{i,\textnormal{\textbf{u}}}^{(m,{\mathbb Q})}$ is a probability measure and
$\sum_{i\in I_m^+}\int_{B_i^{(m)}}\rho(\textnormal{\textbf{u}})\,d\textnormal{\textbf{u}}=1$, the measure $\widetilde{\mathbb Q}$ is a
probability measure. 

We show that $\widetilde{\mathbb Q}\in {\mathcal D_{F,q}}$ and that it preserves all refined-atom masses. First, for any Borel $A\subseteq U$,
\[
\widetilde{\mathbb Q}(A\times V)
=
\sum_{i\in I_m^+}
\int_{A\cap B_i^{(m)}} \rho(\textnormal{\textbf{u}})\,
\kappa_{i,\textnormal{\textbf{u}}}^{(m,{\mathbb Q})}(V)\,d\textnormal{\textbf{u}}
=
\sum_{i\in I_m^+}
\int_{A\cap B_i^{(m)}} \rho(\textnormal{\textbf{u}})\,d\textnormal{\textbf{u}}
=
\int_A \rho(\textnormal{\textbf{u}})\,d\textnormal{\textbf{u}}.
\]
Here the omitted indices $i\notin I_m^+$ contribute zero, since $p_i^{(m)}=\int_{B_i^{(m)}}\rho(\mathbf u)\,d\mathbf u=0$, and hence
$\int_{A\cap B_i^{(m)}}\rho(\mathbf u)\,d\mathbf u=0$. So, $\widetilde{\mathbb Q}$ has the required $U$-marginal. Next, fix $\tau\in\{1,\ldots,L_m\}$ and set $i=i(\tau)$. If $p_i^{(m)}=0$ then $\alpha_\tau=0$, because $0 \leqslant \alpha_\tau \leqslant \sum_{\tau':\,R_{\tau'}^{(m)}\subseteq K_i^{(m)}} \alpha_{\tau'}
= p_i^{(m)} = 0$. Also, $p_i^{(m)}=0$ means $i\notin I_m^+$, so the sum defining
$\widetilde{\mathbb Q}$ contains no term integrating over
$B_i^{(m)}$. Since $R_\tau^{(m)}\subseteq K_i^{(m)}
= B_i^{(m)}\times V$, its vertical section is empty for every
$\mathbf u\in B_j^{(m)}$ with $j\in I_m^+$. Hence,
$\widetilde{\mathbb Q}(R_\tau^{(m)})=0$. Now, suppose $p_i^{(m)}>0$. If $\alpha_\tau=0$ then $\tau\notin T_i^{(m,{\mathbb Q})}$. Since $R_\tau^{(m)}\subseteq K_i^{(m)}$, the vertical section
$R_{\tau,\mathbf{u}}^{(m)}$ is empty for $\mathbf{u}\in B_j^{(m)}$ whenever $j\neq i$.
Thus, only the $i$-th strip contributes to
$\widetilde{\mathbb Q}(R_\tau^{(m)})$. Consider
\begin{equation}
\widetilde{\mathbb Q}\bigl(R_\tau^{(m)}\bigr)
=
\int_{B_i^{(m)}} \rho(\textnormal{\textbf{u}})\,
\sum_{\tau'\in T_i^{(m,{\mathbb Q})}} \frac{\alpha_{\tau'}}{p_i^{(m)}}\,
\nu_{\tau',\textnormal{\textbf{u}}}^{(m)}\!\bigl(R_{\tau,\textnormal{\textbf{u}}}^{(m)}\bigr)\,d\textnormal{\textbf{u}}.
\label{eqn_appD_ConstrQonRtm}
\end{equation}
For every $\tau' \in T_i^{(m,{\mathbb Q})}$ we have $\alpha_{\tau'}>0$, hence
$\tau'\in\mathcal T_m^+$, so the related kernel $\nu_{\tau',\textnormal{\textbf{u}}}^{(m)}$ is supported
on $R_{\tau',\textnormal{\textbf{u}}}^{(m)}$. Since the refined atoms partition $K_i^{(m)}$, the sections
$R_{\tau',\textnormal{\textbf{u}}}^{(m)}$ and $R_{\tau,\textnormal{\textbf{u}}}^{(m)}$ are disjoint whenever $\tau'\neq\tau$.
Because $\tau\notin T_i^{(m,{\mathbb Q})}$, every term in the sum in \eqref{eqn_appD_ConstrQonRtm} is $0$ for $\rho$-a.e. $\textnormal{\textbf{u}}$. Therefore, $\widetilde{\mathbb Q}\bigl(R_\tau^{(m)}\bigr)=0=\alpha_\tau$.
Finally, suppose $\alpha_\tau>0$. Then $\tau\in T_i^{(m,{\mathbb Q})}$, and since the refined
atoms partition $K_i^{(m)}$, their vertical sections partition $V$ for each fixed
$\textnormal{\textbf{u}}\in B_i^{(m)}$. Hence, for $\tau'\in T_i^{(m,{\mathbb Q})}$ with $\tau'\neq\tau$, the sections
$R_{\tau',\textnormal{\textbf{u}}}^{(m)}$ and $R_{\tau,\textnormal{\textbf{u}}}^{(m)}$ are disjoint, and therefore
\[
\nu_{\tau',\textnormal{\textbf{u}}}^{(m)}\!\bigl(R_{\tau,\textnormal{\textbf{\textnormal{\textbf{u}}}}}^{(m)}\bigr)=0
\quad\text{for $\rho$-a.e. }\textnormal{\textbf{u}}\in B_i^{(m)}.
\]
Also, by assumption,
\[
\nu_{\tau,\textnormal{\textbf{u}}}^{(m)}\!\bigl(R_{\tau,\textnormal{\textbf{u}}}^{(m)}\bigr)=1
\quad\text{for $\rho$-a.e. }\textnormal{\textbf{u}}\in B_i^{(m)}.
\]
Therefore,
\begin{align*}
\widetilde{\mathbb Q}\bigl(R_\tau^{(m)}\bigr)
&=
\int_{B_i^{(m)}} \rho(\textnormal{\textbf{u}})\,
\kappa_{i,\textnormal{\textbf{u}}}^{(m,{\mathbb Q})}\!\bigl(R_{\tau,\textnormal{\textbf{u}}}^{(m)}\bigr)\,d\textnormal{\textbf{u}} \\
&=
\int_{B_i^{(m)}} \rho(\textnormal{\textbf{u}})\,
\sum_{\tau'\in T_i^{(m,{\mathbb Q})}} \frac{\alpha_{\tau'}}{p_i^{(m)}}\,
\nu_{\tau',\textnormal{\textbf{u}}}^{(m)}\!\bigl(R_{\tau,\textnormal{\textbf{u}}}^{(m)}\bigr)\,d\textnormal{\textbf{u}} \\
&=
\int_{B_i^{(m)}} \rho(\textnormal{\textbf{u}})\,\frac{\alpha_\tau}{p_i^{(m)}}\,d\textnormal{\textbf{u}} \\
&=
\frac{\alpha_\tau}{p_i^{(m)}} \int_{B_i^{(m)}} \rho(\textnormal{\textbf{u}})\,d\textnormal{\textbf{u}} \\
&=\frac{\alpha_\tau}{p_i^{(m)}} p^{(m)}_i \\
&= \alpha_\tau.
\end{align*}
Thus, $\widetilde{\mathbb Q}\bigl(R_\tau^{(m)}\bigr)=\mathbb Q\bigl(R_\tau^{(m)}\bigr)$ for all $\tau=1,\ldots,L_m$. Since each $F_\ell$ is the union of those refined atoms contained in it, $\widetilde{\mathbb Q}(F_\ell)
=
\sum_{\tau:\,R_\tau^{(m)}\subseteq F_\ell} \widetilde{\mathbb Q}\bigl(R_\tau^{(m)}\bigr)
=
\sum_{\tau:\,R_\tau^{(m)}\subseteq F_\ell} \alpha_\tau
=
q_\ell$. We have shown $\widetilde{\mathbb Q}\in {\mathcal D_{F,q}}$.

We can use this constructed $\widetilde{Q}$ to prove an approximate reverse inequality to \eqref{eqn_appD_ineq1}. Note that
\[
{\mathbb E}_{\mathbb Q}[f(\textnormal{\textbf{X}})]
=
\sum_{\tau=1}^{L_m} \mathbb {\mathbb E}_{\mathbb Q}[f(\textnormal{\textbf{X}})\textbf{1}_{R_\tau^{(m)}}],
\qquad
{\mathbb E}_{\widetilde{\mathbb Q}}[f(\textnormal{\textbf{X}})]
=
\sum_{\tau=1}^{L_m} {\mathbb E}_{\widetilde{\mathbb Q}}[f(\textnormal{\textbf{X}})\textbf{1}_{R_\tau^{(m)}}].
\]
If $\alpha_\tau=0$, both expectations over $R_\tau^{(m)}$ are $0$. If $\alpha_\tau>0$, then
\[
\mathbb {\mathbb E}_{\mathbb Q}[f(\textnormal{\textbf{X}})\textbf{1}_{R_\tau^{(m)}}]
=
\alpha_\tau \bar f_{\tau,\mathbb Q},
\qquad
{\mathbb E}_{\widetilde{\mathbb Q}}[f(\textnormal{\textbf{X}})\textbf{1}_{R_\tau^{(m)}}]
=
\alpha_\tau \bar f_{\tau,\widetilde{\mathbb Q}},
\]
for some numbers $\bar f_{\tau,\mathbb Q}$ and $\bar f_{\tau,\widetilde{\mathbb Q}}$ lying between
$\inf_{\textnormal{\textbf{x}} \in R_\tau^{(m)}} f(\textnormal{\textbf{x}})$ and $\sup_{\textnormal{\textbf{x}} \in R_\tau^{(m)}} f(\textnormal{\textbf{x}})$. Hence, for every
$\tau=1,\ldots,L_m$,
\[
\left|
{\mathbb E}_{\mathbb Q}[f(\textnormal{\textbf{X}})\textbf{1}_{R_\tau^{(m)}}]
-
{\mathbb E}_{\widetilde{\mathbb Q}}[f(\textnormal{\textbf{X}})\textbf{1}_{R_\tau^{(m)}}]
\right|
\leqslant
\alpha_\tau \sup_{\textnormal{\textbf{x}},\textnormal{\textbf{y}} \in R_\tau^{(m)}} |f(\textnormal{\textbf{x}})-f(\textnormal{\textbf{y}})|
\leqslant
\alpha_\tau \widehat{\omega}_f(m).
\]
Summing over $\tau$ and using $\sum_{\tau=1}^{L_m}\alpha_\tau=1$, we obtain $|{\mathbb E}_{\mathbb Q}[f(\textnormal{\textbf{X}})] - {\mathbb E}_{\widetilde{\mathbb Q}}[f(\textnormal{\textbf{X}})]|
\leqslant
\widehat{\omega}_f(m)$. The same argument gives $|{\mathbb E}_{\mathbb Q}[g(\textnormal{\textbf{X}})] - {\mathbb E}_{\widetilde{\mathbb Q}}[g(\textnormal{\textbf{X}})]|
\leqslant
\widehat{\omega}_g(m)$. Write
\[
a := {\mathbb E}_{\mathbb Q}[f(\textnormal{\textbf{X}})], \qquad b := {\mathbb E}_{\mathbb Q}[g(\textnormal{\textbf{X}})], \qquad
\widetilde{a} := {\mathbb E}_{\widetilde{\mathbb Q}}[f(\textnormal{\textbf{X}})], \qquad
\widetilde{b} := {\mathbb E}_{\widetilde{\mathbb Q}}[g(\textnormal{\textbf{X}})].
\]
Since $g \geqslant c>0$ on $K$ for some $c\in\mathbb R$, $b \geqslant c$ and $\widetilde{b} \geqslant c$. Also, $0 \leqslant f \leqslant M_f$ implies $0 \leqslant \widetilde{a} \leqslant M_f$. Hence,
\[
\left|
\frac{a}{b} - \frac{\widetilde{a}}{\widetilde{b}}
\right|
=
\left|
\frac{(a-\widetilde{a})\widetilde{b} + \widetilde{a}(\widetilde{b}-b)}
{b\widetilde{b}}
\right|
\leqslant
\frac{|a-\widetilde{a}|}{c}
+
\frac{M_f\,|b-\widetilde{b}|}{c^2}
\leqslant
\eta_{F,q,m}.
\]
We have $\frac{\widetilde{a}}{\widetilde{b}}
\geqslant
\phi_{F,q}^*$ because $\widetilde{\mathbb Q} \in {\mathcal D_{F,q}}$. Therefore,
\[
\frac{{\mathbb E}_{\mathbb Q}[f(\textnormal{\textbf{X}})]}{{\mathbb E}_{\mathbb Q}[g(\textnormal{\textbf{X}})]}
=
\frac{a}{b}
\geqslant \frac{\widetilde{a}}{\widetilde{b}} - \eta_{F,q,m}
\geqslant
\phi_{F,q}^* - \eta_{F,q,m}.
\]
Furthermore, because $\mathbb Q \in {\mathcal D_{F,q}}^{(m)}$ was arbitrary, taking the infimum over
$\mathbb Q \in {\mathcal D_{F,q}}^{(m)}$ gives the approximate reverse inequality to \eqref{eqn_appD_ineq1},
\begin{equation}
\phi_{F,q,m}
=
\inf_{\mathbb Q \in {\mathcal D_{F,q}}^{(m)}} \frac{{\mathbb E}_{\mathbb Q}[f(\textnormal{\textbf{X}})]}{{\mathbb E}_{\mathbb Q}[g(\textnormal{\textbf{X}})]}
\geqslant
\phi_{F,q}^* - \eta_{F,q,m}.
\label{eqn_appD_ineq2}
\end{equation}
By combining inequalities \eqref{eqn_appD_ineq1} and \eqref{eqn_appD_ineq2} we finally deduce
\[
\phi_{F,q}^* - \eta_{F,q,m}
\leqslant
\phi_{F,q,m}
\leqslant
\phi_{F,q}^*.
\]
Since $\widehat{\omega}_f(m)\to 0$ and $\widehat{\omega}_g(m)\to 0$, we have
$\eta_{F,q,m}\to 0$, so $\phi_{F,q,m} \to \phi_{F,q}^*$. This proves item~(2).
\end{proof}

\newpage\section{Proof of Proposition~\ref{prop_gen_multiple_marginals_piecewise_refined}}
\begin{proof}
We first prove item (1). 

Fix $m\in\mathbb N$. Let $\mathbb P\in {{\mathcal D}^{(m)}}$ and define $w^{\mathbb P}:=\bigl(\mathbb P(C^{(m)}_\nu)\bigr)_{\nu=1}^{N_m}$. Since the cells $\{C^{(m)}_\nu\}_{\nu=1}^{N_m}$ partition $K$, and each strip
$K^{(m)}_{j,r}$ is a union of those cells contained in it,
\[
\sum_{\nu:\,C^{(m)}_\nu\subseteq K^{(m)}_{j,r}} w_\nu^{\mathbb P}
=
\sum_{\nu:\,C^{(m)}_\nu\subseteq K^{(m)}_{j,r}} \mathbb P(C^{(m)}_\nu)
=
\mathbb P\bigl(K^{(m)}_{j,r}\bigr)
=
p^{(m)}_{j,r}.
\]
Hence, $w^{\mathbb P}\in {\mathcal W}_m$.

Let $I_{\mathbb P}:=\{\nu\in\{1,\dots,N_m\}: w_\nu^{\mathbb P}>0\}$. By the same argument as in Proposition~\ref{prop_cbi_transform} applied to the positive-mass subpartition $\{C^{(m)}_\nu:\nu\in I_{\mathbb P}\}$ we obtain
\[
\inf_{\substack{\mathbb Q\in\mathcal P(K)\\ \mathbb Q(C^{(m)}_\nu)=w_\nu^{\mathbb P}\ \forall \nu}}
\frac{{\mathbb E}_{\mathbb Q}[f(\textnormal{\textbf{X}})]}{{\mathbb E}_{\mathbb Q}[g(\textnormal{\textbf{X}})]}
=
\inf_{\substack{\textbf{x}_\nu\in C^{(m)}_\nu\\ \nu\in I_{\mathbb P}}}
\frac{\sum_{\nu\in I_{\mathbb P}} w_\nu^{\mathbb P} f(\textbf{x}_\nu)}
{\sum_{\nu\in I_{\mathbb P}} w_\nu^{\mathbb P} g(\textbf{x}_\nu)}
=
V_m(w^{\mathbb P}).
\]
Since $\mathbb P$ belongs to the constraining class on the left-hand side,
\[
\frac{{\mathbb E}_{\mathbb P}[f(\textnormal{\textbf{X}})]}{{\mathbb E}_{\mathbb P}[g(\textnormal{\textbf{X}})]}\geqslant V_m(w^{\mathbb P})\geqslant \inf_{\textnormal{\textbf{w}}\in {\mathcal W}_m}V_m(w)=\phi_m.
\]
So, taking the infimum over $\mathbb P\in {{\mathcal D}^{(m)}}$ gives
\begin{equation}
\inf_{\mathbb P\in {{\mathcal D}^{(m)}}}\frac{{\mathbb E}_{\mathbb P}[f(\textnormal{\textbf{X}})]}{{\mathbb E}_{\mathbb P}[g(\textnormal{\textbf{X}})]}\geqslant \phi_m.
\label{eqn_appE_ineq1}
\end{equation}

Conversely, let $\textnormal{\textbf{w}}\in {\mathcal W}_m$ and $\varepsilon>0$. Since
\[
\sum_{\nu=1}^{N_m}w_\nu
=
\sum_{r=1}^{n_j(m)}
\sum_{\nu:\,C^{(m)}_\nu\subseteq K^{(m)}_{j,r}} w_\nu
=
\sum_{r=1}^{n_j(m)} p^{(m)}_{j,r}
=
1,
\]
the coefficients $(w_\nu)_{\nu=1}^{N_m}$ define a
probability vector. Choose points ${\textbf{x}}_\nu\in C^{(m)}_\nu$ for those $\nu$ with
$w_\nu>0$, so that
\[
\frac{\sum_{\nu=1}^{N_m} w_\nu f({\textbf{x}}_\nu)}
{\sum_{\nu=1}^{N_m} w_\nu g({\textbf{x}}_\nu)}
\leqslant V_m(\textbf{w})+\varepsilon.
\]
Define the atomic measure $\mathbb P_{\textbf{w}}:=\sum_{\nu=1}^{N_m} w_\nu\delta_{\mathbf x_\nu}$.
Then, for every $j=1,\dots,M$ and $r=1,\dots,n_j(m)$, $\mathbb P_{\textbf{w}}\bigl(K^{(m)}_{j,r}\bigr)
=
\sum_{\nu:\,C^{(m)}_\nu\subseteq K^{(m)}_{j,r}} w_\nu
=
p^{(m)}_{j,r}$ because every cell is contained in exactly one strip $K^{(m)}_{j,r}$ in the
$j$th coordinate. Hence, ${\mathbb P}_w\in {{\mathcal D}^{(m)}}$, and therefore
\[
\inf_{\mathbb P\in {{\mathcal D}^{(m)}}}\frac{{\mathbb E}_{\mathbb P}[f(\textnormal{\textbf{X}})]}{{\mathbb E}_{\mathbb P}[g(\textnormal{\textbf{X}})]}
\leqslant
\frac{\mathbb E_{{\mathbb P}_{\textbf{w}}}[f(\textnormal{\textbf{X}})]}{\mathbb E_{{\mathbb P}_{\textbf{w}}}[g(\textnormal{\textbf{X}})]}
=
\frac{\sum_{\nu=1}^{N_m} w_\nu f({\textbf{x}}_\nu)}
{\sum_{\nu=1}^{N_m} w_\nu g({\textbf{x}}_\nu)}
\leqslant
V_m({\textbf{w}})+\varepsilon.
\]
Taking $\varepsilon\downarrow 0$ and then the infimum over $\textnormal{\textbf{w}}\in {\mathcal W}_m$ gives
\begin{equation}
\inf_{\mathbb P\in {{\mathcal D}^{(m)}}}\frac{{\mathbb E}_{\mathbb P}[f(\textnormal{\textbf{X}})]}{{\mathbb E}_{\mathbb P}[g(\textnormal{\textbf{X}})]}\leqslant \phi_m.
\label{eqn_appE_ineq2}
\end{equation}
Inequalities \eqref{eqn_appE_ineq1} and \eqref{eqn_appE_ineq2} prove item (1).

We now prove item (2). For each $m\in\mathbb N$, define the lower and upper cellwise step
envelopes
\[
f_m^-(\textbf{x}):=\inf_{\textbf{y}\in C^{(m)}_\nu} f(\textbf{y}),\qquad
f_m^+(\textbf{x}):=\sup_{\textbf{y}\in C^{(m)}_\nu} f(\textbf{y}),
\qquad \textbf{x}\in C^{(m)}_\nu,
\]
and similarly
\[
g_m^-(\textbf{x}):=\inf_{\textbf{y}\in C^{(m)}_\nu} g(\textbf{y}),\qquad
g_m^+(\textbf{x}):=\sup_{\textbf{y}\in C^{(m)}_\nu} g(\textbf{y}),
\qquad \textbf{x}\in C^{(m)}_\nu.
\]
Thus, $f_m^- \leqslant f \leqslant f_m^+\,$ and $\,g_m^- \leqslant g \leqslant g_m^+$. By the definitions of $\omega_f(m)\,$ and $\,\omega_g(m)$, the bounds $\|f-f_m^\pm\|_\infty\leqslant \omega_f(m)\,$ and 
$\,\|g-g_m^\pm\|_\infty\leqslant \omega_g(m)$ hold.
Define
\[
J({\mathbb P}):=\frac{{\mathbb E}_{\mathbb P}[f(\textnormal{\textbf{X}})]}{{\mathbb E}_{\mathbb P}[g(\textnormal{\textbf{X}})]},
\qquad
J_m^-({\mathbb P}):=\frac{{\mathbb E}_{\mathbb P}[f_m^-(\textbf{X})]}{{\mathbb E}_{\mathbb P}[g_m^+(\textbf{X})]},
\qquad
J_m^+({\mathbb P}):=\frac{{\mathbb E}_{\mathbb P}[f_m^+(\textbf{X})]}{{\mathbb E}_{\mathbb P}[g_m^-(\textbf{X})]}.
\]
For every probability measure $\mathbb P$ on $K$ and all $m\in\mathbb N$, $J_m^-({\mathbb P})\leqslant J({\mathbb P})\leqslant J_m^+({\mathbb P}),\,$
$|J({\mathbb P})-J_m^-({\mathbb P})|\leqslant \eta_m,\,$
and $\,|J_m^+({\mathbb P})-J({\mathbb P})|\leqslant \eta_m,\,$ where $\,M_f:=\sup\limits_{\textnormal{\textbf{x}}\in K}f(\textnormal{\textbf{x}})\,$ and $\,\eta_m:=\frac{\omega_f(m)}{c}
+\frac{M_f\,\omega_g(m)}{c^2}$. Indeed, writing
\begin{align*}
a&:={\mathbb E}_{\mathbb P}[f],\;\;a^-:={\mathbb E}_{\mathbb P}[f_m^-],\;\;a^+:={\mathbb E}_{\mathbb P}[f_m^+],\\
b&:={\mathbb E}_{\mathbb P}[g],\;\;b^-:={\mathbb E}_{\mathbb P}[g_m^-],\;\;\;b^+:={\mathbb E}_{\mathbb P}[g_m^+],
\end{align*}
and recalling $g\geqslant c>0$ on $K$ for some $c\in\mathbb R$, we have
\[
|a-a^\pm|\leqslant \omega_f(m),\qquad |b-b^\pm|\leqslant \omega_g(m),
\qquad b,b^-,b^+\geqslant c,
\]
and therefore
\begin{equation}
\left|\frac{a}{b}-\frac{a^\pm}{b^\mp}\right|
\leqslant
\frac{|a-a^\pm|}{c}
+\frac{M_f|b-b^\mp|}{c^2}
\leqslant \eta_m.
\end{equation}

Next we show that, for step functions constant on the product cells, the
optimisation over $\mathcal D$ and over ${\mathcal D^{(m)}}$ coincide. Let $s,t$ be bounded
functions constant on each cell $C^{(m)}_\nu$, with values
$s_\nu,t_\nu$ on $C^{(m)}_\nu$, and assume $\inf_K t>0$. Then,
\begin{equation}
\inf_{\mathbb P\in {{\mathcal D}^{(m)}}}\frac{{\mathbb E}_{\mathbb P}[s(\textbf{X})]}{{\mathbb E}_{\mathbb P}[t(\textbf{X})]}
=
\inf_{\textnormal{\textbf{w}}\in {\mathcal W}_m}\frac{\sum_{\nu=1}^{N_m} w_\nu s_\nu}
{\sum_{\nu=1}^{N_m} w_\nu t_\nu}
=
\inf_{{\mathbb P}\in{\mathcal D}}\frac{{\mathbb E}_{\mathbb P}[s(\textbf{X})]}{{\mathbb E}_{\mathbb P}[t(\textbf{X})]}.
\label{eq:step-equality}
\end{equation}
The first equality follows by repeating the proof of item~(1) with $(s,t)$ in
place of $(f,g)$; since $s$ and $t$ are constant on each cell $C^{(m)}_\nu$,
the inner minimisation in the definition of $V_m$ is trivial. For the second
equality, every ${\mathbb P}\in{\mathcal D}$ induces a weight vector
$w^{\mathbb P}=({\mathbb P}(C^{(m)}_\nu))_{\nu=1}^{N_m}\in {\mathcal W}_m$, so
\begin{equation}
\inf_{{\mathbb P}\in{\mathcal D}}\frac{{\mathbb E}_{\mathbb P}[s(\textbf{X})]}{{\mathbb E}_{\mathbb P}[t(\textbf{X})]}
\geqslant
\inf_{\textnormal{\textbf{w}}\in {\mathcal W}_m}\frac{\sum_{\nu=1}^{N_m} w_\nu s_\nu}
{\sum_{\nu=1}^{N_m} w_\nu t_\nu}.
\label{eqn_appE_ineq4}
\end{equation}
Conversely, let $\textnormal{\textbf{w}}\in {\mathcal W}_m$. For every $j,r$ with $p^{(m)}_{j,r}>0$, define
\[
\mu^{(m)}_{j,r}(A):=
\frac{\mu_j(A\cap B^{(m)}_{j,r})}{p^{(m)}_{j,r}},
\qquad A\in\mathcal B(U_j).
\]
If $p^{(m)}_{j,r}=0$ then every $w_\nu$ with
$C^{(m)}_\nu\subseteq K^{(m)}_{j,r}$ is necessarily zero, so those strips may be
ignored. Write $\nu\leftrightarrow (i_1,\dots,i_M)$ according to the product-cell
indexing. If \(w_\nu>0\) and \(\nu\leftrightarrow(i_1,\ldots,i_M)\), then
\(p_{j,i_j}^{(m)}>0\) for every \(j\), so all the measures
\(\mu_{j,i_j}^{(m)}\) are defined. Define the mixture of products of conditional measures
\[
Q_w := \sum_{\nu:\,w_\nu>0}
w_\nu \bigotimes_{j=1}^M \mu_{j,i_j}^{(m)} .
\]
Then ${\mathbb Q}_w\in {\mathcal D}$, ${\mathbb Q}_w(C^{(m)}_\nu)=w_\nu$ for every $\nu$, and hence
\[
\inf_{{\mathbb P}\in{\mathcal D}}\frac{{\mathbb E}_{\mathbb P}[s(\textbf{X})]}{{\mathbb E}_{\mathbb P}[t(\textbf{X})]}
\leqslant
\frac{{\mathbb E}_{{\mathbb Q}_w}[s(\textbf{X})]}{{\mathbb E}_{{\mathbb Q}_w}[t(\textbf{X})]}
=
\frac{\sum_{\nu=1}^{N_m} w_\nu s_\nu}
{\sum_{\nu=1}^{N_m} w_\nu t_\nu}.
\]
Taking the infimum over $\textnormal{\textbf{w}}\in {\mathcal W}_m$ gives the inequality 
\begin{equation}
\inf_{{\mathbb P}\in{\mathcal D}}\frac{{\mathbb E}_{\mathbb P}[s(\textbf{X})]}{{\mathbb E}_{\mathbb P}[t(\textbf{X})]}
\leqslant
\inf_{\textnormal{\textbf{w}}\in {\mathcal W}_m}\frac{\sum_{\nu=1}^{N_m} w_\nu s_\nu}
{\sum_{\nu=1}^{N_m} w_\nu t_\nu}.
\label{eqn_appE_ineq5}
\end{equation}
Inequalities ~\eqref{eqn_appE_ineq4} and \eqref{eqn_appE_ineq5} justify the second equality in \eqref{eq:step-equality}.

Apply \eqref{eq:step-equality} with $(s,t)=(f_m^-,g_m^+)$ and with
$(s,t)=(f_m^+,g_m^-)$. Since we showed, for every ${\mathbb P}\in{\mathcal D}$,
\[
J({\mathbb P})-\eta_m \leqslant J_m^-({\mathbb P})\leqslant J({\mathbb P})\leqslant J_m^+({\mathbb P})\leqslant J({\mathbb P})+\eta_m,
\]
taking infima over ${\mathbb P}\in{\mathcal D}$ yields
\[
\phi^*-\eta_m
\leqslant
\inf_{{\mathbb P}\in{\mathcal D}}J_m^-({\mathbb P})
\leqslant
\phi^*
\leqslant
\inf_{{\mathbb P}\in{\mathcal D}}J_m^+({\mathbb P})
\leqslant
\phi^*+\eta_m.
\]
By \eqref{eq:step-equality},
\begin{equation}
\phi^*-\eta_m
\leqslant
\inf_{\mathbb P\in {{\mathcal D}^{(m)}}}J_m^-({\mathbb P})
\leqslant
\phi^*
\leqslant
\inf_{\mathbb P\in {{\mathcal D}^{(m)}}}J_m^+({\mathbb P})
\leqslant
\phi^*+\eta_m.
\label{eqn_appE_ineq6}
\end{equation}

Finally, for every $\mathbb P\in {{\mathcal D}^{(m)}}$,
\[
J_m^-({\mathbb P})\leqslant J({\mathbb P})\leqslant J_m^+({\mathbb P}),
\]
so taking infima over $\mathbb P\in {{\mathcal D}^{(m)}}$ and using the result in item~(1),
\begin{equation}
\inf_{\mathbb P\in {{\mathcal D}^{(m)}}}J_m^-({\mathbb P})
\leqslant
\phi_m
\leqslant
\inf_{\mathbb P\in {{\mathcal D}^{(m)}}}J_m^+({\mathbb P}).
\label{eqn_appE_ineq7}
\end{equation}
Combining inequalities \eqref{eqn_appE_ineq6} and \eqref{eqn_appE_ineq7} gives $\phi^*-\eta_m\leqslant \phi_m\leqslant \phi^*+\eta_m$ for all $m\in\mathbb N$. Since $\omega_f(m)\to 0$ and $\omega_g(m)\to 0$, so $\eta_m\to0$, it follows that $\phi_m\to\phi^*$. This proves item~(2).
\end{proof}
\newpage
\section{Proof of Corollary~\ref{cor_fgvanishingoscillations}}
\begin{proof}
For each fixed $m\in\mathbb N$, the class ${\mathcal D^{(m)}}$ is specified by the finitely many strip constraints
$K^{(m)}_{j,r}$. The sets $\{R^{(m)}_\tau\}_{\tau=1}^{L_m}$ form a finite Borel partition
of $K$ refining the product-cell partition $\{C^{(m)}_\nu\}_{\nu=1}^{N_m}$.

Let $\mathbb P\in {{\mathcal D}^{(m)}}$, and define $\alpha_\tau:=\mathbb P(R^{(m)}_\tau)$. Since the refined atoms
partition each strip,
\[
\sum_{\tau:\,R^{(m)}_\tau\subseteq K^{(m)}_{j,r}}\alpha_\tau
=
\mathbb P(K^{(m)}_{j,r})
=
p^{(m)}_{j,r},
\]
so $\boldsymbol{\alpha}\in \widetilde{\mathcal W}_m$. Also, for any fixed $j$, the sets
$\{K^{(m)}_{j,r}\}_{r=1}^{n_j(m)}$ partition $K$, whence $\sum_{\tau=1}^{L_m}\alpha_\tau=\mathbb P(K)=1$.

Conversely, if $\boldsymbol{\alpha}\in \widetilde{\mathcal W}_m$, then $\mathbb P_{\boldsymbol{\alpha}}:=\sum_{\tau=1}^{L_m}\alpha_\tau\delta_{{\textnormal{\textbf{x}}}_\tau}$ (${\textnormal{\textbf{x}}}_\tau\in R^{(m)}_\tau$) is a probability measure and belongs to ${\mathcal D^{(m)}}$, because
\[
\mathbb P_{\boldsymbol{\alpha}}(K^{(m)}_{j,r})
=
\sum_{\tau:\,R^{(m)}_\tau\subseteq K^{(m)}_{j,r}}\alpha_\tau
=
p^{(m)}_{j,r}.
\]
First optimise over feasible refined masses and then over support points on the refined atoms. Atoms with mass $\alpha_\tau=0$ may be discarded without affecting
the optimisation. Thus, applying
Proposition~\ref{prop_cbi_transform} to the positive-mass subpartition $\{R^{(m)}_\tau:\alpha_\tau>0\}$ yields
\[
\inf_{\mathbb P\in {{\mathcal D}^{(m)}}}\frac{\mathbb E_{\mathbb  P}[f(\textnormal{\textbf{X}})]}{\mathbb E_{\mathbb P}[g(\textnormal{\textbf{X}})]}
=
\inf_{\boldsymbol{\alpha}\in \widetilde{\mathcal W}_m}\widetilde{V}_m(\boldsymbol{\alpha})
=
\widetilde{\phi}_m.
\]
This proves item $(1)$.

For (ii), it is enough to verify the dominance hypothesis of Proposition~\ref{prop_gen_multiple_marginals_piecewise_refined} for the
original product cells. Fix $m$ and $\nu$, and let
\[
I_\nu^{(m)}:=\{\ell\in\{1,\ldots,L\}: C^{(m)}_\nu\cap E_\ell\neq\varnothing\}.
\]
Then
\[
\overline{C^{(m)}_\nu}
=
\bigcup_{\ell\in I_\nu^{(m)}} \overline{C^{(m)}_\nu\cap E_\ell}.
\]
Since $\overline{C^{(m)}_\nu}$ is connected and the family on the right is finite and
closed, its intersection graph is connected: otherwise the family could be partitioned
into two nonempty subfamilies with no cross-intersections, and the unions of the sets
in these two subfamilies would give a decomposition of $\overline{C^{(m)}_\nu}$ into two
disjoint nonempty closed sets, contradicting connectedness. Therefore, for any
$\textnormal{\textbf{x}},\textnormal{\textbf{y}}\in C^{(m)}_\nu$, there exist indices $\ell_0,\ell_1,\ldots,\ell_q\in I_\nu^{(m)}$ with $q+1\leqslant |I_\nu^{(m)}|\leqslant L$, such that
\[
\mathbf x\in \overline{C^{(m)}_\nu\cap E_{\ell_0}},\qquad
\mathbf y\in \overline{C^{(m)}_\nu\cap E_{\ell_q}},
\]
and
\[
\overline{C^{(m)}_\nu\cap E_{\ell_{t-1}}}\cap
\overline{C^{(m)}_\nu\cap E_{\ell_t}}
\neq\varnothing
\qquad (t=1,\ldots,q).
\]
Choose $\textnormal{\textbf{z}}_t$ in each such intersection. Then,
\begin{align*}
|f(\textnormal{\textbf{x}})-f(\textnormal{\textbf{y}})|
&\leqslant
|f(\textnormal{\textbf{x}})-f(\textnormal{\textbf{z}}_1)|+\sum_{t=1}^{q-1}|f(\textnormal{\textbf{z}}_t)-f(\textnormal{\textbf{z}}_{t+1})|+|f(\textnormal{\textbf{z}}_q)-f(\textnormal{\textbf{y}})| \\
&\leqslant
(q+1)\widetilde{\omega}_f(m)
\leqslant
L\,\widetilde{\omega}_f(m).
\end{align*}
Taking the supremum over $\textnormal{\textbf{x}},\textnormal{\textbf{y}}\in C^{(m)}_\nu$, and then the maximum over $\nu$, gives $\omega_f(m)\leqslant L\,\widetilde{\omega}_f(m)$. The same argument also gives $\omega_g(m)\leqslant L\,\widetilde{\omega}_g(m)$.
Since $\widetilde{\omega}_f(m)\to 0$ and $\widetilde{\omega}_g(m)\to 0$, the dominance
hypothesis of Proposition~\ref{prop_gen_multiple_marginals_piecewise_refined} holds. Therefore,
\[
\widetilde{\phi}_m=\inf_{\mathbb P\in {{\mathcal D}^{(m)}}}\frac{\mathbb E_{\mathbb P}[f(\textnormal{\textbf{X}})]}{\mathbb E_{\mathbb P}[g(\textnormal{\textbf{X}})]}\to \phi^*.
\]
Moreover, substituting the bounds $\omega_f(m)\leqslant L\,\widetilde{\omega}_f(m)$, $\omega_g(m)\leqslant L\,\widetilde{\omega}_g(m)$ into the error estimate in Proposition~\ref{prop_gen_multiple_marginals_piecewise_refined} yields, for all $m\in\mathbb N$,
\[
\phi^*-\widetilde{\eta}_m\leqslant \widetilde{\phi}_m\leqslant \phi^*+\widetilde{\eta}_m.
\]
This proves item $(2)$.
\end{proof}
\newpage
\section{Proof of Theorem~\ref{thm:global_fp_tuple}}
\label{app:proofofgenCBIsolnExistence}
\begin{proof}
By Proposition~\ref{prop_cbi_transform} we can consider $\underline{\mathcal K}:=\overline{K}_1\times\cdots\times \overline{K}_n$ for the optimisation. Define
\[
\Phi({\mathbf x}_1,\ldots,{\mathbf x}_n):=
\frac{\sum_{i=1}^n f({\mathbf x}_i)p_i}{\sum_{i=1}^n g({\mathbf x}_i)p_i},
\qquad
({\mathbf x}_1,\ldots,{\mathbf x}_n)\in \underline{\mathcal K}.
\]
$\Phi$ is continuous on the compact domain $\underline{\mathcal K}$. To see this, since each $\overline{K}_i$ is compact and $f,g$ are continuous, it suffices to check positivity of the denominator.
For every $({\mathbf x}_1,\ldots,{\mathbf x}_n)\in \underline{\mathcal K}$, $\sum_{i=1}^n g({\mathbf x}_i)p_i
\geqslant p_j \inf_{\mathbf x\in K_j} g(\mathbf x)
>0$. 
Hence, the infimum of \eqref{eqn_weak_opt_equiv_disc} is attained at some point $({\mathbf x}_1^*,\ldots,{\mathbf x}_n^*)\in \underline{\mathcal K}$. 
Define
\[
\phi^*:=\Phi({\mathbf x}_1^*,\ldots,{\mathbf x}_n^*).
\]

Now fix $\phi\geqslant 0$. For any feasible $({\mathbf x}_1,\ldots,{\mathbf x}_n)$,
\[
\Phi({\mathbf x}_1,\ldots,{\mathbf x}_n)\geqslant \phi
\iff
\sum_{i=1}^n f({\mathbf x}_i)p_i-\phi\sum_{i=1}^n g({\mathbf x}_i)p_i \geqslant 0\iff
\sum_{i=1}^n p_i h_{\phi,f,g}({\mathbf x}_i)\geqslant 0.
\]
In particular, since $\phi^*$ is the infimum of $\Phi$, at $\phi=\phi^*$ we have $\sum_{i=1}^n p_i h_{\phi^*,f,g}({\mathbf x}_i)\geqslant 0$ for all $({\mathbf x}_1,\ldots,{\mathbf x}_n)\in \underline{\mathcal K}$, with equality at $({\mathbf x}_1^*,\ldots,{\mathbf x}_n^*)$.

The residual objective $R_{\phi^*}({\mathbf x}_1,\ldots,{\mathbf x}_n):=\sum_{i=1}^n p_i h_{\phi^*,f,g}({\mathbf x}_i)$ is separable. Since each $p_i>0$, the tuple $({\mathbf x}_1^*,\ldots,{\mathbf x}_n^*)$ minimises $R_{\phi^*}$ if and only if each coordinate minimises its own term. Thus
\[
{\mathbf x}_i^*\in \arg\min_{\mathbf x\in \overline K_i} h_{\phi^*,f,g}(\mathbf x),
\qquad i=1,\ldots,n,
\]
which is \textnormal{(E1)}.

Conversely, if a tuple satisfies \textnormal{(E1)} then $\sum_{i=1}^n p_i h_{\phi^*,f,g}({\mathbf x}_i)
\geqslant
\sum_{i=1}^n p_i h_{\phi^*,f,g}({\mathbf x}_i^*)$ for every feasible $({\mathbf x}_1,\ldots,{\mathbf x}_n)$. If also \textnormal{(E2)} holds, then $\sum_{i=1}^n p_i h_{\phi^*,f,g}({\mathbf x}_i^*)
=
\sum_{i=1}^n p_i\bigl(f({\mathbf x}_i^*)-\phi^*g({\mathbf x}_i^*)\bigr)=0$,
so $\Phi({\mathbf x}_1,\ldots,{\mathbf x}_n)\geqslant \phi^*$ for every feasible tuple, with equality at $({\mathbf x}_1^*,\ldots,{\mathbf x}_n^*)$.
This shows the fixed-point tuple $(\phi^*,{\mathbf x}_1^*,\ldots,{\mathbf x}_n^*)$ satisfying (E1) and (E2) solves \eqref{eqn_weak_opt_equiv_disc}.

Clearly, $\phi^*$ is obtained from the discrete objective in \eqref{eqn_weak_opt_equiv_disc} by using the extremal prior \eqref{eqn_analyticextremalprior}; i.e. using $
\mathbb P=\sum_{i=1}^n p_i\delta_{{\mathbf x}_i^*}$. If ${\mathbf x}_i^*\in K_i$ for every $i$, then $\mathbb P$ is feasible. In general, however,
some support points may lie in $\overline K_i\setminus K_i$. In that case, for each
$i$ choose a sequence ${\mathbf x}_{i,r}\in K_i$ such that
${\mathbf x}_{i,r}\to{\mathbf x}_i^*$, and define $\mathbb P_r:=\sum_{i=1}^n p_i\delta_{{\mathbf x}_{i,r}}.\,$
Then $\mathbb P_r$ is feasible for every $r$, and $\mathbb P_r\Rightarrow\mathbb P$.
Moreover, by continuity of $f$ and $g$,
\begin{equation}
\mathbb E_{\mathbb P_r}[f(\mathbf{X})]
=
\sum_{i=1}^n p_i f({\mathbf x}_{i,r})
\longrightarrow
\sum_{i=1}^n p_i f({\mathbf x}_i^*)
=
\mathbb E_{\mathbb P}[f(\mathbf{X})],
\label{eqn_appG_finiteExpConvf}
\end{equation}
and
\begin{equation}
\mathbb E_{\mathbb P_r}[g(\mathbf{X})]
=
\sum_{i=1}^n p_i g({\mathbf x}_{i,r})
\longrightarrow
\sum_{i=1}^n p_i g({\mathbf x}_i^*)
=
\mathbb E_{\mathbb P}[g(\mathbf{X})].
\label{eqn_appG_finiteExpConvg}
\end{equation}
Since $\,\mathbb E_{\mathbb P}[g(\mathbf{X})]
=
\sum_{i=1}^n p_i g({\mathbf x}_i^*)
\geqslant
p_j\inf_{{\mathbf x}\in\overline K_j}g({\mathbf x})
>0,\,$ the quotient map is continuous at
\((\mathbb E_{\mathbb P}[f(\mathbf{X})],\mathbb E_{\mathbb P}[g(\mathbf{X})])\). Therefore, \eqref{eqn_appG_finiteExpConvf} and \eqref{eqn_appG_finiteExpConvg} imply
\[
\frac{\mathbb E_{\mathbb P_r}[f(\mathbf{X})]}{\mathbb E_{\mathbb P_r}[g(\mathbf{X})]}
\longrightarrow
\frac{\mathbb E_{\mathbb P}[f(\mathbf{X})]}{\mathbb E_{\mathbb P}[g(\mathbf{X})]}
=
\frac{\sum_{i=1}^n p_i f({\mathbf x}_i^*)}
{\sum_{i=1}^n p_i g({\mathbf x}_i^*)}
=
\phi^*,
\]
where the last equality is \textnormal{(E2)}. Hence, $\mathbb P$ is the weak limit of feasible
priors whose objective values converge to $\phi^*$.

It remains to justify the differential refinements in \textnormal{(E1)}. Suppose first that ${\mathbf x}_i^*\in \mathrm{int}(\overline{K}_i)$. Since ${\mathbf x}_i^*$ is a local minimiser of $h_{\phi^*,f,g}$ on the open set $\mathrm{int}(\overline{K}_i)$, for any $\textbf v\in\mathbb{R}^d$, the function $\psi_{\textbf v}(t):=h_{\phi^*,f,g}({\mathbf x}_i^*+t{\textbf v})$ has a local minimum at $t=0$. That is, for sufficiently small $t$, $\psi_{\textbf v}(0)=h_{\phi^*,f,g}({\mathbf x}_i^*)\leqslant h_{\phi^*,f,g}({\mathbf x}_i^*+t{\textbf v})=\psi_{\textbf v}(t)$. Also, $t\mapsto {\mathbf x}_i^*+t{\textbf v}$ is analytic, and $h_{\phi^*,f,g}$ is analytic on an open neighbourhood of ${\mathbf x}_i^*$, hence $\psi_{\textbf v}(t)$ is analytic ($C^2$, in particular) on some open neighbourhood of $t=0$. Consequently, by the chain rule, $\psi_{\textbf v}'(0)=\nabla h_{\phi^*,f,g}({\mathbf x}^*_i)\cdot {\textbf v}=0$. Since $\textbf v$ was arbitrary,  $\nabla h_{\phi^*,f,g}({\mathbf x}^*_i)= 0$. Moreover, since $\psi_{\textbf v}(t)$ is $C^2$ and has a local minimum at $0$, $\psi_{\textbf v}''(0)\geqslant 0$. Again, by the chain rule, $\psi_{\textbf v}''(0)={\textbf v}^\top\nabla^2 h_{\phi^*,f,g}({\mathbf x}_i^*){\textbf v}\geqslant 0$. This holds for all $\textbf v$, so $\nabla^2 h_{\phi^*,f,g}({\mathbf x}_i^*)$ is positive semidefinite.  


Now assume that $\overline{K}_i$ is convex and that ${\mathbf x}_i^*\in \partial\overline{K}_i$ minimises $h_{\phi^*,f,g}$ on $\overline{K}_i$.
Then for every $\mathbf y\in \overline{K}_i$ and every $t\in[0,1]$, $(1-t){\mathbf x}_i^*+t\mathbf y\in \overline{K}_i$. Thus, the function $\varphi(t):=h_{\phi^*,f,g}\bigl((1-t){\mathbf x}_i^*+t\mathbf y\bigr)$ satisfies $\varphi(t)\geqslant \varphi(0)$ for $t\in[0,1]$.
Therefore, by the chain rule,
\[
\nabla h_{\phi^*,f,g}({\mathbf x}_i^*) ({\mathbf y}-{\mathbf x}_i^*)=\varphi'(0)=\varphi_{+}'(0)\geqslant 0
\qquad \forall\, \mathbf y\in \overline{K}_i,
\]
which is the asserted variational inequality, equivalently $-\nabla h_{\phi^*,f,g}({\mathbf x}_i^*)\in N_{\overline{K}_i}({\mathbf x}_i^*)$.

Finally, suppose that, for some neighbourhood $U_i$ of ${\mathbf x}_i^*$, the set
\[
M_i:=\{{\mathbf x}\in U_i\cap \overline{K}_i:\ h_{\phi^*,f,g}(\mathbf x)=h_{\phi^*,f,g}({\mathbf x}_i^*)\}
\]
is a $C^2$ embedded submanifold contained in $\operatorname{int}(\overline K_i)$.
Since every point ${\mathbf x}\in M_i$ is an interior local minimiser of $h_{\phi^*,f,g}$, the previous argument gives $\nabla h_{\phi^*,f,g}({\mathbf x})=0$ for all ${\mathbf x}\in M_i$. Now fix ${\mathbf x}\in M_i$ and $\mathbf w\in T_{\mathbf x}M_i$.
There exists a $C^2$ curve $\gamma:(-\varepsilon,\varepsilon)\to M_i$ such that $\gamma(0)={\mathbf x}$, $\gamma'(0)=\mathbf w$. Differentiating the identity $\nabla h_{\phi^*,f,g}(\gamma(t))=0$ at $t=0$ yields $\nabla^2 h_{\phi^*,f,g}({\mathbf x})\,\mathbf w=0$. Hence, $T_{\mathbf x}M_i\subseteq \ker \nabla^2 h_{\phi^*,f,g}({\mathbf x})$.
In the Morse--Bott case, the final statement is precisely the non-degeneracy condition along the critical manifold $M_i$.
\end{proof}

\newpage
\section{Proof of Convergence of Algorithm~\ref{alg:dinkelbach}}
\begin{proof}
For $\textbf{x}:=(\textbf{x}_1,\dots,\textbf{x}_n)\in K_1\times\cdots\times K_n$, write
\[
N(\textbf{x}):=\sum_{i=1}^n p_i f(\textbf{x}_i),\qquad
D(\textbf{x}):=\sum_{i=1}^n p_i g(\textbf{x}_i),\qquad
\phi(\mathbf x):=\frac{N(\textbf{x})}{D(\textbf{x})}.
\]
$f$ and $g$ are non-negative and continuous on the compact set $K$, hence $N$ and
$D$ are continuous on the compact set $K:=\overline{K}_1\times\cdots\times \overline{K}_n$.
Moreover, $D$ is positive since $\inf_{\overline K_j} g>0$ and $p_j>0$ for some $j$,
so there exists a $\delta>0$ such that $D(\textbf{x})\geqslant p_j\delta>0$, $\textbf{x}\in K$.
Hence $\phi(\textbf{x})$ is non-negative continuous and the infimum value $\phi^*$ in \eqref{eqn_weak_opt_equiv_disc} is attained on $K$. For a $\phi(\textbf{x})$-value $\phi\geqslant 0$, define the parametric value function
\[
F(\phi):=\min_{\textbf{x}\in K}\bigl(N(\textbf{x})-\phi D(\textbf{x})\bigr).
\]
Since $N(\textbf{x})-\phi D(\textbf{x})=\sum_{i=1}^n p_i\bigl(f(\textbf{x}_i)-\phi g(\textbf{x}_i)\bigr)$, 
step 1 of Algorithm~\ref{alg:dinkelbach} computes $\hat {\textbf{x}}^{(t)}=(\hat {\textbf{x}}^{(t)}_1,\dots,\hat {\textbf{x}}^{(t)}_n)\in K$
satisfying
\[
\hat {\textbf{x}}^{(t)}\in \arg\min_{{\textbf{x}}\in K}\bigl(N({\textbf{x}})-\hat\phi_t D({\textbf{x}})\bigr),
\]
because minimizing the sum over $K$ separates coordinatewise over the intervals $K_i$.
Therefore, $F(\hat\phi_t)=N(\hat {\textbf{x}}^{(t)})-\hat\phi_t D(\hat {\textbf{x}}^{(t)})$. In step 3 of Algorithm~\ref{alg:dinkelbach},
$\hat\phi_{t+1}=\frac{N(\hat {\textbf{x}}^{(t)})}{D(\hat {\textbf{x}}^{(t)})}$ implies 
\[
F(\hat\phi_t)=D(\hat {\mathbf x}^{(t)})(\hat\phi_{t+1}-\hat\phi_t). \tag{A1}\label{A1}
\]
The positivity of $D$ implies the sign of $F(\hat{\phi}_t)$ is the sign of $(\hat{\phi}_{t+1}-\hat{\phi}_t)$.

$F$ is monotonically decreasing: if $\phi_1\leqslant\phi_2$ then $N({\textbf{x}})-\phi_1 D({\textbf{x}})\geqslant N({\textbf{x}})-\phi_2 D({\textbf{x}})$ for every ${\textbf{x}}\in K$, so $F(\phi_1)\geqslant F(\phi_2)$. Moreover, $F$ has a root: since $F$ is the minimum of affine functions in $\phi$ it is continuous in $\phi$, so a root follows from monotonicity, the fact that $F\downarrow-\infty$ as $\phi\uparrow\infty$, $F\uparrow\infty$ as $\phi\downarrow-\infty$, and the \emph{Intermediate Value Theorem} (IVT). This root must be $\phi^*$, so is unique: for any $\phi$ and ${\textbf{x}}\in K$,
\[
N({\textbf{x}})-\phi D({\textbf{x}})=D({\textbf{x}})(\phi({\textbf{x}})-\phi)\geqslant D({\textbf{x}})(\phi^*-\phi)
\]
and $D({\textbf{x}})(\phi^*-\phi)=0$ \emph{iff} $\phi=\phi^*$ (since $D({\textbf{x}})>0$ and $\phi^*$ can be attained). 

Now generate $\{\hat\phi_t\}_{t\in\mathbb Z^{\geqslant0}}$ by Algorithm~\ref{alg:dinkelbach}. By the properties of $F$ and \eqref{A1}, and the fact that $\hat\phi_t\geqslant\phi^*$ for all $t\geqslant 1$,
\[
\hat\phi_t>\phi^* \implies F(\hat\phi_t)<F(\phi^*)=0 \implies \hat\phi_{t+1}<\hat\phi_t .
\]
Thus $\hat\phi_1,\hat\phi_2,\ldots$ decreases monotonically to a limit $L\geqslant\phi^*$. Thus, $\hat\phi_{t+1}-\hat\phi_t\longrightarrow 0$ and, from \eqref{A1}, since $D$ is continuous (thus bounded) on $K$, $F(\phi_t)\longrightarrow0$. The continuity of $F$ and the uniqueness of its root $\phi^*$ imply $L=\phi^*$. That is, $\hat\phi_t \downarrow \phi^*$ and $F(\hat\phi_t)\uparrow F(\phi^*)=0$. 

Let  $(\phi^\dagger,{\textbf{x}}^\dagger_1,\dots,{\textbf{x}}^\dagger_n)$ be any accumulation point of
$\{(\hat\phi_t,\hat {\textbf{x}}^{(t)}_1,\dots,\hat {\textbf{x}}^{(t)}_n)\}_{t\in\mathbb Z^{\geqslant 0}}$. Since $\hat\phi_t\to\phi^*$, we
must have $\phi^\dagger=\phi^*$. Passing to a convergent subsequence
$t_k\to\infty$ with
\[
(\hat\phi_{t_k},\hat {\textbf{x}}^{(t_k)}_1,\dots,\hat {\textbf{x}}^{(t_k)}_n)\to
(\phi^*,{\textbf{x}}^\dagger_1,\dots,{\textbf{x}}^\dagger_n),
\]
and using continuity of $N$ and $D$, together with
\[
N(\hat {\textbf{x}}^{(t_k)})-\hat\phi_{t_k}D(\hat {\textbf{x}}^{(t_k)})=F(\hat\phi_{t_k})\to 0,
\]
we get
\[
N({\textbf{x}}^\dagger)-\phi^*D({\textbf{x}}^\dagger)=0,
\qquad\text{where }{\mathbf x}^\dagger:=({\textbf{x}}^\dagger_1,\dots,{\textbf{x}}^\dagger_n).
\]
Since $D({\textbf{x}}^\dagger)>0$, this implies
\[
\phi({\textbf{x}}^\dagger)=\frac{N({\textbf{x}}^\dagger)}{D({\textbf{x}}^\dagger)}=\phi^*,
\]
so ${\textbf{x}}^\dagger$ is a global minimiser of \eqref{eqn_weak_opt_equiv_disc}. By Theorem~\ref{thm:global_fp_tuple}, every global minimiser
of \eqref{eqn_weak_opt_equiv_disc} satisfies the fixed-point conditions (E1) and (E2). Hence $(\phi^*,{\textbf{x}}^\dagger_1,\dots,{\textbf{x}}^\dagger_n)$ satisfies (E1) and (E2).

\end{proof}

\newpage\section{Proof of Theorem~\ref{thm_gensol}}
\begin{proof}
By Proposition~\ref{prop_cbi_transform} we can consider $\underline{\mathcal K}:=\overline{K}_1\times\cdots\times \overline{K}_n$ for the optimisation. Since $g$ is continuous on the compact set $\overline{K}_j$ and
$\inf_{\overline{K}_j}g>0$, there exists $\eta>0$ such that $g(\mathbf x_j)\geqslant \eta$ for all $\mathbf x_j\in \overline{K}_j$.
Hence, for all $\mathbf x=({\mathbf x}_1,\dots,{\mathbf x}_n)\in \underline{\mathcal K}$,
\[
\sum_{i=1}^n g({\mathbf x}_i)p_i \geqslant p_j g({\mathbf x}_j)\geqslant p_j\eta>0,
\]
so $\tilde{\phi}$ is well-defined on $\underline{\mathcal K}$.

By the \emph{Stone-Weierstrass Approximation Theorem}, there are sequences $\{\underline{f_m}\}_{m\in\mathbb N}$, $\{\underline{g_m}\}_{m\in\mathbb N}$ of analytic functions that uniformly converge to $f$ and $g$. These sequences can be made non-negative by sufficiently large perturbation constants $\varepsilon_m,\,\delta_m>0$ that vanish as $m\to\infty$. Thus, define $f_m:=\underline{f_m}+\varepsilon_m$ and $g_m:=\underline{g_m}+\delta_m$. Since $\|g_m-g\|_{\infty}\to 0$, there exists $m_0$ such that for all $m\geqslant m_0$, $\|g_m-g\|_{\infty}<\eta/2$. Therefore, for all ${\mathbf x}_j\in \overline{K}_j$ and all $m\geqslant m_0$,
\[
g_m({\mathbf x}_j)\geqslant g({\mathbf x}_j)-|g_m({\mathbf x}_j)-g({\mathbf x}_j)|\geqslant \eta-\eta/2=\eta/2,
\]
and so, for every ${\mathbf x}=({\mathbf x}_1,\dots,{\mathbf x}_n)\in \underline{\mathcal K}$ and every $m\geqslant m_0$,
\[
\sum_{i=1}^n g_m({\mathbf x}_i)p_i \geqslant p_j g_m({\mathbf x}_j)\geqslant p_j\eta/2>0.
\]
Thus, $\phi_m$ is also well-defined on $\underline{\mathcal K}$ for all sufficiently large $m$, and Theorem~\ref{thm:global_fp_tuple} applies to ($f_m$, $g_m$) pairs from the approximating sequences $\{f_m\}_{m\geqslant m_0}$ and $\{g_m\}_{m\geqslant m_0}$. In what follows, we restrict to these sequences, re-indexing over all $m\in\mathbb N$. 

Define
\[
N_m({\mathbf x}):=\sum_{i=1}^n f_m({\mathbf x}_i)p_i,
\qquad
D_m({\mathbf x}):=\sum_{i=1}^n g_m({\mathbf x}_i)p_i,
\]
and
\[
\tilde{N}({\mathbf x}):=\sum_{i=1}^n f({\mathbf x}_i)p_i,
\qquad
\tilde{D}({\mathbf x}):=\sum_{i=1}^n g({\mathbf x}_i)p_i.
\]
For ${\mathbf x}\in \underline{\mathcal K}$,
\[
\phi_m({\mathbf x})-\tilde{\phi}({\mathbf x})
=
\frac{N_m({\mathbf x})}{D_m({\mathbf x})}-\frac{\tilde{N}({\mathbf x})}{\tilde{D}({\mathbf x})}
=
\frac{(N_m({\mathbf x})-\tilde{N}({\mathbf x}))\tilde{D}({\mathbf x})+\tilde{N}({\mathbf x})(\tilde{D}({\mathbf x})-D_m({\mathbf x}))}
{D_m({\mathbf x})\tilde{D}({\mathbf x})}.
\]
Hence,
\[
|\phi_m({\mathbf x})-\tilde{\phi}({\mathbf x})|
\leqslant
\frac{|N_m({\mathbf x})-\tilde{N}({\mathbf x})|}{D_m({\mathbf x})}
+
\frac{|\tilde{N}({\mathbf x})|\,|D_m({\mathbf x})-\tilde{D}({\mathbf x})|}{D_m({\mathbf x})\tilde{D}({\mathbf x})}.
\]
Using the lower bounds on $D_m$ and $\tilde{D}$ gives
\[
|\phi_m({\mathbf x})-\tilde{\phi}({\mathbf x})|
\leqslant
\frac{2}{p_j\eta}|N_m({\mathbf x})-\tilde{N}({\mathbf x})|
+
\frac{2}{(p_j\eta)^2}|\tilde{N}({\mathbf x})|\,|D_m({\mathbf x})-\tilde{D}({\mathbf x})|.
\]
Next,
\[
|N_m({\mathbf x})-\tilde{N}({\mathbf x})|
=
\left|
\sum_{i=1}^n (f_m({\mathbf x}_i)-f({\mathbf x}_i))p_i
\right|
\leqslant
\sum_{i=1}^n p_i\|f_m-f\|_{\infty}
=
\|f_m-f\|_{\infty},
\]
and similarly
\[
|D_m({\mathbf x})-\tilde{D}({\mathbf x})|\leqslant \|g_m-g\|_{\infty}.
\]
Since $f$ is continuous on compact $K$, it is bounded; let $
M_f:=\|f\|_{\infty}$. Then, for all ${\mathbf x}\in \underline{\mathcal K}$, $|\tilde{N}({\mathbf x})|\leqslant \sum_{i=1}^n |f({\mathbf x}_i)|p_i\leqslant M_f$. Therefore, for all ${\mathbf x}\in \underline{\mathcal K}$,
\begin{equation}
|\phi_m({\mathbf x})-\tilde{\phi}({\mathbf x})|
\leqslant
\frac{2}{p_j\eta}\|f_m-f\|_{\infty}
+
\frac{2M_f}{(p_j\eta)^2}\|g_m-g\|_{\infty}.
\label{eqn_boundonphistarmphistardiff}
\end{equation}
Hence, $\sup_{{\mathbf x}\in \underline{\mathcal K}}|\phi_m({\mathbf x})-\tilde{\phi}({\mathbf x})|\to 0$, as the \acrshort*{rhs} of \eqref{eqn_boundonphistarmphistardiff} has no ${\mathbf x}$ dependence and vanishes as $m\to\infty$. So, $\phi_m\to \tilde{\phi}$ uniformly on $\underline{\mathcal K}$. Uniform convergence immediately implies convergence of the infima:
\[
|\phi_m^*-\tilde{\phi}^*|
=
\left|
\inf_{{\mathbf x}\in \underline{\mathcal K}}\phi_m({\mathbf x})-\inf_{{\mathbf x}\in \underline{\mathcal K}}\tilde{\phi}({\mathbf x})
\right|
\leqslant
\sup_{{\mathbf x}\in \underline{\mathcal K}}|\phi_m({\mathbf x})-\tilde{\phi}({\mathbf x})|
\to 0.
\]
Consequently,
\begin{equation}
\phi_m^*\to \tilde{\phi}^*\qquad\text{and}\qquad\phi_m({\mathbf x}^{(m)})\to \tilde{\phi}^*,
\label{eqn_phimtendstophistar}
\end{equation}
since $\phi_m({\mathbf x}^{(m)})=\phi_m^*$. Now, let $\mathbf x^{(m)}\in \underline{\mathcal K}$ be any minimiser of $\phi_m$. Since $K$ is compact, the sequence
$\{{\mathbf x}^{(m)}\}_{m\geqslant 1}$ has an accumulation point $\tilde{{\mathbf x}}\in \underline{\mathcal K}$; pass to a subsequence,
not relabelled, such that ${\mathbf x}^{(m)}\to \tilde{{\mathbf x}}$. Since $\tilde{\phi}$ is continuous on $\underline{\mathcal K}$,
\begin{equation}
\tilde{\phi}({\mathbf x}^{(m)})\to \tilde{\phi}(\tilde{{\mathbf x}}).
\label{eqn_xmtendstotildex}
\end{equation}
Also,
\begin{equation}
|\phi_m({\mathbf x}^{(m)})-\tilde{\phi}({\mathbf x}^{(m)})|
\leqslant
\sup_{{\mathbf x}\in \underline{\mathcal K}}|\phi_m({\mathbf x})-\tilde{\phi}({\mathbf x})|
\to 0,
\label{eqn_phimtendstophitilde}
\end{equation}
Thus, by \eqref{eqn_phimtendstophistar}, \eqref{eqn_phimtendstophitilde}, and $|\tilde{\phi}({\mathbf x}^{(m)})-\tilde{\phi}^*|\leqslant|\tilde{\phi}({\mathbf x}^{(m)})-\phi_m({\mathbf x}^{(m)})|+|\phi_m({\mathbf x}^{(m)})-\tilde{\phi}^*|$,
\begin{equation}
\tilde{\phi}({\mathbf x}^{(m)})\to \tilde{\phi}^*.
\label{eqn_phitildetendstophistar}
\end{equation}
Furthermore, 
\begin{equation}
\tilde{\phi}(\tilde{{\mathbf x}})=\tilde{\phi}^*,
\label{eqn_continuousfg_E1}
\end{equation}
which follows from \eqref{eqn_xmtendstotildex}, \eqref{eqn_phitildetendstophistar}, and $|\tilde{\phi}(\tilde{{\mathbf x}})-\tilde{\phi}^*|\leqslant|\tilde{\phi}(\tilde{{\mathbf x}})-\tilde{\phi}({\mathbf x}^{(m)})|+|\tilde{\phi}({\mathbf x}^{(m)})-\tilde{\phi}^*|$.

So, every accumulation point of minimisers of the analytic problems minimises the limiting
continuous problem. 
This justifies (E2).

From \eqref{eqn_continuousfg_E1}, the tuple $(\tilde{\phi}^*,\tilde{\mathbf x})$ satisfies $\sum_{i=1}^{n}p_i(f(\tilde{\mathbf x}_i)-\tilde{\phi}^*g(\tilde{\mathbf x}_i))=\sum_{i=1}^n p_i h_{\tilde\phi^*,f,g}(\tilde{\mathbf x_i})=0$. If there exists ${\mathbf y}\in\overline K_i$ for some $i$ such that $h_{\tilde\phi^*,f,g}(\mathbf y)<h_{\tilde\phi^*,f,g}(\tilde{\mathbf x}_i)$, then replacing $\tilde{\mathbf x_i}$ with ${\mathbf y}$ in the zero sum makes the sum negative: this means $(\tilde {\mathbf x}_1,\ldots,{\mathbf y},\ldots,\tilde {\mathbf x}_n)$ gives a smaller objective function value than $(\tilde {\mathbf x}_1,\ldots,\tilde {\mathbf x}_i,\ldots,\tilde {\mathbf x}_n)$, contradicting $\tilde\phi^*$ being the infimum. Thus, (E1) also holds.

The sequence of priors $\mathbb P_{m_k}$ converges weakly to $\mathbb P$ in Theorem~\ref{thm_gensol} because, for any bounded continuous function $\psi:K\to [0,\infty)$,
\[
\mathbb E_{\mathbb P_{m_k}}[\psi({\mathbf X})]=\sum_{i=1}^n p_i\psi({\mathbf x}_i^{(m_k)})\rightarrow \sum_{i=1}^n p_i\psi(\tilde {\mathbf x}_i)=\mathbb E_{\mathbb P}[\psi(\mathbf X)]\,.
\]
Finally, each minimiser ${\mathbf x}^{(m)}$ determines, via Theorem~\ref{thm:global_fp_tuple}, an extremal prior $\mathbb P_m$
supported on the coordinates of ${\mathbf x}^{(m)}$.
Any accumulation point of $({\mathbf x}^{(m)})$ therefore yields, by the same support construction,
an extremal prior for the limiting problem with objective $\tilde{\phi}$. Different subsequences
may converge to different minimisers of $\tilde{\phi}$, so the limiting extremal prior need
not be unique.

\end{proof}
\newpage
\section{Proof of Proposition~\ref{prop:semicont_refinement}}
\begin{proof}
Let $\mathbb P$ be any distribution satisfying the constraints of
\eqref{eqn_genCBIprob}.
Define
\[
p_{i,\ell}:=\mathbb P(\mathbf X\in K_{i,\ell}),\qquad
q_{i}:=\mathbb P(\mathbf X\in D_i).
\]
The $K_{i,\ell}$ and $D_i$ sets partition $K_i$, so $\sum_{\ell=1}^{r_i}p_{i,\ell}+q_{i}
=
\mathbb P(K_i)
=
p_i$. Conversely, any nonnegative collection $\{p_{i,\ell},q_{i}\}$
satisfying these constraints determines a subclass of
distributions in $\mathcal D$ whose masses on the refined partition coincide
with these quantities. Hence,
\begin{equation}
\inf_{\mathbb P\in \mathcal D \atop \mathbb P(K_i)=p_i}
\frac{\mathbb E[f(\mathbf X)]}{\mathbb E[g(\mathbf X)]}
=
\inf_{\substack{p_{i,\ell},q_{i}\geqslant0\\
\sum_{\ell}p_{i,\ell}+q_{i}=p_i}}
\;
\inf_{\mathbb P\in \mathcal D(p_{i,\ell},q_{i})}
\frac{\mathbb E[f(\mathbf X)]}{\mathbb E[g(\mathbf X)]},
\label{eqn_proofRefineMorenoCano}
\end{equation}
where $\mathcal D(p_{i,\ell},q_{i})$ denotes the set of distributions
with these refined masses.

By definition of $\mathcal D(p_{i,\ell},q_{i})$, the refinement of the partition of $K$ consists of Borel-measurable, hence $\mathbb P$-measurable, sets (for all $\mathbb P\in \mathcal D(p_{i,\ell},q_{i})$). Since $f, g\in\mathcal L(\mathbb P)$ for all $\mathbb P\in\mathcal D(p_{i,\ell},q_{i})$, Proposition~\ref{prop_cbi_transform} yields
\begin{equation}
\inf_{\mathbb P\in \mathcal D(p_{i,\ell},q_{i})}
\frac{\mathbb E[f(\mathbf X)]}{\mathbb E[g(\mathbf X)]}
=
\inf_{{\mathbf x}_{i,\ell}\in K_{i,\ell},\,\mathbf d_i\in D_i}
\frac{
\sum_{i=1}^n\left(
\sum_{\ell=1}^{r_i} f({\mathbf x}_{i,\ell})\,p_{i,\ell}
+
f(\mathbf d_i)\,q_{i}
\right)
}{
\sum_{i=1}^n\left(
\sum_{\ell=1}^{r_i} g({\mathbf x}_{i,\ell})\,p_{i,\ell}
+
g(\mathbf d_i)\,q_{i}
\right)
}.
\label{eqn_proofSubRefineMorenoCano}
\end{equation}
Using \eqref{eqn_proofSubRefineMorenoCano} in \eqref{eqn_proofRefineMorenoCano} gives \eqref{eqn_v2_morenoetalCBIprob}.
\end{proof}
\newpage
\section{Proof of Theorem~\ref{thm_refinedgensol}}
\label{app_TheoremBoundedPiecewiseContinuous}
\begin{proof}
By Proposition~\ref{prop:semicont_refinement}, the original problem
\eqref{eqn_genCBIprob} is equivalent to the refined problem \eqref{eqn_v2_morenoetalCBIprob},
\begin{equation}
\phi^*
=
\inf_{\substack{p_{i,\ell}\geqslant 0,\ q_i\geqslant 0\,\forall\,i,\ell\\ \sum_{\ell=1}^{r_i}p_{i,\ell}+q_i=p_i\,\forall\,i}}
\ \inf_{\substack{{\mathbf x}_{i,\ell}\in K_{i,\ell}\,\forall\,i,\ell\\ {\mathbf d}_i\in D_i\,\forall\,i}}
\frac{
\sum_{i=1}^n \left(
\sum_{\ell=1}^{r_i} f({\mathbf x}_{i,\ell})\,p_{i,\ell}
+
f({\mathbf d}_i)\,q_i
\right)}
{
\sum_{i=1}^n \left(
\sum_{\ell=1}^{r_i} g({\mathbf x}_{i,\ell})\,p_{i,\ell}
+
g({\mathbf d}_i)\,q_i
\right)} .
\label{eq:refined-proof-problem}
\end{equation}
In what follows, terms involving $D_i$ should be considered omitted when $D_i=\varnothing$;
equivalently, in that case we set $q_i=0$.

The inner optimisation in \eqref{eq:refined-proof-problem} is separable and so can be solved via Dinkelbach iteration. Indeed, for potential extremal objective-function value $\varphi\geqslant 0$, the Dinkelbach value for $\varphi$ is \begin{align}F(\varphi)&:=\inf_{\substack{p_{i,\ell}\geqslant 0,\ q_i\geqslant 0\,\forall\,i,\ell\\ \sum_{\ell=1}^{r_i}p_{i,\ell}+q_i=p_i\,\forall\,i}}
\ \inf_{\substack{{\mathbf x}_{i,\ell}\in K_{i,\ell}\,\forall\,i,\ell\\ {\mathbf d}_i\in D_i\,\forall\,i}}\,\,\sum_{i=1}^{n}\biggl(\sum_{\ell=1}^{r_i}\bigl(p_{i,\ell}  h_{\varphi,f,g}(\mathbf{x}_{i,\ell})\bigr)\,+\,q_{i}  h_{\varphi,f,g}(\mathbf{d}_i)\biggr) \nonumber\\
&=\inf_{\substack{p_{i,\ell}\geqslant 0,\ q_i\geqslant 0\,\forall\,i,\ell\\ \sum_{\ell=1}^{r_i}p_{i,\ell}+q_i=p_i\,\forall\,i}}
\ \sum_{i=1}^{n}\biggl(\sum_{\ell=1}^{r_i}\bigl(p_{i,\ell} \inf_{\substack{{\mathbf x}_{i,\ell}\in K_{i,\ell}}} h_{\varphi,f,g}(\mathbf{x}_{i,\ell})\bigr)\,+\,q_{i} \inf_{\substack{\mathbf{d}_i\in D_i}} h_{\varphi,f,g}(\mathbf{d}_i)\biggr)
,\label{eqn_appK_Dinkelbachvalue1}\end{align}
where $h_{\varphi,f,g}$ is the Dinkelbach difference from Definition~\ref{def:Dinkelbachdifference}. By the definition and continuity of the extensions $\tilde{f}_{i,\ell}$, $\tilde{g}_{i,\ell}$, $\tilde{f}_{i}$, $\tilde{g}_{i}$, over the compact closures $\overline{K}_{i,\ell}$, $\overline{D}_i$, the Dinkelbach value \eqref{eqn_appK_Dinkelbachvalue1} is equivalently
\begin{equation}F(\varphi)=\inf_{\substack{p_{i,\ell}\geqslant 0,\ q_i\geqslant 0\,\forall\,i,\ell\\ \sum_{\ell=1}^{r_i}p_{i,\ell}+q_i=p_i\,\forall\,i}}
\ \sum_{i=1}^{n}\biggl(\sum_{\ell=1}^{r_i}\bigl(p_{i,\ell} \min_{\substack{{\mathbf x}_{i,\ell}\in \overline{K}_{i,\ell}}} h_{\varphi,\tilde{f}_{i,\ell},\tilde{g}_{i,\ell}}(\mathbf{x}_{i,\ell})\bigr)\,+\,q_{i} \min_{\substack{\mathbf{d}_i\in \overline{D}_i}} h_{\varphi,\tilde{f}_i,\tilde{g}_i}(\mathbf{d}_i)\biggr).\label{eqn_appK_Dinkelbachvalue2}\end{equation}
Note that the expressions being summed over for each $i$ form a convex combination. Hence, the minimum for each $i$ is given by placing all probability mass at a support location $\mathbf{x}_i(\varphi)\in\overline{K}_i\cup\overline{D}_i$ that gives the minimum value among the Dinkelbach minima defined over $\overline{K}_{i,\ell}$, $\overline{D}_i$ for fixed $i$ and $\ell=1,\ldots,r_i$. If there is at least  one such $\mathbf{x}_i(\varphi)$ in $\overline{K}_{i,\ell}$ then set $p_{i,\ell}=p_i$; otherwise, there is at least one such $\mathbf{x}_i(\varphi)$ in $\overline{D}_i$ so set $q_i=p_i$. Thus, \eqref{eqn_appK_Dinkelbachvalue2} becomes
\begin{equation}
F(\varphi)=\sum_{i=1}^n p_i m_i(\varphi),
\label{eqn_appK_Dinkelbachvalue3}
\end{equation}
where 
\begin{align*}
m_{i,\ell}(\varphi)
&:=
\min_{\mathbf{x}\in \overline K_{i,\ell}}
h_{\varphi,\tilde f_{i,\ell},\tilde g_{i,\ell}}(\mathbf{x}),
\qquad \ell=1,\ldots,r_i, \\
m_{i,D}(\varphi)
&:=
\begin{cases}
\displaystyle
\min_{\mathbf{x}\in \overline D_i}
h_{\varphi,\tilde f_i,\tilde g_i}(\mathbf{x}),
& D_i\neq\varnothing,\\[1ex]
+\infty, & D_i=\varnothing .
\end{cases} \\
m_i(\varphi)
&:=
\min\Bigl\{
m_{i,1}(\varphi),\ldots,m_{i,r_i}(\varphi),
m_{i,D}(\varphi)
\Bigr\}.
\end{align*}
The definition of $F(\varphi)$ in \eqref{eqn_appK_Dinkelbachvalue1} is an infimum over affine functions of $\varphi$, where each affine function is non-increasing in $\varphi$. Thus, $F(\varphi)$ is a monotonically non-increasing function of $\varphi$. Using the definition of $F(\varphi)$ and $g\geqslant\eta>0$ for some $\eta\in\mathbb R$ on some $K_j$ with $p_j>0$ (so, uniformly bounded positive objective function denominator), if $\phi^*< \varphi$ then there exists some value of the objective in \eqref{eq:refined-proof-problem} that gives a value smaller than $\varphi$, so $F(\varphi)<0$. On the other hand, if $\varphi<\phi^*$, no value of the objective function can be smaller than $\varphi$, so $F(\varphi)>0$. Also note that $F(\varphi)$ is continuous because each $m_{i,\ell}(\varphi)$ is continuous as a minimum (over a compact set) of an affine-in-$\varphi$ continuous function and $m_i(\varphi)$ is continuous as the minimum of continuous functions. Hence, the continuity and monotonicity of $F(\varphi)$, and the continuity of the $f$, $g$ extensions over compact closures, imply   
\begin{equation}
\sum_{i=1}^n p_i\,m_i(\phi^*)=F(\phi^*)=0.
\label{eq:root-equation}
\end{equation}
Since each related extremal support point $\mathbf{x}^*_i:=\mathbf{x}_i(\phi^*)$ satisfies $\mathbf{x}^*_i\in\overline{K}_{i,\ell}$ (for some $\ell$) or $\mathbf{x}^*_i\in\overline{D}_i$, then $\mathbf{x}^*_i\in M_i(\phi^*)$ by the definition of $M_i(\phi^*)$, for all $i$. In particular, either 
${\mathbf x}_i^*\in
\underset{{\mathbf x}\in\overline K_{i,\ell}}{\mathrm{argmin}}\;
h_{\phi^*,\tilde f_{i,\ell},\tilde g_{i,\ell}}({\mathbf x})$ for some $\ell,\,$  
or ${\mathbf x}_i^*\in
\underset{{\mathbf x}\in\overline D_i}{\mathrm{argmin}}\;
h_{\phi^*,\tilde f_i,\tilde g_i}({\mathbf x})$.
Thus, \textnormal{(E1)} holds. Define
\[
(\overset{\hspace{0.2em}*}{f}({\mathbf x}_i),\overset{\hspace{0.2em}*}{g}({\mathbf x}_i))
=
\begin{cases}
(\tilde f_{i,\ell}({\mathbf x}_i),\tilde g_{i,\ell}({\mathbf x}_i)),
& \text{if }{\mathbf x}_i\in
\underset{{\mathbf x}\in\overline K_{i,\ell}}{\mathrm{argmin}}\;
h_{\phi^*,\tilde f_{i,\ell},\tilde g_{i,\ell}}({\mathbf x})
\subseteq M_{i,0}(\phi^*),\\[4pt]
(\tilde f_i({\mathbf x}_i),\tilde g_i({\mathbf x}_i)),
& \text{if }{\mathbf x}_i\in M_{i,1}(\phi^*)\setminus M_{i,0}(\phi^*).
\end{cases}
\]
With a choice of extremal support ${\mathbf x}_i^*$ that satisfies $m_i(\phi^*)
=
\overset{\hspace{0.2em}*}{f}({\mathbf x}_i^*)
-
\phi^*\,\overset{\hspace{0.2em}*}{g}({\mathbf x}_i^*)$, it follows from \eqref{eq:root-equation} that $0
=
\sum_{i=1}^n p_i
\Bigl(
\overset{\hspace{0.2em}*}{f}({\mathbf x}_i^*)
-
\phi^*\,\overset{\hspace{0.2em}*}{g}({\mathbf x}_i^*)
\Bigr)$. Hence,
\[
\phi^*
=
\frac{\sum_{i=1}^n \overset{\hspace{0.2em}*}{f}({\mathbf x}_i^*)\,p_i}
{\sum_{i=1}^n \overset{\hspace{0.2em}*}{g}({\mathbf x}_i^*)\,p_i}.
\]
This is \textnormal{(E2)}.

Consider $\mathbb P:=\sum_{i=1}^n p_i\,\delta_{{\mathbf x}_i^*}$. Since ${\mathbf x}_i^*\in \overline K_i$, this is a probability measure of the required form
\eqref{eqn_semicontprior}. It remains to show that it is the weak limit of feasible priors
for \eqref{eqn_v2_morenoetalCBIprob}, and that the objective values along those feasible
priors converge to $\phi^*$.

Fix $i$. Since ${\mathbf x}_i^*\in \overline K_i$, there exists a sequence
$\{{\mathbf z}_i^{(m)}\}_{m\geqslant 1}\subseteq K_i$ such that
${\mathbf z}_i^{(m)}\to {\mathbf x}_i^*$.
We choose this sequence more precisely as follows.

\begin{itemize}
\item
If ${\mathbf x}_i^*\in M_{i,1}(\phi^*)\setminus M_{i,0}(\phi^*)\subseteq \overline D_i$,
choose ${\mathbf z}_i^{(m)}\in D_i$ with ${\mathbf z}_i^{(m)}\to {\mathbf x}_i^*$.

\item
If ${\mathbf x}_i^*\in M_{i,0}(\phi^*)$, choose $\ell$ such that
\[
{\mathbf x}_i^*\in
\underset{{\mathbf x}\in\overline K_{i,\ell}}{\mathrm{argmin}}\;
h_{\phi^*,\tilde f_{i,\ell},\tilde g_{i,\ell}}({\mathbf x}),
\]
and choose ${\mathbf z}_i^{(m)}\in K_{i,\ell}$ with
${\mathbf z}_i^{(m)}\to {\mathbf x}_i^*$.
\end{itemize}

Define $\mathbb P_m:=\sum_{i=1}^n p_i\,\delta_{{\mathbf z}_i^{(m)}}$. Each $\mathbb P_m$ is feasible for \eqref{eqn_v2_morenoetalCBIprob}, because
${\mathbf z}_i^{(m)}\in K_i$ for every $i$, so $\mathbb P_m(K_i)=p_i$.
Also, since ${\mathbf z}_i^{(m)}\to {\mathbf x}_i^*$ for each $i$, one has weak convergence $\mathbb P_m \Rightarrow \mathbb P$. By construction of the sequences and by continuity of the relevant extensions,
\[
f({\mathbf z}_i^{(m)})\to \overset{\hspace{0.2em}*}{f}({\mathbf x}_i^*),
\qquad
g({\mathbf z}_i^{(m)})\to \overset{\hspace{0.2em}*}{g}({\mathbf x}_i^*).
\]
Thus,
\[
\frac{\mathbb E_{\mathbb P_m}[f(\mathbf X)]}{\mathbb  E_{\mathbb P_m}[g(\mathbf X)]}
=
\frac{\sum_{i=1}^n f({\mathbf z}_i^{(m)})\,p_i}
{\sum_{i=1}^n g({\mathbf z}_i^{(m)})\,p_i}
\longrightarrow
\frac{\sum_{i=1}^n \overset{\hspace{0.2em}*}{f}({\mathbf x}_i^*)\,p_i}
{\sum_{i=1}^n \overset{\hspace{0.2em}*}{g}({\mathbf x}_i^*)\,p_i}
=
\phi^*.
\]
Consequently, the infimum is obtained in the limit by a sequence of feasible priors whose weak
limit is $\mathbb P$.

Fix $i$ and $\ell$. On the compact sets $\overline K_{i,\ell}$ and $\overline{D}_i$, the functions
$\tilde f_{i,\ell}$, $\tilde g_{i,\ell}$, $\tilde{f}_i$, $\tilde{g}_i$ are continuous. Moreover, over the compact $\overline{K}_{i,\ell}$ and $\overline{D}_i$ sets, each $h_{\varphi,\tilde{f}_{i,\ell},\tilde{g}_{i,\ell}}$ and $h_{\varphi,\tilde{f}_{i},\tilde{g}_{i}}$ is bounded (because the $\tilde{f}$ and $\tilde{g}$ extensions are continuous), and so the Dinkelbach differences can be made non-negative by affine translation by constants. Thus, global minimisers of these Dinkelbach differences can be obtained via Theorem~\ref{thm_gensol}: using
analytic approximating sequences for $\tilde f_{i,\ell}$, $\tilde g_{i,\ell}$, $\tilde{f}_i$, and $\tilde{g}_i$ to obtain global minimisers of intermediate approximate problems, then
passing to accumulation points of these intermediate minimisers. These accumulation points are minimisers of $h_{\phi,\tilde f_{i,\ell},\tilde g_{i,\ell}}\,$ on $\,\overline K_{i,\ell}$ and $h_{\phi,\tilde f_{i},\tilde g_{i,}}\,$ on $\,\overline D_{i}$ that are either:
\begin{enumerate}
\item
an interior point of $\overline{K}_{i,\ell}$;
\item
a boundary point of $\overline{K}_{i,\ell}$;
\item an interior point of $\overline{D}_i$;
\item a boundary point of $\overline{D}_i$.
\end{enumerate}
The sets $\mathcal C_{\overline{K}_{i,\ell}}(\phi)$ and $\mathcal C_{\overline{D}_{i}}(\phi)$ in Theorem~\ref{thm_refinedgensol}'s statement are the collections of all such accumulation points. Therefore, if ${\mathbf x}_i^*\in M_{i,0}(\phi^*)$ is from such an approximating sequence, then ${\mathbf x}_i^*\in \bigcup_{\ell=1}^{r_i}\mathcal C_{\overline{K}_{i,\ell}}(\phi^*)$. If instead ${\mathbf x}_i^*\in M_{i,1}(\phi^*)\setminus M_{i,0}(\phi^*)$ is from an approximating sequence, then
${\mathbf x}_i^*\in \mathcal C_{\overline{D}_{i}}(\phi^*)$.
Hence, ${\mathbf x}_i^*\in
\Bigl(\bigcup_{\ell=1}^{r_i}\mathcal C_{\overline{K}_{i,\ell}}(\phi^*)\Bigr)\cup \mathcal C_{\overline{D}_{i}}(\phi^*)
=:\mathcal C_i(\phi^*)$ when ${\mathbf x}_i^*$ is from an approximating sequence.

Since the analytic-approximation argument applies on every compact branch
$\overline K_{i,\ell}$ and $\overline D_i$, each branch minimum
$m_{i,\ell}(\phi^*)$ and $m_{i,D}(\phi^*)$ is attained by at least one
minimiser belonging to $\mathcal C_{\overline K_{i,\ell}}(\phi^*)$ or
$\mathcal C_{\overline D_i}(\phi^*)$, respectively. Since $m_i(\phi^*)$ is the
minimum of finitely many such branch minima, $m_i(\phi^*)$ is attained by
some $\mathbf{x}_i^*\in \mathcal C_i(\phi^*)$. Choosing the extremal support points
$\mathbf{x}_i^*$ in this way preserves the (\textnormal{E1}) fixed-point condition $\mathbf{x}_i^*\in M_i(\phi^*)$ and $m_i(\phi^*) = {}\overset{\hspace{0.2em}*}{f}({\mathbf x}_i^*)-\phi^*{}\overset{\hspace{0.2em}*}{g}({\mathbf x}_i^*)$. Therefore, $F(\phi^*)=\sum_{i=1}^n p_i m_i(\phi^*)=0$ yields the (\textnormal{E2})
fixed-point equation
\[
\phi^*
=
\frac{\sum_{i=1}^n {}\overset{\hspace{0.2em}*}{f}({\mathbf x}_i^*)p_i}
{\sum_{i=1}^n {}\overset{\hspace{0.2em}*}{g}({\mathbf x}_i^*)p_i}.
\]

Clearly $\mathcal C_i(\phi^*)\subseteq M_i(\phi^*)$. To see the reverse inclusion $M_i(\phi^*)\subseteq\mathcal C_i(\phi^*) $, choose $\mathbf{x}^*\in M_i(\phi^*)$. Then either $\mathbf{x}^*\in M_{i,0}(\phi^*)$ or $\mathbf{x}^*\in M_{i,1}(\phi^*)$. Suppose $\mathbf{x}^*\in M_{i,0}(\phi^*)$ w.l.o.g.---the following argument can be applied when $\mathbf{x}^*\in M_{i,1}(\phi^*)$. So $\mathbf{x}^*\in\overline{K}_{i,\ell}$ for some $\ell$. Now, for any relatively open neighbourhood $U_m$ of
$\mathbf{x}^*$ (so $U_m\subset \overline{K}_{i,\ell}$), since $\overline{K}_{i,\ell}\setminus U_m$ is compact
and disjoint from $\mathbf{x}^*$, there exists $\rho_{U_m}>0$ such that
$\|\mathbf{x}-\mathbf{x}^*\|\geqslant \rho_{U_m}$ for all $\mathbf{x}\in \overline{K}_{i,\ell}\setminus U_m$. Let $h=h_{\phi^*,\tilde{f}_{i,\ell},\tilde{g}_{i,\ell}}$. Since $\mathbf{x}^*$ minimises $h$ on $ \overline{K}_{i,\ell}$, we have
\[
h(\mathbf{x})+\eta_m\|\mathbf{x}-\mathbf{x}^*\|^2
\geqslant h(\mathbf{x}^*)+\eta_m\rho_{U_m}^2
\qquad (\mathbf{x}\in \overline{K}_{i,\ell}\setminus U_m),
\]
for every $\eta_m>0$. Thus, the perturbed Dinkelbach difference on the \emph{lhs} of the inequality
has its unique global minimiser at $\mathbf{x}^*$, and hence in $U_m$:
the smallest value of $h$ is $h(\mathbf{x}^*)$, while the smallest value of 
$\|\mathbf{x}-\mathbf{x}^*\|^2$ is $0$, which is attained only at
$\mathbf{x}=\mathbf{x}^*$. Moreover, on
$ \overline{K}_{i,\ell}\setminus U_m$, the lower bound
$\|\mathbf{x}-\mathbf{x}^*\|\geqslant \rho_{U_m}$ gives the positive gap
$\eta_m\rho_{U_m}^2$. Hence, any Dinkelbach difference uniformly within, say, $\eta_m\rho_{U_m}^2/3$ of the perturbed Dinkelbach difference preserves this
strict separation and has all its global minimisers in $U_m$. Choosing
non-negative analytic approximants to
$\tilde{f}_{i,\ell}+\eta_m\|\cdot-\mathbf{x}^*\|^2$ and $\tilde{g}_{i,\ell}$ sufficiently close uniformly
ensures that the corresponding analytic Dinkelbach difference has this
property. As $m\to\infty$, taking a shrinking neighbourhood basis $U_m\downarrow\{\mathbf{x}^*\}$ with $\eta_m\downarrow 0$, produces analytic approximating Dinkelbach minimisers converging to $\mathbf{x}^*$. Hence, $\mathbf{x}^*\in\mathcal C_{\overline{K}_{i,\ell}}\subseteq\mathcal C_i(\phi^*)$. An analogous argument shows $\mathbf{x}^*\in M_{i,1}(\phi^*)$ implies $\mathbf{x}^*\in\mathcal C_{\overline{D}_{i}}\subseteq\mathcal C_i(\phi^*)$. We conclude $M_i(\phi^*)\subseteq\mathcal C_i(\phi^*) $, so $M_i(\phi^*)=\mathcal C_i(\phi^*)$.
\end{proof}

\newpage
\section{Proof of Theorem~\ref{thm:regulated_extension}}
\label{app_theoremboundedregulated}
\begin{proof}
Since $f,g\in \overline{\mathcal R}_{2.13}^{\|\cdot\|_\infty}$, choose any sequences
$\{f_m\},\{g_m\}\subset \mathcal R_{2.13}$ such that $\|f_m-f\|_\infty\to 0$, $\|g_m-g\|_\infty\to 0$.
As uniform limits of bounded functions, $f$ and $g$ are bounded. Note that, for $\eta:=\inf_{\mathbf{x}\in K_j}g(\mathbf{x})>0$, the uniform convergence of $g_m$ implies there exists $m_0$ such that $\,\inf_{\mathbf{x}\in K_j}g_m(\mathbf{x})\geqslant\eta/2\,$ for all $m\geqslant m_0$. Restrict the rest of the proof to all such $m$.

Fix $m$. Since $f_m\in\mathcal R_{2.13}$, for each $i$, there is a partition of $K_i$ associated with $f_m$ into disjoint Borel subsets:
\[
K_i=\Bigl(\bigsqcup_{a=1}^{r_i^{(m)}} A_{i,a}^{(m)}\Bigr)\sqcup E_i^{(m)},
\]
such that $f_m|_{A_{i,a}^{(m)}}$ admits a continuous extension
$\widetilde f_{i,a}^{(m)}$ to $\overline{A_{i,a}^{(m)}}$, and
$f_m|_{E_i^{(m)}}$ admits a continuous extension $\widetilde f_{i,0}^{(m)}$
to $\overline{E_i^{(m)}}$. Likewise $g_m$ has an associated partition of $K_i$:
\[
K_i=\Bigl(\bigsqcup_{b=1}^{s_i^{(m)}} B_{i,b}^{(m)}\Bigr)\sqcup F_i^{(m)},
\]
such that $g_m|_{B_{i,b}^{(m)}}$ admits a continuous extension
$\widetilde g_{i,b}^{(m)}$ to $\overline{B_{i,b}^{(m)}}$, and
$g_m|_{F_i^{(m)}}$ admits a continuous extension $\widetilde g_{i,0}^{(m)}$
to $\overline{F_i^{(m)}}$. Set $\,A_{i,0}^{(m)}:=E_i^{(m)}$ and $\,B_{i,0}^{(m)}:=F_i^{(m)}$. Define the common refinement $S_{i,a,b}^{(m)}:=A_{i,a}^{(m)}\cap B_{i,b}^{(m)}$ for 
$a=0,\dots,r_i^{(m)}$, $b=0,\dots,s_i^{(m)}$. The nonempty sets among these form a finite Borel partition of $K_i$. On each
$S_{i,a,b}^{(m)}$, the restriction of $f_m$ admits a continuous extension to
$\overline{S_{i,a,b}^{(m)}}$, namely the restriction of the relevant
$\widetilde f_{i,a}^{(m)}$; similarly, the restriction of $g_m$ admits a continuous
extension to $\overline{S_{i,a,b}^{(m)}}$, namely the restriction of the relevant
$\widetilde g_{i,b}^{(m)}$. Hence, after relabelling the non-empty
$S_{i,a,b}^{(m)}$, the pair $(f_m,g_m)$ satisfies the hypotheses of
Proposition~\ref{prop:semicont_refinement} and Theorem~\ref{thm_refinedgensol}. Therefore, for each $m$, there exists an extremal prior
\[
\mathbb P_m=\sum_{i=1}^n p_i\,\delta_{{\mathbf x}_i^{(m)}},
\qquad {\mathbf x}_i^{(m)}\in \overline K_i,
\]
and a sequence of feasible priors in $\mathcal D$ whose weak limit is
$\mathbb P_m$ and whose objective values converge to $\phi_m^*$.
This proves \textnormal{(ii)}.

We next prove \textnormal{(i)}. For every $\mathbb P\in \mathcal D$, ${\mathbb E}_{\mathbb P}[g(\textnormal{\textbf{X}})]\geqslant p_j\eta\,$ and $\,{\mathbb E}_{\mathbb P}[g_m(\textnormal{\textbf{X}})]\geqslant p_j\eta/2$. Also, by the triangle inequality and the definition of the uniform norm,
\[
|{\mathbb E}_{\mathbb P}[f_m(\textnormal{\textbf{X}})]-{\mathbb E}_{\mathbb P}[f(\textnormal{\textbf{X}})]|
\leqslant 
{\mathbb E}_{\mathbb P}[|f_m(\textnormal{\textbf{X}})-f(\textnormal{\textbf{X}})|] 
\leqslant 
{\mathbb E}_{\mathbb P}[\|f_m-f\|_\infty]
=\|f_m-f\|_\infty.
\]
Similarly, $|{\mathbb E}_{\mathbb P}[g_m(\textnormal{\textbf{X}})]-{\mathbb E}_{\mathbb P}[g(\textnormal{\textbf{X}})]|
 \leqslant \|g_m-g\|_\infty$. Since $f$ is bounded and non-negative, $0\leqslant {\mathbb E}_{\mathbb P}[f(\textnormal{\textbf{X}})]\leqslant \|f\|_\infty$. Therefore, for $m\geqslant m_0$,
\begin{align*}
\left|
\frac{{\mathbb E}_{\mathbb P}[f_m(\textnormal{\textbf{X}})]}{{\mathbb E}_{\mathbb P}[g_m(\textnormal{\textbf{X}})]}
-
\frac{{\mathbb E}_{\mathbb P}[f(\textnormal{\textbf{X}})]}{{\mathbb E}_{\mathbb P}[g(\textnormal{\textbf{X}})]}
\right|
&\leqslant
\frac{|{\mathbb E}_{\mathbb P}[f_m(\textnormal{\textbf{X}})]-{\mathbb E}_{\mathbb P}[f(\textnormal{\textbf{X}})]|}{{\mathbb E}_{\mathbb P}[g_m(\textnormal{\textbf{X}})]}
\\
&\quad+
\frac{{\mathbb E}_{\mathbb P}[f(\textnormal{\textbf{X}})]\,|{\mathbb E}_{\mathbb P}[g_m(\textnormal{\textbf{X}})]-{\mathbb E}_{\mathbb P}[g(\textnormal{\textbf{X}})]|}
     {{\mathbb E}_{\mathbb P}[g_m(\textnormal{\textbf{X}})]{\mathbb E}_{\mathbb P}[g(\textnormal{\textbf{X}})]}
\\
&\leqslant
\frac{2}{p_j\eta}\,\|f_m-f\|_\infty
+
\frac{2\|f\|_\infty}{p_j^2\eta^2}\,\|g_m-g\|_\infty .
\end{align*}
Taking the supremum over $\mathbb P\in \mathcal D$, $\sup_{\mathbb P\in \mathcal D}
\left|
\frac{{\mathbb E}_{\mathbb P}[f_m(\textnormal{\textbf{X}})]}{{\mathbb E}_{\mathbb P}[g_m(\textnormal{\textbf{X}})]}
-
\frac{{\mathbb E}_{\mathbb P}[f(\textnormal{\textbf{X}})]}{{\mathbb E}_{\mathbb P}[g(\textnormal{\textbf{X}})]}
\right|
\to 0\,$ as $\,m\to\infty$.
Since
\[
\phi_m^*=\inf_{\mathbb P\in \mathcal D}\frac{{\mathbb E}_{\mathbb P}[f_m(\textnormal{\textbf{X}})]}{{\mathbb E}_{\mathbb P}[g_m(\textnormal{\textbf{X}})]},
\qquad
\phi^*=\inf_{\mathbb P\in \mathcal D}\frac{{\mathbb E}_{\mathbb P}[f(\textnormal{\textbf{X}})]}{{\mathbb E}_{\mathbb P}[g(\textnormal{\textbf{X}})]},
\]
it follows that
\[
|\phi_m^*-\phi^*|
\leqslant
\sup_{\mathbb P\in \mathcal D}
\left|
\frac{{\mathbb E}_{\mathbb P}[f_m(\textnormal{\textbf{X}})]}{{\mathbb E}_{\mathbb P}[g_m(\textnormal{\textbf{X}})]}
-
\frac{{\mathbb E}_{\mathbb P}[f(\textnormal{\textbf{X}})]}{{\mathbb E}_{\mathbb P}[g(\textnormal{\textbf{X}})]}
\right|
\to 0.
\]
Thus $\phi_m^*\to\phi^*$, proving \textnormal{(i)}.

We now prove \textnormal{(iii)}. Since each $\overline K_i\subseteq K=[0,1]^d$ is compact,
the product $\overline K_1\times\cdots\times \overline K_n$ is compact. Hence,
$({\mathbf x}_1^{(m)},\dots,{\mathbf x}_n^{(m)})$ has a convergent subsequence,
say $({\mathbf x}_1^{(m_k)},\dots,{\mathbf x}_n^{(m_k)})
\to
({\mathbf x}_1^*,\dots,{\mathbf x}_n^*)
\in \overline K_1\times\cdots\times \overline K_n$. Define $\mathbb P^*:=\sum_{i=1}^n p_i\,\delta_{{\mathbf x}_i^*}$ and recall $\mathbb P_m$. Then, for every bounded continuous $\psi:K\to\mathbb R$,
\[
\mathbb E_{\mathbb P_{m_k}}[\psi(\mathbf{X})]
=
\sum_{i=1}^n p_i\,\psi({\mathbf x}_i^{(m_k)})
\to
\sum_{i=1}^n p_i\,\psi({\mathbf x}_i^*)
=
\mathbb E_{\mathbb P^*}[\psi(\mathbf{X})].
\]
Hence $\mathbb P_{m_k}\Rightarrow \mathbb P^*$, proving the subsequential compactness and weak-convergence assertions in \textnormal{(iii)}. Part \textnormal{(iv)} below shows that $\mathbb P^*$ is extremal.

Finally, we prove \textnormal{(iv)}. For each $k$, because
$\mathbb P_{m_k}$ is an extremal prior for the approximating pair
$(f_{m_k},g_{m_k})$, Theorem~\ref{thm_refinedgensol} provides a sequence of feasible
priors in $\mathcal D$ converging weakly to $\mathbb P_{m_k}$, with objective values converging
to $\phi_{m_k}^*$. Choose one element of that sequence, denoted $\mathbb Q_k\in \mathcal D$, such that $d_{\mathrm w}(\mathbb Q_k,\mathbb P_{m_k})<\frac1k$
for some metric $d_{\mathrm w}$ generating weak convergence on $\mathcal P(K)$, and $\left|
\frac{\mathbb E_{\mathbb Q_k}[f_{m_k}(\mathbf{X})]}{\mathbb E_{\mathbb Q_k}[g_{m_k}(\mathbf{X})]}
-
\phi_{m_k}^*
\right|
<\frac1k$. Since $\mathbb P_{m_k}\Rightarrow \mathbb P^*$, the choice
$d_{\mathrm w}(\mathbb Q_k,\mathbb P_{m_k})<1/k$ implies
$\mathbb Q_k\Rightarrow \mathbb P^*$ via the triangle inequality.

It remains to transfer the objective from $(f_{m_k},g_{m_k})$ to $(f,g)$.
Exactly as above, for all sufficiently large $k$, $\,\mathbb E_{\mathbb Q_k}[g(\textnormal{\textbf{X}})]\geqslant p_j\eta\,$ and $\,\mathbb E_{\mathbb Q_k}[g_{m_k}(\mathbf{X})]\geqslant p_j\eta/2$,
therefore
\begin{align*}
\left|
\frac{\mathbb E_{\mathbb Q_k}[f_{m_k}(\mathbf{X})]}{\mathbb E_{\mathbb Q_k}[g_{m_k}(\mathbf{X})]}
-
\frac{\mathbb E_{\mathbb Q_k}[f(\textnormal{\textbf{X}})]}{\mathbb E_{\mathbb Q_k}[g(\textnormal{\textbf{X}})]}
\right|
&\leqslant
\frac{2}{p_j\eta}\,\|f_{m_k}-f\|_\infty
+
\frac{2\|f\|_\infty}{p_j^2\eta^2}\,\|g_{m_k}-g\|_\infty\longrightarrow 0.
\end{align*}
Hence,
\[
\left|
\frac{\mathbb E_{\mathbb Q_k}[f(\textnormal{\textbf{X}})]}{\mathbb E_{\mathbb Q_k}[g(\textnormal{\textbf{X}})]}
-
\phi^*
\right|
\leqslant
\left|
\frac{\mathbb E_{\mathbb Q_k}[f(\textnormal{\textbf{X}})]}{\mathbb E_{\mathbb Q_k}[g(\textnormal{\textbf{X}})]}
-
\frac{\mathbb E_{\mathbb Q_k}[f_{m_k}(\mathbf{X})]}{\mathbb E_{\mathbb Q_k}[g_{m_k}(\mathbf{X})]}
\right|
+
\left|
\frac{\mathbb E_{\mathbb Q_k}[f_{m_k}(\mathbf{X})]}{\mathbb E_{\mathbb Q_k}[g_{m_k}(\mathbf{X})]}
-
\phi_{m_k}^*
\right|
+
|\phi_{m_k}^*-\phi^*|.
\]
The first term tends to $0$ by the displayed estimate, the second is $<1/k$, and
the third tends to $0$ by \textnormal{(i)}. Therefore,
\[
\frac{\mathbb E_{\mathbb Q_k}[f(\textnormal{\textbf{X}})]}{\mathbb E_{\mathbb Q_k}[g(\textnormal{\textbf{X}})]}\to \phi^*.
\]
This proves \textnormal{(iv)} and completes the proof.
\end{proof}

\newpage
\section{Proof of Theorem~\ref{thm_CBIwithfails_sol}}
\label{app_proofofCBIproblem}
\begin{proof}
The following three-step proof is reproduced from \cite{salako2025conservative}: 
\begin{enumerate}
   \item[]\textbf{step I:} the gradient of the objective function determines the functional forms the objective function can assume when minimized. Prove that two of the gradient's components each have a non-trivial root in $[0,1]$ when these components equal zero, then use these roots and the signs of the gradient's components to deduce finitely many functional forms for the objective function when minimized.
  \item[]\textbf{step II:} prove these functional forms have finitely many infima, thus the objective function has finitely many infima: the smallest infimum is the global infimum.
  \item[]\textbf{step III:} based on the previous steps, state the infimum, state \eqref{eqn_morenoetalBer}'s solution as a fixed--point system, and state the prior that gives the infimum.
\end{enumerate}

\noindent\textbf{Step I:}
There are a finite number of plausible functional forms that $\phi$ can take when minimized. To deduce these, first, we prove the existence of a pair of points at which two components of the gradient of $\phi$ are zero. Then, using this pair to deduce the signs of gradient components, we deduce the functional forms.

Define $f,g:[0,1] \longrightarrow [0,1]$, $f(x)=(1-x)^{m}g(x)$ and $g(x)=x^r(1-x)^{k}$, with $k>0,\,r > 0$. Consider $x_i\in K_i$ for $i=1,\ldots, n$. The  gradient of $\phi$ has $i$-th component
\begin{equation}
     \frac{\partial \phi}{\partial x_{i}}=\frac{g(x_i)(r-x_i(r+k))p_i}{x_i(1-x_i){\sum_{j=1}^{n}g(x_j)p_{j}}}\left(h(x_i)-\phi\right)\,,
     \label{eq_grad}
 \end{equation} where $h$ is the function $h:[0,1]\setminus\{\frac{r}{r+k}\}\longrightarrow \mathbb R$, $h(x)=(1-x)^m\left(\frac{r-x(m+k+r)}{r-x(k+r)}\right)$. From \eqref{eq_grad}, the $i$-th gradient component is non-trivially zero if and only if 
\begin{equation}    
h(x_i)=\phi
\label{eq_stationarity_cond}
\end{equation}
Otherwise, the gradient component is positive or negative, depending on combinations of whether $h(x_i)>\phi$ or $h(x_i)<\phi$, and $x_i<\frac{r}{r+k}$ or $x_i>\frac{r}{r+k}$.

The signs of the gradient components determine a preferred location in each compact interval $\overline{K}_i$ ($\overline{K}_i$ is the closure of $K_i$) where probability mass should be assigned to minimize $\phi$. This restricts the possible functional forms of $\phi$ that equal the infimum, $\phi^*$, of $\phi$. 
Probability $p_i$ could be placed at $y_{i-1}$ if $\partial\phi/\partial x_i>0$ in a neighborhood of $y_{i-1}$, or it could be placed at $y_i$ if $\partial\phi/\partial x_i<0$ in a neighborhood of $y_{i}$. It could also be placed at $x$, if $x$ is a stationary point (i.e. $\partial\phi/\partial x_i=0$ at $x_i=x$) at which $\phi$ has a local infimum. 

Whenever the $p_i$s are assigned within each $\overline{K}_i$, there are only two stationary points in $[0,1]$ that satisfy $\partial \phi/\partial x_i=0$~---~one point which is a local infimum, and the other a local supremum. This follows from the graph of $h$, which we deduce now.

$h$ is continuously differentiable on its domain. The derivative of $h$ shows $h$ is monotonically decreasing over $[0,\frac{r}{r+m+k})$ and $(\frac{r}{r+k},1]$, and any stationary points of $h$ must lie in $(\frac{r}{r+m+k},\frac{r}{r+k})$. Differentiating $h$ for $x\neq \frac{r}{r+k}$, \begin{equation}
    \dfrac{\mathtt{d} h}{\mathtt{d}x}=(1-x)^m\left(\dfrac{r-x(r+m+k)}{r-x(r+k)}\right)'+\left(\dfrac{r-x(r+m+k)}{r-x(r+k)}\right)\left((1-x)^m\right)'
    \label{eq_grad_h}
\end{equation} 
    Observe, $\left((1-x)^m\right)'=-m(1-x)^{m-1}<0$ and $\left(\frac{r-x(r+m+k)}{r-x(r+k)}\right)'=\frac{-mr}{(r(1-x)-kx)^2}<0$. Thus, $\mathtt{d} h/\mathtt{d}x<0$ for $x<\frac{r}{r+m+k}$, because $r-x(r+m+k)>0$ and ${r-x(r+k)}>0$ whenever $x<\frac{r}{r+m+k}$. Similarly, $\mathtt{d} h/\mathtt{d}x<0$ for $x>\frac{r}{r+k}$, because $r-x(r+m+k)<0$ and ${r-x(r+k)}<0$ whenever $x>\frac{r}{r+k}$. So, $h$ is monotonically decreasing over these intervals, as claimed. However, for $\frac{r}{r+m+k}< x< \frac{r}{r+k}$ (i.e. $r-x(r+m+k)<0$ and ${r-x(r+k)}>0$), the sign of $\mathtt{d} h/\mathtt{d}x$ depends on $r,\ k$, and $m$.
    
    Interestingly, $h(x)<0$ over the interval  $\frac{r}{r+m+k}< x< \frac{r}{r+k}$. Furthermore, $h$ has, at most, a pair of stationary points in this interval; because, from \eqref{eq_grad_h}, $\mathtt{d} h/\mathtt{d}x=0$ non-trivially if and only if\begin{equation*}x=\dfrac{2r^2+(2k+m+1)r\pm\sqrt{-4rk^2-4kr(m+r)+r^2(m-1)^2}}{2(r+k)(r+m+k)}\end{equation*}
These stationary points are real \emph{iff} $(-4rk^2-4kr(m+r)+r^2(m-1)^2)\geqslant0$. Otherwise, there are no stationary points: the points are a complex conjugate pair. 

When real, this pair of points lies in $(\frac{r}{r+m+k},\frac{r}{r+k})$. The smaller of the two points is greater than $\frac{r}{r+m+k}$, because 
\begin{align*}
    &\dfrac{2r^2+(2k+m+1)r-\sqrt{-4rk^2-4kr(m+r)+r^2(m-1)^2}}{2(r+k)(r+m+k)}>\frac{r}{r+m+k}\\
    &\iff r(m+1)-\sqrt{-4rk^2-4kr(m+r)+r^2(m-1)^2}>0
\end{align*} The last inequality is true because $r(m+1)>r(m-1)$. 
Analogously, the larger of the two points is less than $\frac{r}{r+k}$, because 
\begin{align*}
    &\dfrac{2r^2+(2k+m+1)r+\sqrt{-4rk^2-4kr(m+r)+r^2(m-1)^2}}{2(r+k)(r+m+k)}<\frac{r}{r+k}\\
    &\iff \dfrac{-r(m-1)+\sqrt{-4rk^2-4kr(m+r)+r^2(m-1)^2}}{2(r+k)(r+m+k)}<0\\
    &\iff -r(m-1)+\sqrt{-4rk^2-4kr(m+r)+r^2(m-1)^2}<0
\end{align*} The last inequality is true because the radicand subtracts the positive number $4rk^2+4kr(m+r)$ from $r^2(m-1)^2$, so the square-root of the radicand must be smaller than $r(m-1)$. We conclude that both stationary points of $h$ lie in $(\frac{r}{r+m+k},\frac{r}{r+k})$.

 So far, we have shown $h$ is monotonically decreasing  on $[0,\frac{r}{r+m+k})$ and $(\frac{r}{r+k},1]$. Notice, from the definition of $h$, $h(x)\geqslant 0$ over these intervals. We have also shown $h$ has a pair of stationary points (if any) in $(\frac{r}{r+m+k},\frac{r}{r+k})$. And, over this interval, $h<0$ by definition. Altogether, this means \eqref{eq_stationarity_cond} has a root~---~$\phi$ can equal $h$~---~only where $h$ is positive and monotonically decreasing; that is, only over $[0,1]\setminus(\frac{r}{r+m+k},\frac{r}{r+k})$. In fact, there are two reasons why $0<\phi<1$, so that $\phi$ is bounded by $h$:
 \textbf{i)} the terms in $\phi$'s numerator are strictly less than the corresponding terms in its denominator; \textbf{ii)} both numerator and denominator are strictly positive. 
 
 Figure~\ref{fig_function_h} illustrates the graph of $h$. There is an $x^*\in(\frac{r}{r+k},1]$ such that $h(x^*)=1$. So, $h$ is $0$ at $1$ and $\frac{r}{r+m+k}$, while $h$ is $1$ at $0$ and $x^*$. 
 Moreover, for an arbitrary value of $\phi$, denoted $\hat{\phi}$, this value is represented as a horizontal line at height $\hat{\phi}$. And, for any $x_u\in[x^*,1]$ such that $h(x_u)=\hat{\phi}$, a corresponding $x_l\in[0,\frac{r}{r+m+k}]$ also satisfies $h(x_l)=\hat{\phi}$. Since $h$ bounds $\phi$, and the horizontal line of height $\hat{\phi}$ intersects $h$ at exactly two points, these imply the functional form of $\phi$ when minimized must be based on exactly two components of the gradient of $\phi$ vanishing. The local supremum acts like a ``source'' that ``repels'' probability masses in nearby intervals, while the local infimum acts like a ``sink'' that ``attracts'' probability masses. This proves that at most two components of the gradient have non-trivial roots.


\def\rval{1}
\def\mval{3}
\def\kval{2}

\def\xsing{\fpeval{\rval / (\rval + \kval)}}

\def\xroot{\fpeval{\rval / (\rval + \mval + \kval)}}

\begin{figure}[htbp!]
\begin{center}
\scalebox{0.85}
{\begin{tikzpicture}
  \begin{axis}[
    width=13cm,
    height=8cm,
    domain=0:1,
    samples=400,
    xlabel={$x$},
    ylabel={$h(x)$},
    axis lines=middle,
    xtick distance=1,
    ytick distance=1,
    enlargelimits=true,
    tick style={black},
    ymin=-1, ymax=1,
  ]

  \draw[line width=1mm,gray] ({axis cs:0,0.7}) -- ({axis cs:1,0.7});
  \node[black] at (axis cs:-0.01,0.7) [anchor=east] {$\pmb{\hat{\phi}}$};

  \addplot[
    domain=0:\fpeval{\xsing - 0.001},
    line width = 0.5mm,
    black,
  ]
  {(1 - x)^\mval * (\rval - x*(\mval + \kval + \rval)) / ( \rval - x*(\kval + \rval) )};

  \addplot[
    domain=\fpeval{\xsing + 0.001}:0.999,
    line width = 0.5mm,
    black,
  ]
  {(1 - x)^\mval * (\rval - x*(\mval + \kval + \rval)) / ( \rval - x*(\kval + \rval) )};


  \draw[dashed,thick,black] ({axis cs:\xsing,-0.9}) -- ({axis cs:\xsing,1.1});
  \node[black] at (axis cs:\xsing+0.003,-1.06) {$\pmb{\frac{r}{r+k}}$};

  \draw[dashed,thick,black] ({axis cs:\xsing+0.095,-0.3}) -- ({axis cs:\xsing+0.1,1});
  \node[black] at (axis cs:\xsing+0.105,-0.4)  {$\pmb{x^\ast}$};

  \draw[dashed,thick,black] ({axis cs:\xroot,-0.5}) -- ({axis cs:\xroot,0});
  \node[black] at (axis cs:\xroot,-0.65) {$\pmb{\frac{r}{r+m+k}}$};

  \draw[dashed,thick,black] ({axis cs:0.05,0.7}) -- ({axis cs:0.05,0});
  \node[black] at (axis cs:0.06,-0.1) {$\pmb{x_l}$};

  \draw[dashed,thick,black] ({axis cs:0.465,0.7}) -- ({axis cs:0.465,0});
  \node[black] at (axis cs:0.479,-0.1) {$\pmb{x_u}$};

  \end{axis}
\end{tikzpicture}}
\end{center}\caption[Two-root structure of $h(x)=\widehat{\phi}$]{An example graph of $h(x) = (1 - x)^m\left(\frac{r - x(m + k + r)}{r - x(k + r)}\right)$. Reproduced from \cite{salako2025conservative}.} \label{fig_function_h}
\end{figure}

 Consequently, to minimise $\phi$, all of the $x_i$ must be assigned in their respective $K_i$ as dictated by a ``source'' and ``sink'' pair. All $x_i$ to the left of the ``source'' must be located at the lower endpoints of their respective $K_i$ intervals, while $x_i$ between the ``source'' and  ``sink'' must be located at the upper endpoints of their $K_i$. The $x_i$ for the interval containing the ``source'' must be located at one of the endpoints of the interval~---~whichever endpoint gives the lower objective function value. Analogous reasoning in terms of the ``sink'' requires all $x_i$ to the right of the ``sink'' to be located at the lower endpoints of their respective $K_i$. The $x_i$ in the same interval as the ``sink'' should be located \emph{at} the ``sink''.
 


In interval $K_{1}$, $x_1$ must be $0$ when $n\geqslant 2$. Indeed, express $\phi$ as a function of $\alpha$, \begin{equation*}
    \phi(\alpha)=(1-x_1)^{m}\dfrac{g(x_1)p_1}{\sum_{i=1}^ng(x_i)p_i}+\sum_{j=2}^{n}(1-x_j)^{m}\dfrac{g(x_j)p_j}{\sum_{i=1}^n g(x_i)p_i}=(1-x_1)^{m}\alpha+(1-\alpha)\beta
\end{equation*} where  $\alpha=g(x_1)p_1/\sum_{i=1}^n g(x_i)p_i$ and $\beta=\sum_{j=2}^{n}(1-x_j)^{m}g(x_j)p_j/\sum_{i=2}^n g(x_i)p_i$. Since $(1-x_1)^m\geqslant(1-x_2)^m\geqslant\dots\geqslant(1-x_n)^m$, for all $\alpha\in[0,1]$, we have $\phi(\alpha)\geqslant \beta$. Thus, $\phi(\alpha)\geqslant \phi(0)$, and $\phi$ is smallest necessarily when $x_1=0$ in $\overline{K}_1$. Otherwise, when $n=1$ instead, $\phi$ is $(1-x_1)^m$, so $x_1=1$ minimizes $\phi$. 

All of the foregoing justifies there being only two possible functional forms, $\tilde{\phi}_1$ and $\tilde{\phi}_2$, for $\phi$ at its infima. Define $x^*_1$ to be $\max\{x^*,y_1\}$ and choose $x\in[x^*_1,1]$. Denote the interval containing $x$ as the ${j_2}^{th}$ interval, and the ${j_1}^{th}$ interval as the interval containing the corresponding $x_l$ under $h$. Then, $\tilde{\phi_1}:[x^*_1,1]\longrightarrow[0,1]$ and $\tilde{\phi_2}:[x^*_1,1]\longrightarrow[0,1]$ are the functional forms,  \begin{align*}  &\tilde{\phi_1}(x):=
    \begin{cases}
  \dfrac{\sum\limits_{i=2}^{j_{1}}f(y_{i-1})p_i+\sum\limits_{i=j_{1}+1}^{j_{2}-1}f(y_{i})p_i+f(x)p_{j_2}+\sum\limits_{i=j_{2}+1}^{n}f(y_{i-1})p_i}{\sum\limits_{i=2}^{j_{1}}g(y_{i-1})p_i+\sum\limits_{i=j_{1}+1}^{j_{2}-1}g(y_{i})p_i+g(x)p_{j_2}+\sum\limits_{i=j_{2}+1}^{n}g(y_{i-1})p_i}\,,\hfill j_1<j_2\\
     \dfrac{\sum\limits_{i=2}^{n}f(y_{i-1})p_i}{\sum\limits_{i=2}^{n}g(y_{i-1})p_i}\,,\hfill j_1=j_2
\end{cases}
\\
  &\tilde{\phi_2}(x):=\begin{cases}
   \dfrac{\sum\limits_{i=2}^{j_1-1}\!\!f(y_{i-1})p_i+\sum\limits_{i=j_{1}}^{j_{2}-1}\!f(y_{i})p_i+f(x)p_{j_2}+\sum\limits_{i=j_2+1}^{n}\!\!f(y_{i-1})p_i}{\sum\limits_{i=2}^{j_1-1}\!\!g(y_{i-1})p_i+\sum\limits_{i=j_{1}}^{j_{2}-1}\!g(y_{i})p_i+g(x)p_{j_2}+\sum\limits_{i=j_2+1}^{n}\!\!g(y_{i-1})p_i}\,,\hfill  j_1<j_2\\
    \dfrac{\sum\limits_{i=2}^{j_{2}-1}f(y_{i-1})p_i+f(x)p_{j_2}+\sum\limits_{i=j_{2}+1}^{n}f(y_{i-1})p_i}{\sum\limits_{i=2}^{j_{2}-1}g(y_{i-1})p_i+g(x)p_{j_2}+\sum\limits_{i=j_{2}+1}^{n}g(y_{i-1})p_i}\,,\hfill  j_1=j_2 \end{cases}
\end{align*} 
Notice how $j_1$ and $j_2$ can change as $x$ changes, since the domain $[x_1^*,1]$ of $x$ values can intersect multiple $K_i$ intervals. Also note that, since $x_1=0$, we have $f(x_1)=f(0)=0$ and $g(x_1)=g(0)=0$, so these terms do not appear in the functional forms.

\noindent\textbf{Step II:}
$\phi$ has a finite number of local infima. This is equivalent to only a finite number of possible placements of the $p_i$ probabilities in the $\overline{K}_i$ intervals, in such a way that the infimum $\phi^*$ is either $\tilde{\phi}_1$ or $\tilde{\phi}_2$ while, simultaneously, $\phi^*$ satisfies $h(x_i)=\phi^*$ for some $x_i\in\overline{K}_i$. To show this, consider all possible placements of the $p_i$ consistent with the following three cases:

\noindent Let the interval containing $x^*$ be $K_{i^*}$.
\begin{enumerate}
    \item[] \textbf{case I:} Assume $x$ and $x_1^*$ lie in separate intervals. For each $i\leqslant i^*$ place $p_i$ at an endpoint of $K_i$. Otherwise, place $p_i$ as close as possible to $x$ for the other intervals above $K_{i^*}$; in particular, place $p_i$ at $x$ for the interval containing $x$ (see Figure~\ref {fig_mass_placement_case1}).

\begin{figure}[htbp!]\centering
\begin{tikzpicture}
  \draw[->] (0,0) -- (12,0);

 \draw (0,0) -- (0,-0.15) node[below=5pt] at (0,0) {$0$};
  \foreach \x/\label in {
    1/{$y_1$},
    2/{$y_2$},
    3/{\dots},
    4/{\dots},
    6/{\dots},
    8/{\dots},
    10/{$y_{n-1}$},
     11/{$1$}
  }{
    \draw (\x,0) -- (\x,-0.15);
    \node[below=5pt] at (\x,0) {\label};
  }

  \node[below=3.5pt] at (5.1,0) {$x^*$};

  \node[below=5pt] at (7,0) {$x$};

  \draw [decorate,decoration={brace,amplitude=18pt},yshift=0.3cm,line width=1.5pt]
    (0,0) -- (6,0) 
    node[midway,above=17pt,text width=4cm,align=center]
    {\small\textit{place mass at the endpoints of intervals}};
    
  \draw [decorate,decoration={brace,amplitude=18pt},yshift=0.3cm,line width=1.5pt]
    (6,0) -- (11,0) 
    node[midway,above=17pt,text width=6cm,align=center]
    {\small\textit{place mass as close to $x$ as possible. In particular, place mass at $x$ for the interval containing $x$}};
\end{tikzpicture}
    \caption[Separate-interval mass placement for $x$ and $x_1^*$]{Illustration of  possible probability mass placement: case I. Reproduced from \cite{salako2025conservative}.}\label{fig_mass_placement_case1}
\end{figure}
      \item[] \textbf{case II:} Assume $x$ and $x_1^*$ are in the same interval. For each $i< i^*$ place $p_i$ at an endpoint of $K_i$. Place $p_i$ as close as possible to $x$ in all other intervals; in particular, place $p_{i^*}$ at $x$ in interval $K_{i^*}$ containing $x$ and $x^*$ (see Figure~\ref {fig_mass_placement_case2}).
      \begin{figure}[ht!]\centering
      \begin{tikzpicture}
  \draw[->] (0,0) -- (12,0);

   \draw (0,0) -- (0,-0.15) node[below=5pt] at (0,0) {$0$};
  \foreach \x/\label in {
    1/{$y_1$},
    3/{$y_2$},
    4/{\dots},
    6/{\dots},
    7/{\dots},
    9/{\dots},
    10/{$y_{n-1}$},
    11/{$1$}
  }{
    \draw (\x,0) -- (\x,-0.15);
    \node[below=5pt] at (\x,0) {\label};
  }

  \node[below=2.5pt] at (4.6,0) {$x^*$};
  
    \node[below=2.5pt] at (8,0) {\space};

  \node[below=5pt] at (5.4,0) {$x$};

  \draw [decorate,decoration={brace,amplitude=18pt},yshift=0.3cm,line width=1.5pt]
    (0,0) -- (4,0) 
    node[midway,above=19pt,text width=5.5cm,align=center]
    {\small\textit{place mass at the endpoints of intervals}};
  \draw [decorate,decoration={brace,amplitude=18pt},yshift=0.3cm,line width=1.5pt]
    (4,0) -- (6,0) 
    node[midway,above=19pt,text width=2cm,align=center]
    {\small\textit{place mass at $x$}};
   
  \draw [decorate,decoration={brace,amplitude=19pt},yshift=0.3cm,line width=1.5pt]
    (6,0) -- (11,0) 
    node[midway,above=19pt,text width=5.5cm,align=center]
    {\small\textit{place mass at the lower endpoints of intervals}};
\end{tikzpicture}
          \caption[Shared-interval mass placement at $x$]{Illustration of possible probability mass placement: case II. Reproduced from \cite{salako2025conservative}.}
          \label{fig_mass_placement_case2}
      \end{figure}
      \item[] \textbf{case III:} Assume $x$ and $x_1^*$ are in the same interval. For each $i< i^*$ place mass at an endpoint of $K_i$. Place $p_{i^*}$ at the lower endpoint of the interval $K_{i^*}$ containing $x$ and $x^*$. Otherwise, place mass as close as possible to $x$ in the intervals above $K_{i^*}$ (see Figure~\ref {fig_mass_placement_case3}).
\begin{figure}[ht!]\centering 
   \begin{tikzpicture}
  \draw[->] (0,0) -- (12,0);

   \draw (0,0) -- (0,-0.15) node[below=5pt] at (0,0) {$0$};
  \foreach \x/\label in {
    1/{$y_1$},
    3/{$y_2$},
    4/{\dots},
    6/{\dots},
    7/{\dots},
    8/{\dots},
    10/{$y_{n-1}$},
    11/{$1$}
  }{
    \draw (\x,0) -- (\x,-0.15);
    \node[below=5pt] at (\x,0) {\label};
  }

  \node[below=2.5pt] at (4.6,0) {$x^*$};

  \node[below=5pt] at (5.4,0) {$x$};

  \draw [decorate,decoration={brace,amplitude=18pt},yshift=0.3cm,line width=1.5pt]
    (0,0) -- (4,0) 
    node[midway,above=17pt,text width=4.6cm,align=center]
    {\small\textit{place mass at the endpoints of intervals}};
   
  \draw [decorate,decoration={brace,amplitude=18pt},yshift=0.3cm,line width=1.5pt]
    (4,0) -- (11,0) 
    node[midway,above=17pt,text width=7cm,align=center]
    {\small\textit{place mass at the lower endpoints of intervals}};
\end{tikzpicture}

	\caption[Shared-interval lower-endpoint mass placement]{Illustration of possible probability mass placement: case III. Reproduced from \cite{salako2025conservative}. 
    }
	\label{fig_mass_placement_case3}
\end{figure}
\end{enumerate}
    Placements of the $p_i$s that are consistent with these three cases define functional forms. For example, from case (I), define $\phi_{z_i,\ldots,z_{i^*},j_{2}}: \overline{K}_{j_2}\cap[x^*_1,1]\longrightarrow [0,1]$ as 
    \begin{align*}
    \phi_{z_i,\ldots,z_{i^*},j_2}(x)=\dfrac{\sum\limits_{i=1}^{i^*}f(z_i)p_i+\sum\limits_{i=i^*+1}^{j_{2}-1}f(y_i)p_i+f(x_i)p_{j_{2}}+\sum\limits_{i=j_{2}+1}^{n}f(y_{i-1})p_i}{\sum\limits_{i=1}^{i^*}g(z_i)p_i+\sum\limits_{i=i^*+1}^{j_{2}-1}g(y_i)p_i+g(x_i)p_{j_{2}}+\sum\limits_{i=j_{2}+1}^{n}g(y_{i-1})p_i}
    \end{align*}
where $j_2$ is an arbitrary choice from $\{(i^*+1),\ldots,n\}$, while $z_1,\ldots,z_{i^*}$ are an arbitrary placement of probability masses at an endpoint in each of the intervals $K_1,\ldots, K_{i^*}$. Analogous univariate quotient functions can be defined based on cases (II) and (III). Altogether, there is a finite number of such quotient functions. Note that $\tilde{\phi}_1$ and  $\tilde{\phi}_2$ must agree with some of these functional forms. So, the infimum of $\phi$ must also agree with some of these functional forms. 

Consider all such functional forms that agree on the locations in the intervals to the left of $x_1^*$; for example, the functions $\phi_{z_i,\ldots,z_{i^*},j_2}$ for all $j_2$. These define a piecewise continuous function $F:[x^*_1,1]\rightarrow [0,1]$, $F(x)=\phi_{z_i,\ldots,z_{i^*},j_2}(x)-h(x)$ where $x\in \overline{K}_{j_2}$. Since $F(x_1^*)$ is non-positive (because $h(x_1^*)\geqslant(1-x_1^*)^m\geqslant\phi_{z_i,\ldots,z_{i^*},i^*}(x_1^*)$ for $x\geqslant x_1^*$) and $F(1)$ is non-negative (because $h(1)=0$ and the $\phi_{z_i,\ldots,z_{i^*},j_2}$ are positive), the \acrshort*{ivt} implies that $F(x)=0$ for some $j_2$ interval containing this $x$. Moreover, at such an $x$, $\partial\phi/\partial x_{j_2}(x)$ is zero and $F$ must be increasing, since $F^\prime(x)=\phi^\prime_{z_i,\ldots,z_{i^*},j_2}(x)-h^\prime(x)=\partial\phi/\partial x_{j_2}(x)-h^\prime(x)=-h^\prime(x)>0$. If $x$ lies at an interval boundary, the same inequality holds for the appropriate one-sided derivative of $F$, and adjacent $F$ pieces glue continuously with consistent one-sided slopes; hence, the $F$ root is still simple. Thus, $F=0$ can only be satisfied once over $[x_1^*,1]$. Since there are finitely many such $F$, there are finitely many of these functional forms that satisfy $\phi_{z_i,\ldots,z_{i^*},j_2}=h$, so there are finitely many points in the domain of $\phi$ at which $\phi$ could be at a minimum. These points, up to a choice of endpoint in the $j_1$ interval, must give distinct $\phi$ values: since otherwise, if they have the same value $\hat{\phi}$ of $\phi$, they must have the same value of $\hat{x}\in[x_1^*,1]$ that solves $h(\hat{x})=\hat{\phi}$, and this $\hat{x}$ determines the locations in all other intervals up to a choice of endpoint in interval $j_1$~---~the same functional form follows from the same $\phi$ value. 

Finally, since $h$ is monotonically decreasing over $[x_1^*,1]$, the horizontal lines at the minima values of $\phi$ intersect $h$ at various heights, with only one unique value that has the lowest intersection with $h$: this is the global minimum $\phi^*$.

\noindent\textbf{Step III:} In summary, there exists the unique fixed point consisting of $\phi^*,\,y_{*},\,y_{**}$ that satisfies the system of equations $\phi^* = \underset{x\in[x^*,1]}{\inf} \{\tilde{\phi}(x)\}=\tilde{\phi}(y_{*})$ and $h(y_{**})=\phi^*=h(y_{*})$, for some $i$-indices $j_1,j_2$ such that $y_{**}\in\overline{K}_{j_1}$, $y_{*}\in\overline{K}_{j_2}$ and $y_{**}< \frac{r}{r+m+k}<\frac{r}{r+k}<y_{*}$. And, $\phi$ attains its infimum $\phi^*$ if the prior distribution of $X$ in Theorem\ \ref{thm_CBIwithfails_sol} is used for inference.
\end{proof}

\newpage
\section{Proof of Theorem~\ref{thm_atleastkoutofnbounds}}
\begin{proof}
Let $K=[0,1]^n$ and define $\widetilde K_i:=\{\textbf{x}\in K:x_i\geqslant \tfrac12\}$ for each $i=1,\dots,n$. The upper $k$-out-of-$n$ problem is the optimisation of the success indicator
\[
\mathbf 1\!\left\{\sum_{i=1}^n \mathbf 1_{\widetilde K_i}(\textbf{x})\geqslant k\right\}
\]
under the marginal constraints $\mathbb P(\widetilde K_i)=p_i$. Here, $\widetilde{K}_i$ is the geometric representation of the event $A_i$ for each $i$. By Proposition~\ref{prop_genprob1_v1}, the overlapping sets $\widetilde K_1,\dots,\widetilde K_n$ induce the
atomisation
\[
A_I:=\Bigl(\bigcap_{i\in I}\widetilde K_i\Bigr)\cap
\Bigl(\bigcap_{i\notin I}\widetilde K_i^{\,c}\Bigr),
\qquad I\subseteq[n],
\]
and the problem becomes
\[
\Phi_k
=
\sup_{\substack{p_I\geqslant 0\\ \sum_{I\subseteq[n]}p_I=1\\ \sum_{I\ni i}p_I=p_i,\ i=1,\dots,n}}
\ \sup_{\substack{\mathbf x_I\in A_I\\ p_I>0}}
\sum_{|I|\geqslant k}p_I.
\]
Equivalently, 
\[
\Phi_k
=
1-
\inf_{\substack{p_I\geqslant 0\\ \sum_{I\subseteq[n]}p_I=1\\ \sum_{I\ni i}p_I=p_i,\ i=1,\dots,n}}
\ \inf_{\substack{\textbf{x}_I\in A_I\\ p_I>0}}
\frac{\sum_{I\subseteq[n]} f(\textbf{x}_I)p_I}{\sum_{I\subseteq[n]} g(\textbf{x}_I)p_I},
\]
where
\[
f(\textbf{x}):=\mathbf 1\!\left\{\sum_{i=1}^n \mathbf 1_{\widetilde K_i}(\textbf{x})<k\right\},
\qquad
g(\textbf{x})\equiv 1.
\]
By Proposition~\ref{prop:semicont_refinement}, the atomised domain may be further refined without changing the
objective's optimal value, and by Theorem~\ref{thm_refinedgensol} the corresponding refined problems admit the (E1)--(E2) extremal solution characterisation.

For each $j=1,\dots,k$, define the stage-$j$ indicator objective by
\[
f_j(\textbf{x}):=
\mathbf 1\!\left\{
\sum_{i=1}^{m_j}\mathbf 1_{\widetilde K_i}(\textbf{x})<j
\right\},
\qquad
g_j(\textbf{x})\equiv 1,
\]
where $m_j=n-k+j$. Thus, $f_j$ records failure of a $j$-out-of-$m_j$ objective using only the smallest $m_j$ marginals. This finite sequence of approximants converges pointwise to f in finitely many steps, so the natural extension to an infinite sequence converges uniformly to $f$.

The proof now uses the following three observations in four proof steps.

\smallskip

\noindent
\emph{Observation 1.}
After atomisation via Proposition~\ref{prop_genprob1_v1}, the outermost optimisation can be chosen to enforce $\sum_I p_I=1$, with the inner optimisation producing uncapped solutions that must satisfy the marginal event constraints; the outer problem then truncates the inner solution at $1$.

\smallskip

\noindent
\emph{Observation 2.}
For the inner optimisation, one may restrict attention to \emph{exactly} $k$-contributor
atoms: if a successful atom has $m>k$ contributors, then $m-k$ contributors may have their mass reassigned to atoms consisting of only one marginal event, while the remaining $k$ contributors may have their mass reassigned to a $k$-atom consisting only of these $k$ contributors, without decreasing the objective value.

\smallskip

\noindent
\emph{Observation 3.}
For each $j=1,\dots,k$, one may consider the stage-$j$ reduced objective obtained by
counting only the smallest $m_j=n-k+j$ marginals as genuinely contributing to the
objective function. Equivalently, the largest $k-j$ marginals are treated as already dominating.

\noindent
\paragraph{Step 1: stage $j$ contributes exactly one new quotient form.}
Fix $j\in\{1,\dots,k\}$ and consider the stage-$j$ problem determined by $f_j$.
By Observation~2, we may analyse only exact-$k$ atoms. Inside stage $j$, there may be dominating marginals among the smallest
$m_j$ eligible marginals. Suppose there are $s\in\{0,\dots,j-1\}$ such dominating marginals. Then the number of non-dominating contributors is
\[
t:=j-s\in\{1,\dots,j\}.
\]
Accordingly, the total number of dominating marginals if these $j$-atoms are extended trivially to $k$-atoms is $(k-j)+s=k-t$. Hence, the set of non-dominating marginals has cardinality $n-(k-t)=n-k+t$. Summing the marginal constraints over the non-dominating marginals counts each
successful exact-$k$ atom exactly $t$ times. Therefore, the corresponding (E2) quotient must have the
uncapped form
\[
\frac{\sum_{i=1}^{n-k+t}p_i}{t}
\]
and, by Observation~1, it is capped at $1$. Thus, the stage-$j$ candidate optimal objective values are precisely
\[
U_t=\min\!\left\{\frac{\sum_{i=1}^{n-k+t}p_i}{t},\,1\right\},
\qquad
t=1,\dots,j.
\]
That is, stage $j$ contributes one \emph{new} form $U_j$, with its total candidate set being $\{U_1,\dots,U_j\}$. For $j=k$, one has $m_k=n$, so $f_k=f$ and Observation~3 imposes no further restriction: the stage-$k$ problem is exactly the original inner
exact-$k$ problem obtained after Observation~2.

\noindent
\paragraph{Step 2: each $U_j$ is an upper bound.}
Fix $j\in\{1,\dots,k\}$. Again by Observation~2, it is enough to count only exact-$k$
successful atoms. Any exact-$k$ atom contains at most $k-j$ indices outside the smallest $m_j=n-k+j$ marginals. Hence, every exact-$k$ atom must contain at least $j$ indices from $\{1,\dots,m_j\}$. So, summing the marginal constraints over $\{1,\dots,m_j\}$ counts each exact-$k$ atom at least $j$ times, so $j\,\Phi_k\leqslant \sum_{i=1}^{m_j}p_i$.
Therefore, $\Phi_k\leqslant \frac{\sum_{i=1}^{m_j}p_i}{j}$.
Applying Observation~1 then yields $\Phi_k\leqslant U_j$ $(j=1,\dots,k)$. Thus, each $U_j$ is an upper bound.

\noindent
\paragraph{Step 3: select the stage-$k$ extremal objective value.}
From the foregoing, $\Phi_k$ must be one of $U_1,\dots,U_k$, which is guaranteed by a trivial application of Theorem~\ref{thm_refinedgensol} using the $f_j$ approximants. Since $\Phi_k$ is bounded above by every one of them, it must equal their minimum: $\Phi_k=\min_{1\leqslant j\leqslant k}U_j$.

\noindent
\paragraph{Step 4: characterise the support of extremal priors.}
Finally, temporarily define
\[
j^*:=\begin{cases}\min\{j\in\{1,\ldots,k-1\}:j\in\arg\min\limits_{1\leqslant j\leqslant k}U_j,\;U_j\leqslant p_{n-k+1+j}\},\text{ if }\min\text{exists}\\k,\text{ otherwise}\end{cases}
\]
and $r^*:=k-j^*$.

Assuming $j^*=1$ and $j^*<k$, then $\Phi_k=U_{1}\leqslant p_{n-k+2}<\ldots<p_n$, so the $r^*=k-1$ largest marginal probabilities dominate $\Phi_k$. 

Assume $1<j^*<k$, with $\Phi_k=U_{j^*}$ and $m_{j^*}=n-k+j^*$. Then, by the definition of $j^*$, $U_{j^*}\leqslant p_{m_{j^*}+1}<1$. So, $U_{j^*}=\frac{\sum^{m_{j^*}}_{i=1} p_i}{j^*}\leqslant p_{m_{j^*+1}}<p_{m_{j^*+2}}<\ldots<p_n<1$; there are at least $n-m_{j^*}=n-(n-k+j^*)=k-j^*$ many marginal probabilities greater than or equal to $\Phi_k$. There are no smaller $p_i$s that dominate $U_{j^*}$. By way of contradiction, assume (in particular) that $p_{m_{j^*}}\geqslant U_{j^*}$. Thus $p_{m_{j^*}}\geqslant U_{j^*-1}$. On the other hand, $U_{j^*}< U_{j^*-1}$ by definition of $j^*$ above: that is, we also have $p_{m_{j^*}}<U_{j^*-1}$ which is a contradiction. We conclude, in particular, that $p_{m_{j^*}}<U_{j^*}$; therefore $0<p_1<\ldots<p_{m_{j^*}}<U_{j^*}$. And so, exactly $r^*=k-j^*$ many marginal probabilities are greater than or equal to $\Phi_k$.

If instead $j^*=k$, then $\Phi_k=U_k>p_i$ for $i=1,\ldots,n$. There are no marginal probabilities that dominate $\Phi_k$; i.e. $r^*=0$.


Notice, elementary algebra and the strict monotonicity $p_1<\cdots<p_n$ imply that $U_j>p_{n-k+j+1}$ forces $U_j\geqslant U_{j+1}$, while $U_j\leqslant p_{n-k+j+1}$ forces $U_j\leqslant U_{j+1}$. Thus, $\{U_j\}_{j=1,...,k}$ is decreasing up to the first index satisfying $U_j\leqslant p_{n-k+j+1}$ and nondecreasing thereafter. Consequently, that index is automatically the leftmost minimizer, so the ``$\arg\min$'' part of the $j^*$ definition above is unnecessary.

In summary,
\[
\Phi_k
=
U_{j^*}
=
\min\!\left\{\frac{\sum_{i=1}^{n-r^*}p_i}{k-r^*},\,1\right\},\qquad r^*=\sum_{i=1}^{k-1}{\mathbf{1}}_{p_{n-k+1+i\geqslant\Phi_k}}=k-j^*.
\]
By the definition of the intermediate stage-$j$ optimisation problems, at stage $j^*$, there are exactly $k-j^*=r^*$ dominating marginals that contribute to all $k$-out-of-$n$ events in the support of the extremal prior:
namely, the $r^*$ many $A_i$ events with the largest $p_i$s. The remaining $j^*$ marginals in these $k$-out-of-$n$ events come from the $n-r^*$ many $A_i$ events with the smallest $p_i$s. This characterises the support of the extremal priors (and thus is an (E1) characterisation) up to this requirement on $k$-out-of-$n$ events.
\end{proof}
\newpage

\section{Proof of Theorem~\ref{thm:2d-gk-fixed-point-extension-corrected}}
Some preliminary definitions and constructs. Let $\mathcal J_m$ be a dyadic partition of $U$, and define an $S_C$-adapted partition $\mathcal B_m^-:= \{J\cap S_C,\ J\cap S_C^c:J\in\mathcal J_m\}$, discarding $\mu$-null cells. For $B\in\mathcal B_m^-$, write $p_B:=\mu(B)$ and $K_B:=K\cap(B\times V)$. Let \[ \mathcal D_m^-:= \{\mathbb P\in\mathcal P(K):\mathbb P(K_B)=p_B,\ B\in\mathcal B_m^-\}. \] We will consider the stage-$m$ lower fixed-point extremal value \[ \Lambda_m^-:= \inf_{\mathbb P\in\mathcal D_m^-}\frac{{\mathbb E}_{\mathbb P}[f]}{{\mathbb E}_{\mathbb P}[g]}. \] For $B\in\mathcal B_m^-$, write \[ \ell_B:=\inf_{\varphi\in B}L(\varphi), \qquad u_B:=\sup_{\varphi\in B}L(\varphi). \] Set \[ A_m^\ell:=\sum_{B\subseteq S_C}p_B\ell_B, \qquad A_m^u:=\sum_{B\subseteq S_C}p_Bu_B, \] and \[ B_m^\ell:=\sum_{B\subseteq S_C^c}p_B\ell_B, \qquad B_m^u:=\sum_{B\subseteq S_C^c}p_Bu_B. \] 

\begin{proof} First, reduce the finitely constrained problem to a discrete extremal-prior problem; then identify the discrete support configurations through Dinkelbach fixed-point conditions. Finally, pass to the limit by a sandwiching argument. 

\emph{Step 1: stage-m discrete optimisation.} Fix $m$. The class $\mathcal D_m^-$ imposes finitely many cell-mass constraints, $\mathbb P(K_B)=p_B$ for $B\in\mathcal B_m^-$. This finite relaxation is a finite distributional-constraint problem. The infimum over $\mathcal D_m^-$ is given by a discrete extremal distribution, $\mathbb P_m^-=\sum_{B\in\mathcal B_m^-}p_B\delta_{\mathbf{x}_B}$ for $\mathbf{x}_B\in \overline{K_B}$. Hence, 
\begin{equation}
\Lambda_m^-= \inf_{B\in\mathcal B_m^-,\ \mathbf{x}_B\in K_B} \frac{\sum_{B\in\mathcal B_m^-}p_Bf(\mathbf{x}_B)} {\sum_{B\in\mathcal B_m^-}p_Bg(\mathbf{x}_B)}. 
\label{eqn_stagemdiscretelambda}
\end{equation} 
This is the stage-$m$ discrete objective constrained optimisation problem. 

\emph{Step 2: Dinkelbach fixed-point characterisation.} Since $0\leqslant f\leqslant g$, the stage-$m$ objective value belongs to $[0,1]$; so, it is enough to compute the following Dinkelbach residual on $\lambda\in[0,1]$. Define $h_{\lambda,f,g}(\varphi,\eta) = f(\varphi,\eta)-\lambda g(\varphi,\eta) = L(\varphi)\{\mathbf 1_C(\varphi,\eta)-\lambda\}$. So, the stage-$m$ Dinkelbach residual is \[ F_m^-(\lambda) := \inf_{\mathbf{x}_B\in K_B} \sum_{B\in\mathcal B_m^-} p_Bh_{\lambda,f,g}(\mathbf{x}_B). \] When rewritten in Dinkelbach form, the finite objective in \eqref{eqn_stagemdiscretelambda} is separable across the $K_B$ cells, so  \[ F_m^-(\lambda) = \sum_{B\in\mathcal B_m^-} p_B \inf_{\mathbf{x}\in K_B}h_{\lambda,f,g}(\mathbf{x}). \] 
$\Lambda_m^-$ is the unique zero of $F_m^-$; we deduce its value as follows. If $B\subseteq S_C$, then $K_\varphi\subseteq C_\varphi$ for every $\varphi\in B$. Hence, every feasible point in $K_B$ lies in $C$, and $h_{\lambda,f,g}(\varphi,\eta) = (1-\lambda)L(\varphi)$ on $K_B$. Therefore, $\inf_{\mathbf{x}\in K_B}h_{\lambda,f,g}(\mathbf{x}) = (1-\lambda)\ell_B$. If $B\subseteq S_C^c$, then for every $\varphi\in B$ there exists some $\eta\in K_\varphi\setminus C_\varphi$. Thus, the event $C$ can be avoided over the cell. Moreover, on $C^c$, we have $h_{\lambda,f,g}(\varphi,\eta)=-\lambda L(\varphi)$, so the minimising support point is chosen in $K_B\cap C^c$ at maximal projected likelihood: $\inf_{\mathbf{x}\in K_B}h_{\lambda,f,g}(\mathbf{x}) = -\lambda u_B$. Consequently, \[ F_m^-(\lambda) = (1-\lambda)A_m^\ell-\lambda B_m^u. \] Thus, the unique root of $F_m^-(\lambda)=0$ is $\Lambda_m^-:= \frac{A_m^\ell}{A_m^\ell+B_m^u}$. 

\emph{Step 3: Monotone lower bounds on $I^-$.} Note that, by definition, $\mathcal D\subseteq\mathcal D_m^-$. Therefore, \[ \Lambda_m^-= \inf_{\mathbb P\in\mathcal D_m^-} \frac{{\mathbb E}_{\mathbb P}[f]}{{\mathbb E}_{\mathbb P}[g]} \leqslant \inf_{\mathbb P\in\mathcal D} \frac{{\mathbb E}_{\mathbb P}[f]}{{\mathbb E}_{\mathbb P}[g]} = I^-. \] Since the $S_C$-adapted stage-m partitions are nested, i.e. $\mathcal D\subseteq\mathcal D_{m+1}^-\subseteq\mathcal D_m^-$, we further have \[ \Lambda_m^-\leqslant\Lambda_{m+1}^-\leqslant I^-. \] 

\emph{Step 4: Upper bound on $I^-$.} There exist $\mathbb P_n\in\mathcal D$ that satisfy the vanishing limit condition in Lemma~\ref{lem:choquet-dyadic-vanishing-leakage}. The objective function denominator value w.r.t. $\mathbb P_n$ is fixed for all $n$ (because $g=L$ depends only on $\varphi$): \[ {\mathbb E}_{\mathbb P_n}[g] = \int_U L(\varphi)\mu(d\varphi). \] Moreover, \[ {\mathbb E}_{\mathbb P_n}[f] = \int_{S_C}L(\varphi)\mu(d\varphi) + {\mathbb E}_{\mathbb P_n}\!\left[ L\mathbf 1_C\mathbf 1_{S_C^c} \right]. \] Since $L\leqslant M$, the sandwich $0\leqslant {\mathbb E}_{\mathbb P_n}\!\left[ L\mathbf 1_C\mathbf 1_{S_C^c} \right] \leqslant M\,\mathbb P_n((S_C^c\times V)\cap C) \to0$ holds. Hence, \[ \limsup_{n\to\infty} \frac{{\mathbb E}_{\mathbb P_n}[f]}{{\mathbb E}_{\mathbb P_n}[g]} = \frac{\int_{S_C}L\,d\mu} {\int_U L\,d\mu}. \] Now, $\int_{S_C}L\,d\mu\leqslant A_m^u$ and $\int_{S_C^c}L\,d\mu\geqslant B_m^\ell$. Therefore, \[ \frac{\int_{S_C}L\,d\mu} {\int_U L\,d\mu} = \frac{\int_{S_C}L\,d\mu} {\int_{S_C}L\,d\mu+\int_{S_C^c}L\,d\mu} \leqslant \frac{A_m^u}{A_m^u+B_m^\ell} =: \Gamma_m^-. \] Because $I^-\leqslant {\mathbb E}_{\mathbb P_n}[f]/{\mathbb E}_{\mathbb P_n}[g]$ for every $n$, it follows that \[ I^-\leqslant\Gamma_m^-. \] Combining this with Step 3 gives the sandwich 
\begin{equation}
\Lambda_m^-\leqslant I^-\leqslant\Gamma_m^-.
\label{eqn_LambdaIGammasandwich}
\end{equation}

\emph{Step 5: Sandwich \eqref{eqn_LambdaIGammasandwich} converges.} Write $A:=\int_{S_C}L\,d\mu$ and $B:=\int_{S_C^c}L\,d\mu$. Since $L$ is continuous on compact $U$, it is uniformly continuous. Use \[ \omega_L(\delta) := \sup\{|L(\varphi)-L(\varphi')|:\varphi,\varphi'\in U,\ |\varphi-\varphi'|\leqslant\delta\}. \] Every adapted cell $B\in\mathcal B_m^-$ lies inside a dyadic interval of length $2^{-m}$. So \[ 0\leqslant A_m^u-A_m^\ell \leqslant \omega_L(2^{-m})\mu(S_C) \to0, \] and \[ 0\leqslant B_m^u-B_m^\ell \leqslant \omega_L(2^{-m})\mu(S_C^c) \to0. \] Also, \[ A_m^\ell,A_m^u\to A, \qquad B_m^\ell,B_m^u\to B. \] Therefore, $\Lambda_m^-= \frac{A_m^\ell}{A_m^\ell+B_m^u} \to \frac{A}{A+B}\ $ and $\ \Gamma_m^-= \frac{A_m^u}{A_m^u+B_m^\ell} \to \frac{A}{A+B}$, thus \eqref{eqn_LambdaIGammasandwich} implies \[I^-= \frac{A}{A+B} = \frac{\int_{S_C}L\,d\mu} {\int_U L\,d\mu}.\] 

\emph{Step 6: weak convergence of prior and posterior $\varphi$-marginals of stage-m extremal priors.} Let $\mathbb P_m^-=\sum_{B\in\mathcal B_m^-}p_B\delta_{\mathbf{x}_B^-}$ (where $\mathbf{x}_B^-=(\varphi^-_B,\eta^-_B)\in\overline{K_B}$ with $\varphi^-_B\in\overline B$) be a stage-$m$ extremal prior. For every continuous bounded $\psi:U\to\mathbb R$, \[ \left| \sum_Bp_B\psi(\varphi_B^-) - \int_U\psi(\varphi)\mu(d\varphi) \right| \leqslant \omega_\psi(2^{-m}), \] where $\varphi_B^-\in\overline B$ and $\omega_\psi(\delta)
:=
\sup\{|\psi(\varphi)-\psi(\varphi')|:
\varphi,\varphi'\in U,\ |\varphi-\varphi'|\leqslant \delta\}$. Since $\psi$ is continuous on compact $U$, $\omega_\psi(\delta)\to0$ as $\delta\downarrow0$. That is, $(\mathbb P_m^-)_\varphi\Rightarrow\mu$. 

Similarly, since $L\psi$ is continuous, \[ \sum_Bp_BL(\varphi_B^-)\psi(\varphi_B^-) \to \int_U L(\varphi)\psi(\varphi)\mu(d\varphi), \] and \[ \sum_Bp_BL(\varphi_B^-) \to \int_U L(\varphi)\mu(d\varphi). \] Therefore, the posterior $\varphi$-marginals satisfy \[ \frac{L(\varphi)(\mathbb P_m^-)_\varphi(d\varphi)} {\int_U L(\psi)(\mathbb P_m^-)_\varphi(d\psi)} \Rightarrow \frac{L(\varphi)\mu(d\varphi)} {\int_U L(\psi)\mu(d\psi)}.\qedhere \]
\end{proof}

\begin{lemma}[Choquet--dyadic construction of a vanishing-leakage sequence]
\label{lem:choquet-dyadic-vanishing-leakage}
Let $G:=K\setminus C$ and $H^-:=(S_C^c\times V)\cap C$. Under the assumptions of
Theorem~\ref{thm:2d-gk-fixed-point-extension-corrected}, there exists a
sequence $\{\mathbb P_m\}_{m\geqslant1}\subseteq\mathcal D$ such that $\mathbb P_m(H^-)\leqslant 2^{-m}$,
and hence $\mathbb P_m\bigl((S_C^c\times V)\cap C\bigr)\longrightarrow0$.

More precisely, for every $m\geqslant1$, there exists compact set $F_m\subseteq G\cap(S_C^c\times V)$, $\mathbb P_m\in\mathcal D$, and, for every
$r\geqslant1$, a finitely supported prior
$\mathbb Q_{m,r}\in\mathcal D_r^-$ such that:
\begin{enumerate}
\item $\mu\bigl(S_C^c\setminus\pi_U(F_m)\bigr)\leqslant2^{-m}$;
\item
$\mathbb P_m(H^-)\leqslant2^{-m},
\qquad
\mathbb Q_{m,r}(H^-)\leqslant2^{-m}$;
\item the two-dimensional dyadic cell-mass arrays associated with
$\{\mathbb Q_{m,r}\}_{r\geqslant1}$ are projectively consistent under
refinement;
\item for every continuous \(\Psi:K\to\mathbb R\),
\[
\left|
\int_K\Psi\,d\mathbb Q_{m,r}
-
\int_K\Psi\,d\mathbb P_m
\right|
\leqslant
\omega_\Psi\!\left(\sqrt{2}\,2^{-r}\right),
\]
where
\[
\omega_\Psi(\delta)
:=
\sup\left\{
|\Psi(\mathbf x)-\Psi(\mathbf y)|:
\mathbf x,\mathbf y\in K,\
\|\mathbf x-\mathbf y\|\leqslant\delta
\right\}.
\]
Consequently, for each fixed $m$, $\,\mathbb Q_{m,r}\Rightarrow\mathbb P_m\,$ as $\,r\to\infty$.
\end{enumerate}
\end{lemma}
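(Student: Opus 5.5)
The construction will be a measurable selection realised on compact cores, followed by dyadic discretisation. First, I would show that the set $S_C^c=\pi_U(G)$ is universally measurable; it is Borel by assumption. Here $G=K\setminus C$ is Borel and $\pi_U$ is the projection. Next, I would apply Choquet's capacitability theorem \cite{Choquet1954TheoryOfCapacities} to the capacity $E\mapsto\mu^*(\pi_U(E))$ on subsets of $K$. Since $G\cap(S_C^c\times V)$ is Borel, hence analytic, this gives, for each $m$, a compact set $F_m\subseteq G\cap(S_C^c\times V)$ with $\mu(S_C^c\setminus\pi_U(F_m))\leqslant 2^{-m}$. This is item~1. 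The projection $\pi_U(F_m)$ is compact, and the section map $\varphi\mapsto (F_m)_\varphi$ is upper semicontinuous with nonempty compact values on $\pi_U(F_m)$. Hence the Kuratowski--Ryll-Nardzewski theorem gives a Borel selector $s_m:\pi_U(F_m)\to V$ with $(\varphi,s_m(\varphi))\in F_m\subseteq G$. Concretely, one can take $s_m(\varphi):=\min (F_m)_\varphi$, which is lower semicontinuous and hence Borel.

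With the selector in hand I would define $\mathbb P_m$ fibrewise. On $\pi_U(F_m)$, put the point mass at $(\varphi,s_m(\varphi))$. On the remaining set $U\setminus\pi_U(F_m)$, use any fixed Borel selector $t$ of the nonempty compact sections $K_\varphi$; this exists because $K$ is compact, and one can again take $t(\varphi)=\min K_\varphi$. Thus
\[
\mathbb P_m(A):=\int_U \mathbf 1_A\bigl(\varphi,\sigma_m(\varphi)\bigr)\,\mu(d\varphi),
\qquad
\sigma_m:=s_m\mathbf 1_{\pi_U(F_m)}+t\,\mathbf 1_{U\setminus\pi_U(F_m)}.
\]
Then $\mathbb P_m\in\mathcal D$. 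Moreover, $\mathbb P_m(H^-)$ counts only those $\varphi\in S_C^c$ whose selected point lies in $C$. Points selected from $F_m$ lie in $G=K\setminus C$, so only $\varphi\in S_C^c\setminus\pi_U(F_m)$ can contribute, and therefore $\mathbb P_m(H^-)\leqslant 2^{-m}$.

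For the finitely supported approximants I would use the two-dimensional dyadic grid of mesh $2^{-r}$. The $\varphi$-coordinates of its cells refine the $S_C$-adapted cells of $\mathcal B_r^-$. Put $\mathbb Q_{m,r}:=\sum_{Q}\mathbb P_m(Q\cap K)\,\delta_{\mathbf y_Q}$, where $\mathbf y_Q$ is a point of $\overline{Q}\cap\operatorname{supp}\mathbb P_m$. To keep the constraints $\mathbb Q_{m,r}(K_B)=p_B$ exact, I would refine the cells further by $B\times V$, $B\in\mathcal B_r^-$, and by $F_m$ versus its complement. I would then choose $\mathbf y_Q$ inside the corresponding refined atom, taking its closure only where this is needed. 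Projective consistency (item~3) follows because the masses are $\mathbb P_m$-masses of nested cells. The Wasserstein-type bound in item~4 follows since each atom moves mass within a cell of diameter $\sqrt2\,2^{-r}$. The leakage bound $\mathbb Q_{m,r}(H^-)\leqslant 2^{-m}$ follows if, for atoms inside $F_m$, the representative $\mathbf y_Q$ is taken in $F_m\subseteq G$.

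The main obstacle is exactly this last compatibility. Representatives must lie simultaneously in $K_B$, so that $\mathcal D_r^-$ holds exactly, and in $F_m$, to avoid $C$. Closures can reintroduce points of $C$ or leave $K_B$, because the $S_C$-adapted cells need not be closed. I would handle this by choosing $\mathbf y_Q$ as an actual point $(\varphi,s_m(\varphi))$ with $\varphi$ in the refined $U$-atom and $\mathbb P_m$-positive mass there, rather than a closure point. This keeps the point inside both $K_B$ and $F_m$, at the cost of only the cell-diameter error in item~4.
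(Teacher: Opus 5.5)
Your proposal is correct and follows essentially the paper's route. You use Choquet capacitability of $E\mapsto\mu^*(\pi_U(E))$ to get the compact core $F_m$, then a Borel selector of the compact sections. Your lower semicontinuous $\min(F_m)_\varphi$ is exactly the selector the paper's leftmost-child dyadic recursion produces. You then push $\mu$ forward along the graph to get $\mathbb P_m$, and build $\mathbb Q_{m,r}$ from the atoms $\{\varphi\in I\cap L:\sigma_m(\varphi)\in J\}$, with $L\in\{\pi_U(F_m),\,S_C^c\setminus\pi_U(F_m),\,S_C\}$, using graph-point representatives. Only two cosmetic fixes are needed: off $\pi_U(F_m)$ the representative should be $(\varphi,\sigma_m(\varphi))=(\varphi,t(\varphi))$ rather than $(\varphi,s_m(\varphi))$, and it is the $S_C$-adapted cells that refine the dyadic $\varphi$-cells, not the reverse.
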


\begin{proof}
Set the favourable relation $R:=G\cap(S_C^c\times V)$; in particular,  $R=G\,$ since $G\subseteq S_C^c\times V$. For every $\varphi\in S_C^c$, the definition of $S_C$ implies that
$K_\varphi\nsubseteq C_\varphi$. Hence, $G_\varphi=K_\varphi\setminus C_\varphi\neq\varnothing$, and therefore
\begin{equation}
\pi_U(R)=S_C^c.
\label{eqn_projection_R_SCc}
\end{equation}

For $E\subseteq K$, define the projection capacity
\[
\operatorname{Cap}(E)
:=
\mu^*\bigl(\pi_U(E)\bigr),
\]
where $\mu^*$ denotes the outer measure induced by $\mu$. This is
a Choquet projection capacity on the compact metric space
$K$. By Choquet's capacitability theorem---see 
\cite[Theorem~30.1, Example~37.2]{Choquet1954TheoryOfCapacities}, every analytic subset of a
compact metrizable space is capacitable with respect to a Choquet
capacity. Since $R$ is Borel, it is analytic and therefore capacitable:
\[
\operatorname{Cap}(R)
=
\sup\left\{
\operatorname{Cap}(F):
F\subseteq R,\ F\text{ compact}
\right\}.
\]
By \eqref{eqn_projection_R_SCc}, since $S^c_C$ is Borel hence $\mu$-measurable, $\operatorname{Cap}(R)=\mu^*(\pi_U(R))=\mu^*(S^c_C)=\mu(S_C^c)$. Thus, for every $m\geqslant1$, there exists a compact set $F_m\subseteq R$ such that $\,\operatorname{Cap}(F_m)
\geqslant
\operatorname{Cap}(R)-2^{-m}=\mu(S_C^c)-2^{-m}$.
Let $F_m^\pi:=\pi_U(F_m)$. Since $F_m$ is compact and $\pi_U$ is continuous,
$F_m^\pi$ is compact in $U$, hence Borel and therefore $\mu$-measurable. So $\operatorname{Cap}(F_m)=\mu^*(\pi_U(F_m))=\mu(F_m^\pi)$. Moreover,
$F_m^\pi\subseteq S_C^c$. It follows that
\begin{align}
\mu(S_C^c\setminus F_m^\pi)
&=
\mu(S_C^c)-\mu(F_m^\pi) \notag\\
&=
\operatorname{Cap}(R)-\operatorname{Cap}(F_m)
\leqslant 2^{-m}.
\label{eqn_compact_core_projection_error}
\end{align}

We next construct a Borel selector of the compact relation $F_m$ by
nested dyadic refinement in the $V$-coordinate. For
$\varphi\in F_m^\pi$, write $(F_m)_\varphi
:=
\{\eta\in V:(\varphi,\eta)\in F_m\}$. Every such section is the continuous $V$-projection of $F_m\cap(\{\varphi\}\times V)$,
which is nonempty because $\varphi\in\pi_U(F_m)$, and compact because the
intersection of two compact sets is compact. Hence, $(F_m)_\varphi$ is nonempty and
compact. Let $J_0(\varphi):=[0,1]$. Suppose that the closed dyadic interval
$J_r(\varphi)$ has been chosen. Write the left and right closed dyadic halves of $J_r(\varphi)$ as
$J_r^0(\varphi)$ and $J_r^1(\varphi)$, respectively. Define
\[
J_{r+1}(\varphi)
:=
\begin{cases}
J_r^0(\varphi),
&
(F_m)_\varphi\cap J_r^0(\varphi)\neq\varnothing,
\\[1mm]
J_r^1(\varphi),
&
(F_m)_\varphi\cap J_r^0(\varphi)=\varnothing.
\end{cases}
\]
At stage $r+1$, the recursion selects as $J_{r+1}(\varphi)$ a
dyadic child of $J_r(\varphi)$ that intersects the section
$(F_m)_\varphi$. The $0$-child at stage $r$ is chosen as the
tie-breaker when both children intersect the section. The construction is well-defined because $(F_m)_\varphi\cap J_r(\varphi)\neq\varnothing$: this implies at least one dyadic child of $J_r(\varphi)$ has a non-empty intersection with $(F_m)_\varphi$.

For any fixed closed dyadic interval $J\subseteq V$, $\{\varphi\in F_m^\pi:(F_m)_\varphi\cap J\neq\varnothing\}
=
\pi_U\bigl(F_m\cap(U\times J)\bigr)$. The set on the right-hand side is compact, because it is the
projection of a compact set. Hence, every branch decision in the above
recursion is Borel measurable.

The intervals $J_r(\varphi)$ are nested and satisfy $\operatorname{diam}J_r(\varphi)=2^{-r}$.
Thus, $\bigcap_{r\geqslant0}J_r(\varphi)
=
\{s_m(\varphi)\}$ for a unique point $s_m(\varphi)\in V$. The map $s_m:F_m^\pi\to V$ is Borel measurable because there is a sequence of
Borel-measurable step functions $c_r:F_m^\pi\to V$ that converge
pointwise to $s_m$. This sequence is constructed out of the following Borel sets. Let $\mathscr D_r$ denote the finite collection of level-$r$ dyadic
intervals in $V$. For each $I\in\mathscr D_r$, define $A_{r,I}
:=
\{\varphi\in F_m^\pi:J_r(\varphi)=I\}$. The sets $A_{r,I}$ (for $I\in\mathscr D_r$) form a finite Borel
partition of $F_m^\pi$. To see their Borelness, observe first that,
for every closed dyadic interval \(I\subseteq V\), the hitting set
\[
H_I
:=
\{\varphi\in F_m^\pi:(F_m)_\varphi\cap I\neq\varnothing\}
=
\pi_U\bigl(F_m\cap(U\times I)\bigr)
\]
is compact, and hence Borel. The Borelness of the sets $A_{r,I}$ now
follows inductively from the recursive construction. Indeed, if
$I^0$ and $I^1$ are the two children of $I\in\mathscr D_r$, then $\,A_{r+1,I^0}
=
A_{r,I}\cap H_{I^0}\,$ and $\,A_{r+1,I^1}
=
A_{r,I}\cap H_{I^0}^{\,c}$ are each Borel as intersections of Borel sets.

Now, for each $r$, let $c_r(\varphi)$ be the midpoint of
$J_r(\varphi)$. Then $c_r$ is a step function: on each set
$A_{r,I}$, it takes the constant value $\operatorname{mid}(I)$.
Consequently, for every Borel set $E\subseteq V$,
\[
c_r^{-1}(E)
=
\bigcup_{\substack{I\in\mathscr D_r\\
\operatorname{mid}(I)\in E}}
A_{r,I}.
\]
This is a finite union of Borel sets, so $c_r$ is Borel measurable. Finally, both $c_r(\varphi)$ and $s_m(\varphi)$ belong to
$J_r(\varphi)$. Since $c_r(\varphi)$ is the midpoint of this
interval and $\operatorname{diam}J_r(\varphi)=2^{-r}$, we have $\lvert c_r(\varphi)-s_m(\varphi)\rvert
\leqslant 2^{-r-1}$. Thus, $c_r(\varphi)\to s_m(\varphi)$ for every
$\varphi\in F_m^\pi$. Since a pointwise limit of real-valued Borel
functions is Borel measurable, $s_m$ is Borel measurable.

Preferred points in $G$ for each $\varphi$, so  points in $R$, can be chosen using $s_m$. Indeed, for each $r$, choose $\eta_r\in(F_m)_\varphi\cap J_r(\varphi)$. Then $\eta_r\to s_m(\varphi)$. Since $(F_m)_\varphi$ is compact, $s_m(\varphi)\in(F_m)_\varphi$. Therefore,
\begin{equation}
(\varphi,s_m(\varphi))
\in F_m\subseteq G
\qquad
\text{for every }\varphi\in F_m^\pi.
\label{eqn_compact_core_selector_in_G}
\end{equation}
Apply the same nested-dyadic construction to the compact relation
$K$. Since every section $K_\varphi$ is nonempty, there exists a
Borel map $t:U\to V$ such that
\begin{equation}
(\varphi,t(\varphi))\in K
\qquad
\text{for every }\varphi\in U.
\label{eqn_global_K_selector}
\end{equation}
Define
\[
\widehat s_m(\varphi)
:=
\begin{cases}
s_m(\varphi),
&
\varphi\in F_m^\pi,
\\
t(\varphi),
&
\varphi\notin F_m^\pi,
\end{cases}
\]
The map $T_m:U\to K$ defined as $T_m(\varphi)
:=
(\varphi,\widehat s_m(\varphi))$ is Borel, since its coordinate functions are. Define
\begin{equation}
\mathbb P_m:=(T_m)_\#\mu.
\label{eqn_exact_repaired_prior_Pm}
\end{equation}
This prior is concentrated on the graph of $\widehat s_m$. Since the first coordinate of \(T_m(\varphi)\) is \(\varphi\), the U-marginal of $\mathbb P_m$ is $\mu$. Hence, $\mathbb P_m\in\mathcal D$.

By \eqref{eqn_compact_core_selector_in_G}, the selected point belongs
to $G$ whenever $\varphi\in F_m^\pi$. Since $F_m^\pi\subseteq S_C^c$,
leakage into $C$ over $S_C^c$ can occur only when
$\varphi\in S_C^c\setminus F_m^\pi$. Therefore,
\[
\begin{aligned}
\mathbb P_m(H^-)
&=\mathbb P_m((S^c_C\times V)\cap C) \\
&=
\mu\left(
\left\{
\varphi\in S_C^c:
(\varphi,\widehat s_m(\varphi))\in C
\right\}
\right)
\\
&\leqslant
\mu(S_C^c\setminus F_m^\pi)
\\
&\leqslant
2^{-m},
\end{aligned}
\]
where the final inequality follows from
\eqref{eqn_compact_core_projection_error}. Thus, $\,\mathbb P_m\bigl((S_C^c\times V)\cap C\bigr)
\leqslant2^{-m}
\longrightarrow0$.

It remains to relate the repaired prior $\mathbb P_m$ to
two-dimensional dyadic cell repairs satisfying the finite
$U$-row-mass constraints.

Fix $m$. For $r\geqslant1$, let $\mathcal J_r$ be the level-$r$
dyadic partition of $U$, and let $\mathcal I_r$ be the level-$r$
dyadic partition of $V$. Use half-open dyadic intervals, with the
final interval closed at $1$, so that each point belongs to a unique
cell. Partition $U$ into the three Borel sets
\[
L_{m,1}:=F_m^\pi,
\qquad
L_{m,2}:=S_C^c\setminus F_m^\pi,
\qquad
L_{m,3}:=S_C.
\]
For $\,I\in\mathcal J_r$ and $\,J\in\mathcal I_r$ (where $q\in\{1,2,3\}$),
define
\begin{equation}
E_{I,J,q}^{m,r}
:=
\left\{
\varphi\in I\cap L_{m,q}:
\widehat s_m(\varphi)\in J
\right\}.
\label{eqn_dyadic_graph_atoms}
\end{equation}
These sets form a finite Borel partition of $U$. Put $\,w_{I,J,q}^{m,r}
:=
\mu(E_{I,J,q}^{m,r})$. For every nonempty atom $E_{I,J,q}^{m,r}$, choose
$\,\varphi_{I,J,q}^{m,r}
\in E_{I,J,q}^{m,r}\,$ and define $\,\mathbf x_{I,J,q}^{m,r}
:=
T_m(\varphi_{I,J,q}^{m,r})$. Then $\,\mathbf x_{I,J,q}^{m,r}
\in K\cap(I\times J)$. Define the finitely supported prior
\begin{equation}
\mathbb Q_{m,r}
:=
\sum_{I\in\mathcal J_r}
\sum_{J\in\mathcal I_r}
\sum_{q=1}^3
w_{I,J,q}^{m,r}
\delta_{\mathbf x_{I,J,q}^{m,r}},
\label{eqn_dyadic_repaired_prior_Qmr}
\end{equation}
where zero-mass terms are omitted.

For every $I\in\mathcal J_r$,
$\,\sum_{J\in\mathcal I_r}
w_{I,J,3}^{m,r}
=
\mu(I\cap S_C)\,$
and $\,\sum_{J\in\mathcal I_r}
\left(
w_{I,J,1}^{m,r}
+
w_{I,J,2}^{m,r}
\right)
=
\mu(I\cap S_C^c)$. Every atom indexed by $q=3$ lies in $K\cap((I\cap S_C)\times V)$,
whereas every atom indexed by $q=1$ or $q=2$ lies in $K\cap((I\cap S_C^c)\times V)$.
Consequently, $\mathbb Q_{m,r}(K_B)=\mu(B)\,$ for every $\,B\in\mathcal B_r^-$,
and therefore
\begin{equation}
\mathbb Q_{m,r}\in\mathcal D_r^-.
\label{eqn_Qmr_in_Dr}
\end{equation}

If $q=1$, then $\mathbf x_{I,J,1}^{m,r}
\in F_m\subseteq G$. If $q=3$, then the first coordinate belongs to $S_C$, so the atom
cannot belong to $H^-$. Hence, only the $q=2$ atoms can contribute
to $H^-$, and therefore
\[
\begin{aligned}
\mathbb Q_{m,r}(H^-)
&\leqslant
\sum_{I\in\mathcal J_r}
\sum_{J\in\mathcal I_r}
w_{I,J,2}^{m,r}
\\
&=
\mu(S_C^c\setminus F_m^\pi)
\\
&\leqslant
2^{-m}.
\end{aligned}
\]

The cell-mass arrays are projectively consistent. Indeed, if
$I\in\mathcal J_r$ and $J\in\mathcal I_r$, then
\[
E_{I,J,q}^{m,r}
=
\bigsqcup_{\substack{
I'\in\mathcal J_{r+1},\ I'\subseteq I\\
J'\in\mathcal I_{r+1},\ J'\subseteq J
}}
E_{I',J',q}^{m,r+1}.
\]
Consequently,
\begin{equation}
w_{I,J,q}^{m,r}
=
\sum_{\substack{
I'\in\mathcal J_{r+1},\ I'\subseteq I\\
J'\in\mathcal I_{r+1},\ J'\subseteq J
}}
w_{I',J',q}^{m,r+1}.
\label{eqn_projective_cell_mass_consistency}
\end{equation}
Thus, every finer repair aggregates to the same coarser two-dimensional cell-mass array.

Finally, let $\Psi:K\to\mathbb R$ be continuous. Couple
$T_m(\varphi)$, distributed according to $\mathbb P_m$, with the
representative point
$\mathbf x_{I,J,q}^{m,r}$ whenever
$\varphi\in E_{I,J,q}^{m,r}$. Both points lie in the same product
cell $I\times J$, whose Euclidean diameter is at most $\sqrt{2}\,2^{-r}$. Therefore,
\[
\begin{aligned}
\left|
\int_K\Psi\,d\mathbb Q_{m,r}
-
\int_K\Psi\,d\mathbb P_m
\right|
\ \leqslant\ 
\sum_{I,J,q}
\int_{E_{I,J,q}^{m,r}}
\left|
\Psi(\mathbf x_{I,J,q}^{m,r})
-
\Psi(T_m(\varphi))
\right|
\mu(d\varphi)\ \leqslant
\ \omega_\Psi\!\left(\sqrt{2}\,2^{-r}\right).
\end{aligned}
\]
Since $\Psi$ is uniformly continuous on compact $K$, $\,\omega_\Psi\!\left(\sqrt{2}\,2^{-r}\right)\longrightarrow0$. Hence, $\mathbb Q_{m,r}\Rightarrow\mathbb P_m\,$ as $\,r\to\infty$. In particular, the diagonal sequence $\{\mathbb Q_{m,m}\}$ gives finitely supported priors satisfying $\,\mathbb Q_{m,m}\in\mathcal D_m^-$, 
$\,\mathbb Q_{m,m}(H^-)\leqslant2^{-m}$, while the exact-marginal priors $\mathbb P_m\in\mathcal D$ satisfy $\,\mathbb P_m(H^-)\leqslant2^{-m}$. By the bounds established in the main proof of
Theorem~\ref{thm:2d-gk-fixed-point-extension-corrected}, the objective
values of the diagonal priors $\mathbb Q_{m,m}$ converge to the
target robust lower bound, while the priors $\mathbb P_m$ form the
required vanishing-leakage sequence.
\end{proof}

\newpage
\section{Proof of Theorem~\ref{thm_VaRExample} and Corollary~\ref{Cor_VaRExample}}
\begin{proof}
We organise the proof around the two constrained optimisation problems in
\eqref{eqn_event_extremal_problems}. Since $I(G)=1-V(G^c)$,
it is sufficient to solve the maximizing problem
\begin{equation}
\phi_G^+
=
\sup_{\mathbb P\in\mathcal D}\mathbb P(G).
\label{eqn_proof_target_problem}
\end{equation}
The minimizing problem then follows by applying the same arguments to
$G^c$.

The proofs of Theorem~\ref{thm_VaRExample} (including \eqref{eqn_proof_target_problem}) and Corollary~\ref{Cor_VaRExample} proceed through four
successive stages.

\begin{enumerate}
\item
\emph{Reduction of the constrained feasible set to exchange space.}
Starting from a candidate prior $\mathbb P\in\mathcal D$, we show that
every feasible competitor in $\mathcal D$ is obtained from $\mathbb P$ by
a complete admissible mass exchange. The constrained optimisation problem
can therefore be rewritten as an optimisation over
$\mathfrak X(\mathbb P)$. This reduction produces the exchange-space
Dinkelbach difference, the global $(E1)_G$--$(E2)_G$ system, and the residual identity in part \textup{(a)}.

\item
\emph{Reduction to marginal-completion fibres.}
To determine what global extremality implies for the support and masses of
an extremal prior, we apply the same exchange reduction to every removable
submeasure $0<\alpha\leqslant\mathbb P^\ast$. The resulting constrained
problem is the optimisation of $\beta(G)$ over
$\mathcal C(\alpha)$. Its Dinkelbach form yields the fibrewise fixed-point
conditions, hereditary fibre-extremality, and the tight-exchange
description of the complete extremal family in part \textup{(b)}.

\item
\emph{Finite approximation of Borel-event problems.}
Since $G$ is Borel (in particular, analytic), we first approximate $G$ from within by
increasing compact sets $F_\ell$. For each fixed compact core, we then
construct a nested family of dyadic whole-cell optimisation problems.
Their feasible points are represented by cell-mass tables, and each such
table admits a realisation in $\mathcal D$ with the same
whole-cell objective value. This yields the discretised extremal values
appearing in part \textup{(c)}.

\item
\emph{Passage to limits and application to robust VaR aggregation.}
For each compact core, weak compactness, together with the dyadic
bounded-Lipschitz estimate, yields a sequence of discrete extremal priors
converging weakly to a prior attaining the compact-core extremal value.
The compact-core and discretisation limits then prove part \textup{(c)}.

For the VaR application, the paired quantile versions in
\eqref{eqn_VaR_quantile_versions} generate the two threshold-event
families $C_s$ and $G_s$ in \eqref{eqn_VaR_threshold_events}. The
compact-target arguments above apply directly to both families, yielding attained extremal probabilities, exchange fixed-point
pairs, and convergent finite fixed-marginal approximations. The symmetric
representations in \eqref{eqn_symmetric_VaR_representation} then identify
the lower and upper robust VaR bounds with the corresponding threshold
inversions. Compact-set continuity closes the upper feasible-threshold
set and shows that its extremal threshold, and hence the robust upper VaR,
is attained. This proves Corollary~\ref{Cor_VaRExample}.
\end{enumerate}

We now carry out these four stages.

\medskip
\noindent
\textbf{I. Reduction of the constrained feasible set to exchange space.}

\noindent
\emph{Step 1: Complete exchanges describe exactly the feasible competitors
in \eqref{eqn_proof_target_problem}.}

Let $\mathbb P\in\mathcal D$ and
$(\alpha,\beta)\in\mathfrak X(\mathbb P)$. Since $0\leqslant\alpha\leqslant\mathbb P$,
one has $\mathbb P-\alpha\geqslant0$.
Thus, $T_{\alpha,\beta}\mathbb P
=
\mathbb P-\alpha+\beta$ is a positive measure. Moreover, $\bigl(T_{\alpha,\beta}\mathbb P\bigr)(K)
=
1-\alpha(K)+\beta(K)
=
1$. For every $j=1,\ldots,d$, $\,(\operatorname{pr}_j)_\#
T_{\alpha,\beta}\mathbb P
=
(\operatorname{pr}_j)_\#\mathbb P
-
(\operatorname{pr}_j)_\#\alpha
+
(\operatorname{pr}_j)_\#\beta =
\lambda$. Hence,
\begin{equation}
T_{\alpha,\beta}\mathbb P\in\mathcal D.
\label{eqn_exchange_feasibility_proof}
\end{equation}
The corresponding change in the target objective is
\begin{equation}
\bigl(T_{\alpha,\beta}\mathbb P\bigr)(G)-\mathbb P(G)
=
\beta(G)-\alpha(G)
=
\Delta_G(\alpha,\beta).
\label{eqn_exchange_change_proof}
\end{equation}
Conversely, let $\mathbb Q\in\mathcal D$ be an arbitrary feasible
prior in \eqref{eqn_proof_target_problem} different from $\mathbb P$. Take the Jordan
decomposition $\mathbb Q-\mathbb P
=
\beta-\alpha$, where $\alpha$ and $\beta$ are mutually-singular non-negative measures. Since $\alpha=-(\mathbb Q-\mathbb P)|_N$ where $N$ is a negative set of the decomposition, we have $\alpha(A)=\mathbb P(A\cap N)-\mathbb Q(A\cap N)\leqslant \mathbb P(A\cap N)\leqslant \mathbb P(A)$ for all Borel sets $A$. Hence, $0\leqslant\alpha\leqslant\mathbb P$. An analogous argument shows $0\leqslant\beta\leqslant\mathbb Q$. Since $\mathbb Q$ and $\mathbb P$ also have the same coordinate marginals, $(\operatorname{pr}_j)_\#
(\mathbb Q-\mathbb P)
=
0$ implies $(\operatorname{pr}_j)_\#\beta
=
(\operatorname{pr}_j)_\#\alpha$ by the linearity of push-forwards, for $j=1,\ldots,d$. We conclude that $(\alpha,\beta)\in\mathfrak X(\mathbb P)$ and $\mathbb Q
=
T_{\alpha,\beta}\mathbb P$.

Therefore the feasible set of \eqref{eqn_proof_target_problem}, viewed from
any $\mathbb P\in\mathcal D$, is precisely its complete exchange orbit:
\begin{equation}
\mathcal D
=
\left\{
T_{\alpha,\beta}\mathbb P:
(\alpha,\beta)\in\mathfrak X(\mathbb P)
\right\}.
\label{eqn_exchange_orbit_equals_D}
\end{equation}
Consequently,
\begin{equation}
V(G)
=
\mathbb P(G)
+
\sup_{(\alpha,\beta)\in\mathfrak X(\mathbb P)}
\Delta_G(\alpha,\beta).
\label{eqn_target_problem_exchange_reduction}
\end{equation}
This is the fundamental reduction of the target constrained problem to the
fixed-point framework.

\medskip
\noindent
\emph{Step 2: The global $(E1)_G$--$(E2)_G$ system solves the target
problem.}

Suppose first that $\mathbb P^\ast$ solves
\eqref{eqn_proof_target_problem}, and put $\phi^\ast
:=
\mathbb P^\ast(G)$. For every
$(\alpha,\beta)\in\mathfrak X(\mathbb P^\ast)$,
\eqref{eqn_exchange_feasibility_proof} gives $T_{\alpha,\beta}\mathbb P^\ast\in\mathcal D$. Optimality therefore implies $\bigl(T_{\alpha,\beta}\mathbb P^\ast\bigr)(G)
\leqslant
\mathbb P^\ast(G)
=
\phi^\ast$. Equivalently, $h_{\phi^\ast,G}^{\mathbb P^\ast}(\alpha,\beta)
\leqslant 0$. The zero exchange has value $h_{\phi^\ast,G}^{\mathbb P^\ast}(0,0)
=
\mathbb P^\ast(G)-\phi^\ast
=
0$. Hence $(E1)_G$ holds, while $(E2)_G$ holds by the definition of
$\phi^\ast$.

Conversely, suppose that
$(\mathbb P^\ast,\phi^\ast)$ satisfies $(E1)_G$ and $(E2)_G$. Let
$\mathbb Q\in\mathcal D$. By
\eqref{eqn_exchange_orbit_equals_D}, there exists
$(\alpha,\beta)\in\mathfrak X(\mathbb P^\ast)$ such that $\mathbb Q
=
T_{\alpha,\beta}\mathbb P^\ast$. Using $(E2)_G$, condition $(E1)_G$ yields $0
\geqslant
h_{\phi^\ast,G}^{\mathbb P^\ast}(\alpha,\beta)
=
\mathbb Q(G)-\mathbb P^\ast(G)$.
Thus, $\mathbb Q(G)\leqslant\mathbb P^\ast(G)$ for all $\mathbb Q\in\mathcal D$. Therefore, $\mathbb P^\ast$ solves
\eqref{eqn_proof_target_problem} and $\phi^\ast
=
\mathbb P^\ast(G)
=
V(G)$. This proves the fixed-point characterisation in part \textup{(a)}.

\medskip
\noindent
\emph{Step 3: The exchange Dinkelbach difference and the global
residual.}

By \eqref{eqn_exchange_change_proof}, the Dinkelbach difference associated
with a feasible exchange is $\Delta_G(\alpha,\beta)
=
\beta(G)-\alpha(G)$. For every
$(\alpha,\beta)\in\mathfrak X(\mathbb P)$, $T_{\alpha,\beta}\mathbb P\in\mathcal D$, and hence $\Delta_G(\alpha,\beta)
=
\bigl(T_{\alpha,\beta}\mathbb P\bigr)(G)-\mathbb P(G)
\leqslant
V(G)-\mathbb P(G)$. Taking the supremum gives $\mathscr R_G(\mathbb P)
\leqslant
V(G)-\mathbb P(G)$.

Conversely, for every $\mathbb Q\in\mathcal D$, Step~1 produces an
admissible exchange such that $\Delta_G(\alpha,\beta)
=
\mathbb Q(G)-\mathbb P(G)$. Taking the supremum over $\mathbb Q\in\mathcal D$ gives $\mathscr R_G(\mathbb P)
\geqslant
V(G)-\mathbb P(G)$. Therefore $\mathscr R_G(\mathbb P)
=
V(G)-\mathbb P(G)$. 

It follows immediately that $\mathscr R_G(\mathbb P)=0
\Longleftrightarrow
\mathbb P\in
\operatorname*{arg\,max}_{\mathbb Q\in\mathcal D}\mathbb Q(G)$,
and $\,\mathbb P_r(G)\longrightarrow V(G)
\Longleftrightarrow
\mathscr R_G(\mathbb P_r)\longrightarrow0$. Thus, the residual $\mathscr R_G(\mathbb P)$ is not merely a local stationarity quantity: over the
complete exchange class it is exactly the remaining value in the target
optimisation problem. If the residual is instead computed over a proper exchange subfamily, its
vanishing establishes only stability with respect to that subfamily unless
the subfamily is proved to generate every feasible displacement in
\eqref{eqn_exchange_orbit_equals_D}.

This completes the proof of part \textup{(a)} for the maximizing problem.
The minimizing assertions follow by replacing $G$ with $G^c$ and using $I(G)=1-V(G^c)$.

\medskip
\noindent
\textbf{II. Reduction to marginal-completion fibres.}

The global system above determines whether a prior solves the target
problem. We now determine what that solution implies for each removable
part of the extremal prior. Fix an extremiser $\mathbb P^\ast$ and a
nonzero submeasure $0<\alpha\leqslant\mathbb P^\ast$. Replacing $\alpha$ while retaining its coordinate marginals leads to the
constrained fibre problem
\begin{equation}
\Gamma_G(\alpha)
=
\sup_{\beta\in\mathcal C(\alpha)}\beta(G).
\label{eqn_proof_fibre_target}
\end{equation}

\noindent
\emph{Step 4: Dinkelbach reduction of the fibre problem.}

Every $\beta\in\mathcal C(\alpha)$ has $\beta(K)=\alpha(K)$. Therefore, $\Psi_{\alpha,G}(\rho)=
\sup_{\beta\in\mathcal C(\alpha)}
\left[
\beta(G)-\rho\beta(K)
\right]=
\sup_{\beta\in\mathcal C(\alpha)}
\beta(G)
-
\rho\alpha(K)=
\Gamma_G(\alpha)-\rho\alpha(K)$.
Since $\alpha(K)>0$, this affine function has the unique root $\widehat\rho_G(\alpha)
=
\frac{\Gamma_G(\alpha)}{\alpha(K)}$. If $(E2)_{\alpha,G}$ holds, then $\rho
=
\frac{\alpha(G)}{\alpha(K)}\,$ and $\,h_{\rho,G}^{\alpha}(\alpha)
=
\alpha(G)
-
\frac{\alpha(G)}{\alpha(K)}
\alpha(K)
=
0$.
Thus, $(E1)_{\alpha,G}$ holds if and only if $h_{\rho,G}^{\alpha}(\beta)\leqslant 0$ for all $\beta\in\mathcal C(\alpha)$,
which is equivalent to $\beta(G)\leqslant\alpha(G)$ for all $\beta\in\mathcal C(\alpha)$. Because $\alpha\in\mathcal C(\alpha)$,
this is equivalent to $\Gamma_G(\alpha)=\alpha(G)$. Hence, the fibrewise $(E1)$--$(E2)$ system solves
\eqref{eqn_proof_fibre_target} exactly.

\medskip
\noindent
\emph{Step 5: Hereditary fibre-extremality characterises extremal priors.}

Suppose that $\mathbb P^\ast$ solves
\eqref{eqn_proof_target_problem}. Let
$0<\alpha\leqslant\mathbb P^\ast$ and let
$\beta\in\mathcal C(\alpha)$. Then $\mathbb P^\ast-\alpha+\beta\in\mathcal D$. Optimality of $\mathbb P^\ast$ gives $\mathbb P^\ast(G)-\alpha(G)+\beta(G)
\leqslant
\mathbb P^\ast(G)$. Therefore, $\beta(G)\leqslant\alpha(G)$ for all $\beta\in\mathcal C(\alpha)$, and hence $\Gamma_G(\alpha)=\alpha(G)$. Thus, every removable submeasure of an extremal prior solves its own
marginal-completion problem \eqref{eqn_proof_fibre_target}.

Conversely, suppose that $\Gamma_G(\alpha)=\alpha(G)$ for all $0<\alpha\leqslant\mathbb P^\ast$. Take $\alpha=\mathbb P^\ast$. Since the marginals of $\mathbb P^\ast$ are all $\lambda$, $\mathcal C(\mathbb P^\ast)
=
\mathcal D$. Consequently, $\mathbb P^\ast(G)
=
\Gamma_G(\mathbb P^\ast)
=
\sup_{\mathbb Q\in\mathcal D}\mathbb Q(G)
=
V(G)$. Thus, $\mathbb P^\ast$ solves the original target problem.

This proves the hereditary extremal prior characterisation in part \textup{(b)}.

\medskip
\noindent
\emph{Step 6: Tight exchanges describe the complete solution set.}

Let $\mathbb P^\ast$ be an extremiser. By the global fixed-point condition, $\Delta_G(\alpha,\beta)\leqslant0$ for all $(\alpha,\beta)\in\mathfrak X(\mathbb P^\ast)$. If $\Delta_G(\alpha,\beta)=0$,
then $\bigl(T_{\alpha,\beta}\mathbb P^\ast\bigr)(G)
=
\mathbb P^\ast(G)
=
V(G)$, so the exchanged prior is also an extremiser.

Conversely, let $\mathbb Q^\ast$ be any other solution of
\eqref{eqn_proof_target_problem}. By Step~1, the Jordan decomposition $\mathbb Q^\ast-\mathbb P^\ast
=
\beta-\alpha$ gives an admissible exchange from $\mathbb P^\ast$. Since both priors are
extremal, $\Delta_G(\alpha,\beta)
=
\mathbb Q^\ast(G)-\mathbb P^\ast(G)
=
0$. Thus, $\operatorname*{arg\,max}_{\mathbb P\in\mathcal D}\mathbb P(G)
=
\left\{
\mathbb P^\ast-\alpha+\beta:
(\alpha,\beta)\in\mathfrak X_G^0(\mathbb P^\ast)
\right\}$.

Therefore, zero Dinkelbach difference identifies exchanges tangent to the
extremal solution set, while negative Dinkelbach difference identifies
strictly worsening feasible displacements. This completes the proof of
part \textup{(b)}.

\medskip
\noindent
\textbf{III. Finite approximation of Borel-event problems.}
The preceding arguments identify the structure of the set of extremisers of \eqref{eqn_proof_target_problem}. Since the Borel event $G$ is analytic, direct
attainment need not be available. We therefore approximate $G$ from
within by compact-core problems and approximate each compact-core problem
by finite whole-cell fixed-point problems.

\medskip
\noindent
\emph{Step 7: Compactness of the feasible class and capacity of the target
value.}

Since $K$ is compact metric, $\mathcal P(K)$ is weakly compact. For each
$j$, the map $\mathbb P
\longmapsto
(\operatorname{pr}_j)_\#\mathbb P$ is weakly continuous. Indeed, for every $f\in C([0,1])$, $\int_{[0,1]} f\,d((\operatorname{pr}_j)_\#\mathbb P)
=
\int_Kf\circ\operatorname{pr}_j\,d\mathbb P$. Since the projection map $\operatorname{pr}_j$ is continuous, so is the composition $f\circ\operatorname{pr}_j$. Hence, $\mathbb P_n\Rightarrow\mathbb P$ implies $\int_{[0,1]} f\,d((\operatorname{pr}_j)_\#\mathbb P_n)=\int_Kf\circ\operatorname{pr}_j\,d\mathbb P_n
\longrightarrow
\int_Kf\circ\operatorname{pr}_j\,d\mathbb P=\int_{[0,1]} f\,d((\operatorname{pr}_j)_\#\mathbb P)$, so $(\operatorname{pr}_j)_\#\mathbb P_n\Rightarrow(\operatorname{pr}_j)_\#\mathbb P$. 

Therefore, each constraint $(\operatorname{pr}_j)_\#\mathbb P=\lambda$ defines a weakly closed subset of $\mathcal P(K)$: if $(\operatorname{pr}_j)_\#\mathbb P_n=\lambda$ for all $n$ and $\mathbb P_n\Rightarrow\mathbb P$, then $\lambda=(\operatorname{pr}_j)_\#\mathbb P_n\Rightarrow (\operatorname{pr}_j)_\#\mathbb P$ so $(\operatorname{pr}_j)_\#\mathbb P=\lambda$ as well, which means $\mathbb P$ satisfies the constraint. The finite
intersection of these weakly closed subsets is weakly closed and equals $\mathcal D$. Since $\mathcal P(K)$ is weakly compact, every weakly closed subset is weakly compact, so $\mathcal D$ is weakly compact.

We next verify the capacity properties needed to approximate the target event. On nested events, the monotonicity of $V$ is immediate from the monotonicity of probability measures. Now, let $A_r\uparrow A$.
For every $\mathbb P\in\mathcal D$, $\mathbb P(A_r)\uparrow\mathbb P(A)$,
and therefore $V(A)
=
\sup_{\mathbb P\in\mathcal D}\sup_r\mathbb P(A_r)=
\sup_r\sup_{\mathbb P\in\mathcal D}\mathbb P(A_r)=
\sup_rV(A_r)$. Thus, $V$ is continuous from below.

Next let $F_r\downarrow F$, where every $F_r$ is compact. Put $a_r:=V(F_r)$. Then $a_r\downarrow a$ for some $a\geqslant V(F)$. For each $r$, by the definition of $V(F_r)$ as a supremum over $\mathcal D$, there is a sequence of priors $\{\mathbb P_{r,\ell}\}\subseteq\mathcal D$ that satisfy $\mathbb P_{r,\ell}(F_r)\rightarrow V(F_r)$ as $\ell\rightarrow\infty$. Since $\mathcal D$ is weakly compact, a weakly convergent subsequence $\{\mathbb P_{r,\ell_k}\}$ satisfies $\mathbb P_{r,\ell_k}\Rightarrow\mathbb P_r\in\mathcal D$. Since $F_r$ is compact (thus closed), the Portmanteau theorem implies $V(F_r)=\operatorname{limsup}_{k\to\infty}\mathbb P_{r,\ell_k}(F_r)\leqslant \mathbb P_r(F_r)\leqslant \sup_{\mathbb P\in\mathcal D}\mathbb P(F_r)=V(F_r)$. Hence, $\mathbb P_r(F_r)=V(F_r)$ for all $r$.

Since $\{\mathbb P_r\}\subseteq\mathcal D$ and $\mathcal D$ is weakly compact, extract a convergent subsequence $\{\mathbb P_{r_k}\}$ that satisfies $\mathbb P_{r_k}\Rightarrow\mathbb P^\ast\in\mathcal D$. Fix $q$. For all sufficiently large $k$, $F_{r_k}\subseteq F_q$, so $a_{r_k}
=
\mathbb P_{r_k}(F_{r_k})
\leqslant
\mathbb P_{r_k}(F_q)$. Since $F_q$ is closed, the Portmanteau theorem gives $a=\lim_{r\to\infty}V(F_r)=\lim_{k\to\infty}V(F_{r_k})=\lim_{k\to\infty}\mathbb P_{r_k}(F_{r_k})
\leqslant
\limsup_{k\to\infty}\mathbb P_{r_k}(F_q)
\leqslant
\mathbb P^\ast(F_q)$. Letting $q\to\infty$ and using continuity from above for the fixed measure
$\mathbb P^\ast$ yields $a
\leqslant
\mathbb P^\ast(F)
\leqslant
V(F)$. Since $V(F)\leqslant a$, it follows that $V(F_r)\downarrow V(F)$. Finally, note that $V(\varnothing)=0$.

Thus, $V$ has the properties of a Choquet capacity on Borel sets. Define the Borel outer extension of $V$ by $\overline V(A)
:=
\inf\left\{
V(B):
A\subseteq B,\  B\in\mathcal B(K)
\right\}$ for $A\subseteq K$. Monotonicity shows that $\overline V=V$ on $\mathcal B(K)$.
The continuity properties established above extend to $\overline V$.
Indeed, let $A_n\uparrow A$ and set
$c_n:=\overline V(A_n)$; since the $c_n$ are nondecreasing and bounded above by $1$, $c_n\uparrow c$ for some $c\leqslant\overline V(A)$. For $\varepsilon>0$, choose
Borel sets $B_n\supseteq A_n$ such that $V(B_n)\leqslant c_n+\varepsilon 2^{-n}$,
and define $C_n:=\bigcap_{m\geqslant n}B_m$, $C:=\bigcup_{n\geqslant1}C_n$. Then, $C_n\uparrow C$, $A_n\subseteq C_n$, and hence $A\subseteq C$.
Moreover, $C_n\subseteq B_n$, so continuity from below of $V$ on
Borel sets gives
\[
\overline V(A)
\leqslant V(C)
=\lim_{n\to\infty}V(C_n)
\leqslant\lim_{n\to\infty}
\bigl(c_n+\varepsilon2^{-n}\bigr)
=c.
\]
Thus, $\overline V(A_n)\uparrow\overline V(A)$. If
$F_n\downarrow F$ are compact, then $\overline V(F_n)=V(F_n)\downarrow V(F)=\overline V(F)$. Together with monotonicity and $\overline V(\varnothing)=0$, this
shows $\overline V$ is a Choquet capacity on $K$. 

Since $G$ is Borel, $G$ is analytic. So, by the Choquet capacitability theorem, $V(G)
=
\overline V(G)
=
\sup\limits_{\substack{F\subseteq G,\ F\ \mathrm{compact}}}
\overline V(F)
=
\sup\limits_{\substack{F\subseteq G,\ F\ \mathrm{compact}}}V(F)$. Choose compact sets $K_\ell\subseteq G$ satisfying $V(K_\ell)\geqslant V(G)-2^{-\ell}$,
and put $F_\ell
:=
\bigcup_{q\leqslant\ell}K_q$. Then $F_\ell\subseteq G$, $\,F_\ell\uparrow$, $\,V(F_\ell)\uparrow V(G)$. The original target problem is reduced to the compact-core
problems
\begin{equation}
V(F_\ell)
=
\sup_{\mathbb P\in\mathcal D}\mathbb P(F_\ell),
\qquad
\ell\geqslant1.
\label{eqn_compact_core_target_problems}
\end{equation}

\medskip
\noindent
\emph{Step 8: Finite whole-cell problems associated with a compact-core
target.}

Fix $\ell$. For $m\geqslant1$, let
\[
I_{m,k}
=
\begin{cases}
[k2^{-m},(k+1)2^{-m}),
&
k=0,\ldots,2^m-2,
\\[1mm]
[1-2^{-m},1],
&
k=2^m-1.
\end{cases}
\]
For $\mathcal K_m
:=
\{0,\ldots,2^m-1\}^d$, let $Q_{m,\mathbf k}
=
I_{m,k_1}\times\cdots\times I_{m,k_d},
\ 
\mathbf k=(k_1,\ldots,k_d)\in\mathcal K_m$, be the level-$m$ dyadic product cells. Define
\[
\mathcal D_m
:=
\left\{
\mathbb P\in\mathcal P(K):
\mathbb P\{\mathbf u:u_j\in I_{m,k}\}=2^{-m}
\quad
j=1,\ldots,d, \quad k=0,\ldots,2^m-1
\right\},
\]
and the corresponding cell-table polytope
\[
W_m
:=
\left\{
\mathbf p=(p_{\mathbf k})_{\mathbf k\in\mathcal K_m}\geqslant0:
\sum_{\mathbf k\in\mathcal K_m:k_j=k}p_{\mathbf k}=2^{-m},
\quad
j=1,\ldots,d, \quad k=0,\ldots,2^m-1
\right\}.
\]
As a shorthand, the marginal constraints defining $W_m$ can be written as $M_m\mathbf p=\mathbf r_m$, where $M_m$ is a $d2^m\times (2^m)^d$ indicator matrix, $\mathbf p$ is a $(2^m)^d\times 1$ vector of $p_{\mathbf k}$ probabilities, and $\mathbf r_m$ is a $d2^m\times 1$ vector whose entries are all $2^{-m}$.

Define the level-$m$ whole-cell cover of $F_\ell$ by $E_{\ell,m}
:=
\bigcup
\left\{
Q_{m,\mathbf k}:
Q_{m,\mathbf k}\cap F_\ell\neq\varnothing
\right\}$, and let $c_{\ell,m,\mathbf k}
:=
\mathbf 1_{\{Q_{m,\mathbf k}\subseteq E_{\ell,m}\}}$. The compact-core problem \eqref{eqn_compact_core_target_problems} is
therefore approximated by
\[
\phi_{\ell,m}^\ast
=
\max_{\mathbf p\in W_m}c_{\ell,m}^{\top}{\mathbf p}.
\]
The polytope $W_m$ is nonempty: for example, the cell table induced
by $\lambda^{\otimes d}$ belongs to $W_m$. Moreover, the marginal
equalities imply $\sum_{\mathbf k}p_{\mathbf k}=1$, so $W_m$ is closed and bounded, hence compact. Since
$\mathbf p\mapsto c_{\ell,m}^{\mathsf T}\mathbf p$ is continuous,
the maximum of this map over $W_m$ is attained.

\medskip
\noindent
\emph{Step 9: Lifting proves that the finite problem has no
stagewise relaxation gap.}

For $\mathbf p\in W_m$, define
\[
L_m(\mathbf p)
:=
\sum_{\mathbf k}
p_{\mathbf k}
\bigotimes_{j=1}^d
\left(
2^m\lambda|_{I_{m,k_j}}
\right).
\]
For a fixed coordinate $j$,
\[
\begin{aligned}
(\operatorname{pr}_j)_\#L_m(\mathbf p)
&=
\sum_{\mathbf k}
p_{\mathbf k}
\left(
2^m\lambda|_{I_{m,k_j}}
\right)\\
&=
\sum_{k=0}^{2^m-1}
\left(
\sum_{\mathbf k:k_j=k}p_{\mathbf k}
\right)
2^m\lambda|_{I_{m,k}}\\
&=
\sum_{k=0}^{2^m-1}
2^{-m}2^m\lambda|_{I_{m,k}}\\
&=
\lambda.
\end{aligned}
\]
Hence, $L_m(\mathbf p)\in\mathcal D$.
Moreover, $L_m(\mathbf p)(Q_{m,\mathbf k})
=
p_{\mathbf k}$. 

Every $\mathbb P\in\mathcal D_m$ induces the table $p_{\mathbf k}:=\mathbb P(Q_{m,\mathbf k})\in W_m$ and, since $E_{\ell,m}$ is a union of dyadic cells, $\mathbb P(E_{\ell,m})=c_{\ell,m}^{\top}{\mathbf p}$. Thus $\sup_{\mathbb P\in\mathcal D_m}\mathbb P(E_{\ell,m})
\leqslant
\max_{\mathbf p\in W_m}c_{\ell,m}^{\top}{\mathbf p}$. Furthermore, as we have just shown, every $\mathbf p\in W_m$ has a lift $L_m(\mathbf p)\in\mathcal D$
satisfying $L_m(\mathbf p)(Q_{m,\mathbf k})=p_{\mathbf k}$
for every $\mathbf k$, and therefore $L_m(\mathbf p)(E_{\ell,m})=c_{\ell,m}^{\top}{\mathbf p}$. Thus $\max_{\mathbf p\in W_m}c_{\ell,m}^{\top}{\mathbf p}
\leqslant
\sup_{\mathbb P\in\mathcal D}\mathbb P(E_{\ell,m})$.
Finally, $\mathcal D\subseteq\mathcal D_m$, thus $\sup_{\mathbb P\in\mathcal D}\mathbb P(E_{\ell,m})
\leqslant
\sup_{\mathbb P\in\mathcal D_m}\mathbb P(E_{\ell,m})$. These imply the following sandwich:
\begin{equation}
\sup_{\mathbb P\in\mathcal D_m}\mathbb P(E_{\ell,m})
\leqslant
\max_{\mathbf p\in W_m}c_{\ell,m}^{\top}{\mathbf p}
\leqslant
\sup_{\mathbb P\in\mathcal D}\mathbb P(E_{\ell,m})
\leqslant
\sup_{\mathbb P\in\mathcal D_m}\mathbb P(E_{\ell,m}).
\label{eqn_stagewise_no_gap_proof}
\end{equation}
So, these suprema are equal. The corresponding equality for infima is proved analogously.


\medskip
\noindent
\emph{Step 10: Each finite problem is itself an exchange fixed-point
problem.}
For $\mathbf p\in W_m$, define
\[
\mathfrak X_m(\mathbf p)
:=
\left\{
(\mathbf a,\mathbf b):
0\leqslant \mathbf a\leqslant \mathbf p,
\quad \mathbf b\geqslant 0,\quad
M_m\mathbf a=M_m\mathbf b
\right\}.
\]
If $(\mathbf a,\mathbf b)\in\mathfrak X_m(\mathbf p)$, then $\mathbf p-\mathbf a+\mathbf b\geqslant0\,$
and $\,M_m(\mathbf p-\mathbf a+\mathbf b)
=
\mathbf r_m-M_m\mathbf a+M_m\mathbf b
=
\mathbf r_m$. Thus $\,\mathbf p-\mathbf a+\mathbf b\in W_m$.

Conversely, if $\mathbf q\in W_m$, define $\mathbf a=(\mathbf p-\mathbf q)^+$ and $\mathbf b=(\mathbf q-\mathbf p)^+$. Then $\mathbf q=\mathbf p-\mathbf a+\mathbf b$,
$0\leqslant \mathbf a\leqslant \mathbf p$ and, since $M_m(\mathbf q-\mathbf p)=0$,
one finds $M_m\mathbf a=M_m\mathbf b$. 

This means every feasible $\mathbf q\in W_m$ is obtained from $\mathbf p$ by a
finite admissible exchange. Hence, $\,W_m
=
\left\{
\mathbf p-\mathbf a+\mathbf b:
(\mathbf a,\mathbf b)\in\mathfrak X_m(\mathbf p)
\right\}$. It follows, exactly as in Steps~2 and~3, that
$(\mathbf p_{\ell,m}^\ast,\phi_{\ell,m}^\ast)$ solves the finite target problem if
and only if $\,(0,0)
\in\allowbreak
\operatorname*{arg\,max}_{(\mathbf a,\mathbf b)\in\mathfrak X_m(\mathbf p_{\ell,m}^\ast)}
\allowbreak
\bigl\{
c_{\ell,m}^{\top}
\bigl(\mathbf p_{\ell,m}^\ast-\mathbf a+\mathbf b\bigr)
-\allowbreak
\phi_{\ell,m}^\ast
\bigr\}$
and $\phi_{\ell,m}^\ast
=
c_{\ell,m}^{\top}{\mathbf p}_{\ell,m}^\ast$. Moreover, the finite full residual is exactly the finite objective gap: for arbitrary $\mathbf p\in W_m$, $\,\mathscr R_{\ell,m}(\mathbf p):=
\max_{(\mathbf a,\mathbf b)\in\mathfrak X_m(\mathbf p)}
c_{\ell,m}^{\top}(\mathbf b-\mathbf a)=
\max_{\mathbf q\in W_m}
c_{\ell,m}^{\top}\mathbf q
-
c_{\ell,m}^{\top}{\mathbf p}$.

For every $0<\mathbf a\leqslant \mathbf p^\ast$ where $\mathbf p^\ast$ is extremal, define $\mathcal C_m(\mathbf a)
:=
\{\mathbf b\geqslant0:M_m\mathbf b=M_m\mathbf a\}$. Applying the fibre argument from Step~4 gives $c_{\ell,m}^{\top}\mathbf a
=
\max_{\mathbf b\in\mathcal C_m(\mathbf a)}
c_{\ell,m}^{\top}\mathbf b$. Equivalently, with $\rho_{\mathbf a}
=
\frac{c_{\ell,m}^{\top}\mathbf a}{\mathbf 1^{\top}\mathbf a}$, one has $\mathbf a
\in
\operatorname*{arg\,max}_{\mathbf b\in\mathcal C_m(\mathbf a)}
\bigl\{
c_{\ell,m}^{\top}\mathbf b
-
\rho_{\mathbf a}\mathbf 1^{\top}\mathbf b
\bigr\}$.

Writing $\mathbf d=\mathbf b-\mathbf a$,
finite feasibility is equivalently $M_m\mathbf d=0$ and $\mathbf p^\ast + \mathbf d\geqslant0$. Therefore, finite extremality is equivalent to $c_{\ell,m}^{\top}\mathbf d\leqslant0$ for every feasible $\text{ker}(M_m)$ direction. Directions satisfying $c_{\ell,m}^{\top}\mathbf d=0$ are tight and generate the complete finite extremal face.

\medskip
\noindent
\emph{Step 11: Same-stage exchange repair and lifting of the repaired
solution.}

Let $\widehat{\mathbb P}_m\in\mathcal D_m$ be any relaxed stage-$m$ prior and let $\mathbf p$, with $p_{\mathbf k}
=
\widehat{\mathbb P}_m(Q_{m,\mathbf k})$, be its cell table. Let $\mathbf p^\ast$ solve the finite target problem and let $\mathbf a=(\mathbf p-\mathbf p^\ast)^+$, $\mathbf b=(\mathbf p^\ast-\mathbf p)^+$. Then $\mathbf p^\ast=\mathbf p-\mathbf a+\mathbf b$, $0\leqslant\mathbf a\leqslant\mathbf p$ and $M_m\mathbf a=M_m\mathbf b$. Hence, $\mathbf p^\ast$ is obtained from $\mathbf p$ by one complete stage-$m$
marginal-preserving exchange.

For every cell with $p_{\mathbf k}>0$, define $\widehat\alpha|_{Q_{m,\mathbf k}}
=
\frac{a_{\mathbf k}}{p_{\mathbf k}}
\widehat{\mathbb P}_m|_{Q_{m,\mathbf k}}$,
and define this restriction to be zero when $p_{\mathbf k}=0$. Then $0\leqslant\widehat\alpha\leqslant\widehat{\mathbb P}_m$,
and the cell table of $\widehat\alpha$ is $\mathbf a$. Choose any probability measure $\kappa_{\mathbf k}$ supported in
$Q_{m,\mathbf k}$ and define $\widehat\beta
=
\sum_{\mathbf k}b_{\mathbf k}\kappa_{\mathbf k}$. The cell table of $\widehat\beta$ is $\mathbf b$. Put $\widehat{\mathbb P}_m^\ast
=
\widehat{\mathbb P}_m-\widehat\alpha+\widehat\beta$. Its cell table is $\mathbf p^\ast=\mathbf p-\mathbf a+\mathbf b$. Since $M_m\mathbf a=M_m\mathbf b$, the stage-$m$ strip marginals are unchanged, so $\widehat{\mathbb P}_m^\ast\in\mathcal D_m$. Moreover, $\widehat{\mathbb P}_m^\ast(E_{\ell,m})
=
c_{\ell,m}^{\top}{\mathbf p}^\ast$. The lift $\mathbb P_{\ell,m}^\ast
:=
L_m(\mathbf p^\ast)$ belongs to $\mathcal D$ and satisfies $\mathbb P_{\ell,m}^\ast(E_{\ell,m})
= c_{\ell,m}^{\top}{\mathbf p}^\ast$. 


\medskip
\noindent
\textbf{IV. Passage to limits and application to robust VaR aggregation.}

\noindent
\emph{Step 12: Finite exchange fixed-point pairs yield a fixed-point pair
for each compact-core target.}

Fix $\ell$ and set $\overline E_{\ell,m}
:=
\overline{E_{\ell,m}}$. The sets $\overline E_{\ell,m}$ are compact and decreasing, and $\bigcap_{m\geqslant1}\overline E_{\ell,m}
=
F_\ell$. Indeed, if $\mathbf x\notin F_\ell$, compactness of $F_\ell$ gives $\operatorname{dist}(\mathbf x,F_\ell)>0$, and all sufficiently fine dyadic cells near $\mathbf x$ are disjoint from
$F_\ell$.

The difference $\overline E_{\ell,m}\setminus E_{\ell,m}$ is contained in a finite union of dyadic coordinate hyperplanes. For every
$\mathbb P\in\mathcal D$, $\mathbb P\{\mathbf u:u_j=k2^{-m}\}
=
\lambda(\{k2^{-m}\})
=
0$. Hence, $\mathbb P(E_{\ell,m})
=
\mathbb P(\overline E_{\ell,m})$ for all 
$\mathbb P\in\mathcal D$,
and therefore $V(E_{\ell,m})
=
V(\overline E_{\ell,m})$. Since $\overline E_{\ell,m}\downarrow F_\ell$, the compact-set continuity
proved in Step~7 gives $V(E_{\ell,m})
=
V(\overline E_{\ell,m})
\downarrow
V(F_\ell)$. Using the no-gap identity \eqref{eqn_stagewise_no_gap_proof}, $\phi_{\ell,m}^\ast
=
V(E_{\ell,m})$ and, hence, $\phi_{\ell,m}^\ast
\downarrow
V(F_\ell)$.

For each $m$, let $\bigl(\mathbf p_{\ell,m}^{\ast},\phi_{\ell,m}^{\ast}\bigr)$ be a finite exchange fixed-point pair, and let $\mathbb P_{\ell,m}^{\ast}
=
L_m(\mathbf p_{\ell,m}^{\ast})$ be the related lifted prior in $\mathcal D$. Then, $\mathbb P_{\ell,m}^\ast(\overline E_{\ell,m})=\mathbb P_{\ell,m}^\ast(E_{\ell,m})
=
V(E_{\ell,m})
=
V(\overline E_{\ell,m})$. 
We now relate these priors to discrete extremal priors. For each dyadic
cell $Q_{m,\mathbf k}$, choose a point $\mathbf x_{\ell,m,\mathbf k}\in Q_{m,\mathbf k}$,
and define
\begin{equation}
\widehat{\mathbb P}_{\ell,m}^\ast
:=
\sum_{\mathbf k}
p_{\ell,m,\mathbf k}^\ast
\delta_{\mathbf x_{\ell,m,\mathbf k}}.
\label{eqn_discrete_extremal_prior}
\end{equation}
The measure $\widehat{\mathbb P}_{\ell,m}^\ast$ has cell table
$\mathbf p_{\ell,m}^\ast$. Consequently, $\widehat{\mathbb P}_{\ell,m}^\ast\in\mathcal D_m$ and, because $E_{\ell,m}$ is a union of whole dyadic cells, $\widehat{\mathbb P}_{\ell,m}^\ast(E_{\ell,m})
=
c_{\ell,m}^{\top}{\mathbf p}_{\ell,m}^\ast
=
\phi_{\ell,m}^\ast$. That is, $\widehat{\mathbb P}_{\ell,m}^\ast$ is a discrete extremal prior for
the $m$-th discretised problem.

The two measures
$\widehat{\mathbb P}_{\ell,m}^\ast$ and
$\mathbb P_{\ell,m}^\ast$ assign the same mass to every level-$m$ dyadic
cell. Hence, for every bounded Lipschitz function $f:K\to\mathbb R$,
\begin{align}
\left|
\int_K f\,d\widehat{\mathbb P}_{\ell,m}^\ast
-
\int_K f\,d\mathbb P_{\ell,m}^\ast
\right|
&\leqslant
\sum_{\mathbf k}
p_{\ell,m,\mathbf k}^\ast
\operatorname{osc}_{Q_{m,\mathbf k}}(f)
\nonumber\\
&\leqslant
\operatorname{Lip}(f)\sqrt d\,2^{-m},
\label{eqn_dyadic_bounded_Lipschitz_error}
\end{align}
for $\operatorname{osc}_{A}(f)
:=
\sup_{\mathbf x,\mathbf y\in A}|f(\mathbf x)-f(\mathbf y)|\,$
and $\,\operatorname{Lip}(f)
:=
\sup_{\substack{\mathbf x,\mathbf y\in K\\\mathbf x\neq \mathbf y}}
\frac{|f(\mathbf x)-f(\mathbf y)|}{\|\mathbf x-\mathbf y\|}$ where $A\subseteq K$. Indeed, every level-$m$ dyadic cell has diameter at most
$\sqrt d\,2^{-m}$.

Let $\{f_r:r\geqslant1\}$ be a countable convergence-determining family of bounded Lipschitz functions on $K$. Recall that by weak compactness of $\mathcal D$, there exist discretisation levels $m_k\uparrow\infty$ and a measure $\mathbb P_\ell^\ast\in\mathcal D$ such that $\mathbb P_{\ell,m_k}^\ast
\Rightarrow
\mathbb P_\ell^\ast$. For every fixed $r$, $\int_K f_r\,d\mathbb P_{\ell,m_k}^\ast
\longrightarrow
\int_K f_r\,d\mathbb P_\ell^\ast$, while \eqref{eqn_dyadic_bounded_Lipschitz_error} gives $\left|
\int_K f_r\,d\widehat{\mathbb P}_{\ell,m_k}^\ast
-
\int_K f_r\,d\mathbb P_{\ell,m_k}^\ast
\right|
\longrightarrow0$. Therefore,  by the triangle inequality, for all $r\geqslant1$, $\int_K f_r\,d\widehat{\mathbb P}_{\ell,m_k}^\ast
\longrightarrow
\int_K f_r\,d\mathbb P_\ell^\ast\ \text{ as }k\longrightarrow\infty$. Since $\{f_r:r\geqslant1\}$ is convergence determining,
\begin{equation}
\widehat{\mathbb P}_{\ell,m_k}^\ast
\Rightarrow
\mathbb P_\ell^\ast.
\label{eqn_discrete_extremal_prior_weak_convergence}
\end{equation}
This shows $\widehat{\mathbb P}_{\ell,m_k}^\ast$ and $\mathbb P_{\ell,m_k}^\ast$ have the same weak limit $\mathbb P_\ell^\ast$.

It remains to identify this limit as a solution of the compact-core target
problem. Fix $q$. For all sufficiently large $k$, $F_{\ell}\subseteq\overline E_{\ell,m_k}
\subseteq
\overline E_{\ell,q}$. Therefore, $V(F_{\ell})\leqslant V(\overline E_{\ell,m_k})=\mathbb P_{\ell,m_k}^\ast(\overline E_{\ell,m_k})
\leqslant
\mathbb P_{\ell,m_k}^\ast(\overline E_{\ell,q})$. Since $\overline E_{\ell,q}$ is closed, $\mathbb P_{\ell,m_k}^\ast
\Rightarrow
\mathbb P_\ell^\ast$ implies $V(F_\ell)
\leqslant
\operatorname{limsup}_{k\to\infty}\mathbb P_{\ell,m_k}^\ast(\overline E_{\ell,q})
\leqslant
\mathbb P_\ell^\ast(\overline E_{\ell,q})$ by the Portmanteau theorem. Letting $q\to\infty$ and using continuity from above for the fixed measure
$\mathbb P_\ell^\ast$ yields $V(F_\ell)
\leqslant
\mathbb P_\ell^\ast(F_\ell)
\leqslant
V(F_\ell)$. Hence $\mathbb P_\ell^\ast(F_\ell)
=
V(F_\ell)$.

We conclude that the discrete extremal priors admit a weakly convergent sequence whose
limit solves the compact-core target problem
\eqref{eqn_compact_core_target_problems}. By part~\textup{(a)}, $\bigl(\mathbb P_\ell^\ast,V(F_\ell)\bigr)$ is an exchange fixed-point pair for the $F_\ell$-problem.

\medskip
\noindent
\emph{Step 13: Passage from the compact-core problems to the Borel-event
value.}

By Step~7, $V(F_\ell)\uparrow V(G)$ as $\ell\to\infty$,
while Step~12 gives, for every fixed $\ell$, $\phi_{\ell,m}^\ast
\downarrow
V(F_\ell)$ as $m\to\infty$. Consequently, $V(G)
=
\lim_{\ell\to\infty}V(F_\ell)
=
\lim_{\ell\to\infty}
\lim_{m\to\infty}
\phi_{\ell,m}^\ast$. Together with $\widehat{\mathbb P}_{\ell,m_k}^\ast
\Rightarrow
\mathbb P_\ell^\ast$ and $\mathbb P_\ell^\ast(F_\ell)=V(F_\ell)$, proved in Step~12, this establishes part \textup{(c)}.

\medskip
\noindent
\emph{Step 14: fixed-point pair characterisations of the robust VaR
bounds.}

For each $j$, the quantile versions $Q^-_j$ and $Q^+_j$ in
\eqref{eqn_VaR_quantile_versions} differ only on a
$\lambda$-null set: they differ only at probability levels corresponding to countably many nondegenerate, disjoint intervals on which $H_j$ is constant. Since every $\mathbb P\in\mathcal D$ has uniform
coordinate marginals,
\begin{equation}
S^-=S^+
\quad
\mathbb P\text{-a.s. for every }\mathbb P\in\mathcal D.
\label{eqn_VaR_aggregate_versions_same_law_proof}
\end{equation}
For an arbitrary real-valued random variable $X$, $\inf\{t:\mathbb P(X\leqslant t)>\eta\}
=
\sup\{t:\mathbb P(X\geqslant t)\geqslant1-\eta\}$: this follows from the monotonicity of $F(t)=\mathbb P(X\leqslant t)$ and continuity of $\mathbb P$ from below. Applying this identity together with
\eqref{eqn_VaR_aggregate_versions_same_law_proof} proves
\eqref{eqn_symmetric_VaR_representation}.

The function $S^-$ is lower semicontinuous and $S^+$ is upper
semicontinuous. Hence, the sets $C_s$ and $G_s$ in
\eqref{eqn_VaR_threshold_events} are compact for every $s\in\mathbb R$.
Their definitions immediately give
\[
G_r^+(s)=V(C_s),
\qquad
G_r^-(s)=1-V(G_s),
\]
which proves \eqref{eqn_reduced_envelopes}.

By \eqref{eqn_symmetric_VaR_representation},
\begin{align}
\operatorname{VaR}_{\eta}^{\mathrm{lower}}
&=
\inf\left\{
s:
\exists\,\mathbb P\in\mathcal D
\text{ with }\mathbb P(C_s)>\eta
\right\}
\nonumber\\
&=
\inf\{s:G_r^+(s)>\eta\}.
\label{eqn_lower_VaR_inversion_proof}
\end{align}
Similarly,
\begin{align}
\operatorname{VaR}_{\eta}^{\mathrm{upper}}
&=
\sup\left\{
s:
\exists\,\mathbb P\in\mathcal D
\text{ with }\mathbb P(G_s)\geqslant1-\eta
\right\}.
\label{eqn_upper_VaR_feasibility_proof}
\end{align}
Since $G_s$ is compact, Step~7 shows that $V(G_s)$ is attained.
Consequently,
\[
\exists\,\mathbb P\in\mathcal D:
\mathbb P(G_s)\geqslant1-\eta
\quad\Longleftrightarrow\quad
V(G_s)\geqslant1-\eta.
\]
Thus, \eqref{eqn_upper_VaR_feasibility_proof} becomes
\begin{equation}
\operatorname{VaR}_{\eta}^{\mathrm{upper}}
=
\sup\{s:V(G_s)\geqslant1-\eta\}
=
\sup\{s:G_r^-(s)\leqslant\eta\}.
\label{eqn_upper_VaR_sup_proof}
\end{equation}
The following shows $\{s:G_r^-(s)\leqslant\eta\}$ is nonempty and bounded above. Let $a_j\leqslant Q^+_j(u_j)\leqslant b_j$ for all $u_j$. Put $a:=\sum_{j=1}^n a_j$ and 
$b:=\sum_{j=1}^n b_j$. Then, $a\leqslant S^+(\mathbf u)\leqslant b$ for every $\mathbf u\in K$. For $s\leqslant a$, one has $G_s=\{\mathbf u\in K:S^+(\mathbf u)\geqslant a\geqslant s\}=K$, so $G_r^-(s)=1-V(K)=0\leqslant\eta$: this shows non-emptiness. For $s>b$, $G_s=\{\mathbf u\in K:S^+(\mathbf u)\geqslant s> b\}=\varnothing$ and $G_r^-(s)=1-V(\varnothing)=1>\eta$: this shows boundedness from above. 

Let $s_\eta^\ast=\sup\{s:G_r^-(s)\leqslant\eta\}$, and
choose $s_k\uparrow s_\eta^\ast$, $V(G_{s_k})\geqslant1-\eta$.
Since $G_{s_k}\downarrow G_{s_\eta^\ast}$, the compact-set continuity proved in Step~7 gives $V(G_{s_k})
\downarrow
V(G_{s_\eta^\ast})$. Therefore, $V(G_{s_\eta^\ast})\geqslant1-\eta$, so the supremum in \eqref{eqn_upper_VaR_sup_proof} is attained. This proves
\eqref{eqn_reduced_lower_VaR} and
\eqref{eqn_upper_VaR_extremal_threshold}.

$C_s$ and $G_s$ are compact, so their extremal probabilities are
attained. Applying part \textup{(a)} to these two events gives extremal priors $\mathbb P_s^{+,\ast},
\mathbb P_s^{-,\ast}\in\mathcal D$ satisfying $\,\mathbb P_s^{+,\ast}(C_s)=V(C_s)\,$ and 
$\,\mathbb P_s^{-,\ast}(G_s)=V(G_s)$. The residual identity \eqref{eqn_exchange_residual_gap} gives
\eqref{eqn_VaR_positive_residual} and
\eqref{eqn_VaR_negative_residual}; the global fixed-point condition gives
\eqref{eqn_VaR_positive_fixed_point} and
\eqref{eqn_VaR_negative_fixed_point}; and part \textup{(b)} gives
\eqref{eqn_VaR_positive_hereditary_support},
\eqref{eqn_VaR_negative_hereditary_support}, and the tight-exchange description of all other extremal priors.

For $A_s^+:=C_s$ and $A_s^-:=G_s$, let $E_{s,m}^{\pm}
:=
\bigcup
\left\{
Q_{m,\mathbf k}:
Q_{m,\mathbf k}\cap A_s^\pm\neq\varnothing
\right\}$ and define
$\phi_{s,m}^\pm
:=
V(E_{s,m}^\pm)$. The dyadic partitions refine coherently, and the two threshold families
are nested in $s$. Thus the sets $E_{s,m}^\pm$ may be chosen coherently
both in $m$ and in $s$.

Applying Steps~8--12 with the compact target $A_s^\pm$ gives $\phi_{s,m}^+
\downarrow
V(C_s)
=
G_r^+(s)$
and $\phi_{s,m}^-
\downarrow
V(G_s)
=
1-G_r^-(s)$. By Step~12, the corresponding extremal priors admit weakly convergent
subsequences whose limits attain $V(C_s)$ and $V(G_s)$, respectively.
This proves \eqref{eqn_VaR_upper_envelope_limit} and
\eqref{eqn_VaR_lower_envelope_limit}, together with the asserted
convergence of the associated extremal priors.

Define $q^+(\gamma)
:=
\inf\{s:G_r^+(s)>\gamma\}$. 
Since $\phi_{s,m}^+\downarrow G_r^+(s)$ for every $s$,
one has $\phi_{s,m}^+\geqslant G_r^+(s)$ and hence $q_m^+(\gamma)\leqslant q^+(\gamma)$. Moreover, $q_m^+(\gamma)$ is nondecreasing in $m$. Fix
$\gamma'<\gamma$ and $x<q^+(\gamma')$. Then
$G_r^+(x)\leqslant\gamma'$, so
$\phi_{x,m}^+\leqslant\gamma$ for all sufficiently large $m$.
Since $s\mapsto\phi_{s,m}^+$ is nondecreasing, this implies
$q_m^+(\gamma)\geqslant x$ eventually. Letting
$x\uparrow q^+(\gamma')$ gives $\liminf_{m\to\infty}q_m^+(\gamma)
\geqslant q^+(\gamma')$. Letting $\gamma'\uparrow\gamma$, one obtains
\[
q^+(\gamma-)
\leqslant
\liminf_{m\to\infty}q_m^+(\gamma)
\leqslant
\limsup_{m\to\infty}q_m^+(\gamma)
\leqslant
q^+(\gamma).
\]
Hence, at every continuity point $\gamma$ of the generalized inverse
$q^+$,
\begin{equation}
q_m^+(\gamma)
\longrightarrow
q^+(\gamma).
\label{eqn_lower_finite_inverse_convergence_proof}
\end{equation}
The nondecreasing function $q^+$ has at most countably many
discontinuities. Therefore, for every $\eta\in(0,1)$, continuity points
$\eta_r\downarrow\eta$ may be chosen. Using
\eqref{eqn_lower_VaR_inversion_proof},
\eqref{eqn_lower_finite_inverse_convergence_proof}, and the
right-continuity of $q^+$ gives $q_m^+(\eta_r)
\longrightarrow
\operatorname{VaR}_{\eta_r}^{\mathrm{lower}}$ (for $m\to\infty$),
and $\operatorname{VaR}_{\eta_r}^{\mathrm{lower}}
\downarrow
\operatorname{VaR}_{\eta}^{\mathrm{lower}}$ (for $r\to\infty$).
This proves
\eqref{eqn_stable_level_lower_VaR_convergence}.

For the upper bound, put $W(s):=V(G_s)$, $W_m(s):=\phi_{s,m}^-$. Both $W$ and $W_m$ are nonincreasing in $s$, while $W_m(s)\downarrow W(s)$ for every $s$. Since $E_{s,m}^-\supseteq G_s$, $q_m^-(\eta)\geqslant s_\eta^\ast$,
and the sequence $\{q_m^-(\eta)\}_{m\geqslant1}$ is nonincreasing.

Let $t>s_\eta^\ast$. By
\eqref{eqn_upper_VaR_extremal_threshold}, $W(t)<1-\eta$. Since $W_m(t)\downarrow W(t)$, one has $W_m(t)<1-\eta$ for all sufficiently large $m$. The monotonicity of $W_m$ in $s$ then
implies $q_m^-(\eta)\leqslant t$ for all sufficiently large $m$. Letting $t\downarrow s_\eta^\ast$
yields $q_m^-(\eta)
\downarrow
s_\eta^\ast
=
\operatorname{VaR}_{\eta}^{\mathrm{upper}}$, which proves \eqref{eqn_direct_upper_VaR_convergence}.

Finally, let $\mathbb P_{s_\eta^\ast}^{-,\ast}$ be any extremal
prior for the $G_{s_\eta^\ast}$-problem. Then $\bigl(
\mathbb P_{s_\eta^\ast}^{-,\ast},
V(G_{s_\eta^\ast})
\bigr)$ is an exchange fixed-point pair by Theorem~\ref{thm_VaRExample}. So, $\mathbb P_{s_\eta^\ast}^{-,\ast}(G_{s_\eta^\ast})
=
V(G_{s_\eta^\ast})
\geqslant1-\eta$. By \eqref{eqn_symmetric_VaR_representation}, $\operatorname{VaR}_{\eta}
\bigl(S;\mathbb P_{s_\eta^\ast}^{-,\ast}\bigr)
\geqslant s_\eta^\ast$. The reverse inequality follows from the definition of
$\operatorname{VaR}_{\eta}^{\mathrm{upper}}=s_\eta^\ast$.
Therefore $\operatorname{VaR}_{\eta}
\bigl(S;\mathbb P_{s_\eta^\ast}^{-,\ast}\bigr)
=
\operatorname{VaR}_{\eta}^{\mathrm{upper}}$, which proves \eqref{eqn_upper_VaR_attainment} and completes the proof of
Corollary~\ref{Cor_VaRExample}.
\end{proof}
\newpage
\section{Proof of Theorem~\ref{thm:canonical-strike-recursive}}
\begin{lemma}[Anti-Monge structure]
\label{lem:canonical-antimonge}
For every $m\in\mathbb{N}$, the matrix $F^{(m)}:=(F_{ij}^{(m)})_{1\leqslant i,j\leqslant N_m}$
is anti-Monge (see \cite{burkard1996perspectives}):
\[
F_{ij}^{(m)}+F_{k\ell}^{(m)}
\geqslant
F_{i\ell}^{(m)}+F_{kj}^{(m)}
\qquad
(i<k,\ j<\ell).
\]
Consequently, the restriction of $F^{(m)}$ to every dyadic block
$B\in\mathscr{B}_{m,r}$ is also anti-Monge.
\end{lemma}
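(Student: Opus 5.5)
The plan is to reduce the anti-Monge inequality to a supermodularity property of the basket payoff along the product grid. By construction, the chosen points satisfy $\pi_1(\mathbf x_{ij}^{(m)})=\pi_1(\mathbf x_{il}^{(m)})$ and $\pi_2(\mathbf x_{ij}^{(m)})=\pi_2(\mathbf x_{kj}^{(m)})$. So there are row abscissae $u_1,\dots,u_{N_m}$ and column ordinates $v_1,\dots,v_{N_m}$ such that $\mathbf x_{ij}^{(m)}=(u_i,v_j)$. Consequently
\[
F_{ij}^{(m)}=\psi\bigl(Q_1(u_i)+Q_2(v_j)\bigr),\qquad \psi(z):=(z-k)^+ .
\]
Because $u_i\in\overline{I_i^{(m)}}$ and the dyadic intervals are ordered, $i<k$ gives $u_i\leqslant u_k$, and $j<\ell$ gives $v_j\leqslant v_\ell$. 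Since $Q_1,Q_2$ are increasing, we get $a:=Q_1(u_i)\leqslant a':=Q_1(u_k)$ and $b:=Q_2(v_j)\leqslant b':=Q_2(v_\ell)$.

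The inequality to prove is then
\[
\psi(a+b)+\psi(a'+b')\geqslant\psi(a+b')+\psi(a'+b).
\]
The four arguments satisfy
\[
a+b\leqslant \min\{a+b',a'+b\}\leqslant\max\{a+b',a'+b\}\leqslant a'+b',
\]
and the two outer arguments sum to the same total as the two inner ones. For the convex function $\psi$, this majorisation (Karamata, or directly: $\psi$ convex implies $s\mapsto\psi(s+\delta)-\psi(s)$ is nondecreasing for $\delta\geqslant0$) gives the claim. Concretely, set $\delta:=b'-b\geqslant0$. Then
\[
\psi(a'+b')-\psi(a'+b)\geqslant\psi(a+b')-\psi(a+b),
\]
because $a'\geqslant a$ and the increments of a convex function over intervals of fixed length $\delta$ are nondecreasing in the left endpoint. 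Rearranging yields the anti-Monge inequality.

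The only step needing care is justifying the ordering $u_i\leqslant u_k$ when the representatives lie in closures, which may share an endpoint. Ties are harmless here, since we only need weak monotonicity. The consistency of row and column coordinates across cells is exactly the stated constraint on $\mathbf x_{ij}^{(m)}$, so no further work is needed there.

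For the consequence about blocks, any $B=I\times J\in\mathscr B_{m,r}$ has contiguous index intervals $I,J$. The restriction of $F^{(m)}$ to $B$ is therefore a submatrix whose row and column index orderings are inherited from the original matrix. The anti-Monge inequality for $i<k$, $j<\ell$ within $I\times J$ is then a special case of the global one. (Contiguity is not even needed; any row/column subselection preserving order works.) I expect no real obstacle. The key insight is recognising that the grid-consistency condition on the $\mathbf x_{ij}^{(m)}$ turns $F^{(m)}$ into a convex function of a sum of two monotone sequences.
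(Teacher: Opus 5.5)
Your proposal is correct and follows essentially the same route as the paper. Both write $F_{ij}^{(m)}=(\alpha_i+\beta_j-k)^+$ with monotone row and column sequences coming from the grid-consistency of the chosen points, then invoke convexity of $t\mapsto(t-k)^+$ (the paper says the outer pair is more spread out, you use nondecreasing increments), and finally pass to blocks as order-preserving submatrices; your explicit check that $u_i\leqslant u_k$ still holds when representatives lie on shared closed-cell endpoints fills in a step the paper leaves implicit.
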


\begin{proof}
Set
$\alpha_i^{(m)}:=
Q_1\!\left(\pi_1(\mathbf{x}_{ij}^{(m)})\right)$, \ $\beta_j^{(m)}
:=
Q_2\!\left(\pi_2(\mathbf{x}_{ij}^{(m)})\right)$ \ 
and \ $\varphi(t):=(t-k)^+$.
Then
\[
F_{ij}^{(m)}=\varphi\!\bigl(\alpha_i^{(m)}+\beta_j^{(m)}\bigr).
\]
Because $Q_1,Q_2$ are increasing, $\alpha_i^{(m)}\leqslant \alpha_k^{(m)}$ whenever $i<k$, $\beta_j^{(m)}\leqslant \beta_\ell^{(m)}$ whenever $j<\ell$. Fix $i<k$ and $j<\ell$. Put
\[
x:=\alpha_i^{(m)}+\beta_j^{(m)},
\quad
y:=\alpha_k^{(m)}+\beta_\ell^{(m)},
\quad
x':=\alpha_i^{(m)}+\beta_\ell^{(m)},
\quad
y':=\alpha_k^{(m)}+\beta_j^{(m)}.
\]
Then $x+y=x'+y'$,
and the pair $(x,y)$ is more spread out than $(x',y')$. Since $\varphi$ is convex, $\varphi(x)+\varphi(y)\geqslant \varphi(x')+\varphi(y')$. Substituting back gives
\[
F_{ij}^{(m)}+F_{k\ell}^{(m)}
\geqslant
F_{i\ell}^{(m)}+F_{kj}^{(m)}.
\]
Any dyadic block is a contiguous submatrix, and every contiguous submatrix of an
anti-Monge matrix is anti-Monge.
\end{proof}

\begin{lemma}[Local reverse-permutation reduction on a dyadic block]
\label{lem:block-reverse-permutation}
Fix $m\in\mathbb{N}$ and let $B=I\times J\in\mathscr{B}_{m,r}$ for some $r\in\{0,\dots,m-1\}$, where $I=\{i_1<\cdots<i_n\}$, $J=\{j_1<\cdots<j_n\}$, and 
$n=2^{m-r}$. Define
\[
{\mathcal W}(B)
:=
\left\{
\textnormal{\textbf{x}}=(x_{ij})_{(i,j)\in B}\in[0,\infty)^B:\;\;\begin{array}{l}
\sum_{j\in J}x_{ij}=\frac{1}{N_m}\ \forall i\in I,\\[3pt]
\sum_{i\in I}x_{ij}=\frac{1}{N_m}\ \forall j\in J
\end{array}
\right\}.
\]
If the restriction of $F^{(m)}$ to $B$ is anti-Monge, then
\begin{equation}
\inf_{\textnormal{\textbf{x}}\in{\mathcal W}(B)}\sum_{(i,j)\in B}x_{ij}F_{ij}^{(m)}
=
\frac{1}{N_m}\sum_{t=1}^{n}F_{i_t,j_{n+1-t}}^{(m)}.
\label{eqn_antiMongeconsequence_1}
\end{equation}
In particular,
\begin{equation}
\inf_{\textnormal{\textbf{x}}\in{\mathcal W}(B)}\sum_{(i,j)\in B}{x}_{ij}F_{ij}^{(m)}
=
\inf_{\substack{\textnormal{\textbf{x}}\in{\mathcal W}(B)\\
\operatorname{supp}(\textnormal{\textbf{x}})\subseteq B^{\mathrm{NW}}\cup B^{\mathrm{SE}}}}
\sum_{(i,j)\in B}x_{ij}F_{ij}^{(m)}.
\label{eqn_antiMongeconsequence_2}
\end{equation}
\end{lemma}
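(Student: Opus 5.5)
The plan is to prove \eqref{eqn_antiMongeconsequence_1} by reducing the block transportation problem to an assignment problem, and then to settle the assignment problem with an uncrossing (exchange) argument that uses the anti-Monge inequality of Lemma~\ref{lem:canonical-antimonge}.

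First, rescale. Writing $y_{st}:=N_m x_{i_s j_t}$ turns ${\mathcal W}(B)$ into the set of $n\times n$ doubly stochastic matrices, and the objective becomes $\frac1{N_m}\sum_{s,t}y_{st}c_{st}$ with $c_{st}:=F^{(m)}_{i_s j_t}$. The cost matrix $c$ is anti-Monge by hypothesis. By Birkhoff--von Neumann \cite{Birkhoff1946Tres,vonNeumann1953Assignment}, this linear objective attains its minimum over the Birkhoff polytope at a permutation matrix. So it suffices to show
\[
\min_{\sigma} \sum_{s=1}^n c_{s\sigma(s)} = \sum_{s=1}^n c_{s,n+1-s}.
\]

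Second, the exchange step. Take any permutation $\sigma$ that is not the reversal $s\mapsto n+1-s$. Then $\sigma$ has an ascent pair $s<s'$ with $\sigma(s)<\sigma(s')$. Apply anti-Monge with $i=s$, $k=s'$, $j=\sigma(s)$, $\ell=\sigma(s')$:
\[
c_{s\sigma(s)}+c_{s'\sigma(s')}\geqslant c_{s\sigma(s')}+c_{s'\sigma(s)}.
\]
So swapping $\sigma(s)$ and $\sigma(s')$ does not increase the cost, and it strictly decreases the number of non-inversions. Induction on the number of non-inversions, a finite quantity that is zero exactly for the reversal, shows the reversal is optimal. Undoing the rescaling gives \eqref{eqn_antiMongeconsequence_1}.

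Third, \eqref{eqn_antiMongeconsequence_2}. The reversal's support is $\{(i_t,j_{n+1-t})\}$. For $t\leqslant n/2$ these cells have $i_t\in I^-$ and $j_{n+1-t}\in J^+$, so they lie in $B^{\mathrm{NW}}=I^-\times J^+$. For $t>n/2$ they lie in $B^{\mathrm{SE}}$. The optimal $\textbf{x}$ is therefore feasible for the restricted problem. The restricted infimum is always at least the unrestricted one, and the unrestricted minimiser is admissible in the restricted problem, so the two are equal.

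I expect the main obstacle to be making the exchange induction airtight. The inequality is only weak, so I would use the count of non-inversion pairs rather than the cost as the strictly decreasing potential. I also need to check that a transposition of two ascending values strictly reduces non-inversions, which is a standard fact about transpositions. A direct alternative that avoids Birkhoff is a north-west-corner style mass-shifting on $y$ itself, but the permutation route seems cleaner.
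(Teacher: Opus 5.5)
Your proposal is correct and follows essentially the same route as the paper. Both arguments use Birkhoff--von Neumann to reduce to permutation assignments, uncross pairwise via the anti-Monge inequality until the reverse permutation is reached, and then observe that its graph lies in $B^{\mathrm{NW}}\cup B^{\mathrm{SE}}$. Your non-inversion count, used as a strictly decreasing potential, makes explicit why the swapping terminates, which the paper only asserts with ``repeating finitely many times''.
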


\begin{proof}
Since $N_m \mathbf{x}$ is doubly stochastic on $I\times J$, the Birkhoff--von Neumann theorem (see \cite{Birkhoff1946Tres,vonNeumann1953Assignment})
gives
\[
\mathbf{x}=\frac{1}{N_m}\sum_{\pi}\lambda_\pi \mathbb P_\pi,
\qquad
\lambda_\pi\geqslant 0,\quad \sum_\pi\lambda_\pi=1,
\]
where $\pi$ runs over permutations of $\{1,\dots,n\}$ and $\mathbb P_\pi$ is the corresponding
permutation matrix on $I\times J$. Hence,
\[
\sum_{(i,j)\in B}x_{ij}F_{ij}^{(m)}
=
\frac{1}{N_m}\sum_\pi \lambda_\pi
\sum_{t=1}^{n}F_{i_t,j_{\pi(t)}}^{(m)}.
\]
So it suffices to minimise over permutation assignments.

Let $\pi$ be any permutation. If there exist $s<t$ with $\pi(s)<\pi(t)$, then the two
assignments $(i_s,j_{\pi(s)})$ and $(i_t,j_{\pi(t)})$ form a crossing. Since the
restriction of $F^{(m)}$ to $B$ is anti-Monge,
\[
F_{i_s,j_{\pi(s)}}^{(m)}+F_{i_t,j_{\pi(t)}}^{(m)}
\geqslant
F_{i_s,j_{\pi(t)}}^{(m)}+F_{i_t,j_{\pi(s)}}^{(m)}.
\]
Thus, swapping the two assignments does not increase the cost. Repeating finitely many
times yields the reverse permutation $\pi_{\mathrm{rev}}(t):=n+1-t$, without increasing the total cost. Therefore, $\sum_{t=1}^{n}F_{i_t,j_{n+1-t}}^{(m)}
\leqslant
\sum_{t=1}^{n}F_{i_t,j_{\pi(t)}}^{(m)}$ for every permutation $\pi$.
This proves \eqref{eqn_antiMongeconsequence_1}.

Finally, the graph of the reverse permutation lies in the local anti-diagonal of $B$,
hence in $B^{\mathrm{NW}}\cup B^{\mathrm{SE}}$, proving \eqref{eqn_antiMongeconsequence_2}.
\end{proof}
Lemmas~\ref{lem:canonical-antimonge} and \ref{lem:block-reverse-permutation} are used in the following five-step proof of Theorem~\ref{thm:canonical-strike-recursive}.
\begin{proof}\textbf{Step 1: infima from extreme approximants converge to objective infimum.}
For the original basket payoff $f$ and $g\equiv 1$, Proposition~\ref{prop_gen_multiple_marginals_piecewise_refined} gives the stage-$m$ product-cell problem over the polytope ${\mathcal W}_m$, and Corollary~\ref{cor_finite_marginal_approx}
gives convergence of the corresponding stage-$m$ extrema to $\phi^\ast$, because $f$
is continuous on the compact square $K$. More precisely, by construction, the stage-$m$ infimum is $\underline{\phi}_m$ and $\underline{\phi}_m\to \phi^\ast$. Moreover, $0\leqslant b_{ij}^{(m)}-a_{ij}^{(m)}
\leqslant
\omega_f(m)$, where
\[
\omega_f(m)
:=
\sup\{|f(\textnormal{\bf x})-f(\textnormal{\bf y})|:
\textnormal{\bf x},\textnormal{\bf y}\in C_{ij}^{(m)}
\text{ for some }i,j\}.
\]
Hence, $\underline{\phi}_m
\leqslant
\overline{\phi}_m
\leqslant
\underline{\phi}_m+\omega_f(m)$. Since $f$ is uniformly continuous on $K$, we have $\omega_f(m)\to 0$ so ``squeeze'' implies:
\begin{equation}
\overline{\phi}_m\to \phi^\ast.
\label{eqn_BasketPayoff_5}
\end{equation}
Consequently, since $\underline{\phi}_m
\leqslant
\phi_m^{\mathrm{full}}
\leqslant
\overline{\phi}_m$, ``squeeze'' again implies:
\begin{equation}
\phi_m^{\mathrm{full}}\to \phi^\ast.
\label{eqn_BasketPayoff_10}
\end{equation}

\paragraph{Step 2: recursive uncrossing on each dyadic block.}
Fix $m\in\mathbb{N}$. We prove \eqref{eqn_BasketPayoff_1} by induction on $r$.

For $r=0$, the statement is trivial because $ \bigcup\limits_{B\in\mathscr{B}_{m,0}}\!\!\!\!\!\!\!B=\{1,\ldots,N_m\}^2$. Assume \eqref{eqn_BasketPayoff_1} has been proved for $r-1$, and let $\textbf{w}\in{\mathcal W}_m$ with $\operatorname{supp}(\textbf{w})\subseteq  \bigcup\limits_{B\in\mathscr{B}_{m,r-1}}\!\!\!\!\!\!\!\!\!\!B$. The set $ \bigcup\limits_{B\in\mathscr{B}_{m,r-1}}\!\!\!\!\!\!\!\!\!\!B$ is a disjoint union of the dyadic blocks $B=I\times J\in \mathscr{B}_{m,r-1}$, and these blocks have disjoint row and column sets. Since the support of $\mathbf{w}$ is
contained in the union of these blocks, the restriction $\mathbf{w}|_B$ belongs to the polytope
\[
{\mathcal W}(B)
:=
\left\{
\textnormal{\textbf{x}}=(x_{ij})_{(i,j)\in B}\in[0,\infty)^B:
\;\;\begin{array}{l}\sum_{j\in J}x_{ij}=\frac{1}{N_m}\ \forall i\in I,\\[3pt]
\sum_{i\in I}x_{ij}=\frac{1}{N_m}\ \forall j\in J
\end{array}
\right\}.
\]

By Lemma~\ref{lem:canonical-antimonge}, the restriction of $F^{(m)}$ to $B$ is
anti-Monge. Therefore, Lemma~\ref{lem:block-reverse-permutation} applies inside $B$: if a local
feasible mass pattern has a crossing, then swapping the crossing assignments does not
increase the local cost. Iterating finitely many times yields a local minimiser whose
support is contained in the two local anti-diagonal children $B^{\mathrm{NW}}\cup B^{\mathrm{SE}}$. Thus,
\begin{equation}
\inf_{\textnormal{\textbf{x}}\in{\mathcal W}(B)}\sum_{(i,j)\in B}x_{ij}F_{ij}^{(m)}
=
\inf_{\substack{\textnormal{\textbf{x}}\in{\mathcal W}(B)\\
\operatorname{supp}(\textnormal{\textbf{x}})\subseteq B^{\mathrm{NW}}\cup B^{\mathrm{SE}}}}
\sum_{(i,j)\in B}x_{ij}F_{ij}^{(m)}.
\label{eqn_BasketPayoff_6}
\end{equation}

Apply \eqref{eqn_BasketPayoff_6} independently on each block in $\mathscr{B}_{m,r-1}$. Since those
blocks have disjoint row and column sets, the blockwise replacements assemble into a new
matrix $\textbf{w}'\in{\mathcal W}_m$ with $\operatorname{supp}(\mathbf{w}')\subseteq \bigcup\limits_{B\in\mathscr{B}_{m,r}}\!\!\!\!\!\!\!B$ and $\sum_{i,j=1}^{N_m}w'_{ij}F_{ij}^{(m)}
\leqslant
\sum_{i,j=1}^{N_m}w_{ij}F_{ij}^{(m)}$. Therefore,
\[
\phi_m^{\mathrm{full}}=\inf_{\substack{\textbf{w}\in{\mathcal W}_m\\ \operatorname{supp}(\textbf{w})\subseteq \bigcup\limits_{B\in\mathscr{B}_{m,r-1}}\!\!\!\!\!\!\!\!\!\!B}}\,
\sum_{i,j=1}^{N_m}w_{ij}F_{ij}^{(m)}
=
\inf_{\substack{\textbf{w}\in{\mathcal W}_m\\ \operatorname{supp}(\textbf{w})\subseteq  \bigcup\limits_{B\in\mathscr{B}_{m,r}}\!\!\!\!\!\!\!\!B}}\,
\sum_{i,j=1}^{N_m}w_{ij}F_{ij}^{(m)}.
\]
This proves \eqref{eqn_BasketPayoff_1}.

\paragraph{Step 3: reduction to the anti-diagonal solution branch.}
Taking $r=m$ in \eqref{eqn_BasketPayoff_1},
\begin{equation}
\phi_m^{\mathrm{full}}
=
\inf_{\substack{\textbf{w}\in{\mathcal W}_m\\ \operatorname{supp}(\textbf{w})\subseteq  \bigcup\limits_{B\in\mathscr{B}_{m,m}}\!\!\!\!\!\!\!\!B}}\,
\sum_{i,j=1}^{N_m}w_{ij}F_{ij}^{(m)}.
\label{eqn_BasketPayoff_7}
\end{equation}
But for each row index $i$, there is exactly one retained cell in $S_{m,m}$, namely $C_{i,N_m+1-i}^{(m)}$,
and similarly for each column. Therefore there is only one feasible mass allocation in
${\mathcal W}_m$ supported on $S_{m,m}$:
\begin{equation}
w_{ij}^{\mathrm{ad},(m)}
=
\frac{1}{N_m}\mathbf{1}\{j=N_m+1-i\}.
\label{eqn_BasketPayoff_8}
\end{equation}
Hence,
\begin{equation}
\phi_m^{\mathrm{full}}
=
\frac{1}{N_m}\sum_{i=1}^{N_m}F_{i,N_m+1-i}^{(m)}.
\label{eqn_BasketPayoff_9}
\end{equation}


\paragraph{Step 4: identify the limiting value as the strike-payoff formula.}
From \eqref{eqn_BasketPayoff_9}, choose the midpoint payoff for every cell, so
\[
\phi_m^{\mathrm{full}}
=
\frac{1}{N_m}\sum_{i=1}^{N_m}
f\!\left(
\frac{i-\frac12}{N_m},
1-\frac{i-\frac12}{N_m}
\right).
\]
Define
\[
h(u):=f(u,1-u)=\bigl(Q_1(u)+Q_2(1-u)-k\bigr)^+.
\]
Since $Q_1,Q_2$ are continuous, $h$ is continuous on $[0,1]$, and the above sum is
the midpoint Riemann sum for $h$. Therefore
\[
\phi_m^{\mathrm{full}}
\to
\int_0^1 h(u)\,du
=
\int_0^1 \bigl(Q_1(u)+Q_2(1-u)-k\bigr)^+\,du.
\]
Thus, \eqref{eqn_BasketPayoff_10} implies \eqref{eqn_BasketPayoff_3}.

\paragraph{Step 5: canonical anti-diagonal representatives and Wasserstein closeness.}
For each $m\in\mathbb N$, define
\[
\mathbb P_m^{\mathrm{ad}}
=
\frac1{N_m}\sum_{i=1}^{N_m}\delta_{\textnormal{\textbf{x}}_i^{(m)}},
\qquad
\textnormal{\textbf{x}}_i^{(m)}
=
\left(\frac{i-\frac12}{N_m},\,1-\frac{i-\frac12}{N_m}\right).
\]
Since $\textnormal{\textbf{x}}_i^{(m)}\in C_{i,N_m+1-i}^{(m)}$ for every $i$, we have $\operatorname{supp}(\mathbb P_m^{\mathrm{ad}})
\subseteq
S_{m,m}$. Moreover, if $\textnormal{\textbf{x}}=(u,v)\in C_{i,N_m+1-i}^{(m)}$, then $\frac{i-1}{N_m}\leqslant u\leqslant \frac{i}{N_m}$ and $\frac{N_m-i}{N_m}\leqslant v\leqslant \frac{N_m+1-i}{N_m}$. Hence, $|u+v-1|\leqslant \frac1{N_m}=2^{-m}$. Therefore, $S_{m,m}
\subseteq
\{(u,v)\in K:\ |u+v-1|\leqslant 2^{-m}\}$, which proves \eqref{eqn_BasketPayoff_4}.

Next, because $f$ is continuous on the compact set $K$, for every $i,j$ the
restriction of $f$ to the compact closure $\overline C_{ij}^{(m)}$ attains its minimum.
Choose points $\textnormal{\textbf{z}}_{ij}^{(m)}\in \overline C_{ij}^{(m)}$ such that $f(\textnormal{\textbf{z}}_{ij}^{(m)})=a_{ij}^{(m)}$,
and define a valid stage-$m$ cellwise approximant that equals $a_{ij}^{(m)}$ on cell $\overline{C}_{ij}^{(m)}$. Then the infimum from this approximant over $\mathcal W_m$ is $\underline{\phi}_m$: by construction, this is also the stage-$m$ infimum for the general problem in terms of $f$. And, by the recursive mass re-allocation arguments in Steps 2 and 3, $\underline{\phi}_m$ can be obtained using the anti-diagonal supported extremal distribution 
\[
\mathbb P_{m,\star}^{\mathrm{ad}}
=
\frac1{N_m}\sum_{i=1}^{N_m}\delta_{\textnormal{\textbf{x}}_{i,\star}^{(m)}},
\qquad
\textnormal{\textbf{x}}_{i,\star}^{(m)}\in \overline C_{i,N_m+1-i}^{(m)}
\]
upon re-allocating probability masses from the $\textnormal{\textbf{z}}_{ij}^{(m)}$ to some $\textnormal{\textbf{x}}_{i,\star}^{(m)}$.

Now $\textnormal{\textbf{x}}_i^{(m)}\in C_{i,N_m+1-i}^{(m)}\subseteq
\overline C_{i,N_m+1-i}^{(m)}$ and
$\textnormal{\textbf{x}}_{i,\star}^{(m)}\in \overline C_{i,N_m+1-i}^{(m)}$, so both points
lie in the same closed anti-diagonal cell. Therefore,
\[
\|\textnormal{\textbf{x}}_i^{(m)}-\textnormal{\textbf{x}}_{i,\star}^{(m)}\|
\leqslant
\operatorname{diam}\!\bigl(\overline C_{i,N_m+1-i}^{(m)}\bigr)
=
\operatorname{diam}\!\bigl(C_{i,N_m+1-i}^{(m)}\bigr).
\]
Taking the maximum over $i$ gives
\[
\max_{1\leqslant i\leqslant N_m}
\|\textnormal{\textbf{x}}_i^{(m)}-\textnormal{\textbf{x}}_{i,\star}^{(m)}\|
\leqslant
\max_{1\leqslant i\leqslant N_m}
\operatorname{diam}\!\bigl(C_{i,N_m+1-i}^{(m)}\bigr).
\]
Since each $C_{i,N_m+1-i}^{(m)}$ is a dyadic square of side length $1/N_m$, its
diameter tends to $0$ uniformly in $i$ as $m\to\infty$. Hence,
\[
\max_{1\leqslant i\leqslant N_m}
\|\textnormal{\textbf{x}}_i^{(m)}-\textnormal{\textbf{x}}_{i,\star}^{(m)}\|
\leqslant
\max_{1\leqslant i\leqslant N_m}\operatorname{diam}(C_{i,N_m+1-i}^{(m)})\to 0.
\]

Finally, let $W_1$ denote the Wasserstein--1 distance associated with the same norm
$\|\cdot\|$. Couple the $i$th atom of $\mathbb P_m^{\mathrm{ad}}$ with the $i$th atom of
$\mathbb P_{m,\star}^{\mathrm{ad}}$. This yields
\[
W_1(\mathbb P_m^{\mathrm{ad}},\mathbb P_{m,\star}^{\mathrm{ad}})
\leqslant
\frac1{N_m}\sum_{i=1}^{N_m}
\|\textnormal{\textbf{x}}_i^{(m)}-\textnormal{\textbf{x}}_{i,\star}^{(m)}\|
\leqslant
\max_{1\leqslant i\leqslant N_m}
\|\textnormal{\textbf{x}}_i^{(m)}-\textnormal{\textbf{x}}_{i,\star}^{(m)}\|.
\]
Therefore, $W_1(\mathbb P_m^{\mathrm{ad}},\mathbb P_{m,\star}^{\mathrm{ad}})\to 0$,
as stated in \eqref{eqn_Wass1Dist}.
\end{proof}
\newpage
\section{Proof of Theorem~\ref{thm_Gen2stateMarkovChain}}
\begin{proof}
We proceed in six steps.
\paragraph{Step 1: applying Proposition~\ref{prop_genprob1_v1} to the original fractional problem.}
Since every ${\mathbb P}\in\mathcal D$ satisfies ${\mathbb P}(\Theta_{\mathrm{feas}})=1$, we may regard the effective state space as $K=\Theta_{\mathrm{feas}}$. On this restricted space, define
\[
B_i^-:=\{\textnormal{\textbf{q}}\in\Theta_{\mathrm{feas}}:t\leqslant L_i(\textnormal{\textbf{q}})<\epsilon\},
\qquad
B_i^+:=\{\textnormal{\textbf{q}}\in\Theta_{\mathrm{feas}}:\epsilon\leqslant L_i(\textnormal{\textbf{q}})\leqslant 1\}.
\]
For each $i$, $B_i^-\cap B_i^+=\varnothing$ and $B_i^-\cup B_i^+=\Theta_{\mathrm{feas}}$. The constraints defining $\mathcal D$ are precisely ${\mathbb P}(B_i^-)=\theta$ and ${\mathbb P}(B_i^+)=1-\theta$ ($i=1,\ldots,m$). 

Apply Proposition~\ref{prop_genprob1_v1} to the $2m$ measurable sets $B_1^-,B_1^+,\ldots,B_m^-,B_m^+$. Because, for each $i$, the pair $B_i^-,B_i^+$ is disjoint and exhaustive, every nonempty Boolean atom chooses exactly one of the two sets in each pair. Hence, the nonempty atoms are indexed by subsets $S\subseteq[m]$ and are exactly
\[
A_S
=
\bigcap_{i\in S}B_i^-
\cap
\bigcap_{i\notin S}B_i^+.
\]
Equivalently, $A_S
=
\{\textnormal{\textbf{q}}\in\Theta_{\mathrm{feas}}:
L_i(\textnormal{\textbf{q}})\in[t,\epsilon)\text{ if }i\in S,\
L_i(\textnormal{\textbf{q}})\in[\epsilon,1]\text{ if }i\notin S\}$.

If ${\mathbb P}\in\mathcal D$ and $p_S={\mathbb P}(A_S)$,
then $p_S=0$ for $S\in\mathcal M_{\mathrm{miss}}$,
$p_S\geqslant0$ otherwise,
and $\sum_{S\subseteq[m]}p_S=1$. Moreover, $\sum_{S\ni i}p_S
=
{\mathbb P}(B_i^-)
=
\theta$. Thus $C_{\mathrm{full}}p=b$, so $p\in\mathcal P$.

Conversely, if $p\in\mathcal P$ and, for every active nonmissing atom, ${\textnormal{\textbf{q}}}_S\in A_S$, then ${\mathbb P}
=
\sum_{S:p_S>0}p_S\delta_{{\textnormal{\textbf{q}}}_S}$ belongs to $\mathcal D$, because ${\mathbb P}(B_i^-)
=
\sum_{S\ni i}p_S
=
\theta$ and ${\mathbb P}(B_i^+)
=
\sum_{S\not\ni i}p_S
=
1-\sum_{S\ni i}p_S
=
1-\theta$.

Therefore, Proposition~\ref{prop_genprob1_v1} yields the atomised fractional representation
\begin{equation}
\Phi^\ast
=
\inf_{p\in\mathcal P}
\inf_{\substack{{\textnormal{\textbf{q}}}_S\in A_S\\ S\in\mathcal S_p}}
\frac{
\sum_{S\in\mathcal S_p}p_Sf({\textnormal{\textbf{q}}}_S)
}{
\sum_{S\in\mathcal S_p}p_Sg({\textnormal{\textbf{q}}}_S)
},
\qquad
\mathcal S_p:=\{S\subseteq[m]:p_S>0\}.
\label{eq:GenMC_atomised_fractional}
\end{equation}
The infimum is written rather than a minimum because the atoms $A_S$ are half-open and the corresponding support-point infima need not be attained. Atoms with $p_S=0$ are omitted because their support locations are irrelevant.

\paragraph{Step 2: closure minimisers for the atomwise Dinkelbach problems.}
Since each $L_i=L_{\sigma_i}$ is polynomial in the coordinates of $\textnormal{\textbf{q}}$, the functions
$L_i$ are continuous. Hence $\Theta_{\mathrm{feas}}$ is closed in the compact set
$\Theta$, and therefore $\Theta_{\mathrm{feas}}$ is compact. For each nonmissing $S$, the relative closure $\overline A_S=\operatorname{cl}_{\Theta_{\mathrm{feas}}}(A_S)$ is compact.

For each $\phi\geqslant0$, the Dinkelbach difference $h_\phi$ is continuous on $\Theta_{\mathrm{feas}}$ (because $f$ and $g$ are). Therefore, for every nonmissing $S$, $\inf_{\textnormal{\textbf{q}}\in A_S}h_\phi(\textnormal{\textbf{q}})
=
\inf_{\textnormal{\textbf{q}}\in\overline A_S}h_\phi(\textnormal{\textbf{q}})
=
\min_{\textnormal{\textbf{q}}\in\overline A_S}h_\phi(\textnormal{\textbf{q}})$. Thus, $\overline N_S(\phi)
=
\arg\min_{\textnormal{\textbf{q}}\in\overline A_S}h_\phi(\textnormal{\textbf{q}})$ is nonempty. These closure minimisers describe limiting support locations: they may not belong to their respective half-open atoms $A_S$.

\paragraph{Step 3: separability of weighted Dinkelbach sum over active atoms.}
Define $F(\phi)
=
\inf_{{\mathbb P}\in\mathcal D}
{\mathbb E}_{\mathbb P}[h_\phi(\textnormal{\textbf{Q}})]$. Using the fractional problem \eqref{eq:GenMC_atomised_fractional} from Step~1, 
\begin{equation}
F(\phi)
=
\inf_{p\in\mathcal P}
\inf_{\substack{{\textnormal{\textbf{q}}}_S\in A_S\\ S\in\mathcal S_p}}
\sum_{S\in\mathcal S_p}p_Sh_\phi({\textnormal{\textbf{q}}}_S).
\label{eqn_2stateMarkovGenDinkelbach}
\end{equation}
For fixed $p\in\mathcal P$, the variables ${\textnormal{\textbf{q}}}_S$, $S\in\mathcal S_p$, are independent across atoms and the objective in \eqref{eqn_2stateMarkovGenDinkelbach} is an additive finite sum. Hence,
\[
\inf_{\substack{{\textnormal{\textbf{q}}}_S\in A_S\\ S\in\mathcal S_p}}
\sum_{S\in\mathcal S_p}p_Sh_\phi({\textnormal{\textbf{q}}}_S)
=
\sum_{S\in\mathcal S_p}p_S\inf_{\textnormal{\textbf{q}}\in A_S}h_\phi(\textnormal{\textbf{q}}).
\]
Indeed, for every choice ${\textnormal{\textbf{q}}}_S\in A_S$, $h_\phi({\textnormal{\textbf{q}}}_S)\geqslant m_S(\phi)$, so $\sum_{S\in\mathcal S_p}p_Sh_\phi({\textnormal{\textbf{q}}}_S)
\geqslant
\sum_{S\in\mathcal S_p}p_Sm_S(\phi)$. Conversely, for every $\eta>0$ and every $S\in\mathcal S_p$, choose ${\textnormal{\textbf{q}}}_{S,\eta}\in A_S$ such that $h_\phi({\textnormal{\textbf{q}}}_{S,\eta})\leqslant m_S(\phi)+\eta$. Then,
\[
\sum_{S\in\mathcal S_p}p_Sh_\phi({\textnormal{\textbf{q}}}_{S,\eta})
\leqslant
\sum_{S\in\mathcal S_p}p_Sm_S(\phi)
+
\eta\sum_{S\in\mathcal S_p}p_S
\leqslant
\sum_{S\in\mathcal S_p}p_Sm_S(\phi)+\eta.
\]
Letting $\eta\downarrow0$ gives the reverse inequality.

Therefore,  $F(\phi)=\inf_{p\in\mathcal P}\sum_{S\in\mathcal S_p}p_Sm_S(\phi)$. Since $p_S=0$ for $S\notin\mathcal S_p$, 
\[
F(\phi)
=
\inf_{p\in\mathcal P}
\sum_{S\subseteq[m]}p_Sm_S(\phi)
=
\min_{p\in\mathcal P}p^\top m(\phi).
\]
The final infimum is a minimum because $\mathcal P$ is a compact polytope and the map $p\mapsto p^\top m(\phi)$ is linear.


\paragraph{Step 4: the Dinkelbach residual equation.}
The preceding step explains how to evaluate $F(\phi)$ for fixed $\phi$. It does not by itself determine the infimum value $\Phi^\ast$. The following shows that the unique root of $F(\phi)$ is the infimum.

Note that $F$ is monotonic and continuous. If $\phi_2>\phi_1$, then
for every ${\mathbb P}\in\mathcal D$,
\[
{\mathbb E}_{\mathbb P}[h_{\phi_2}(\textnormal{\textbf{Q}})]
=
{\mathbb E}_{\mathbb P}[h_{\phi_1}(\textnormal{\textbf{Q}})]
-
(\phi_2-\phi_1){\mathbb E}_{\mathbb P}[g(\textnormal{\textbf{Q}})].
\]
Since $0<t\leqslant {\mathbb E}_{\mathbb P}[g(\textnormal{\textbf{Q}})]\leqslant 1$, taking infima over $\mathcal D$ gives
\[
F(\phi_1)-(\phi_2-\phi_1)
\leqslant
F(\phi_2)
\leqslant
F(\phi_1)-t(\phi_2-\phi_1).
\]
Hence, $F$ is Lipschitz continuous and strictly decreasing. 

Now, for every ${\mathbb P}\in\mathcal D$,
\begin{equation}
{\mathbb E}_{\mathbb P}[h_\phi(\textnormal{\textbf{Q}})]
=
{\mathbb E}_{\mathbb P}[g(\textnormal{\textbf{Q}})]
\left\{
\frac{{\mathbb E}_{\mathbb P}[f(\textnormal{\textbf{Q}})]}
{{\mathbb E}_{\mathbb P}[g(\textnormal{\textbf{Q}})]}
-\phi
\right\}.
\label{eqn_MarkovDinkelbachRootEquivalence}
\end{equation}
Since $g\geqslant t>0$ on $\Theta_{\mathrm{feas}}$, every feasible denominator is uniformly positive: ${\mathbb E}_{\mathbb P}[g(\textnormal{\textbf{Q}})]\geqslant t$. If $\phi<\Phi^\ast$ then, for every ${\mathbb P}\in\mathcal D$, \eqref{eqn_MarkovDinkelbachRootEquivalence} implies ${\mathbb E}_{\mathbb P}[h_\phi(\textnormal{\textbf{Q}})]
\geqslant
t(\Phi^\ast-\phi)>0$, so $F(\phi)>0$. However, if $\phi>\Phi^\ast$ instead, by the definition of $\Phi^\ast$ there exists ${\mathbb P}\in\mathcal D$ such that $\frac{{\mathbb E}_{\mathbb P}[f(\textnormal{\textbf{Q}})]}
{{\mathbb E}_{\mathbb P}[g(\textnormal{\textbf{Q}})]}
<\phi$, hence ${\mathbb E}_{\mathbb P}[h_\phi(\textnormal{\textbf{Q}})]<0$. Thus, $F(\phi)<0$. 

This sign change of $F$, together with the monotonicity and continuity of $F$, proves that $F(\phi)=0$ has a unique solution. Any bracketed scalar root-finding method for $F$ (e.g. bisection)
converges to this root.

Finally, if $\phi=\Phi^\ast$, every feasible prior has nonnegative Dinkelbach difference, so $F(\Phi^\ast)\geqslant0$. Also, from \eqref{eqn_MarkovDinkelbachRootEquivalence}, ${\mathbb E}_{\mathbb P}[h_{\Phi^\ast}(\textnormal{\textbf{Q}})]
\leqslant
\left\{
\frac{{\mathbb E}_{\mathbb P}[f(\textnormal{\textbf{Q}})]}
{{\mathbb E}_{\mathbb P}[g(\textnormal{\textbf{Q}})]}
-{\Phi^\ast}
\right\}$ since $g$ is bounded above by $1$ on $\Theta_{\mathrm{feas}}$. So, taking the infimum over $\mathcal D$, $F(\Phi^\ast)\leqslant0$. Hence, $F(\Phi^\ast)=0$; i.e. $\Phi^\ast$ is the unique root of $F$.

\paragraph{Step 5: canonical exchange basis for the outer mass problem.}
We now characterise the minimisers of $F(\phi)=\min_{p\in\mathcal P}p^\top m(\phi)$. 

For $|S|\geqslant2$, recall $d_S
:=
e_S+(|S|-1)e_\varnothing-\sum_{i\in S}e_{\{i\}}$. Since $c_S
=
(1-|S|)c_\varnothing+\sum_{i\in S}c_{\{i\}}$, we have $C_{\mathrm{full}}d_S=0$. The columns $c_\varnothing,c_{\{1\}},\ldots,c_{\{m\}}$ are linearly independent. Hence, $\operatorname{rank}(C_{\mathrm{full}})=m+1$ and $\dim\ker C_{\mathrm{full}}=2^m-(m+1)$. There are exactly $2^m-(m+1)$ vectors $d_S$ with $|S|\geqslant2$, and they are linearly independent because $d_S$ is the only one with a nonzero coordinate in the $S$-coordinate among the non-anchor coordinates. Therefore, the columns of $\underline D$ form a basis of $\ker C_{\mathrm{full}}$.

Let $p^\ast\in\mathcal P$. A vector $v$ is a feasible tangent direction at $p^\ast$ for the polytope $\mathcal P$ if and only if \textbf{i)} $C_{\mathrm{full}}v=0$, \textbf{ii)} $v_S=0$ for $S\in\mathcal M_{\mathrm{miss}}$, and
\textbf{iii)} $v_S\geqslant0$ whenever $S\notin\mathcal M_{\mathrm{miss}}$ and $p_S^\ast=0$. Since the columns of $\underline D$ form a basis of $\ker C_{\mathrm{full}}$, every such direction can be written as $v={\underline D}\alpha$ for some $\alpha\in\mathcal A(p^\ast)$. Thus, $p^\ast\in\arg\underset{p\in\mathcal P}{\min}\;\; p^\top m(\phi)$ if and only if $\alpha^\top {\underline D}^\top m(\phi) =m(\phi)^\top {\underline D}\alpha\geqslant0$ 
for every $\alpha\in\mathcal A(p^\ast)$.

The canonical probability-mass exchange gradient is ${\underline D}^\top m(\phi)$:
\[
\Delta_S(\phi)
=
d_S^\top m(\phi)
=
m_S(\phi)
+
(|S|-1)m_\varnothing(\phi)
-
\sum_{i\in S}m_{\{i\}}(\phi),
\qquad |S|\geqslant2.
\]

\paragraph{Step 6: fixed-point conditions and extremal solution convergence.}
Assume first that $\phi^\ast=\Phi^\ast$. By Step~4, $F(\phi^\ast)=0$. By Step~3, choose $p^\ast\in\arg\min_{p\in\mathcal P}p^\top m(\phi^\ast)$. Then $(p^\ast)^\top m(\phi^\ast)=F(\phi^\ast)=0$. For every active atom $S\in\mathcal S_{p^\ast}$, choose $\bar {\textnormal{\textbf{q}}}_S^\ast\in\overline N_S(\phi^\ast)$. Then $h_{\phi^\ast}(\bar {\textnormal{\textbf{q}}}_S^\ast)=m_S(\phi^\ast)$,
so $\sum_{S\in\mathcal S_{p^\ast}}
p_S^\ast h_{\phi^\ast}(\bar {\textnormal{\textbf{q}}}_S^\ast)
=
\sum_{S\subseteq[m]}p_S^\ast m_S(\phi^\ast)
=
0$. Therefore, $\sum_{S\in\mathcal S_{p^\ast}}
p_S^\ast\allowbreak
f(\bar{\mathbf q}_S^\ast)
\mathrel{\allowbreak=\allowbreak}
\phi^\ast\allowbreak
\sum_{S\in\mathcal S_{p^\ast}}
p_S^\ast\allowbreak
g(\bar{\mathbf q}_S^\ast)
$. Since $g\geqslant t>0$ on $\Theta_{\mathrm{feas}}$, $\sum_{S\in\mathcal S_{p^\ast}}
p_S^\ast g(\bar {\textnormal{\textbf{q}}}_S^\ast)>0$. Thus,
\[
\frac{
\sum_{S\in\mathcal S_{p^\ast}}
p_S^\ast f(\bar {\textnormal{\textbf{q}}}_S^\ast)
}{
\sum_{S\in\mathcal S_{p^\ast}}
p_S^\ast g(\bar {\textnormal{\textbf{q}}}_S^\ast)
}
=
\phi^\ast.
\]
This proves $(E1_q^{\mathrm{cl}})$ and $(E2_{\mathrm{obj}})$. Since $p^\ast$ minimises the
outer linear problem in \eqref{eqn_2stateMarkovGenDinkelbach}, Step~5 with $\phi=\phi^*$ proves $(E1_p)$.

Conversely, suppose that $p^\ast$ and $(\bar {\textnormal{\textbf{q}}}_S^\ast)_{S\in\mathcal S_{p^\ast}}$
satisfy $(E1_q^{\mathrm{cl}})$, $(E1_p)$, and $(E2_{\mathrm{obj}})$. By
$(E1_q^{\mathrm{cl}})$, $h_{\phi^\ast}(\bar {\textnormal{\textbf{q}}}_S^\ast)=m_S(\phi^\ast)$ for $S\in\mathcal S_{p^\ast}$.
By $(E2_{\mathrm{obj}})$, $\sum_{S\in\mathcal S_{p^\ast}}
p_S^\ast f(\bar {\textnormal{\textbf{q}}}_S^\ast)
=
\phi^\ast
\sum_{S\in\mathcal S_{p^\ast}}
p_S^\ast g(\bar {\textnormal{\textbf{q}}}_S^\ast)$. Rearranging,  $\sum_{S\in\mathcal S_{p^\ast}}
p_S^\ast h_{\phi^\ast}(\bar {\textnormal{\textbf{q}}}_S^\ast)=0$.
Using $(E1_q^{\mathrm{cl}})$ again, this becomes $(p^\ast)^\top m(\phi^\ast)=0$. By $(E1_p)$ and Step~5, $p^\ast\in\arg\underset{p\in\mathcal P}{\min}\; p^\top m(\phi^\ast)$. Hence, $F(\phi^\ast)
=
\min_{p\in\mathcal P}p^\top m(\phi^\ast)
=
(p^\ast)^\top m(\phi^\ast)
=
0$. By Step~4, $\phi^\ast=\Phi^\ast$.

For convergence, suppose $\phi_r\to\Phi^\ast$ and $p^{(r)}\in\arg\min_{p\in\mathcal P}p^\top m(\phi_r)$. Since $\mathcal P$ is compact, every sequence $\{p^{(r)}\}$ has an accumulation point. Along a subsequence, let $p^{(r_k)}\to p^\ast$. For each nonmissing atom $S$, the map
\[
\phi\mapsto m_S(\phi)
=
\inf_{\textnormal{\textbf{q}}\in A_S}\{f(\textnormal{\textbf{q}})-\phi g(\textnormal{\textbf{q}})\}
\]
is Lipschitz, because it is the infimum of affine functions of $\phi$ with slopes in $[-1,0]$. Therefore, $m(\phi_{r_k})\to m(\Phi^\ast)$ componentwise. Passing to the limit in the inequality $(p^{(r_k)})^\top m(\phi_{r_k})
\leqslant
p^\top m(\phi_{r_k})$ for every $p\in\mathcal P$,
gives $(p^\ast)^\top m(\Phi^\ast)
\leqslant
p^\top m(\Phi^\ast)$ for every $p\in\mathcal P$. Thus, $p^\ast\in\arg\underset{p\in\mathcal P}{\min}\; p^\top m(\Phi^\ast)$. Since $F(\Phi^\ast)=0$, $(p^\ast)^\top m(\Phi^\ast)=0$.
Therefore, after choosing closure minimisers on active atoms, $p^\ast$ can be
completed to a tuple satisfying $(E1_q^{\mathrm{cl}})$, $(E1_p)$, $(E2_{\mathrm{obj}})$ at $\Phi^\ast$. If the outer minimiser at $\Phi^\ast$ is unique, every accumulation point is the same, and hence $p^{(r)}$ converges to it.

Finally, let $p^\ast$ be a fixed-point mass vector and choose, for every active atom $S\in\mathcal S_{p^\ast}$, a closure minimiser $\bar {\textnormal{\textbf{q}}}_S^\ast\in\overline N_S(\Phi^\ast)$. Since $\bar {\textnormal{\textbf{q}}}_S^\ast\in\overline A_S$, for every $\eta>0$ there exists ${\textnormal{\textbf{q}}}_{S,\eta}\in A_S$ such that $\|{\textnormal{\textbf{q}}}_{S,\eta}-\bar {\textnormal{\textbf{q}}}_S^\ast\|\leqslant \eta$. By continuity of $h_{\Phi^\ast}$ and the fact that $h_{\Phi^\ast}(\bar {\textnormal{\textbf{q}}}_S^\ast)=m_S(\Phi^\ast)$, we may also choose ${\textnormal{\textbf{q}}}_{S,\eta}$ so that $h_{\Phi^\ast}({\textnormal{\textbf{q}}}_{S,\eta})
\leqslant
m_S(\Phi^\ast)+\eta$. Define ${\mathbb P}_\eta
=
\sum_{S\in\mathcal S_{p^\ast}}p_S^\ast\delta_{{\textnormal{\textbf{q}}}_{S,\eta}}$. By Step~1, ${\mathbb P}_\eta\in\mathcal D$. Moreover, ${\mathbb E}_{{\mathbb P}_\eta}[h_{\Phi^\ast}(\textnormal{\textbf{Q}})]
=
\sum_{S\in\mathcal S_{p^\ast}}
p_S^\ast h_{\Phi^\ast}({\textnormal{\textbf{q}}}_{S,\eta})
\leqslant
\sum_{S\subseteq[m]}p_S^\ast m_S(\Phi^\ast)+\eta
=
\eta$. Since $F(\Phi^\ast)=0$, every feasible prior has nonnegative Dinkelbach difference at $\Phi^\ast$. Hence, $0
\leqslant
{\mathbb E}_{{\mathbb P}_\eta}[h_{\Phi^\ast}(\textnormal{\textbf{Q}})]
\leqslant
\eta$. Using ${\mathbb E}_{{\mathbb P}_\eta}[h_{\Phi^\ast}(\textnormal{\textbf{Q}})]
=
{\mathbb E}_{{\mathbb P}_\eta}[f(\textnormal{\textbf{Q}})]
-
\Phi^\ast{\mathbb E}_{{\mathbb P}_\eta}[g(\textnormal{\textbf{Q}})]$,
and ${\mathbb E}_{{\mathbb P}_\eta}[g(\textnormal{\textbf{Q}})]\geqslant t$,
we obtain $0
\leqslant
\frac{{\mathbb E}_{{\mathbb P}_\eta}[f(\textnormal{\textbf{Q}})]}
{{\mathbb E}_{{\mathbb P}_\eta}[g(\textnormal{\textbf{Q}})]}
-
\Phi^\ast
\leqslant
\frac{\eta}{t}$.
Therefore, 
$\frac{{\mathbb E}_{{\mathbb P}_\eta}[f(\textnormal{\textbf{Q}})]}
{{\mathbb E}_{{\mathbb P}_\eta}[g(\textnormal{\textbf{Q}})]}
\to
\Phi^\ast$ as $\eta\downarrow0$.
\end{proof}
\newpage
\section{Proof of Proposition~\ref{prop_convergence_to_ordinary_BayesianInference}}
\begin{proof}
For fixed $m$ and each cell $K_i^{(m)}$, define $p_i^{(m)}:=\mu\!\left(K_i^{(m)}\right)$ and
\[
f_{i,m}^-:=\inf_{\mathbf{x}\in K_i^{(m)}} f(\mathbf{x}),\quad
f_{i,m}^+:=\sup_{\mathbf{x}\in K_i^{(m)}} f(\mathbf{x}),\quad
g_{i,m}^-:=\inf_{\mathbf{x}\in K_i^{(m)}} g(\mathbf{x}),\quad
g_{i,m}^+:=\sup_{\mathbf{x}\in K_i^{(m)}} g(\mathbf{x}).
\]
Set
\[
A_m^-:=\sum_{i=1}^{n_m} f_{i,m}^-\,p_i^{(m)},\;
A_m^+:=\sum_{i=1}^{n_m} f_{i,m}^+\,p_i^{(m)},\;
B_m^-:=\sum_{i=1}^{n_m} g_{i,m}^-\,p_i^{(m)},\; 
B_m^+:=\sum_{i=1}^{n_m} g_{i,m}^+\,p_i^{(m)}.
\]

Take any $\mathbb P\in\mathcal D_m$. Note that $\mathbb P(K_i^{(m)})=p_i^{(m)}$, $\mathbb E[f]=\sum_{i=1}^{n_m}\mathbb E[f{\mathbf 1}_{K_i^{(m)}}]$ and $\mathbb E[g]=\sum_{i=1}^{n_m}\mathbb E[g{\mathbf 1}_{K_i^{(m)}}]$. By definition of infimum and supremum on each cell,
\[
f_{i,m}^-\,p_i^{(m)}
\leqslant
\mathbb E[f{\mathbf 1}_{K_i^{(m)}}]
\leqslant
f_{i,m}^+\,p_i^{(m)},\quad g_{i,m}^-\,p_i^{(m)}
\leqslant
\mathbb E[g{\mathbf 1}_{K_i^{(m)}}]
\leqslant
g_{i,m}^+\,p_i^{(m)}.
\]
Summing over $i$ then gives $A_m^- \leqslant \mathbb E[f]\leqslant A_m^+$ and $B_m^- \leqslant \mathbb E[g]\leqslant B_m^+$. Hence, whenever $B_m^->0$,
\[
\frac{A_m^-}{B_m^+}
\leqslant
\frac{\mathbb E[f]}{\mathbb E[g]}
\leqslant
\frac{A_m^+}{B_m^-}.
\]
Therefore, for such $m$,
\begin{equation}
\frac{A_m^-}{B_m^+}\leqslant L_m\leqslant U_m\leqslant \frac{A_m^+}{B_m^-}.
\label{eqn_LmlessthanUm}
\end{equation}

Since $f$ is continuous on compact $K$, it is uniformly continuous. Let $\omega_f(m):=
\allowbreak
\max_{1\leqslant i\leqslant n_m}
\allowbreak
\sup_{\mathbf{x},\mathbf{y}\in K_i^{(m)}}
\allowbreak
\lvert f(\mathbf{x})-f(\mathbf{y})\rvert$. 
Since the mesh tends to $0$, $\omega_f(m)\to 0$. Thus, 
$0\leqslant A_m^+-A_m^- \leqslant \omega_f(m)\sum_{i=1}^{n_m}p_i^{(m)}=\omega_f(m)\to 0$. Also, for each $i$, $f_{i,m}^-\,p_i^{(m)}
\leqslant
\mathbb E_\mu[f{\mathbf 1}_{K_i^{(m)}}]
\leqslant
f_{i,m}^+\,p_i^{(m)}$, 
so summing over $i$, $A_m^- \leqslant \mathbb E_\mu[f] \leqslant A_m^+$. Since the gap $A_m^+-A_m^-\to 0$, it follows that
$A_m^-\to \mathbb E_\mu[f]$ and $A_m^+\to \mathbb E_\mu[f]$. The same argument for $g$ yields $B_m^-\to \mathbb E_\mu[g]$ and $B_m^+ \to \mathbb E_\mu[g]$.

By assumption, $\mathbb E_\mu[g]>0$. Therefore, for all sufficiently large $m$, $B_m^- > \mathbb E_\mu[g]/2>0$. So, for all sufficiently large $m$, every $\mathbb P\in\mathcal D_m$ satisfies $\mathbb E[g]\geqslant B_m^->0$, and the bounds in \eqref{eqn_LmlessthanUm} are valid.

Finally,
\[
\frac{A_m^-}{B_m^+}\to \frac{\mathbb E_\mu[f]}{\mathbb E_\mu[g]},
\qquad
\frac{A_m^+}{B_m^-}\to \frac{\mathbb E_\mu[f]}{\mathbb E_\mu[g]},
\]
since numerator and denominator in each ratio converge and $\mathbb E_\mu[g]>0$. By the squeeze theorem applied to \eqref{eqn_LmlessthanUm},
\[
L_m\to \frac{\mathbb E_\mu[f]}{\mathbb E_\mu[g]},
\qquad
U_m\to \frac{\mathbb E_\mu[f]}{\mathbb E_\mu[g]}.\qedhere
\]\end{proof}

\end{document}